\providecommand{\abs}[1]{\lvert#1\rvert}
\def \R{\mathbb{R}}
\def\eps{{\varepsilon}}
\def\supp{{\rm supp}}
\def\Diff{{\rm Diff}}
\def\Prob{{\mathbb{P}}}
\def\EXP{{\mathbb{E}}}
\newcommand{\vol}{\operatorname{vol}}
\def\reals{\mathbb{R}}
\def\brs{{\bar s}}
\def\cB{\mathcal{B}}
\def\cD{\mathcal{D}}
\def\cI{\mathcal{I}}
\def\cF{\mathcal{F}}
\def\cG{\mathcal{G}}
\def\cE{\mathcal{E}}
\def\cK{\mathcal{K}}
\def\cL{\mathcal{L}}
\def\cN{\mathcal{N}}
\def\cU{\mathcal{U}}
\def\cX{\mathcal{X}}
\def\cZ{\mathcal{Z}}
\def\fA{\mathfrak{A}}
\def\fU{\mathfrak{U}}
\def\hmu{{\hat\mu}}
\def\tG{{\tilde G}}
\def\tf{{\tilde f}}
\def\tg{{\tilde g}}
\def\beq{\begin{equation}}
\def\eeq{\end{equation}}
\theoremstyle{definition}
\newtheorem{definition}{Definition}[section]
\newtheorem{theorem}[definition]{Theorem}
\newtheorem{lemma}[definition]{Lemma}
\newtheorem{corollary}[definition]{Corollary}
\newtheorem{proposition}[definition]{Proposition}
\newtheorem{conjecture}[definition]{Conjecture}
\newtheorem{remark}[definition]{Remark}
\newtheorem{example}%[definition]
{Example}%[section]
\numberwithin{equation}{section}
\def\DS{\displaystyle}
\newcommand{\Aut}{\operatorname{Aut}}
\newcommand{\Id}{\operatorname{Id}}
\newcommand{\GL}{\operatorname{GL}}
\newcommand{\SL}{\operatorname{SL}}
\newcommand{\mc}[1]{\mathcal{#1}}
\newcommand{\mf}[1]{\mathfrak{#1}}
\newcommand{\E}[1]{\mathbb{E}\left[{#1}\right]}
\newcommand{\PP}{\mathbb{P}}
\newcommand{\N}{\mathbb{N}}
\newcommand{\Z}{\mathbb{Z}}
\newcommand{\Gr}{\operatorname{Gr}}
\newcommand{\SO}{\operatorname{SO}}
\newcommand{\wt}[1]{\widetilde{#1}}
\newcommand\dd{{\mathfrak{d}}}
\title[Conservative Coexpanding on Average Diffeomorphisms]{Conservative Coexpanding on Average Diffeomorphisms}
\author{Jonathan DeWitt and Dmitry Dolgopyat}
\address{Department of Mathematics, The University of Maryland, College Park, MD 20742, USA}
\email{dewitt@umd.edu, dolgop@umd.edu}
\date{\today}
\begin{document}
\begin{abstract}
 We show that the generator of a conservative  IID  random system whose dynamics expands on average codimension $1$ planes
has an essential spectral radius strictly
smaller than $1$ on Sobolev spaces of small positive index index. Consequently, such a system
has finitely many ergodic components. If there is only one component for each power of the random system,
then the system enjoys multiple exponential mixing and the central limit theorem. 
Moreover, these properties are stable under small perturbations.

As an application we show that many small perturbations of random homogeneous systems are exponentially mixing.
\end{abstract}

\maketitle

\section{Introduction}
\subsection{Overview of the main results.}
\label{SSMainResults}
In this paper we provide sufficient conditions for exponential mixing  of the IID random dynamics on higher dimensional smooth manifolds.

We now explain our main hypothesis. 
We say that a measure $\mu$ on $\Diff^1(M)$ is \emph{coexpanding on average} if there exists $N\in \mathbb{N}$ and $\lambda>0$ such that for all $x\in M$ and $\xi\in T^{1*}_xM$, the unit cotangent bundle, 
\begin{equation}\label{eqn:coexpanding_on_average}
\int N^{-1}\ln \| (D_xf^*)^{-1} \xi\|\,d\mu^N({f})\ge \lambda>0.
\end{equation}
 Here and below $Df^*$ denotes the adjoint (pullback) action on the cotangent bundle: if $\xi\in T_{fx}^*M$
and $v\in T_x M$, then 
$\displaystyle\langle D_x f^* \xi, v\rangle=\langle \xi, D_x f v\rangle $; and
$\mu^N$ denotes the law of $N$-fold composition of independent maps with law $\mu$. As we shall see below, coexpansion of $\mu$ directly implies good properties for the dynamics of $\mu^{-1}$, where $\mu^{-1}$ is the law of $f^{-1}$ when $f$ is distributed according to $\mu$. 
For this reason, we will also state results when $\mu^{-1}$ is coexpanding on average. Our main result is the following:

\begin{theorem}\label{thm:essential_spectral_gap}
Let $M$ be a closed Riemannian manifold and $\mu$ be a compactly supported probability measure on $\Diff^{\infty}_{\vol}(M)$ that is coexpanding on average. Then there exists $s_0>0$ such that for $s\in  (0,s_0]$ the associated generator
$\mc{G}\colon H^{s}(M)\to H^{s}(M)$
defined by 
$ (\cG\phi)(x)=\int \phi(f x) d\mu(f)$
has essential spectral radius less than $1$. If instead $\mu^{-1}$ is coexpanding on average, then the associated generator 
$\mc{G}\colon H^{-s}(M)\to H^{-s}(M)$
defined by 
$ (\cG\phi)(x)=\int \phi(f x) d\mu(f)$
has essential spectral radius less than $1$. 
\end{theorem}

Theorem \ref{thm:essential_spectral_gap} implies a variety of additional 
results. 
We say that that the random system is {\em totally ergodic} if for each natural number $q$ there is no non-trivial function
$\phi$ which is invariant for $\mu^q$ almost every $f$.
We show that for systems with $\mu^{-1}$ coexpanding on average 
the manifold $M$ decomposes into a finite number of totally ergodic components (Theorem \ref{cor:coexpanding_components}). If we assume that both $\mu$ and $\mu^{-1}$ are coexpanding on average, we are able to improve Theorem \ref{thm:essential_spectral_gap} to an essential spectral gap on $H^s$ for all $s\in [-s_0,s_0]$ (Theorem \ref{thm:essential_spectral_gapL2}). An important property of the coexpanding condition above is that if $\mu$ is coexpanding on average, then the $k$-point motion of $\mu$ is also coexpanding on average for all $k\in \mathbb{N}$ (Corollary ~\ref{CrEoAk}).

The above results pertain to
the essential spectral gap: they do not show exponential mixing yet. The problem is that the spectral argument does not give ergodicity. However, for many examples, ergodicity is already known. 
In that case, we obtain an actual spectral gap, which has a number of dynamical consequences.
In Section \ref{ScBack} we derive the following result.

\begin{corollary}
\label{CrSpGap-MEM}
If $\mu^{-1}$ is coexpanding on average totally ergodic measure on $\Diff^{\infty}_{\vol}(M)$, then the random dynamics of $\mu$ is multiply exponentially mixing and satisfies
the annealed central limit theorem. The same properties hold for small perturbations of~$\mu$.
\end{corollary}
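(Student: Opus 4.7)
\emph{From essential gap to spectral gap.} The plan is to promote the essential spectral gap of Theorem \ref{thm:essential_spectral_gap} to a genuine spectral gap on $H^{-s}$, then to read off the mixing and CLT statements from standard spectral machinery, and finally to close stability by perturbation. Theorem \ref{thm:essential_spectral_gap} confines the essential spectrum of $\cG$ on $H^{-s}$ to a disk of radius $<1$, so the peripheral spectrum consists of finitely many isolated eigenvalues of finite multiplicity. We claim the only peripheral eigenvalue is $\lambda=1$, simple, with eigenspace the constants. The delicate point --- and the main technical obstacle in the plan --- is a regularity upgrade showing that peripheral eigenfunctions $\phi\in H^{-s}$ are in fact in $L^2$ (or smoother); this exploits the Markov structure of $\cG$, volume preservation, and the finite-dimensionality of the peripheral eigenspace. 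Granting the upgrade, for $\phi\in L^2$ with $\cG\phi=\lambda\phi$, $|\lambda|=1$, the Cauchy--Schwarz equality case for the Markov operator $\cG$ forces $\phi\circ f=\lambda\phi$ for $\mu$-a.e.\ $f$. Total ergodicity of $\mu$ rules out roots of unity $\lambda\neq 1$, since then $\phi$ would be $\mu^q$-invariant. For an irrational $\lambda$, differentiating $\phi\circ f=\lambda\phi$ gives $(D_xf^\ast)^{-1}D_x\phi=\lambda^{-1}D_{fx}\phi$; taking logarithms, integrating against $d\mu(f)$ via \eqref{eqn:coexpanding_on_average}, and then against $d\vol(x)$ using volume preservation, yields
\[
0=\int\!\!\int\bigl(\ln\|D_{fx}\phi\|-\ln\|D_x\phi\|\bigr)\,d\mu(f)\,d\vol(x)\ge\lambda>0,
\]
a contradiction. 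Hence $\cG=\Pi+R$ on $H^{-s}$, with $\Pi$ the rank-one projection onto constants and $\|R^n\|_{H^{-s}\to H^{-s}}\le C\theta^n$ for some $\theta<1$.

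\emph{Dynamical consequences.} Two-point exponential decay of correlations for Sobolev observables is immediate from the spectral gap. Multiple exponential mixing at times $0<n_1<\dots<n_k$ follows by writing the $k$-point correlation as a telescoping composition of powers $\cG^{n_{j+1}-n_j}$ interleaved with multiplication by $H^s$-regular observables (which are bounded multipliers on $H^{-s}$ for $s$ in the appropriate range) and applying the gap at each step. The annealed CLT follows from the Nagaev--Guivarc'h method applied to the twisted family $\cG_t\phi(x)=\int e^{it\psi(x)}\phi(fx)\,d\mu(f)$: analyticity in $t$ and the gap of $\cG_0=\cG$ yield a simple leading eigenvalue $\lambda(t)=\exp(-\sigma^2 t^2/2+O(t^3))$, from which the CLT follows by Fourier inversion of $\EXP[e^{itS_n/\sqrt{n}}]$.

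\emph{Stability.} The condition \eqref{eqn:coexpanding_on_average} is open on compactly supported measures in the appropriate topology, and the essential gap of Theorem \ref{thm:essential_spectral_gap} can be arranged uniformly on a neighborhood of $\mu$. The simple isolated eigenvalue $1$ is stable under perturbations (constants are always invariant; isolated eigenvalues depend continuously on the operator), and no new peripheral eigenvalues can appear without contradicting the argument of the first paragraph. Hence the full spectral gap persists under small perturbations, and all conclusions transfer accordingly.
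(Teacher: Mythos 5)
Your overall strategy (essential gap from Theorem \ref{thm:essential_spectral_gap} $\Rightarrow$ analysis of the peripheral spectrum $\Rightarrow$ spectral gap $\Rightarrow$ telescoping for multiple mixing and Nagaev--Guivarc'h for the CLT $\Rightarrow$ perturbative stability) is exactly the paper's, but two of your steps have genuine gaps. First, the regularity upgrade you flag as ``the main technical obstacle'' and then simply grant is never carried out, and there is no reason a peripheral eigenfunction of $\cG$ on $H^{-s}$ should lie in $L^2$. The paper sidesteps this entirely: it passes to the adjoint $\cG^*=\cG_{\mu^{-1}}$ acting on $H^{s}\subset L^2$ (same peripheral spectrum), so its eigenfunctions are automatically square integrable and the Cauchy--Schwarz equality argument of \S\ref{SSWMSkew} applies. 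Second, your treatment of a peripheral eigenvalue $\lambda$ that is not a root of unity is invalid: you differentiate $\phi\circ f=\lambda\phi$, but $\phi$ is only an $L^2$ (at best) function, $D\phi$ need not exist, and even formally $\ln\|D_x\phi\|$ may fail to be defined or integrable where $D\phi$ vanishes; the coexpanding inequality \eqref{eqn:coexpanding_on_average} applies to unit covectors, not to $D\phi$. The paper's route is both correct and simpler: the modulus-one eigenvalues attached to eigenfunctions of $x$ alone form a group, finite-dimensionality of the peripheral eigenspace forces this group to be finite, hence every peripheral eigenvalue is a root of unity, and total ergodicity then kills all of them (Theorem \ref{ThEMAnnealed}(b)). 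No separate ``irrational $\lambda$'' case arises. (A smaller issue: for the telescoping step, multiplication by an $H^s$ function with $s$ small is \emph{not} a bounded multiplier on $H^{-s}$; the paper uses $C^1$ observables, for which it is.)

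The stability paragraph also has a gap. ``Isolated eigenvalues depend continuously on the operator'' requires norm continuity, but a measure $\tilde\mu$ close to $\mu$ in the relevant weak (Wasserstein-type) topology yields an operator $\tilde\cG$ close to $\cG$ only as a map $H^{-s}\to H^{-\bar s}$ with $\bar s>s$ (Lemma \ref{LmWH}), not in the operator norm on $H^{-s}$. Standard analytic perturbation theory therefore does not apply; one needs the Keller--Liverani theorem (Proposition \ref{PrSpecCont}) together with a Lasota--Yorke inequality \eqref{LY} uniform over a neighborhood of $\mu$, which is how Theorem \ref{ThSpStab} proceeds. Moreover, your claim that ``no new peripheral eigenvalues can appear without contradicting the argument of the first paragraph'' is circular: that argument uses total ergodicity of the measure, which is not known a priori for the perturbation. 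The correct order is the reverse --- Keller--Liverani stability rules out new peripheral eigenvalues directly, and total ergodicity (hence mixing) of the perturbed system is a \emph{consequence}. With these repairs --- work with the adjoint on $H^s$, replace the differentiation argument by the finite-group argument, and invoke Keller--Liverani for stability --- your proof becomes the paper's.
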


 For the rest of the \S \ref{SSMainResults} we will say for brevity that a random system is {\em strongly chaotic}
if it enjoys the properties of the above corollary, that is, it is multiply exponentially mixing and satisfies
the annealed central limit theorem, and these properties are stable with respect to small
perturbations.

 It turns out (see Proposition \ref{prop:coa_d-1_planes}) that in the conservative setting 
 the coexpansion property is the same as the much more studied {\em expansion of average on codimension
one planes} property. Therefore we can use previous work on this subject,
including {\em the invariance principle} of Avila-Viana, to verify the coexpansion on average condition
in many examples. In particular, we obtain the following statement.

\begin{theorem}
\label{ThExpMixGen}
 Suppose that $\dim M>1$ and let $\cU\subseteq \Diff^\infty_{\vol}(M)$ be an open set consisting of uniformly $C^r$ bounded volume preserving 
diffeomorphisms where $r=r(\dim(M))$ is a sufficiently large
constant. Then the set of measures on $\cU$ so that the corresponding random dynamics is 
 strongly chaotic
 contains weak* open and dense subset. 
\end{theorem}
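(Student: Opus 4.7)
The strategy is to show that measures satisfying the hypotheses of Corollary \ref{CrSpGap-MEM} --- i.e., simultaneously coexpanding on average and totally ergodic --- are weak* dense in $\mc{P}(\cU)$. Because the corollary then supplies strong chaos at each such measure, and strong chaos is open by its very definition (stability under small perturbations is built into the term ``strongly chaotic''), this produces the desired weak* open and dense subset.

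The coexpansion inequality \eqref{eqn:coexpanding_on_average} is itself weak*-open in $\mu$: its left-hand side is jointly continuous in $(\mu,x,\xi)$ on the compact unit cotangent bundle, since the integrand is continuous and uniformly bounded over the $C^r$-bounded set $\cU$. The density argument then proceeds in two successive perturbations. Given any $\mu_0\in\mc{P}(\cU)$, the first step is to approximate it by a coexpanding-on-average measure. By Proposition~\ref{prop:coa_d-1_planes}, in the conservative setting coexpansion is equivalent to positivity of the top Lyapunov exponent of the cocycle induced on the Grassmannian $\Gr_{d-1}(TM)$ of codimension-one tangent planes. If this exponent vanishes for $\mu_0$, the Avila--Viana invariance principle supplies a measurable $\mu_0$-invariant family of probability measures on the fibers of $\Gr_{d-1}(TM)$ with strong rigidity. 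Using that $\cU$ is $C^\infty$-open, select $g\in\cU$ whose derivative cocycle is generic enough to be incompatible with any such family; then
\[
\mu_1 := (1-\eps)\,\mu_0 + \eps\,\delta_g,\qquad 0<\eps\ll 1,
\]
is weak*-close to $\mu_0$, supported in $\cU$, and destroys the rigid invariant structure, so its top exponent on $\Gr_{d-1}(TM)$ is positive and $\mu_1$ is coexpanding on average.

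The second step upgrades $\mu_1$ to a totally ergodic measure while preserving the now-open coexpansion property. Theorem~\ref{cor:coexpanding_components} provides, for each $q\ge 1$, a finite measurable partition of $M$ into ergodic components of $\mu_1^q$; moreover the essential spectral gap of the generator of $\mu_1$ restricts the relevant values of $q$ to the finite set producing roots-of-unity eigenvalues on the unit circle, so only finitely many such partitions must be considered. A further sufficiently small convex perturbation of $\mu_1$ by a generic $g'\in\cU$ that preserves none of the partitions in this finite collection merges all their components and yields the desired $\mu\in\mc{P}(\cU)$ satisfying both conditions.

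The principal obstacle in this plan is the invariance-principle step of the first perturbation: one must verify that the Avila--Viana theorem applies under the present finite $C^r$-regularity and conservative hypotheses (which ultimately dictates the choice of the constant $r = r(\dim M)$ in the statement), and then show that a $C^\infty$-generic volume-preserving diffeomorphism in $\cU$ has a Grassmannian cocycle admitting no measurable invariant family, so that one such $g$ can always be found inside the open set $\cU$ to break the rigidity produced by the invariance principle. Once this step is in hand, the two perturbative moves chain together and Corollary~\ref{CrSpGap-MEM} finishes the argument.
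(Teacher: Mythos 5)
Your overall skeleton agrees with the paper's: exhibit a weak*-open and dense set of measures that are simultaneously coexpanding on average and totally ergodic, then invoke Corollary \ref{CrSpGap-MEM}. One caveat on openness: ``strong chaos is open by its very definition'' does not quite work, since stability of the mixing properties at $\mu$ does not make nearby measures themselves stable; the paper instead shows that the set $\mathfrak{G}$ of coexpanding-on-average totally ergodic measures is open via the Keller--Liverani stability result (Theorem \ref{ThSpStab}) and observes that $\mathfrak{G}$ consists of strongly chaotic measures. Your remark that the coexpansion inequality is weak*-open is correct but is only half of what is needed, since total ergodicity alone is not open (Remark \ref{RmNotOpen}).

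The genuine gap is in your first density step. You propose to repair a non-coexpanding $\mu_0$ by mixing in $\eps\,\delta_g$ for a single ``generic'' $g\in\cU$. No single diffeomorphism can do this job. Take $\mu_0=\delta_{\mathrm{id}}$ (or the measure $\mu_Q$ of Remark \ref{RmNotOpen}). For any $g\in\cU$, the measure $\mu_1=(1-\eps)\delta_{\mathrm{id}}+\eps\,\delta_g$ admits $\delta_p$ as a stationary measure for every fixed point $p$ of $g$, and generic conservative diffeomorphisms have many periodic points. Over $\delta_p$ the cotangent cocycle is generated by the single matrix $(D_pg^{*})^{-1}\in\SL_d(\R)$ together with the identity; some eigenvalue of this matrix has modulus at most $1$, so the corresponding covector fails to expand on average and $\mu_1$ is never coexpanding on average, no matter how generic $g$ is. Equivalently, by Proposition \ref{prop:EoA_characterization} a deterministic cocycle always carries an invariant measurable family of subspaces or a metric, so the ``incompatibility with any invariant family'' you want cannot be achieved by one map. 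This is exactly the point you flag as ``the principal obstacle'' but misdiagnose: the issue is not the regularity hypotheses of Avila--Viana but the fact that the perturbing object must itself be a rich \emph{measure} --- a clean/coclean measure in the sense of Definition \ref{DefClean}, whose transition probabilities have absolutely continuous components so that volume is its unique stationary measure and no invariant geometric structures survive. The existence of such a measure supported inside an arbitrary open set $\cU$ is the nontrivial input from \cite{elliott2023uniformly} (see also Theorem \ref{ThXF}), and once one mixes with it, Corollary \ref{rem:clean} delivers coexpansion and total ergodicity in a single step. Your second perturbation step is essentially Proposition \ref{prop:stable_exponential_mixing} and is sound on its own, but it cannot rescue the first step.
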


The proof of Theorem \ref{ThExpMixGen} relies on the following ingredients:
\begin{enumerate}
\item 
The set of coexpanding on average measures is open and dense.
\item 
Exponential mixing is dense.
\item 
Exponential mixing is open among coexpanding on average systems.
\end{enumerate}

The first two ingredients are due to \cite{elliott2023uniformly} (see also \cite{BCZG} for some related results).
Our contribution is the third ingredient, which relies on
essential spectral gap given by Theorem \ref{thm:essential_spectral_gap} and stability of the peripheral spectrum, given by Keller-Liverani
stability theory in \cite{KL}.

\begin{remark}
\label{RmNotOpen}
We note that neither ergodicity nor exponential mixing are open by themselves. Indeed,
take $M=\mathbb{T}^d$ and let $\mu$ be a random translation $x\mapsto x+\alpha$ where $\alpha$ is uniformly
distributed on $\mathbb{T}^d.$ Then $\mu$ is exponentially mixing (in fact, the points of the orbit are IID uniformly
distributed on $\mathbb{T}^d$). 
Let $\mu_Q$ be defined similarly but now $\alpha$ be uniformly distributed on rational vectors with denominator 
$Q.$ Then $\mu_Q$ is not ergodic. Thus the openness
comes by combining ergodicity with coexpansion
on average.
\end{remark}

Theorem \ref{ThExpMixGen} allows us to produce coexpanding on average random systems but the size of the support of their generator can be arbitrary large.
It is of great interest to study coexpansion on average for tuples of fixed size, where the random dynamics is generated by the uniform measure on the elements of the tuple. 
As was mentioned above, we can use classical techniques for producing hyperbolicity for random systems
to provide such examples. 
In particular, we shall show that many  homogeneous systems, as well as their perturbations, satisfy this condition. 
This is discussed in detail in Section \ref{sec:examples}. Here, we provide several representative examples.
Note that the words {\em coexpanding on average} do not appear explicitly in the statements below.

\begin{corollary}\label{cor:main_cor}
(a) Let $(A_1,\ldots, A_m)$ be a tuple of $\SL_d(\mathbb{Z})$ matrices 
generating a Zariski dense subgroup of $\SL_d(\mathbb{R}).$
Let $(f_1, \dots ,f_m)$ be either of the following systems:

\begin{enumerate}
\item[(i)] 
$M=\mathbb{T}^d=\R^d/\Z^d$ and $f_j(x)=A_j x+b_j$ for some vectors $b_j$;

\item[(ii)]
 $M=\SL_d(\reals)/\Gamma$ where $\Gamma$ is a uniform lattice in $\SL_d(\reals)$ and
$f_j  x$=$A_j x.$
\end{enumerate}

Let $\tf_j$ be small, smooth, volume preserving perturbations of $f_j.$ Then the random system generated by $(\tf_1, \dots ,\tf_m)$ is strongly chaotic
and its generator
 has spectral gap on $L^2$.
\vskip1mm

(b) Suppose that $d$ is even and 
let $(R_1, \ldots, R_m)$ be rotations of the sphere ${S}^d$ generating a dense subgroup of $\SO_{d+1}(\mathbb{R}).$
Let $\tf_j$ be small,  smooth, conservative perturbations of $f_j.$ Then either the random system generated by $(\tf_1, \dots \tf_m)$  is strongly chaotic
and its generator
 has spectral gap on $L^2$
or the $\tf_j$ are simultaneously conjugated to rotations.
\end{corollary}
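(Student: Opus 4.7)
The plan is to verify, for each family of examples in the corollary, the hypotheses of Corollary \ref{CrSpGap-MEM} --- namely coexpansion on average together with total ergodicity --- and then to invoke stability of these properties under small perturbations (the third ingredient used in the proof of Theorem \ref{ThExpMixGen}). By Proposition \ref{prop:coa_d-1_planes}, coexpansion on average in the conservative setting is equivalent to expansion on average on codimension one planes, so in each case the verification reduces to a positivity statement for a random product of matrices arising from the derivative cocycle on cotangent vectors.

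For part (a), the derivative cocycle of the affine torus map in (i) is the constant cocycle with fibre $A_j$, while for the homogeneous system in (ii), after identifying tangent vectors via right translation with $\mathfrak{sl}_d(\R)$, the derivative of left multiplication by $A_j$ becomes $\mathrm{Ad}(A_j)$. Zariski density of $\langle A_1,\dots,A_m\rangle$ in $\SL_d(\R)$ implies Zariski density of both the contragredient representation and of the adjoint representation on $\mathfrak{sl}_d$ in the corresponding special linear group. The resulting random walk on cotangent vectors is therefore strongly irreducible and proximal, and Furstenberg's positivity theorem gives a strictly positive top Lyapunov exponent, i.e.\ coexpansion on average. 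Total ergodicity in case (a)(i) follows because a Zariski dense subgroup of $\SL_d(\Z)$ admits no invariant proper rational subspaces, and hence no proper invariant subtori, under any power; in case (a)(ii) it follows from classical Howe--Moore type mixing of the random walk on $\SL_d(\R)/\Gamma$. Corollary \ref{CrSpGap-MEM} then yields strong chaos, and Theorem \ref{thm:essential_spectral_gapL2} upgrades this to an actual $L^2$ spectral gap; both are stable under small perturbations.

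For part (b), the unperturbed rotations are isometries, so the derivative cocycle has identically vanishing Lyapunov exponents and coexpansion fails at $(R_1,\dots,R_m)$ itself. Thus the dichotomy must be established directly for the perturbed tuple $(\tilde f_1,\dots,\tilde f_m)$. If the perturbation is coexpanding on average, then total ergodicity on $S^d$ follows from density of the generated subgroup in $\SO_{d+1}(\R)$ (which prevents any non-trivial invariant measurable function for any iterate), and the first conclusion follows as in part (a). If coexpansion on average fails, the Avila--Viana invariance principle, applied to the derivative cocycle on the projectivised cotangent bundle, produces a measurable invariant field of probability measures; since $d$ is even, the topology of $S^d$ forces this invariant field to reduce to the canonical round conformal structure, and a regularity bootstrap upgrades it to a smooth invariant Riemannian metric. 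A standard argument then produces a simultaneous smooth conjugacy of $(\tilde f_1,\dots,\tilde f_m)$ to a tuple of rotations.

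The main obstacle is the second alternative of part (b): translating the measurable invariant structure produced by the invariance principle into a smooth conjugacy to rotations. This is where even-dimensionality is essential (in odd dimensions there exist continuous families of non-round invariant conformal structures) and where most of the technical effort lies. In part (a), by contrast, the top Lyapunov exponent on cotangent vectors is strictly positive by Furstenberg, so the only non-routine point is to check total ergodicity uniformly over all iterates $\mu^q$, for which Zariski density of $\langle A_1,\dots,A_m\rangle$ is tailor-made.
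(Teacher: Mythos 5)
Your treatment of part (a) follows essentially the same route as the paper: coexpansion on average from Zariski density via Furstenberg-type positivity for the contragredient and adjoint actions (Propositions \ref{PrTorusAffine} and \ref{PrHomogeneous}), total ergodicity from mixing (Fourier analysis on the torus, Howe--Moore on $\SL_d(\R)/\Gamma$), stability under perturbation via Keller--Liverani, and the $L^2$ gap from Theorem \ref{thm:essential_spectral_gapL2}. One small omission there: Theorem \ref{thm:essential_spectral_gapL2} requires that \emph{both} $\mu$ and $\mu^{-1}$ be coexpanding on average; you need to remark (as the paper does) that the inverse tuple belongs to the same class, so the hypothesis is satisfied.

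Part (b), however, contains two genuine gaps. First, you claim that when the perturbed tuple is coexpanding on average, total ergodicity ``follows from density of the generated subgroup in $\SO_{d+1}(\R)$.'' Density of $\langle R_1,\dots,R_m\rangle$ gives ergodicity of the \emph{unperturbed} rotations, not of their perturbations, which need not generate anything dense; the spectral machinery of this paper only yields an essential spectral gap and cannot supply ergodicity on its own (see \S\ref{SSErgEoA}). Ergodicity of the perturbed system is established in \cite{dolgopyat2007simultaneous} by a random Hopf argument requiring expansion on average on all $k$-planes together with nonvanishing Lyapunov exponents --- and \emph{this} is where evenness of $d$ enters (odd-dimensional spheres may have a zero exponent by the Taylor expansion of \cite[Thm.~2]{dolgopyat2007simultaneous}), not where you place it. Second, your route to the other alternative --- invariance principle producing a measurable invariant conformal structure, which ``the topology of $S^d$'' forces to be the round one, followed by a bootstrap to a smooth conjugacy --- does not work as sketched: the moduli of conformal structures on $S^d$ is not a point for $d\ge 3$, an invariant smooth Riemannian metric near the round one does not by itself yield a simultaneous conjugacy to rotations, and your stated reason for needing $d$ even here is not the actual one. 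The dichotomy of Theorem \ref{thm:coa_isometry_perturbation} is proved in \cite{dewitt2024simultaneous,dolgopyat2007simultaneous} by a KAM scheme: each step runs as long as the expansion integral \eqref{eqn:integral_on_proj} is quantitatively small; if the scheme runs forever the maps are conjugated to isometries, and if it halts the integral is bounded below, giving (co)expansion on average. Your proposal would need to either reproduce that KAM argument or supply a genuinely different rigidity statement, neither of which is present.
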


 It is possible that the same results hold in odd dimensions as well but we are unable to prove this
(the reason for this is discussed in  \S \ref{SSErgEoA}). 
 However, we have the following partial result. Recall that an {\em isotropic manifold} is a rank $1$ symmetric space of compact type of dimension at least $2$. The full list of such manifolds includes $S^d$, $\mathbb{RP}^d$, $\mathbb{CP}^d$, $\mathbb{HP}^d$, and the Cayley projective plane.

\begin{theorem}
\label{ThGenericNearIsom}
Let $M$ be an isotropic manifold and
$m\ge 3$, then there exists an open neighborhood $\mc{U}$ of $\mathrm{Isom}(M)^m$ 
in the space of $m$-tuples in
$\Diff_{\vol}^\infty(M)^m$ 
such that on an open and dense set in $\mc{U}$ the corresponding dynamics is strongly chaotic and enjoys a spectral gap in $L^2$.
\end{theorem}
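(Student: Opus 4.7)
The plan is to reduce the statement to a density claim. By Corollary \ref{CrSpGap-MEM} combined with Keller--Liverani stability, the set of strongly chaotic tuples with an $L^2$ spectral gap is open in $\Diff_{\vol}^\infty(M)^m$, and Theorem \ref{thm:essential_spectral_gapL2} upgrades an essential spectral gap to a genuine one in the presence of total ergodicity. So the goal becomes: inside some neighborhood $\cU$ of $\mathrm{Isom}(M)^m$ and in every nonempty open subset $V\subset \cU$, produce a tuple $(g_1,\dots,g_m)$ that is simultaneously (i) coexpanding on average (for both $\mu$ and $\mu^{-1}$) and (ii) totally ergodic. Once this density claim is proven, openness follows for free from Corollary \ref{CrSpGap-MEM}.

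For total ergodicity, I would shrink $\cU$ so that every tuple admits a well-defined ``nearest isometry part'' $(R_1,\dots,R_m)\in \mathrm{Isom}(M)^m$. For $m\ge 3$, tuples generating a dense subgroup of the compact connected Lie group $\mathrm{Isom}(M)$ form an open and dense subset; the corresponding deterministic random walk by isometries is minimal and, by Peter--Weyl on the symmetric space, its $L^2$-action is mixing on $L^2_0(M)$, so every power is ergodic. For any nearby coexpanding-on-average tuple, Theorem \ref{cor:coexpanding_components} applied to each power gives finitely many ergodic components, and the minimality of the underlying isometric random walk forces a single component throughout, yielding total ergodicity.

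For coexpansion on average, this is the principal technical step. At $(R_1,\dots,R_m)$ all Lyapunov exponents of the derivative cocycle vanish; by Proposition \ref{prop:coa_d-1_planes}, coexpansion on average in the conservative setting is equivalent to the sum of the top $d-1$ Lyapunov exponents being strictly positive. Following the strategy of \cite{elliott2023uniformly}, I would apply the Avila--Viana invariance principle to the derivative cocycle on the Grassmannian bundle $\mathrm{Gr}_{d-1}(TM)$: if a $C^\infty$-small volume-preserving perturbation has vanishing sum of the top $d-1$ exponents, then every stationary measure on $\mathrm{Gr}_{d-1}(TM)$ projecting to $\vol$ must be invariant under the cocycle. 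For tuples whose rotation part $(R_1,\dots,R_m)$ generates a dense subgroup of $\mathrm{Isom}(M)$, this invariance propagates by continuity to the full isometry group. Since $\mathrm{Isom}(M)$ acts transitively on the orthonormal frame bundle of an isotropic $M$, no nontrivial measurable $\mathrm{Isom}(M)$-invariant subbundle of $\mathrm{Gr}_{d-1}(TM)$ exists. A Baire/transversality argument in $\Diff_{\vol}^\infty(M)^m$ then produces in any open subset of $\cU$ a tuple realizing strict positivity, and the same argument applied to the inverse tuple covers $\mu^{-1}$, giving access to Theorem \ref{thm:essential_spectral_gapL2}.

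The main obstacle is the passage from the measurable invariant structure produced by the invariance principle to a genuinely $\mathrm{Isom}(M)$-equivariant (hence continuous) one, so that the transitivity of $\mathrm{Isom}(M)$ on orthonormal frames of the isotropic $M$ can be invoked. Establishing this rigidity in the purely conservative, non-hyperbolic regime near isometries --- essentially propagating measurable invariance along an isometric random walk with a Peter--Weyl spectral gap --- is the crux of the proof and is precisely the step that uses both the rank-one symmetric space structure and the hypothesis $m\ge 3$. This is also what allows the argument to go through uniformly for all isotropic $M$, bypassing the odd-dimension issue for spheres alluded to after Corollary \ref{cor:main_cor}.
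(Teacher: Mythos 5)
Your high-level reduction (openness from Corollary \ref{CrSpGap-MEM}/Keller--Liverani, so it suffices to prove density of totally ergodic coexpanding tuples) matches the paper, but both of your substantive steps have genuine gaps, and the second one is fatal as proposed. For coexpansion on average, the paper does not run an invariance-principle argument near isometries; it invokes Theorem \ref{thm:coa_isometry_perturbation}, the KAM dichotomy of \cite{dolgopyat2007simultaneous,dewitt2024simultaneous}: a perturbation of a generating tuple of isometries is \emph{either} simultaneously conjugate to isometries \emph{or} (co)expanding on average. Your proposed mechanism --- invariance principle on $\Gr_{d-1}(TM)$, then ``propagate by continuity to the full isometry group'' and use transitivity of $\mathrm{Isom}(M)$ on frames --- cannot work: the invariance principle only yields a structure invariant under the \emph{perturbed} cocycle, which generates nothing resembling $\mathrm{Isom}(M)$; and if the argument did go through it would show that every small perturbation has positive exponent sum, which is false since the perturbation can itself be (conjugate to) a tuple of isometries, which preserves a Riemannian metric and has all exponents zero. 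The ``conjugate to isometries'' alternative is unavoidable, and it is exactly what the KAM scheme supplies; you flag this rigidity step as ``the crux'' but leave it unproved, and it is not where the hypothesis $m\ge 3$ enters.

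For total ergodicity, your claim that ``minimality of the underlying isometric random walk forces a single component'' for the nearby coexpanding tuple is unjustified: ergodicity is not open (Remark \ref{RmNotOpen}), the ergodic components of the perturbed system are merely measurable sets, and closeness to a minimal isometric walk gives no control over them. The paper's route is different and explains the role of $m\ge 3$: take $(R_1,\dots,R_{m-1})$ generating $\mathrm{Isom}(M)$ (dense by \cite{field1999generating}, needing $m-1\ge 2$), apply the dichotomy to the first $m-1$ perturbed maps to get coexpansion forwards and backwards on a dense set, then use Corollary \ref{cor:coexpanding_components} to get finitely many totally ergodic components and perturb the \emph{reserved} $m$-th map via Proposition \ref{PrNoInvSetsDiff} to destroy every nontrivial invariant set in the resulting finite algebra. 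This yields density of stably exponentially mixing tuples; openness then comes from Theorem \ref{ThSpStab}. You would need to replace both of your key steps with these arguments (or independent proofs of the KAM dichotomy and of the invariant-set destruction) for the proof to close.
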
 

In contrast, the question of when the random isometries of $M$ enjoy spectral gap in $L^2$ is wide open. 
The first examples were constructed in \cite{Mar80, Sul81}. \cite{BourgainGamburd} proves density of the
spectral gap for random symmetric $SU(2)$ actions.
\cite{Fisher-IMRN} shows that the spectral gap has probability zero or one in $SU(2)^m$, but it is unknown 
which alternative holds. Theorem \ref{ThGenericNearIsom} shows that the question is much easier for
small perturbation of isometries.
\subsection{Related Results}

\subsubsection{Expanding on average}
 The expanding on average condition first appeared in the study of IID matrix products and shows up naturally for the following reason. Suppose $\mu$ is a probability measure on $\SL(d,\R)$, and we study the Lyapunov exponents of the associated random walk. 
 Consider a $\mu$-stationary measure $\nu$ for the induced 
 random walk on $\mathbb{RP}^{d-1}$. For a matrix $A$ and a unit vector $v$, define $\Phi(A,v)=\ln \|Av\|$. Then consider the integral:
\[
\iint \Phi(A,v)\,d\nu(v)\,d\mu(A).
\]
According to Furstenberg's formula, the values that this integral takes for different stationary measures $\nu$ are a subset of the Lyapunov exponents. Further, if there is a unique stationary measure $\nu$, then the integral is always equal to $\lambda_1(\mu)$, the top Lyapunov exponent.  Moreover, if the stationary measure is unique, we have uniform convergence of Birkhoff sums against $\Phi$. Namely,
\[
\lim_{n\to \infty} \E{\ln\|A^n_{\omega} v\|}\to \lambda_1(\mu)
\]
uniformly independent of $v$. Hence we obtain the expanding on average condition as long as $\lambda_1(\mu)$ is positive. For a more detailed discussion see \cite[Cor.~III.3.4]{BougerolLacroix} and \cite[Ch.~6]{viana2014lectures}.

The Lyapunov exponent results of Furstenberg were extended to random dynamical systems and beyond in
\cite{AvilaViana10, BarMal20, Bax86, Bax89,  brinkifer, Car85, Crauel90,  kifer1986ergodic, Led84, Led86} and others.

The expanding on average condition was applied to studying ergodic properties of random dynamical systems
 in \cite{BS88, DKK04}. An application to stable ergodicity appears in
 \cite{dolgopyat2007simultaneous}, which proved  stable ergodicity of certain random isometric systems. 
 This property is also crucial for stable ergodicity results of the present paper.

 The application of expansion on average to the mixing of random systems appears in \cite{DKK04} and was expanded in \cite{BCZG, BFPS}. The latter paper obtains mixing results
 similar to ours under stronger conditions. (The expansion on average is not explicitly assumed in \cite{BFPS}
 but they refer to other papers such as \cite{BCZG} for the verifications of the assumptions of their main theorem
 in specific models, and the first (among many) steps of such verification 
 usually amounts to expansion on average.) Roughly speaking \cite{BFPS} use similar ideas to handle high
 frequencies, but they use PDE techniques to treat low frequencies, while we rely on ergodic theoretic approach
 which seems more flexible. Thus we can obtain similar conclusions under less restrictive assumptions. 
 On the other hand \cite{BFPS} do not assume the independence of the consecutive maps, they work with more
 general Markov chains. Similar extensions seem possible in our setting as well, but it would make the arguments less transparent.

Later, interest in expansion on average
increased when it was realized that the condition should be generic and also leads to a variety of interesting results. Perhaps most surprising were measure rigidity results obtained by Brown and Rodriguez Hertz in \cite{BrownRodriguezHertz}, which showed, in particular, that for a volume preserving expanding on average random dynamical system on a surface all stationary measures are invariant, and all invariant measures are either periodic or volume. 
Cantat and Dujardin studied random walks on complex surfaces in \cite{cantat2025hyperbolicity} and gave concrete criteria for this random walk to be expanding on average. In particular, they then apply this result to classify the stationary measures for these surfaces. See also \cite{cantat2024dynamics} for additional perspective on this application.
Quite recently, the results of \cite{BrownRodriguezHertz} were generalized to higher dimensions under the condition of being expanding on average in all dimensions \cite{brown2025measure}, plus additional assumptions  such as all Lyapunov exponents being non-zero.\footnote{We note that the conditions of \cite{brown2025measure} also imply ergodicity so that paper provides  additional examples of expanding on average systems that are ergodic.
The ergodicity plays important role in the applications of our results described in Section \ref{ScApplications}.}
Also in \cite{liu2016lyapunov} large deviations were studied for expanding on average systems. Chung constructed some discrete perturbations of the standard map 
and gave some alternate characterizations of the expanding on average condition \cite{chung2020stationary}. This generalized perturbations due to Blumenthal, Xue, and Young that used continuous noise \cite[Prop.~9]{blumenthal2018lyapunov}, \cite{blumenthal2017lyapunov}. A generalization 
 of the expanding on average condition
was also used by Eskin and Lindenstrauss in the homogeneous setting \cite{eskin2018random}. As will be discussed more below, Potrie \cite{potrie2022remark} showed how one could construct more examples of expanding on average systems on surfaces and that these systems are dense in weak* sense. Later, in \cite{dewitt2024expanding}, the authors showed that conservative expanding on average random dynamics on surfaces satisfies quenched exponential mixing. In the dissipative setting, an important question is the existence of an absolutely continuous invariant measure. In \cite{brown2024absolute}, Brown, Lee, Obata, and Ruan showed that for dissipative perturbations of a pair expanding on average pair of Anosov diffeomorphisms there exists an absolutely continuous stationary measure. 

The above mentioned work is, in the non-homogeneous case, limited to surfaces. In higher dimensions much less is known.  An important work 
by Elliott Smith \cite{elliott2023uniformly}  implies that the expanding on average condition, and its generalization to $k$-planes, is weak* dense in the space of driving measures. 

 \subsubsection{Contracting on average diffeomorphisms} 

 The importance of expanding on average condition for IID  matrix products is that it is equivalent to
the fact that the induced projective action is contracting on average. This fact was crucial in the study 
of statistical properties of random matrix products, see \cite{GR85, LePage82, LePage89}, 
and led to a general theory of contracting on average systems, see
\cite{Antonov,  barrientos2024contracting,  Blank01, Kaijser, Malicet17, Stenflo}.
We note that similarly to the present work, quasicompactness of the associated transfer operators
plays a key role in most of the above mentioned papers. However, since contractions improve regularity,
in the mostly contracting case one can get quasicompactness on the spaces of smooth (H\"older) functions,
while in the present case one needs to work with less regular functions which introduces additional complications.
We emphasize that unlike the contracting on average property, whose random dynamics are extremely similar to that of an actual contraction, 
conservative expanding on average maps look much more like maps that have at 
least one positive and one negative Lyapunov exponent. In this sense the dynamics looks partially hyperbolic. 

\subsubsection{Generic dynamics.}
For deterministic systems, establishing even weak statistical properties is quite difficult whereas for random systems this is much easier. If the random dynamics is sufficiently rich, then many statistical properties can be shown. This was done in  \cite{DKK04} in the context of stochastic flows. 
A recent work of Blumenthal, Coti Zelati, and Gvalani shows exponential mixing of some random flows including the Pierrehumbert model \cite{BCZG}. An important question is just how ``rich" the random dynamics must be in order to exhibit chaotic behavior. The following conjecture appears in \cite{dolgopyat2007simultaneous}.\footnote{ This statement is the strengthened statement that the result hold for pairs---and not longer tuples---which was demanded by the audience during the first author's talk at the 2024 Penn State Fall Conference.}

\begin{conjecture}
For each closed manifold $M$ and regularity class $k\ge 1$, the expanding on average pairs $(f,g)$ are open and dense in $\Diff^k_{\vol}(M)\times \Diff^k_{\vol}(M)$. 
\end{conjecture}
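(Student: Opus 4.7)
The plan is to approach the conjecture in two movements: openness, which is relatively routine, and density, which is the substantive part.

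For openness, fix $k\ge 1$ and write $\mu_{f,g}=\tfrac12(\delta_f+\delta_g)$. By Proposition~\ref{prop:coa_d-1_planes} expansion on average for this measure is equivalent to the coexpansion condition \eqref{eqn:coexpanding_on_average}, which amounts to positivity of
\[
\Lambda_N(f,g):=\inf_{(x,\xi)\in T^{1*}M}\frac{1}{N}\int \ln\|(D_xh^*)^{-1}\xi\|\, d\mu_{f,g}^N(h).
\]
For each fixed $N$ the integrand is bounded and continuous in $(f,g,x,\xi)$ in the $C^1$ topology and $T^{1*}M$ is compact, so $\Lambda_N$ is continuous in $(f,g)$, giving $C^1$-openness and hence $C^k$-openness.

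For density, the first route I would attempt is a \emph{concentration of atoms} argument using the weak-$*$ density result of \cite{elliott2023uniformly}. Given any $(f_0,g_0)$, that theorem produces, $C^k$-close to $\{f_0,g_0\}$, a finitely supported expanding-on-average measure $\mu_1=\sum_{i=1}^m p_i\delta_{h_i}$ with $m$ possibly large. The goal is then to realize comparable expansion with a two-atom measure $\tfrac12(\delta_{\tilde f}+\delta_{\tilde g})$ still close to $(f_0,g_0)$. The natural idea is to pick words $w_1,\dots,w_m$ of bounded length $L$ in two letters and build $(\tilde f,\tilde g)$ so that $w_i(\tilde f,\tilde g)$ locally realizes the differential of $h_i$ on the part of $M$ that controls the infimum in $\Lambda_N$. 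Because $\Lambda_N$ depends on only finitely many jets at finitely many scales, a conservative patchwork construction from volume-preserving isotopies and cut-off Hamiltonian vector fields should, in principle, suffice to transfer expansion of $\mu_1$ to the two-generator system.

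A complementary plan is to prove density by local arguments around distinguished loci: near hyperbolic pairs coexpansion is automatic, and near products of isometries of isotropic manifolds Theorem~\ref{ThGenericNearIsom} provides an open-dense coexpansion-on-average set for $m$-tuples with $m\ge 3$. Strengthening these local results from tuples to pairs, combined with a connectedness argument showing that every $(f_0,g_0)\in \Diff^k_{\vol}(M)^2$ can be $C^k$-approximated by a pair lying in a neighborhood of such a locus, would then yield density. This route should be tractable on manifolds of low dimension or with rich symmetry but does not obviously globalize.

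The hard part will be the pair restriction combined with the volume constraint. With only two generators the $N$-fold semigroup has $2^N$ elements and, when $f$ and $g$ nearly commute, these concentrate around the abelian compositions $f^ag^b$; arranging a uniform positive lower bound on expansion along all codimension-one covectors using only such compositions is exactly where the tuple arguments break down. The volume-preservation requirement compounds this: generic perturbations producing hyperbolic directions are easy to construct, but conservative perturbations with the same effect are scarce, and composing with a divergence-free flow to restore the volume form tends to undo the finite-time expansion gained at the previous step. A serious proof will almost certainly require a genuinely new conservative perturbation technique that controls the cocycle on all codimension-one planes under the combinatorial constraint of a free semigroup on two generators; this is presumably why \cite{elliott2023uniformly} obtains only the weak-$*$ version of the density statement.
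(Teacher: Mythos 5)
This statement is a \emph{conjecture} in the paper, attributed to \cite{dolgopyat2007simultaneous}; the paper offers no proof of it, and neither does your proposal. What you have written is a correct proof of the (easy, and already known) openness half together with a candid admission that the density half --- the entire content of the conjecture --- remains open. Your own closing paragraph concedes that the pair restriction plus the volume constraint is ``exactly where the tuple arguments break down'' and that a ``genuinely new conservative perturbation technique'' would be required. That is an accurate self-assessment: the concentration-of-atoms idea (realizing the atoms $h_i$ of an Elliott Smith measure as bounded-length words $w_i(\tilde f,\tilde g)$) founders precisely because one cannot freely prescribe the local behavior of $2^N$ distinct words in two conservative generators while keeping both generators $C^k$-close to a given pair $(f_0,g_0)$; the weak-$*$ density of \cite{elliott2023uniformly} is obtained exactly by allowing the number of atoms (or the mass of a highly non-atomic component) to be unconstrained, which is what the conjecture forbids. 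So there is a genuine gap: no argument for density is given, only a description of why plausible arguments fail.

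One further correction to the openness half: your invocation of Proposition~\ref{prop:coa_d-1_planes} is misplaced. That proposition identifies \emph{coexpansion} on average with expansion on average on $(d-1)$-planes; it is equivalent to expansion on average on vectors only when $d=2$ (Corollary~\ref{prop:low_dim_equivalence}). For $\dim M\ge 3$ the conjecture's hypothesis (expansion on average on $TM$) is a different condition, and the detour through covectors is both unnecessary and incorrect. The openness you want follows directly: for fixed $N$ the function $(f,g)\mapsto \inf_{v\in T^1M} N^{-1}\int \ln\|D_xf^N_\omega v\|\,d\mu_{f,g}^N$ is continuous in the $C^1$ topology by compactness of $T^1M$ and boundedness of the integrand, so $\{\Lambda_N>0\}$ is open for each $N$ and the expanding-on-average locus is the union of these open sets. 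With that repair the openness half stands, but the conjecture as a whole remains unproved.
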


 \noindent Naturally the idea of the conjecture is that it should take very little randomness for a random system to have strong properties.

  Similarly, \cite{dewitt2024expanding} conjecture that for a generic tuple the associated random dynamics
 is exponentially mixing. The present work shows that the two conjectures are intimately related. See 
 Proposition \ref{prop:stable_exponential_mixing} for the precise statement.
 
 The foregoing discussion was mostly limited to understanding dynamics in the  random conservative setting. However, there are some notable results in the dissipative setting as well, see
 \cite{barrientos2024contracting, LeJan86, DKK04}.

 We also note that the Smale and Palis conjectures \cite{Smale, Palis} 
 about the genericity of good behaviors from either the  topological 
 or ergodic theoretic point of view, were motivated by the success in understanding 
 hyperbolic systems. In fact, a reasonable description could be obtained for large classes of 
 nonuniformly hyperbolic systems \cite{BCS, dolgopyat2016geometric}. The problem with deterministic systems
 is that they could admit small invariant regions where dynamics is 
 far from hyperbolic \cite{Berger16, Berger17, Newhouse79}. 
 The fact that hyperbolicity is much more prevalent in the random setting because non-hyperbolicity implies existence
 of invariant geometric structures motivates a quest to understand the behavior of generic 
 random systems.

\subsection{Comments on the proof}
 There are two main approaches to establishing exponential mixing for systems without a large symmetry group.
The first, more classical, approach is based on quasicompactness in an appropriate space. It goes back to the
work of Lasota--Yorke \cite{LY73} and Ruelle \cite{RuelleTF} and
requires establishing Lasota--Yorke type inequalties (see \cite{BaladiBook, ParryPollicott,  viana1999lectures}). 
This approach got a powerful boost in the last two decades with the development of
 weighted Banach spaces \cite{AGT06, BaladiBook, BKL, 
CastorriniLiverani, GouezelLiverani06, Tsujii01}, which led to powerful results in the deterministic setting. 
 In the present paper we also follow this approach. 
 The argument is most similar to the arguments of \cite{Tsujii23} and \cite{BFPS}, which also work directly with the symbol.
While deterministic systems may require the use of an anisotropic Banach space that is well adapted to the dynamics of the system, in our case because of the uniformity of the assumption we are able to work directly with the simplest Hilbertian spaces\footnote
{The action on Sobolev spaces was also considered in deterministic setting, see \cite{Thomine, Tsujii23}. However, in those cases it was used that if the deterministic system is expanding then the inverse system is contracting, so that one has essential spectral gap on Sobolev spaces with positive index. 
This is not the case for random systems, where there are many examples where both $\mu$ and $\mu^{-1}$ are expanding on average and, in fact, this is conjectured to happen generically.}
---the Sobolev spaces $H^s(M)$. 

The proof of our main results in this paper is quite different than in our earlier work \cite{dewitt2024expanding}, which 
used a coupling  method developed in \cite{Young99, dolgopyat00}.
The proof of \cite{dewitt2024expanding}
relied on a delicate argument to construct a coupling between two curves lying in our surface. The proof makes detailed use of Pesin theory and many tools from smooth dynamics. The consequences obtained are stronger as well: that paper is able to show that a $C^{1+\text{H\"older}}$-curve exponentially equidistributes. The methods in this paper do not yield such a result because a measure along a curve is not regular enough to be in $H^s$ for $s$ close to $0$.

The current 
proof proceeds by a direct calculation of the essential spectral radius that gives a relatively explicit relationship between the expansion on average constant and the spectral radius. In this sense, the argument is not a particularly dynamical one as it does not shed much light on how the dynamics comes to be mixing, whereas the argument in \cite{dewitt2024expanding} shows this quite explicitly.  
 On other hand, the analytic approach of the present paper makes it much easier to see how the system
changes under small perturbations\footnote{There is also an approach to perturbation theory 
based on coupling and shadowing, see \cite{ChernovBrownian, ChernovGalton, DolgopyatInv, DolgopyatMoscow}. 
However, the results obtained by this method are weaker than the results relying on analytic techniques.},
both when we change the diffeomorphisms, which entails spectral stability results elucidated in 
\S \ref{SSStability},
and when we apply a multiplication by a small function which allows one to obtain the Berry--Esseen bound 
of Theorem~\ref{thm:central_limit_theorem}(b).

On the other hand, the approach of \cite{dewitt2024simultaneous} seems less sensitive to the independence
assumption and so it may be easier to extend to the setting of partially hyperbolic skew products. 
We note that for partially hyperbolic systems  there are many results in the setting where all central 
exponents have the same sign. 
They were first studied in \cite{bonatti2000SRB,alves2000srb}, and \cite{dolgopyat00}. Later more properties were shown in \cite{andersson2010robust, viana2013physical,dolgopyat2016geometric}. 
The systems with mixed exponents in the center are much less understood, even though
their abundance was demonstrated in \cite{AvilaViana10},
and we hope that studying expanding and coexpanding on average systems could shed some light on their properties.

We also note that both coupling and analytic techniques only show mixing on small scales
and so they require mixing to start the argument. Namely,
in the case of coupling we need the two pieces of the curves to be close to start the coupling procedure,
while in the analytic case we only have good control of high frequencies, so we only get quasi-compactness
as opposed to the spectral gap. In the two dimensional case the mixing was already known due to
\cite{chung2020stationary, dolgopyat2007simultaneous}, but in the present setting it is not known in the full
generality, see \S \ref{SSErgEoA} for a detailed discussion.

The reader may notice that the present proof is significantly shorter than the proof in \cite{dewitt2024simultaneous}. Moreover, a significant part of the present paper is devoted to examples, with 
the proof of the main result being limited to Sections \ref{ScSymb-Op}--\ref{ScMainProof}.
The reason for this disparity is that  \cite{dewitt2024simultaneous} required novel finite time estimates
in Pesin theory which are of independent interest. In the present paper we can use the well developed theory
of pseudodifferential operators. This is the main reason why we assume that the our random maps 
are $C^\infty$. While this assumption is clearly not optimal it allows us to cite many references that
do not explicitly track the smoothness required for various estimates.\\

\noindent\textbf{Acknowledgments.} The first author was supported by the National Science Foundation under Award No.~DMS-2202967. The second author was supported by the National Science Foundation under award No.~DMS-2246983. The authors are grateful to 
Carlangelo Liverani for helpful discussions. After we had proved the main results of this paper, we learned from Zhiyuan Zhang that he had independently obtained a related proof using curvelet spaces, and we remain grateful to Zhiyuan for the ensuing discussions. 
In particular, the results of \S \ref{SSDissipative} were suggested by Zhiyuan. The authors are also grateful to Thibault Lefeuvre for pointing out that the direction of the dynamics was reversed in an earlier version of this article.

\section{Background}

Here we describe  the necessary background.

\subsection{Symbols and the Operators} 
First we describe, the symbol class $S^m(X)$ where $X\subseteq \R^n$ is an open set.

For a domain $X\subseteq\R^n$ a symbol of class $S^m(X)$, $m\in \R$ is a smooth function $a(x,\xi)\colon X\times \R^n\to \R$ so that on every compact set $K\subset X$ there exists $C_{\alpha,\beta}$ such that 
\begin{equation}\label{eqn:S_m_definition}
\abs{D^{\alpha}_{\xi}D^{\beta}_xa(x,\xi)}\le C_{\alpha,\beta}(1+\abs{\xi})^{m-\abs{\alpha}}.
\end{equation}
The corresponding symbol class $S^m(M)$ on a manifold is defined analogously
by means of charts, see \cite[Ch.~I.5]{treves1980introduction}.
In the language of Shubin, this is the class $S^m_{1,0}(X)$ \cite[Def.~I.1.1]{shubin2001pseudodifferential}. 
We write $\Psi^m(X)$ for the class of pseudodifferential operators on $X$ defined using symbols by the standard quantization in $\R^n$. 
The operators in $\Psi^{-\infty}(X)$ are called smoothing because they map $H^s\to C^{\infty}$ for all $s\in \R$.  

Write $\Psi^m(M)$ for the class of pseudodifferential operators whose restriction to any charts---up to perturbation by a smoothing operator in $\Psi^{-\infty}$---is pseudodifferential operator on the chart as described above. 
In particular
smoothing operators have symbol 0.
The principal symbol\footnote{ We do not provide a precise definition of the principal symbol here since it
 is not important for our purpose, we only use Lemma \ref{lem:square_root} below.}
of an operator is an element of $S^m(T^*M)$ and is well defined modulo $S^{m-1}(T^*M)$. If two pseudodifferential operators in $\Psi^{m}(M)$ have the same principal symbol then their difference is an operator in $\Psi^{m-1}(M)$. We write $\sigma_A(x,\xi)\colon T^*M\to \R$ for the principal symbol of a pseudodifferential operator $A$.
The association between symbols and pseudodifferential operators is given by a quantization procedure $\text{Op}$ that takes a 
a function on $T^*M$
 and produces a pseudodifferential operator  in $\Psi^m(M)$ with that principal
 symbol. For our purposes, we only need to  know that such a quantization procedure exists.

\subsection{The pushforward}
\label{SSPullback}
A useful construction is the pushforward of a pseudodifferential operator. If $A$ is a pseudodifferential operator in symbol class $S^m(M)$, and $f\colon M\to M$ is a smooth diffeomorphism, then the {\em pushforward} $A^f$ of $A$ 
acts on a function $\phi$ by 
\begin{equation}\label{eqn:pushforward_symbol}
A^f: \phi\mapsto (A(\phi\circ f))\circ f^{-1}.
\end{equation}
See the discussion surrounding \cite[Thm.~I.3.3]{treves1980introduction}, \cite[Sec.~I.4.2]{shubin2001pseudodifferential}, or \cite[Lem.~5.2.7]{lefeuvre2024microlocal}.
An important fact for the symbolic calculus of pseudodifferential operators is that the principal symbol is functorial with respect to the pullback and pushforward by diffeomorphisms. Namely, if $A$ has symbol $a(x,\xi)$, then $A^f$ has principal symbol 
$ a(f^{-1}(x), (D_x(f^{-1})^*)^{-1}(\xi))$ . This is why the symbol class $S^m(M)$ is well defined.

The random dynamics acts on pseudodifferential operators via the  pushforward. 
For an operator $\Psi$, we let $\mc{L}\Psi$ denote the averaged pushforward
\begin{equation}\label{eqn:averaged_operator_action}
(\mc{L}\Psi)(\phi)=\int (\Psi^{f_{\omega}})\phi\,d\mu(\omega),
\end{equation}
where we defined the pushforward of a pseudodifferential operator as above.
Note that this will preserve the symbol class of $\Psi$ if the $f_{\omega}$ lie in a compact subset of $\Diff^{\infty}(M)$. 

\subsection{Elliptic Operators}
We will have particular use for elliptic operators. For an open set $X\subseteq \R^n$,
we say that a symbol $\sigma(x,\xi)\in C^{\infty}(X\times \R^n)$ is {\em elliptic}  if for every compact subset $K\subseteq X$, there
are positive constants $C_1, C_2$ such that for all sufficiently large $\xi.$
\begin{equation}
  \label{EqElliptic}  
 C_1\abs{\xi}^m\le \abs{\sigma(x,\xi)}\le C_2\abs{\xi}^m. 
\end{equation}
As the principal symbol transforms appropriately under pushforward, this definition extends naturally to manifolds
 and the estimate \eqref{EqElliptic} holds there.
See \cite[Sec.~I.5]{shubin2001pseudodifferential} for more information.

\subsection{Sobolev Norms}
One can use pseudodifferential operators for defining the Sobolev spaces. In fact, there are several equivalent approaches to this. See a discussion in \cite[Prop.~I.7.3]{shubin2001pseudodifferential}.

Here we will just use the fact that for every $s\in \R$ that there exists pseudodifferential operator $\Delta^{s}$ with principal symbol $\|\xi\|^{s}$ such that $\Delta^{s}\colon H^{s}(
M)\to L^2(M)$ is an isometry, 
see e.g.~\cite[Lem.~II.2.4]{treves1980introduction}. Note that the notation for the pushforward of $\Delta^s$ by a diffeomorphism $f$ looks crowded: $(\Delta^s)^f$.

We can choose a particularly simple definition of the Sobolev norms. For $s>0$, one defines the $H^s$ Sobolev norms by 
\[
\|\phi\|_{s}^2=\|(\Id+\Delta^{s})\phi\|_{0}^2.
\]
For $s<0$, one can define them as 
\[
\|\phi\|_{s}^2=\|\Delta^{-s}\phi\|_0^2.
\]
Note that these definitions are basically the same, up to the $\Id$ term which is compact as a map $H^s\to L^2$. Also, compare with \cite[\S 5.3.2.1, 5.3.2.2]{lefeuvre2024microlocal}.
Shubin and Lefeuvre's definitions of the Sobolev  norms for $0<s<1$ are different but only by a 
compact error,  which is the quadratic form defined by a compact operator.

\subsection{Interpolation inequalities}\label{subsec:interpolation}

 We now review a useful fact concerning the interpolation of the spectral radius for an operator on an interpolation space.  For an operator $A\colon V\to V$ on a Banach space, we write $r_e(A)$ for its essential spectral radius. 

There are two main types of interpolation: real and complex interpolation. 
 Complex interpolation will be more useful for us.
In this case, one starts with a 
complex Banach couple, which is a pair $(A_0,A_1)$ of Banach spaces along with an embedding in a 
complex Hausdorff vector space. 
For each $\theta\in (0,1)$ one obtains an interpolation space, which we denote by $[A_0,A_1]_{[\theta]}$. 
For an overview of the general theory see \cite{bergh1976interpolation}.

The following result allows us to 
interpolate the norm, the spectral radius, and the essential spectral radius.

\begin{lemma}\label{lem:interpolation}
(\cite[Thm.~4.1.2]{bergh1976interpolation}, 
\cite[Prop.~5.2]{szwedek2015on}).
Suppose $(A_0,A_1)$ is a complex Banach couple, then
\begin{equation}
\|T\|_{[\theta]}\le \|T\|_{A_0}^{1-\theta}\|T\|_{A_1}^{\theta},
\end{equation}
and 
\begin{equation*}
    r_e(T\colon (A_0,A_1)_{[\theta]}\to (A_0,A_1)_{[\theta]})\le r_e(T\colon A_0\to A_0)^{1-\theta}r_e(T\colon A_1\to A_1)^{\theta}. 
\end{equation*}
Note that the estimates on the norm of the interpolation imply that 
we can interpolate the spectral radius because the spectral radius of an operator $A$ is equal to 
$\displaystyle \lim_{n\to\infty} n^{-1}\log \|A^n\|$. 
\end{lemma}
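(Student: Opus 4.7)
The lemma bundles together three interpolation statements: for the operator norm, for the spectral radius, and for the essential spectral radius. The first two are classical complex interpolation facts; the last, due to Szwedek, is more subtle. My plan is to sketch each in turn, with emphasis on the essential spectral radius bound, which is the main content.

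For the norm inequality, I would appeal to the abstract Riesz--Thorin type theorem that underlies complex interpolation. Given $x$ in a dense subspace, the definition of the complex interpolation norm $\|x\|_{[\theta]}$ lets us choose an analytic function $F$ on the strip $S=\{0<\operatorname{Re} z<1\}$, continuous on $\overline S$, with values in $A_0+A_1$, such that $F(\theta)=x$ and $\sup_t\|F(it)\|_{A_0}, \sup_t\|F(1+it)\|_{A_1}$ nearly realize $\|x\|_{[\theta]}$. Composing with $T$ produces another admissible analytic function $TF$ with $TF(\theta)=Tx$, whose boundary values are controlled by $\|T\|_{A_0}\|F(it)\|_{A_0}$ and $\|T\|_{A_1}\|F(1+it)\|_{A_1}$. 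Applying Hadamard's three-lines theorem (in its Banach-space-valued form, valid because one can pair with functionals) gives $\|Tx\|_{[\theta]}\le \|T\|_{A_0}^{1-\theta}\|T\|_{A_1}^\theta \|x\|_{[\theta]}$.

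The spectral radius bound is then immediate from Gelfand's formula $r(T)=\lim_n \|T^n\|^{1/n}$ applied on each of the three spaces, combined with the norm bound applied to the iterates $T^n$, since $(\|T^n\|_{A_0}^{1-\theta}\|T^n\|_{A_1}^\theta)^{1/n}$ converges to $r(T|_{A_0})^{1-\theta} r(T|_{A_1})^\theta$.

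The essential spectral radius bound is the harder part and is the main obstacle. Here the plan is to use the Nussbaum formula $r_e(T)=\lim_n \alpha(T^n)^{1/n}$, where $\alpha$ is the Kuratowski measure of non-compactness of an operator, defined via the infimum of $\varepsilon$ for which $T$ maps the unit ball into a set covered by finitely many $\varepsilon$-balls. The key lemma one needs, which is Szwedek's contribution, is that this measure of non-compactness itself interpolates:
\[
\alpha_{[\theta]}(T)\le \alpha_{A_0}(T)^{1-\theta}\alpha_{A_1}(T)^\theta .
\]
The proof of this inequality in turn uses the complex interpolation machinery: a finite cover of the unit ball of $A_j$ by $\varepsilon_j$-balls yields, via analytic selection and again the three-lines theorem, a finite cover of the unit ball of $[A_0,A_1]_{[\theta]}$ by balls of radius $\varepsilon_0^{1-\theta}\varepsilon_1^\theta$ (up to arbitrarily small loss). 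Granting this, one applies the inequality to each power $T^n$, extracts $n$-th roots, and passes to the limit in Nussbaum's formula to obtain the stated bound on $r_e$. The hardest technical point is the interpolation of Kuratowski's measure; everything else is bookkeeping on top of classical three-lines arguments.
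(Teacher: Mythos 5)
Your treatments of the first two assertions are fine and match what lies behind the paper's citations: the norm inequality is exactly Bergh--L\"ofstr\"om's three-lines argument, and the spectral radius statement follows from it via Gelfand's formula applied to $T^n$, which is precisely the remark the paper itself makes. Note, though, that the paper offers no proof of the lemma at all --- it cites \cite[Thm.~4.1.2]{bergh1976interpolation} and \cite[Prop.~5.2]{szwedek2015on} --- so for the essential spectral radius you are on your own, and that is where your proposal breaks down.

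The ``key lemma'' you invoke, the constant-free logarithmic convexity $\alpha_{[\theta]}(T)\le \alpha_{A_0}(T)^{1-\theta}\alpha_{A_1}(T)^{\theta}$ of the measure of non-compactness under the \emph{complex} method, is not an available theorem for general Banach couples. Its qualitative special case ($\alpha_{A_0}(T)=0$ forces $\alpha_{[\theta]}(T)=0$, i.e.\ compactness at one endpoint passes to the interpolation space) is the long-standing open problem on complex interpolation of compactness going back to Calder\'on; the clean product inequality would resolve it. Your sketched justification is exactly the step that fails: an element of the unit ball of $[A_0,A_1]_{[\theta]}$ is $F(\theta)$ for an admissible analytic $F$ on the strip, and knowing into which of finitely many $\varepsilon_j$-balls the boundary values $F(j+it)$ fall does not pin down $F(\theta)$ to within $\varepsilon_0^{1-\theta}\varepsilon_1^{\theta}$ --- the family of admissible $F$ with boundary values in prescribed cover elements is not of small diameter at $\theta$, so no finite set of ``interpolated centers'' emerges. (This is in contrast with the \emph{real} method, where the Edmunds--Teixeira/Cobos-type estimates do hold.) Szwedek's Proposition~5.2 establishes the essential spectral radius inequality directly, by a genuinely different and nontrivial argument; it cannot be recovered by the covering argument you propose. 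If you want a self-contained proof for the purposes of this paper, you should exploit that the couple actually used here, $(H^{-s_0},H^{s_0})$, is a couple of Hilbert spaces, for which interpolation of compactness and of the measure of non-compactness by the complex method \emph{is} known; in that restricted setting your Nussbaum-formula strategy can be carried out.
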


The use of this is that one can interpolate between Sobolev spaces
(\cite[Thm.~6.4.5]{bergh1976interpolation}, \cite[p.~22]{hamilton1975harmonic}). 
The pair of Sobolev spaces $(H^{s_0}, H^{s_1})$, $s_0,s_1\in \R$, form an interpolation couple and complex interpolation gives  
\begin{equation}\label{eqn:complex_interpolation}
[H^{s_0},H^{s_1}]_{[\theta]}=H^{\theta s_0+(1-\theta) s_1}.
\end{equation}

\begin{remark}
Similar results hold for real interpolation. In that case interpolation spaces depend on two 
parameters
$\theta\in (0,1)$ and $q\ge 1$, and
\begin{equation}\label{eqn:besov}
[H^{s_0},H^{s_1}]_{\theta,q}=B^{s^*}_{2,q},
\end{equation}
 where $s^*=(1-\theta)s_0+\theta s_1$ and 
$B_{pq}^s$ is the  Besov space $B^s_{pq}.$

Using this fact one can also obtain spectral gap on appropriate Besov 
spaces (see Remark \ref{rem:besov}), but this will be less useful for us, so we do pursue this subject in detail.
\end{remark}

\subsection{Weak mixing of random systems}
\label{SSWMSkew}
Random dynamics on a manifold $M$ is naturally encoded by a skew product 
$F$ on $\Sigma\times M$ where $\Sigma=\supp(\mu)^\mathbb{N}$.
It is
defined by 
\begin{equation}
\label{DefSkew}
    F(\omega, x)=(S\omega, f_{\omega_0}(x))
\end{equation}
where $S$ is the shift.
If $\nu$ is a stationary measure for the random dynamics given by a measure $\mu$ on $\Diff_{\nu}(M)$, then we say that a random system is \emph{weak mixing} on $L^2(\nu)$ if there does not exist a non-trivial function $\phi\in L^2(M,\nu)$ such that 
\begin{equation}
   \label{RandWM} 
\mathbb{E}_{\mu}\left[\phi\circ f\right]=e^{i\theta}\phi \text{ for }\theta\in \R.
\end{equation}
Note that this is implied by the usual skew product on $\Sigma\times M$ being weak mixing for the invariant measure $\mu\times \nu$.
 Indeed, without loss of generality we
 may assume that $\|\phi\|_{L^2}=1.$ Then taking the scalar product of both sides of \eqref{RandWM} with $\phi$ we obtain
$\int \langle \phi, \phi\circ f \rangle d\mu(f)=e^{i\theta}$
which is only possible if 
$$\phi\circ f=e^{i\theta} \phi \quad \text{for $\mu$ almost every }f.$$
\noindent The last equality shows that $\mu$ is weak mixing iff $\mu^{-1}$ is weak mixing.

\subsection{Perturbation of the essential spectrum.}
We recall a result of Keller and Liverani \cite{KL} that is convenient for studying the essential spectrum of perturbations. 
Let $(\cB, \| \cdot\|)$ be a Banach space.  Suppose that there is a second norm $|\cdot|$ on $\cB$ 
and a family of operators $\cG_\eps\colon \mc{B}\to \mc{B}$  indexed by $\varepsilon\ge 0$ and constants $\eta\in (0,1)$, $C, M>0$,
and a monotone upper semicontinuous function $\tau(\epsilon)$ satisfying the following conditions: 
\begin{equation}
 \label{UniGrowth}
\text{There exist } C, M\text{ such that for all } \eps,\,  \|\cG_\eps^n\|\leq C M^n;
\end{equation}
\begin{equation} \label{KL-LY}
 \|\cG_\eps^n \phi\|\leq C [\eta^n \|\phi\|+M^n |\phi|];
\end{equation}
\begin{equation}
 \label{FinMult}
\text{Spec}(\cG_\eps)\cap \{|\lambda|>\eta\} \text{ consists of isolated eigenvalues of finite
multiplicity}; 
\end{equation}
\begin{equation}
\label{TameCont} \text{For all } \phi\in\cB,\, 
 |\cG_\eps\phi-\cG_0\phi|\leq \tau(\eps) \|\phi\| 
\text{ where } \tau(\eps)\to 0 \text{ as }\eps\to 0.
\end{equation}

Fix $r>\eta$ and let  $V_{r, \delta}=\{\lambda: |\lambda|\geq r\text{ and } d(\lambda, \mathrm{Spec}(\cG_0))>\delta\}.$
The next result is a special case of \cite[Theorem 1 and Corollary 1]{KL}.

\begin{proposition}
\label{PrSpecCont}
Suppose \eqref{UniGrowth}--\eqref{TameCont}. Then  there exists $\theta,D>0$ such that for each $r, \delta$ there exists $\eps_1\leq \eps_0$, 
depending only on the constants fixed above, such that for $|\eps|\leq \eps_1$:
\begin{enumerate}[leftmargin=*]
\item[(i)]
$\cG_\eps$ has no eigenvalues in $V_{r,\delta}$;
\item[(ii)] 
The multiplicity of eigenvalues in each component of $(\mathbb{C}\setminus V_{r,\delta})\cap \{|\lambda|\geq r\}$ is constant;
\item[(iii)]
Each simple eigenvalue $\lambda_0$ of $\cG_0$ can be continued so that $|\lambda_\eps-\lambda_0|\leq D\tau(\eps)^\theta$.
\end{enumerate}
\end{proposition}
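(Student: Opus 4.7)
The plan is to follow the Keller--Liverani framework, whose core insight is that quasi-compactness (from the Lasota--Yorke bound \eqref{KL-LY}) combined with a weak-norm perturbation estimate \eqref{TameCont} suffices to replace the usual norm-continuous perturbation hypothesis. First, iterating \eqref{KL-LY} and applying Hennion's theorem (using $|\cdot|$ as the compact auxiliary norm) gives $r_e(\cG_\eps)\le\eta$ uniformly in $\eps$; combined with \eqref{FinMult} for $\cG_0$, this means that in $\{|\lambda|>\eta\}$ both $\cG_0$ and each $\cG_\eps$ have only isolated eigenvalues of finite multiplicity, so the statements make sense.

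The heart of the argument is to invert $\lambda-\cG_\eps$ on the contour $\partial V_{r,\delta}$ uniformly for small $\eps$. Introduce the mixed operator seminorm $|||A|||=\sup\{|A\phi|:\|\phi\|\le 1\}$, so that \eqref{TameCont} reads $|||\cG_\eps-\cG_0|||\le\tau(\eps)$. The technical key is to establish, using \eqref{KL-LY} together with \eqref{UniGrowth}, a \emph{mixed-norm resolvent bound} of the form
\begin{equation*}
|||R_0(\lambda)^n|||\le K(r,\delta)\quad\text{for all } n\ge 0,\;\lambda\in\partial V_{r,\delta},
\end{equation*}
where $R_0(\lambda)=(\lambda-\cG_0)^{-1}$. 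I would prove this by representing $R_0(\lambda)^n$ via a Dunford integral or a geometric series in $\cG_0/\lambda$ and then splitting $\cG_0^k$ according to \eqref{KL-LY}: the strong part is absorbed by $(\eta/r)^k<1$, while the weak part is controlled by \eqref{UniGrowth}, and the two contributions sum geometrically because $|\lambda|\ge r>\eta$.

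With this bound in hand, factor $\lambda-\cG_\eps=(\lambda-\cG_0)[I-R_0(\lambda)(\cG_\eps-\cG_0)]$. A direct Neumann series fails because $R_0(\lambda)(\cG_\eps-\cG_0)$ is small only in the mixed norm. The trick is to expand to order $k$, writing
\begin{equation*}
R_\eps(\lambda)=R_0(\lambda)\sum_{j=0}^{k-1}[R_0(\lambda)(\cG_\eps-\cG_0)]^j+\text{remainder},
\end{equation*}
and to control the remainder by combining the $O(\tau(\eps))$ mixed-norm bound on a single block with the uniform mixed-norm resolvent estimate above, converted back to the strong norm by one final application of \eqref{KL-LY}. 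Optimizing $k$ in terms of $\tau(\eps)$ produces the exponent $\theta\in(0,1)$, yields invertibility of $\lambda-\cG_\eps$ throughout $V_{r,\delta}$ (claim (i)) and gives $\|R_\eps(\lambda)-R_0(\lambda)\|\le D\tau(\eps)^\theta$ uniformly on $\partial V_{r,\delta}$. Claims (ii) and (iii) follow by standard spectral projection arguments: the contour integral $P_\eps=\frac{1}{2\pi i}\oint_\Gamma R_\eps(\lambda)\,d\lambda$ is Hölder close to $P_0$, so the total multiplicity in each component is locally constant, and for a simple eigenvalue the representation $\lambda_\eps=\mathrm{tr}(\cG_\eps P_\eps)/\mathrm{tr}(P_\eps)$ immediately yields $|\lambda_\eps-\lambda_0|\le D\tau(\eps)^\theta$.

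The main obstacle is precisely the mismatch of norms: $\cG_\eps-\cG_0$ is small only as a map $(\cB,\|\cdot\|)\to(\cB,|\cdot|)$, so none of the classical analytic perturbation machinery (Kato, etc.) applies verbatim. The delicate step is the mixed-norm resolvent estimate, where one must carefully balance the strong/weak splitting provided by \eqref{KL-LY} against the geometric decay coming from $|\lambda|>\eta$. Once this estimate is in hand, the rest of the argument proceeds in a relatively mechanical way, with the loss from Lipschitz to Hölder continuity of eigenvalues being the unavoidable price paid for the weak form of perturbation control.
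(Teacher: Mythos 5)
First, a point of comparison: the paper does not prove this proposition at all --- it is quoted verbatim as a special case of Theorem~1 and Corollary~1 of Keller--Liverani \cite{KL} --- so you are really reconstructing their argument. Your overall architecture is the right one (quasi-compactness from \eqref{KL-LY} via Hennion's theorem, an approximate inverse for $\lambda-\cG_\eps$ on $\partial V_{r,\delta}$, optimization of a truncation parameter to produce the H\"older exponent $\theta$, contour-integral projections at the end), but the central device you propose would fail. In the truncated Neumann series the higher-order terms $[R_0(\lambda)(\cG_\eps-\cG_0)]^j$, $j\ge 2$, cannot be controlled: $\cG_\eps-\cG_0$ is small only as a map $(\cB,\|\cdot\|)\to(\cB,|\cdot|)$, and $R_0(\lambda)$ is in general \emph{not} bounded on the weak space --- for $r\le|\lambda|\le M$ the point $\lambda$ may lie in the weak-norm spectrum of $\cG_0$, whose radius is bounded only by $M$. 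Hence the first factor of $(\cG_\eps-\cG_0)$ produces weak-norm smallness which the next $R_0(\lambda)$ immediately destroys; each term of your series is then merely bounded by $(2CM\|R_0\|)^j$ in the strong norm, the partial sums grow geometrically, and the remainder cannot be closed. The device that works admits exactly \emph{one} factor of the perturbation: one verifies
\[
(\lambda-\cG_\eps)\Big[\sum_{k=0}^{n-1}\lambda^{-(k+1)}\cG_\eps^k+\lambda^{-n}\cG_\eps^n R_0(\lambda)\Big]=\Id-\lambda^{-n}\cG_\eps^{n}(\cG_\eps-\cG_0)R_0(\lambda),
\]
and in the error term the single factor $(\cG_\eps-\cG_0)$ is immediately followed by $\cG_\eps^{n}$, so that \eqref{KL-LY} converts the $O(\tau(\eps))$ weak-norm smallness back into strong-norm smallness at the price of $M^{n}$, while the remaining strong part carries $(\eta/r)^{n}<1$; choosing $n\sim\log(1/\tau(\eps))$ makes the error an operator of strong norm less than $1$ and is exactly where $\theta$ comes from.

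A second, related overreach is your claim that $\|R_\eps(\lambda)-R_0(\lambda)\|\le D\tau(\eps)^\theta$ in the strong operator norm. This is false in the Keller--Liverani setting (it would give norm-continuity of the spectrum, which is precisely what one does not have) and is not what they prove: the resolvents, and hence the projections $P_\eps$, are close only in the mixed norm $|||\cdot|||$. Consequently (ii) and (iii) do not follow from standard analytic perturbation theory; one must combine the mixed-norm closeness of $P_\eps$ and $P_0$ with the finite rank of $P_0$ to conclude $\mathrm{rank}(P_\eps)=\mathrm{rank}(P_0)$, and only then run the trace computation for a simple eigenvalue. With these two corrections your outline becomes the \cite{KL} proof.
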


\subsection{Ergodicity}
\label{SSErgEoA}
Recall that for a random dynamical system, a stationary measure $\nu$ is \emph{ergodic} if it does not have any a.s.~invariant sets of intermediate measure. As was mentioned above, our results show essential spectral gap but do not show ergodicity. 
Ergodicity is not known to follow from just the coexpanding on average assumption or even expanding on average on all $k$-planes  defined in \S \ref{SSBundleDef} below. This is  due to a possible presence of zero Lyapunov exponents.

That said, it is possible to prove ergodicity with additional hyperbolicity assumptions. In \cite{dolgopyat2007simultaneous}, it was shown that knowing the expanding on average condition for all $k$-planes, combined with a lack of zero Lyapunov exponents is enough to deduce ergodicity for a random dynamical system. In particular, for conservative dynamics on a surface expanding on average dynamics is ergodic \cite[Sec.~6]{dewitt2024expanding}. The proof is given by a type of random Hopf argument where the role of the stable and unstable manifolds in the usual Hopf argument is replaced by the use of the stable manifolds for different realizations of the random dynamical system. See \cite{chung2020stationary} where this argument is explained in detail. A consequence of this approach to ergodicity is that the examples in \cite{dolgopyat2007simultaneous} are only known to be ergodic for even dimensional spheres, whereas the dynamics on odd dimensional spheres might have a zero Lyapunov exponent. This can happen due to the formula for the Taylor expansion of the Lyapunov exponents in  \cite[Thm.~2]{dolgopyat2007simultaneous}. This is why Corollary~\ref{cor:main_cor} requires even dimensional spheres.

We shall also use the following criterion for ergodicity of the random system, which follows from
\cite[Theorem 3]{Kakutani}  or \cite[Prop.~I.1.3]{Liu1995smooth}.

\begin{proposition} \label{PrRandomET}
The following properties are equivalent:

\begin{enumerate}[leftmargin=*]
    \item[(a)] The skew product defined by \eqref{DefSkew} is not ergodic.
    \item[(b)]  There exists a measurable set $\Omega\subset M$ with $0<\nu(\Omega)<1$
which is  invariant $\text{mod } 0$  for $\mu$ almost every $f$,   i.e.~$\nu(f(\Omega)\Delta\Omega)=0$ for $\mu$-a.e.~$f$. 
\end{enumerate}
\end{proposition}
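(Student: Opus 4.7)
The plan is to prove the forward implication by a trivial cylinder lift plus Fubini, and the converse by a martingale-convergence argument that averages the indicator of an $F$-invariant set over $\omega$ to produce a $\{0,1\}$-valued function on $M$, whose level set becomes the desired $\Omega$. For (b) $\Rightarrow$ (a), the set $B := \Sigma \times \Omega$ has $(\mu^{\mathbb{N}} \times \nu)(B) = \nu(\Omega) \in (0,1)$, and
\[
F^{-1}(B) \Delta B \;=\; \{(\omega,x) : x \in f_{\omega_0}^{-1}(\Omega) \Delta \Omega\}
\]
has measure $\int \nu(f^{-1}(\Omega)\Delta\Omega)\,d\mu(f) = 0$, so the skew product is not ergodic.

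For (a) $\Rightarrow$ (b), let $A$ be an $F$-invariant set with $(\mu^{\mathbb{N}}\times\nu)(A) \in (0,1)$, put $\phi := \mathbf{1}_A$, and define
\[
\tilde\phi(x) \;:=\; \mu^{\mathbb{N}}\{\omega : (\omega,x) \in A\}.
\]
Writing the $F$-invariance as $\phi(\omega,x) = \phi(S^n\omega, f_\omega^n x)$ with $f_\omega^n := f_{\omega_{n-1}} \circ \cdots \circ f_{\omega_0}$, I condition on $\mathcal{F}_n := \sigma(x, \omega_0, \dots, \omega_{n-1})$ and use that $f_\omega^n x$ is $\mathcal{F}_n$-measurable while $S^n\omega$ is \emph{independent} of $\mathcal{F}_n$ (this is the key place where the IID structure enters) to obtain
\[
\mathbb{E}[\phi \mid \mathcal{F}_n] \;=\; \tilde\phi(f_\omega^n x).
\]
L\'evy's upward martingale theorem then gives $\tilde\phi(f_\omega^n x) \to \phi(\omega,x) \in \{0,1\}$ almost surely. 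Dominated convergence combined with stationarity of $\nu$ (so that $\int \mathcal{G}^n g\,d\nu = \int g\,d\nu$) yields
\[
\int \tilde\phi^2\,d\nu \;=\; \lim_n \int \mathcal{G}^n(\tilde\phi^2)\,d\nu \;=\; \int \tilde\phi\,d\nu,
\]
so $\int \tilde\phi(1-\tilde\phi)\,d\nu = 0$ and hence $\tilde\phi \in \{0,1\}$ $\nu$-a.e.

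Setting $\Omega := \{\tilde\phi = 1\}$ gives $\nu(\Omega) = (\mu^{\mathbb{N}}\times\nu)(A) \in (0,1)$. The harmonicity relation $\mathcal{G}\tilde\phi = \tilde\phi$ together with $\tilde\phi$ being $\{0,1\}$-valued forces $\tilde\phi(fx) = \tilde\phi(x)$ for $(\mu\times\nu)$-a.e.\ $(f,x)$ (since if $\tilde\phi(x)=1$ the integrand $\tilde\phi(fx)\le 1$ must be $1$ for $\mu$-a.e.\ $f$, and symmetrically for $\tilde\phi(x)=0$). Fubini then gives $\nu(f^{-1}(\Omega)\Delta\Omega) = 0$ for $\mu$-a.e.\ $f$, which equals $\nu(f(\Omega)\Delta\Omega)$ since $\nu$ is preserved by $\mu$-a.e.\ $f$. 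The main obstacle is the $\{0,1\}$-dichotomy for $\tilde\phi$: it uses crucially both the IID product structure (for the independence exploited in the conditioning) and the stationarity of $\nu$; without either one, the above chain of equalities collapses.
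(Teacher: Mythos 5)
Your proof is correct. Note first that the paper does not prove Proposition \ref{PrRandomET} at all: it is quoted from \cite[Theorem 3]{Kakutani} and \cite[Prop.~I.1.3]{Liu1995smooth}. What you have written is, in essence, the standard argument behind those references: the ``freezing'' identity $\mathbb{E}[\mathbf{1}_A\mid\mathcal{F}_n]=\tilde\phi(f_\omega^n x)$ (valid precisely because $S^n\omega$ is independent of $\mathcal{F}_n$ under the product measure), L\'evy's upward theorem to force $\int\tilde\phi^2\,d\nu=\int\tilde\phi\,d\nu$, and the maximum-principle step showing that a $\{0,1\}$-valued $\mathcal{G}$-harmonic function has an a.s.\ invariant level set. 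All the individual steps check out: $f_\omega^n x$ is indeed $\mathcal{F}_n$-measurable, $\mathcal{F}_\infty$ is the full $\sigma$-algebra so the martingale limit is $\mathbf{1}_A$ itself, and stationarity of $\nu$ gives $\int\mathcal{G}^n(\tilde\phi^2)\,d\nu=\int\tilde\phi^2\,d\nu$ for every $n$ while dominated convergence identifies the limit with $\int\tilde\phi\,d\nu$.

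The one point worth flagging is the passage between $\nu(f(\Omega)\Delta\Omega)=0$ and $\nu(f^{-1}(\Omega)\Delta\Omega)=0$, which you use in both directions (in (b)$\Rightarrow$(a) to compute the measure of $F^{-1}(B)\Delta B$, and at the end of (a)$\Rightarrow$(b) to match the statement). Since $f^{-1}(\Omega)\Delta\Omega=f^{-1}\bigl(\Omega\Delta f(\Omega)\bigr)$, the equivalence requires $f_*\nu$ and $\nu$ to be mutually absolutely continuous (or at least quasi-invariance of $\nu$ under a.e.\ $f$); a general stationary measure need not be preserved, or even quasi-preserved, by the individual maps. In the setting where the paper actually invokes the proposition ($\nu=\vol$ and $\mu$ supported on $\Diff_{\vol}(M)$) this is automatic, so your justification ``$\nu$ is preserved by $\mu$-a.e.\ $f$'' is fine there; for the proposition as literally stated for an arbitrary stationary $\nu$ one should either add this hypothesis or phrase condition (b) with $f^{-1}(\Omega)$, which is the form your argument naturally produces.
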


\subsection{Transversality.}
We recall here Thom's Jet Transversality Theorem. See for example \cite{golubitsky1973stable} or \cite{cieliebak2024introduction} for a general discussion.

Let $X$ and $Y$ be smooth manifolds and  $W$ be a submanifold in $Y$. We say that a smooth map 
$f:X\to Y$ is transversal to $W$ if for each $x\in X$ such that
$f(x)\in W$ we have that $T_{f(x)}Y=T_{f(x)} W+ Df (T_x X).$ We will use the notation $f\pitchfork W$ to mean that $f$ is transversal to $W.$
Note that if $f\pitchfork W$ and $\dim(X)+\dim(W)<\dim(Y)$ then the image of $X$ is disjoint from $W.$
We also recall that for each smooth map $f$ from $X$ to $Y$ and each $k$ there is a smooth map $j^k f$ from $X$ to the space $J^k (X,Y)$ of $k$-jets.
The following result is helpful for constructing maps with certain properties:

 \begin{theorem}
 \label{ThThom}
 \cite[Thm.~4.9]{golubitsky1973stable}
 (Thom Jet Transversality Theorem) Let $X$ and $Y$ be smooth manifolds and $W$ be a submanifold of $J^k(X,Y)$. Then 
 \[
T_W=\{f\in C^{\infty}(X,Y)\mid j^kf\pitchfork W\}
 \]
 is a residual subset of $C^{\infty}(X,Y)$ in the $C^{\infty}$ topology. 
 \end{theorem}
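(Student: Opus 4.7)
The plan is to follow the classical two-step strategy: reduce the density statement to a parametric transversality result provable via Sard's theorem, and then exhibit a finite-dimensional family of perturbations large enough to realize arbitrary $k$-jets at every point.

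First I would perform the local reduction. Choose countable covers $\{U_i\}$ of $X$ by relatively compact coordinate charts and $\{V_j\}$ of $Y$ by coordinate charts. For each pair $(i,j)$, let $T_W^{i,j}\subset C^\infty(X,Y)$ consist of maps $f$ with $j^k f \pitchfork W$ on $\overline{U_i}$ (the condition being vacuous when $f(\overline{U_i})\not\subset V_j$). Transversality of a continuous section to a closed submanifold over a compact set is an open condition in the Whitney $C^\infty$ topology, so each $T_W^{i,j}$ is open. Because $C^\infty(X,Y)$ is Baire and $T_W = \bigcap_{i,j} T_W^{i,j}$, it suffices to prove density of each $T_W^{i,j}$; that is, it suffices to work in a single chart.

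Second, I would invoke the parametric transversality theorem: if $H\colon N\times P\to Q$ is smooth with $H\pitchfork W$, then for almost every $p\in P$ the restriction $H(\cdot,p)$ is transverse to $W$. This follows from Sard's theorem applied to the projection $H^{-1}(W)\to P$, whose critical values are precisely the bad parameters. I then construct the appropriate family. Working in fixed charts identifying $U_i$ with an open subset of $\R^n$ and $V_j$ with $\R^m$, I set
\[
F(x,p) = f(x) + \sum_{|\alpha|\le k} p_\alpha\, x^\alpha,
\]
where $P$ is the finite-dimensional vector space of $\R^m$-valued polynomials of degree $\le k$. The composed map $(x,p)\mapsto j^k F(\cdot,p)(x)\in J^k(U_i,\R^m)$ is a submersion, because varying $p$ freely changes the $k$-jet of $F(\cdot,p)$ at any prescribed point. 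A submersion is transverse to everything, so the parametric theorem yields $p$ arbitrarily close to $0$ for which $j^k F(\cdot,p)\pitchfork W$ on $\overline{U_i}$. Since small $p$ gives an arbitrarily small $C^\infty$ perturbation of $f$ on the compact set, this establishes density.

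The main obstacle is globalization: the polynomial perturbation is defined on a chart, but I need a genuine element of $C^\infty(X,Y)$. I would handle this by multiplying the perturbation by a bump function supported in a slightly larger chart $U_i'\supset \overline{U_i}$ and pushing the result back into $Y$ via the exponential map on a tubular neighborhood of $f(\overline{U_i})$. The cutoff and the exponential map introduce no new issues because for sufficiently small $p$ the perturbed map stays inside the tubular neighborhood, and the support of the correction lies in a single chart, so it does not interfere with transversality already established on other $\overline{U_{i'}}$. Together with the Baire argument this produces the residual set $T_W$.
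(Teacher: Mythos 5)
The paper offers no proof of this statement---it is quoted from Golubitsky--Guillemin---so your argument has to stand on its own. Its architecture (Baire reduction to charts, parametric transversality via Sard applied to the projection of $H^{-1}(W)$, a polynomial family whose jet extension is a submersion, then cutoff plus tubular neighborhood to globalize) is the standard and correct route, and the density half of the argument is sound.

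There is, however, a genuine gap in the first step. You justify openness of each $T_W^{i,j}$ by asserting that transversality to a \emph{closed} submanifold over a compact set is open. But the theorem is stated for an arbitrary submanifold $W\subset J^k(X,Y)$, and the paper explicitly emphasizes that $W$ need not be closed; in its applications $W$ is a union of rank-degeneracy strata, which are locally closed but not closed. For non-closed $W$ the openness claim is false: if $j^kf(x)$ lands in $\overline{W}\setminus W$, then $f$ is vacuously transverse to $W$ at $x$, yet arbitrarily small perturbations can push the jet into $W$ tangentially (take $W$ the open positive $x$-axis in $\mathbb{R}^2$, $f_0(t)=(t,t^2)$ and $f_\eps(t)=(t+\eps,t^2)$). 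A countable intersection of dense but non-open sets need not be residual, so the Baire argument as written does not close. The standard repair is to exhaust $W$ by countably many open subsets $W_r$ whose closures in $J^k(X,Y)$ are compact and contained in $W$ (possible since a submanifold is locally closed and second countable), and to require transversality only at those $x\in\overline{U_i}$ with $j^kf(x)\in\overline{W_r}$; these sets are open, are still dense by your perturbation argument, and their intersection over $i,j,r$ is $T_W$. A smaller bookkeeping point: with $\{U_i\}$ merely a countable cover, $\bigcap_{i,j}T_W^{i,j}$ can be strictly larger than $T_W$, since $f$ may map no chart $\overline{U_i}$ containing a bad point into any single $V_j$, rendering all your conditions vacuous there; taking $\{U_i\}$ to be a countable basis of relatively compact charts fixes this.
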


 We emphasize that the submanifold $W$ in this theorem need not be closed or compact.
The Thom transversality theorem also applies in the volume preserving setting \cite[Thm.~3]{visik1971some}. 

\subsection{Measure Theory}
 The following result is useful in proofing that certain properties are generic. 
\begin{proposition}
\label{PrNoInvSetsDiff}
Let $A$ be a measurable  set in a closed manifold $M$ such that $0<\vol(A)<\vol(M)$. Then for each $r\geq 1$ the set
$$\cN_r(A)=\{g\in \Diff^r_{\vol} (M): \vol(gA\cap (M\setminus A))>0\}$$ is open and dense.
\end{proposition}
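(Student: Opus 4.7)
The statement decomposes into \emph{openness} and \emph{density} of $\cN_r(A)$ in $\Diff^r_{\vol}(M)$. The crucial preliminary observation is that, since every $g$ preserves volume, $\vol(gA\cap (M\setminus A))=0$ is equivalent to $gA=A\bmod 0$, so the complement
\[
\cC:=\Diff^r_{\vol}(M)\setminus \cN_r(A)=\{g: gA=A\bmod 0\}
\]
is a \emph{subgroup} of $\Diff^r_{\vol}(M)$. This structural fact will drive the density half.

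For openness, I would show that $\Phi(g):=\vol(gA\cap (M\setminus A))$ is continuous on $\Diff^r_{\vol}(M)$, already in the $C^0$ topology. This reduces to continuity of $g\mapsto \mathbf 1_A\circ g^{-1}$ as a map into $L^1(M,\vol)$. Given $\eps>0$, approximate $\mathbf 1_A$ in $L^1$ by a continuous $\phi$ with $\|\mathbf 1_A-\phi\|_{L^1}<\eps$, and use (i) that $g,g_0$ preserve volume, so $\|\mathbf 1_A\circ g^{-1}-\phi\circ g^{-1}\|_{L^1}=\|\mathbf 1_A-\phi\|_{L^1}<\eps$, together with (ii) uniform continuity of $\phi$ on the compact manifold $M$, which gives $\|\phi\circ g^{-1}-\phi\circ g_0^{-1}\|_{L^1}\to 0$ as $g\to g_0$ in $C^0$. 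A three-term triangle inequality yields continuity of $\Phi$, so $\cN_r(A)=\Phi^{-1}((0,\infty))$ is open and $\cC=\Phi^{-1}(\{0\})$ is closed.

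For density, it suffices to show the closed subgroup $\cC$ has empty interior. Any subgroup of a topological group with nonempty interior is automatically open (translate a neighborhood of $\Id$ by each element), hence clopen, and so contains the identity component of $\Diff^r_{\vol}(M)$. In particular $\cC$ would then contain $\Diff^\infty_{\vol,0}(M)$, which acts transitively on the connected manifold $M$ (a classical theorem going back to Moser; transitivity is realized by patching together time-$t$ flows of compactly supported divergence-free vector fields along a path joining two prescribed points). Now pick a Lebesgue density point $x_0\in A$, which exists because $\vol(A)>0$. For any $y\in M$, choose $g\in \Diff^\infty_{\vol,0}(M)$ with $gx_0=y$. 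Since $g$ is a diffeomorphism, $y$ is a density point of $gA$; since $gA=A\bmod 0$ and the density of a set at a point depends only on its class mod null sets, $y$ is a density point of $A$. Thus every point of $M$ is a density point of $A$, forcing $A=M\bmod 0$ and contradicting $\vol(A)<\vol(M)$.

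The only nontrivial ingredient is the classical transitivity of $\Diff^\infty_{\vol,0}(M)$ on connected $M$. Given this, the main ``obstacle'' is structural rather than analytic: recognizing $\cC$ as a subgroup so that the ``closed + nonempty interior $\Rightarrow$ clopen'' trick applies, and then converting the resulting mod-$0$ invariance into a pointwise statement via Lebesgue density points. Neither the $L^1$-approximation in the openness argument nor the density-point bookkeeping poses real difficulty.
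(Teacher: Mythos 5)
Your proof is correct, but it takes a genuinely different route from the paper's on both halves. For openness, the paper argues locally: it picks a density point $x$ of $A$ with $gx$ a density point of $M\setminus A$ and writes down a quantitative open condition on small balls (their conditions (i)--(iii)) that survives perturbation of $g$; you instead prove outright that $g\mapsto \vol(gA\cap(M\setminus A))$ is continuous (even in the $C^0$ topology) via $L^1$-approximation of $\mathbf 1_A$ and volume preservation, which is cleaner and gives closedness of the complement for free. For density, the paper again works locally: if $gA=A\bmod 0$ then $g$ preserves the set $A^*$ of density points, one picks a boundary point $z$ of $A^*$ (nonempty by connectedness), finds points of $A^*$ and of $(M\setminus A)^*$ accumulating at $z$, and composes $g$ with small diffeomorphisms $h_n$ carrying one to the other. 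You instead observe that the complement $\cC$ is a subgroup of the topological group $\Diff^r_{\vol}(M)$, so nonempty interior would force it to be clopen and hence to contain the identity component, whose transitivity on connected $M$ plus invariance of density points under bi-Lipschitz volume-preserving maps would make every point a density point of $A$ --- a contradiction. The paper's argument is more elementary and constructive (it exhibits the approximating perturbations explicitly and uses only that nearby points can be swapped by a small volume-preserving map), while yours is shorter and structurally more informative (the exceptional set is a closed subgroup with empty interior) at the cost of invoking the classical transitivity of $\Diff^\infty_{\vol,0}(M)$. Note that both arguments, yours and the paper's, implicitly require $M$ connected (yours through transitivity, the paper's through $\partial A^*\neq\emptyset$); the statement is indeed false for, say, $A$ a connected component of a disconnected $M$, so this is a standing hypothesis rather than a gap in your proof.
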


\begin{proof}
Denote $B=M\setminus A$ and let $A^*$ and $B^*$ be the density points of $A$ and $B$ respectively.   

To see that $\cN_r(A)$ is open, take $g\in \cN_r(A).$ Since 
$\vol(A\Delta A^*)=\vol(B\Delta B^*)=0$, we have that $g A^*\cap B^*\neq\emptyset.$
Take $x\in A^*$ such that $y=g(x)\in B^*$. Since $g$ is Lipshitz, there is a constant $\delta>0$ such that 
for all $t$ small enough the sets $U_1=B(x, t)$, $U_2=B(y, \delta t)$ satisfy that:
\begin{enumerate}
    \item[(i)]
$U_1$ and $U_2$ are closed;
    \item[(ii)]
    $U_2 \subset \mathrm{Int}(g U_1)$; 
    \item[(iii)]
    $\vol(B\cap U_2)+\vol(A\cap U_1)>\vol(U_1)=\vol(g U_1).$
\end{enumerate}
For a fixed sufficiently small $t>0$, (i)--(iii) will also be satisfied with $g$ replaced by its small perturbation $\tg$, which shows that
$\tg\in \cN_r(A)$, whence $\cN_r(A)$ is open.

To show that $\cN_r(A)$ is dense we need to show that any diffeomorphism $g$ can be approximated by diffeomorphisms from
$\cN_r(A).$ If $g\in\cN_r(A)$ we are done, so we may assume that $\vol(A\Delta gA)=0.$ Then $g$ preserves
$A^*$. Let $z$ be a point on the boundary of $A^*$. Then for each $r$, $B(z,r)$ contains points
from both $A^*$ (since $z\in\partial A^*$) and from $B^*$ (since otherwise $z\in\mathrm{Int}(A^*))$. 
Thus there are points $x_n\in A^*, y_n\in B^*$ converging to $z$. 
Hence there are maps $h_n$ arbitrary close to identity such that $h_n x_n=y_n$ and hence
$h_n A^*\cap B^*$ is non-empty. 
Then $\tg_n=h_n\circ g$ also has this property. Now the same argument as in first part of the proof shows that
$\tg_n\in \cN_r(A)$. Since $\tg_n\to g$, $\cN_r(A)$ is dense.
\end{proof}

\section{Expanding on average conditions}\label{sec:expanding_on_average_conditions}
\subsection{Bundle maps associated to a random system}
\label{SSBundleDef}
In order to adequately describe the expanding on average conditions that we use, we introduce a small amount of formalism. 
Suppose that $\mc{E}$ is a Riemannian vector bundle over a smooth manifold $M$. Let $\Aut(\mc{E})$ be the space of all vector bundle automorphisms of $\mc{E}$ fibering over a homeomorphism of $M$, and let $\Aut^{\infty}(\mc{E})$ be the space of all $C^{\infty}$ bundle automorphisms of $\mc{E}$ fibering over 
$C^{\infty}$ diffeomorphisms of $M$. For example, for any $C^{\infty}$ diffeomorphism $f$, $Df\in \Aut^{\infty}(TM)$. Now consider a measure $\mu$ supported on the space of maps $F\colon \mc{E}\to \mc{E}$ in $\Aut^{\infty}(\mc{E})$.

\begin{definition}
We say that a measure $\mu$ on $\Aut(\mc{E})$ is \emph{expanding on average} if there exists $N,\lambda>0$ such that for every unit vector $v\in \mc{E}$, 
\begin{equation}
\int \ln\|F^N_{\omega} v\|\,d\mu^N(\omega)> \lambda>0.
\end{equation}
\end{definition} 

There is also a more general notion of expanding on average on $k$-planes, which seems to first be mentioned in \cite[Def.~1.2]{elliott2023uniformly}. 

\begin{definition}
Suppose that $\mu$ is a probability measure on $\Aut(\mc{E})$. Then we say that $\mu$ is expanding on average on $k$-planes if the following holds. There exists $N,\lambda>0$, such that for all $k$-planes $V$ in $\mc{E}$,
\[
\int \ln \|F^n_{\omega} \vert_{\vol_V}\|\,d\mu^n(\omega)>\lambda>0. 
\]
\end{definition}

Note that given dynamics in $\Aut(\mc{E})$ there are naturally associated random bundle maps of the associated Grassmannian bundles. For a measure $\mu$ we let $\mu_k$ denote the associated random dynamics on $\Gr_k(\mc{E})$. If the dynamics of $\mu$ are denoted $F$, then we write $F_k$ for the induced dynamics of $F$ on $\Gr_k(\mc{E})$.

\subsection{Characterization of Expansion on Average}

The expanding on average property for bundle automorphisms is characterized similarly to the expanding on average property for diffeomorphisms. The proof of the following is a straightforward extension of \cite[Thm.~3.2]{elliott2023uniformly}, which is a generalization of the proof of \cite[Prop.~3.17]{chung2020stationary}, although \cite[Thm.~3.2]{elliott2023uniformly} does not claim the full characterization that \cite{chung2020stationary} obtains. See also the discussion in \cite{cantat2025hyperbolicity}.
\begin{proposition}\label{prop:EoA_int_characterization}
Let $\mc{E}$ be a smooth Riemannian vector bundle and suppose that $\mu$ is a probability measure on $\Aut(\mc{E})$ with bounded support. Then $\mu$ is expanding on average if and only if for all a $\mu$-stationary measures $\nu$ on $\PP(\mc{E})$,
\begin{equation}
\label{eqn:1Planes}
\iint \ln \|Df v\|\,d\nu(v)\,d\mu(f)> 0.
\end{equation}

The analogous characterization holds for the expansion on average on $k$-planes. 
 Namely, $\mu$ is expanding on average on $k$-planes if and only if for all $\mu_k$ stationary measures $\nu$ on $\Gr_k(\mc{E})$, 
\begin{equation}
\label{eqn:KPlanes}
    \iint \ln \|F_k\vert V\|\,d\nu(V)\,d\mu(F_k)>0.
\end{equation}
\end{proposition}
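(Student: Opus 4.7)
The plan follows the standard cocycle/empirical-measure approach (an extension of \cite{elliott2023uniformly} and \cite{chung2020stationary}). I will treat the case of $\PP(\mathcal{E})$; the $k$-plane statement is completely parallel after replacing the cocycle $\Phi_1(F,v):=\ln\|Fv\|$ on $\PP(\mathcal{E})$ by $\Phi_k(F,V):=\ln\|F|_{\mathrm{vol}_V}\|$ on $\Gr_k(\mathcal{E})$, since the argument uses only compactness of the base and the fact that $\Phi$ is a bounded continuous additive cocycle. Let $\Sigma=\supp(\mu)^{\mathbb{N}}$ and let $\hat F(\omega,[u])=(S\omega,[F_{\omega_0}u])$ be the one-sided skew product on $\Sigma\times\PP(\mathcal{E})$. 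The boundedness of $\supp(\mu)$ guarantees $\Phi_1$ is continuous and bounded, and the single computational input is the telescoping cocycle identity
\[
\Phi_1^N(\omega,u):=\ln\|F_\omega^N u\|=\sum_{k=0}^{N-1}\Phi_1\circ\hat F^k(\omega,u).
\]

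For the easy direction, I use that a probability $\nu$ on $\PP(\mathcal{E})$ is $\mu$-stationary iff $\mu^{\mathbb{N}}\times\nu$ is $\hat F$-invariant. Integrating the cocycle identity against this invariant measure yields $\int\Phi_1^N\,d(\mu^N\times\nu)=N\iint\Phi_1\,d\mu\,d\nu$. If $\mu$ is $(N,\lambda)$-expanding on average, the left side is at least $N\lambda$, giving $\iint\Phi_1\,d\mu\,d\nu\geq\lambda>0$.

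The harder converse is a compactness argument. Assume the integral is positive for every $\mu$-stationary $\nu$. The set of $\mu$-stationary probabilities on $\PP(\mathcal{E})$ is weak-$*$ compact and $\nu\mapsto\iint\Phi_1\,d\mu\,d\nu$ is weak-$*$ continuous, so the infimum is attained and equals some $2\lambda>0$. For $u\in\PP(\mathcal{E})$ and $N\geq 1$, form the empirical measure
\[
\bar\nu_N^u:=\frac{1}{N}\sum_{k=0}^{N-1}\mathbb{E}_{\mu^{\mathbb{N}}}\bigl[\delta_{[F^k_\omega u]}\bigr]\in\mathcal{P}(\PP(\mathcal{E})).
\]
The cocycle identity together with independence of the $\omega_k$ gives
\[
\frac{1}{N}\int\Phi_1^N(F,u)\,d\mu^N(F)=\iint\Phi_1(F,v)\,d\mu(F)\,d\bar\nu_N^u(v),
\]
and a direct telescoping shows $\bar\nu_N^u$ is $O(1/N)$-approximately stationary uniformly in $u$: for every $g\in C(\PP(\mathcal{E}))$,
\[
\Bigl|\int g\,d\bar\nu_N^u-\iint g([Fv])\,d\mu(F)\,d\bar\nu_N^u(v)\Bigr|\leq\frac{2\|g\|_\infty}{N}.
\]

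To close the argument, suppose for contradiction that expansion on average fails. Then for each $N$ there exists $u_N$ with $\tfrac{1}{N}\int\Phi_1^N(F,u_N)\,d\mu^N(F)\leq 0$. Extracting a subsequence along which $\bar\nu_N^{u_N}\to\nu^*$ weakly, the uniform $O(1/N)$ bound shows $\nu^*$ is $\mu$-stationary, while continuity of $\iint\Phi_1\,d\mu\,d\cdot$ combined with the displayed equality forces $\iint\Phi_1\,d\mu\,d\nu^*\leq 0<2\lambda$, contradicting the infimum. Hence for $N$ sufficiently large, $\int\Phi_1^N(F,u)\,d\mu^N(F)\geq N\lambda$ uniformly in $u$, establishing expansion on average. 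The main technical nuisance is the uniform $O(1/N)$ approximate-stationarity estimate; once that is in hand, the compactness/continuity step is automatic.
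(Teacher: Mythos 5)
Your proof is correct and follows essentially the same route as the paper's: the forward direction via the cocycle identity integrated against the invariant measure $\mu^{\mathbb{N}}\times\nu$, and the converse by a Krylov--Bogolyubov argument extracting a stationary weak-$*$ limit of empirical measures along a sequence of non-expanding vectors. The only cosmetic issue is at the very end: the contradiction hypothesis you use is $\frac1N\int\Phi_1^N(F,u_N)\,d\mu^N\le 0$, which yields a stationary limit with nonpositive integral and hence establishes expansion on average, but to justify the final uniform claim $\int\Phi_1^N(F,u)\,d\mu^N\ge N\lambda$ for all large $N$ you should run the same extraction with the threshold $\lambda$ (half the attained infimum) in place of $0$.
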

\begin{proof}
First suppose that $\mu$ is not expanding on average. Then for every $n\in \N$, there exists a vector 
$v_n$ such that
\[
\int \ln \|F^n_{\omega}({v_n})\|\,d\mu^n(\omega)\le 0.
\]
Let $\nu_n$ be the measure
$\int \delta_{F^n_{\omega}v_n}\,d\mu^n(\omega),$ 
and $\overline{\nu}$ be a weak* limit of the measures 
$\nu'_n={\frac{1}{n}}\sum_{i=0}^{n-1} \nu_i$. As $\mu*\nu'_n$ is increasingly close to $\nu'_n$, it follows that $\overline{\nu}$ is $\mu$-stationary. 
Further, for any $\epsilon$ and all large $N$ we have that $\int \ln \|F_{\omega}^nv_n\|\,d\mu^n(\omega)\le \epsilon$. Hence, for any weak* limit we have by continuity that 
$\displaystyle
\int \ln \|F_{\omega}v\|\,d\overline{\nu}(v)\le \epsilon.
$
But $\epsilon>0$ was arbitrary so we obtain the needed conclusion. 

Suppose now that $\mu$ is expanding on average; then it is straightforward to see that there exists $\lambda>0$ such that not only is the integral of $\nu$ positive, but in fact for any stationary measure $\nu$, 
the integral is at least $\lambda$.  This completes the proof.
The argument in the case of $k$-planes is identical.
\end{proof}

In the following proof we say that a vector $v$ is almost surely non-expanding if almost surely $\limsup \frac{\ln \|F^n_{\omega}v\|}{n}\le 0$. Also a {\em $\nu$-measurable family of subbundles} is a collection of $k$-dimensional subspaces in $\R^d$ defined at $\nu$-a.e. point. The invariance of such a measurable family means that this collection is permuted by the random dynamics $\mu$. 

\begin{proposition}\label{prop:EoA_characterization}
Suppose $\mc{E}$ is a Riemannian vector bundle over a smooth manifold $M$ and that $\mu$ is a probability measure on $\Aut(\mc{E})$. 
\begin{enumerate}[leftmargin=*]
\item[(a)] 
The measure $\mu$ is  expanding on average if and only if for every stationary measure $\nu$ on $M$, there is no non-trivial $\nu$-measurable $\mu$-a.s.~invariant subbundle of $\mc{E}$ comprised of vectors that a.s.~have Lyapunov exponent at most $0$.

\item[(b)]
A volume preserving driving measure $\mu$ is expanding on average if for all stationary measures $\nu$ there do not exist  any $\mu$-a.s.~invariant $\nu$-measurable family of subbundles or a $\nu$-measurable 
 Riemannian metric.
\end{enumerate}
\end{proposition}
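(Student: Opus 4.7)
The plan for both parts is to combine the Furstenberg-style characterization of Proposition \ref{prop:EoA_int_characterization} with the Oseledets multiplicative ergodic theorem for random cocycles, so that a stationary measure on $\PP(\mc{E})$ with non-positive Birkhoff average corresponds to an invariant subbundle of non-positive Lyapunov exponent. For the ``if'' direction of (a), given a non-trivial $\mu$-a.s.~invariant $\nu$-measurable subbundle $V$ of non-positive Lyapunov exponent, I would use Krylov--Bogolyubov on the closed, $\mu$-a.s.~invariant subset $\PP(V)\subset \PP(\mc{E})$ to produce a stationary measure $\bar{\nu}$ supported on $\PP(V)$. Applying Birkhoff to the skew product on $\Sigma\times\PP(\mc{E})$ with $\phi(\omega,v)=\ln\|Df_{\omega_0}v\|$ identifies $\iint\ln\|Dfv\|\,d\bar{\nu}\,d\mu$ with the $(\mu\otimes\bar{\nu})$-average of the pointwise limit $\lim n^{-1}\ln\|F_\omega^n v\|$; since vectors in $V$ have non-positive Lyapunov exponent, this is $\le 0$, and Proposition \ref{prop:EoA_int_characterization} shows $\mu$ is not expanding on average.

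For the converse in (a) I would argue contrapositively: if $\mu$ is not expanding on average, Proposition \ref{prop:EoA_int_characterization} provides a stationary $\bar{\nu}$ on $\PP(\mc{E})$ with non-positive integral, which I may take ergodic for the skew product by passing to an ergodic component. Let $\nu$ be its projection to $M$. Oseledets gives a $\nu$-measurable filtration $\mc{E}_x=V_1(x)\supset V_2(x)\supset\cdots$ with distinct exponents $\lambda_1>\lambda_2>\cdots$. The Birkhoff calculation above shows that the non-positive integral equals some $\lambda_j$ (the Lyapunov exponent along the Oseledets level charged by $\bar{\nu}$); hence $\lambda_j\le 0$, and the corresponding Oseledets subspace $V_j(x)$ is a non-trivial, $\mu$-a.s.~invariant, $\nu$-measurable subbundle of non-positive Lyapunov exponent, as desired.

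Part (b) would follow from (a). Assuming $\mu$ is not expanding on average on $TM$, (a) produces a non-trivial invariant subbundle $V\subset TM$ all of whose vectors have non-positive Lyapunov exponent. Either $V\subsetneq TM$, in which case $\{V(x)\}$ is the forbidden $\mu$-a.s.~invariant family of subbundles; or $V=TM$ and all Lyapunov exponents of the derivative cocycle are $\le 0$. Volume preservation gives $|\det Df|\equiv 1$, so $\sum_i\lambda_i=0$ $\nu$-a.e.\ and every Lyapunov exponent must vanish. At this point I would invoke the invariance principle for random cocycles with trivial Lyapunov spectrum, in the spirit of Ledrappier and Avila--Viana \cite{AvilaViana10}, to extract a $\mu$-a.s.~invariant $\nu$-measurable Riemannian metric on $TM$, contradicting the remaining hypothesis.

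The main obstacle is the precise identification of the integral with a \emph{single} Lyapunov exponent in part (a), which requires reducing to an ergodic component of the skew product on $\Sigma\times\PP(\mc{E})$ and carefully locating the $\bar{\nu}$-support on the Oseledets filtration; and the invariance-principle step in case $V=TM$ of part (b), where one must promote the vanishing of the entire Lyapunov spectrum to the existence of a measurable, invariant Riemannian metric. Both steps are classical in random cocycle theory but must be applied with care in the present bundle setting.
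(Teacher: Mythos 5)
Your overall strategy is the paper's: use Proposition \ref{prop:EoA_int_characterization} to convert failure of expansion on average into an ergodic stationary measure $\bar\nu$ on $\PP(\mc{E})$ with non-positive integral, apply Birkhoff on the skew product, and in the conservative case invoke the invariance principle. The "if" direction of (a) is fine. In the converse, however, your identification of the invariant subbundle with "the Oseledets subspace $V_j(x)$" has a gap: for a one-sided random cocycle the Oseledets filtration is in general $\omega$-dependent (the slow space of an IID hyperbolic $\SL(2,\R)$ product depends on the future of the noise), so $V_j$ is a priori only $(\omega,x)$-measurable, whereas the proposition demands a $\nu$-measurable subbundle over $M$. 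The deterministic object you need is the space $V_{non}(x)$ of vectors that are non-expanding for \emph{almost every} $\omega$: Birkhoff plus Fubini shows that $\bar\nu$-a.e.\ $v$ lies in it (so it is non-trivial over $\nu$-a.e.\ $x$), it is a linear subspace since the a.s.\ non-expanding property passes to spans, and it is a.s.\ invariant by its characterization. This is the paper's construction and it repairs your step.

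The more serious gap is the last step of (b). When $V_{non}=TM$ and all exponents vanish, the invariance principle (Ledrappier/Avila--Viana) gives that the disintegration of $\bar\nu$ over the fibers is deterministic and a.s.\ invariant under the cocycle; it does \emph{not} produce an invariant Riemannian metric. To conclude you need the Furstenberg/Arnold dichotomy (\cite[Lem.~3.22]{arnold1999jordan}): either the fiberwise invariant measure forces an invariant conformal structure (equivalently, in the volume-preserving case, a metric), or it is supported on the union of two proper subspaces, and in the latter case one must still extract a finite a.s.\ invariant \emph{family} of subspaces (via atoms, or the finitely many minimal-dimension subspaces carrying the maximal fiber mass $\eta$). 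This is precisely why the hypothesis of (b) excludes invariant \emph{families} of subbundles rather than a single subbundle or a metric: the paper's example \eqref{MultRot} — a hyperbolic diagonal matrix and a rotation by $\pi/2$ at a common fixed point — has all exponents zero, admits no invariant metric and no invariant line, yet preserves the union of the two axes. Your version, which concludes an invariant metric outright, is contradicted by that example.
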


 The advantage of part (b) is that in the volume preserving case we do not need to verify 
expansion directly, only rule out measurable invariant structures.

Also, note that the condition (b)  is not necessary  for expansion on average. A simple counterexample 
 is the product of two expanding on average systems. That is, if $\mu$ is expanding on average, then the measure 
 $\mu\times \mu$ on $\Diff_{\vol}^\infty (M\times M)$ is expanding on average, see \S \ref{subsec:products} below for this and similar examples. 

Before we proceed, we comment on condition (b), which is slightly different than the statements appearing in the literature. For example, a similar characterization appears in \cite[Thm.~1.2]{potrie2022remark}, but without the added statement that there might be more than one subbundle. Its conclusion reads, that if the dynamics is not expanding on average, ``[...] then there is an invariant $\nu$-measurable distribution or conformal structure\footnote{Note that for volume preserving linear cocycles, having a measurable invariant Riemannian metric is the same thing as having a measurable conformal structure.}
."  
The corresponding statement in \cite[Lem.~3.2]{elliott2023uniformly} gives a less precise type of characterization, which says that there are no $\nu$-measurable algebraic structures in $\Gr_k(TM)$. Such structures can have more than one connected component. However, compare the statement of \cite[Lem.~3.5]{elliott2023uniformly} with the last line of that lemma's proof to see that a similar issue appears. 
As we will shortly explain, systems that are expanding on average but do not have an invariant measurable subbundle or Riemannian metric do occur. That said, the main results of the papers just mentioned are certainly unaffected: these are just minor oversights and do not affect the strategy of the proofs,  because the methods
used in \cite{potrie2022remark, elliott2023uniformly} to rule out invariant bundles also allow to rule out  
families of such bundles (cf. Lemma \ref{LmSkewDensity} in the present paper). 

Here is an example of non-expanding on average dynamics without an a.s.~invariant line bundle or Riemannian metric 
over a stationary measure. Suppose that $(f_1,f_2)$ are two volume preserving diffeomorphisms of a closed surface $M$, and that $p$ is a common fixed point where their differentials are the matrices:
\begin{equation}
\label{MultRot}
    \begin{bmatrix}
\lambda & 0 \\
0 & \lambda^{-1} 
\end{bmatrix}\text{ and } 
\begin{bmatrix}
0 & -1\\
1 & 0
\end{bmatrix}.
\end{equation}
Then $\nu=\delta_p$ is an invariant measure  for the driving measure $2^{-1}(\delta_{f_1}+\delta_{f_2})$
where all Lyapunov exponents at $p$ vanish. However there is no invariant Riemannian metric at $p$ nor a line bundle. On the other hand, the union of the $x$ and $y$ axes is certainly invariant. 
Moreover, applying the techniques used in \cite{elliott2023uniformly} and the proof of 
Theorem \ref{ThGenVF-EoA} below, 
 one can produce an example of a random measure where all maps preserve $p$, the derivative at  $p$ is given by
\eqref{MultRot} above, the only  stationary measures are $\delta_p$ and the volume,
and there are no invariant structures over volume either.

\begin{proof}[Proof of Proposition~\ref{prop:EoA_characterization}.]
The necessity of (a) is obvious, so we only show the other direction. 
From Proposition \ref{prop:EoA_int_characterization}, it follows that there exists an ergodic stationary measure $\hat{\nu}$ on $\mc{E}$ such that $\int \|F_{\omega}v\|\,d\nu(v)\,d\mu(\omega)\le 0$. We let $\nu$ denote the pushforward to the base.

Given $x\in M$, consider the subset of $\PP(T_xM)$ of vectors $v$ such that $v$ is almost surely non-expanding, i.e.~the Lyapunov exponent of the vector $v$ is non-positive. Note that if $v,w\in T_xM$ are almost surely non-expanding, then so is every vector in their span. 
Thus we see that there is a well defined a.s.~non-expanding subspace over each point $x\in M$, which we call $V_{non}(x)$. 
In fact, note that for $\hat{\nu}$-a.e.~$v\in \mc{E}$, by the Birkhoff ergodic theorem $v$ is a.s.~non-expanding. 
Thus we see that over $\nu$-this subspace is nontrivial. 
Further note that any $w\notin V_{non}(x)$ is \emph{not} almost surely non-expanding, i.e.~a.s.~$\liminf n^{-1}\ln \|F^n_{\omega} w\|>0$. Since $\nu$ is ergodic, the dimension of $V_{non}(x)$ is a.s.~constant, call this dimension $k$. Note that due to its characterization $V_{non}$ is a.s.~invariant. This finishes the proof of the characterization in the non-conservative case.

We now prove the alternative criterion for the conservative case.
If the subspaces $V_{non}$ we found in the above part had  dimension $k<d$, then we are done.
So suppose that $k=d$, we then need to produce an invariant subbundle family or Riemannian 
metric.

We now apply the invariance principle to upgrade $\hat{\nu}$ from a stationary measure to an invariant measure.
Let $\Sigma$ be the space $\Diff(M)^\N\!\times\!\! \Gr_k(\mc{E})$ endowed with the measure $\mu^N\!\!\times\! \hat{\nu}$. 
 By \cite[Thm.~B]{AvilaViana10}, 
the disintegration of $\hat{\nu}$ along fibers depends only on the zeroth symbol. But this implies that the disintegration of $\hat{\nu}$ is a.s.~invariant under all of the dynamics. We have a map that sends $\omega_0\omega_1\in \Diff(M)$ to the disintegration $\hat{\nu}_{\omega_0,\omega_1}$, and as the disintegration of the image of this vector is invariant we have that $f_{\omega_0}\hat{\nu}_{\omega_0}\!\!=\!\!\hat{\nu}_{\omega_1}$ for a.e.~$\omega_1$. Thus the disintegration is almost surely equal to some constant $\hat{\nu}$; this is a measure on $\PP(\mc{E})$ that is a.s.~invariant 
by~$\mu$.

As before, we may assume that $\hat{\nu}$ is ergodic. 
By \cite[Lem.~3.22]{arnold1999jordan}, if the cocycle does not preserve  a measurable Riemannian metric, then for almost every $\omega$, the conditional measure $\hat{\nu}_{\omega}=\hat{\nu}$ is supported on the union of two proper subspaces $[V]$ and $[W]$. 
 In this case we will produce a finite collection of subspaces that are permuted. 

First, if there exist any atoms of $\nu$ in $\Gr_1(TM)$, then we are done, because the atoms of a fixed mass are an almost surely invariant set. So, suppose there are no atoms of the disintegration of $\nu$ in $\Gr_1(TM)$. Then there is a minimum $k<d$ such that $\nu$ assigns positive measure to some $k$-dimensional subspace. Note that $k<d$ due to the support of the disintegration of $\nu$ being contained in the union of two subspaces $[V]\cup [W]$ from the previous paragraph. Then due to ergodicity there exists some $0<\eta<1$, such that at $\nu$-a.e. point there is a plane $V$ whose measure is $\eta$. Note that there are at most finitely many such planes in each fiber as their intersection is a set of zero measure. Hence at each point we have a finite collection $V_1(\omega),\ldots,V_k(\omega)$ for some a.s.~constant $k$. Further, note that there is some maximum $\eta$ such that the foregoing statement is true as each fiber has mass $1$. But this implies that the set of such mass $\eta$ planes over each point must be a.s.~invariant because otherwise stationarity would be violated: Every preimage of such a plane must be a plane of at least measure $\eta$. Thus this collection of planes is a $\nu$-measurable $\mu$-a.s.~invariant finite collection of subspaces. We have obtained the needed dichotomy.
\end{proof}

\begin{remark}\label{rem:two_subbundles}
Note that the above proof furnishes additional information in dimension $2$: any invariant family of line bundles is supported on at most two lines at each point. Otherwise the fact that the disintegration of $\nu$ is supported on two non-trivial subspaces in the penultimate paragraph would not hold. 
\end{remark}

\begin{definition}
 \label{DefClean}   
 We say that a measure $\mu$ on $\Diff(M)$ is {\em clean} if:
\begin{enumerate}[leftmargin=*]
    \item[(i)]
For each $x\in M$ the distribution of $fx$ has an absolutely continuous component; 
\item[(ii)]
volume is ergodic for $\mu$; 
 \item[(iii)]  there do not exist any  measurable $\mu$ a.s.~invariant family of line bundles  or a 
 $\mu$-a.s.~invariant Riemannian metric on  $M$.
 \end{enumerate} 
  Here we say that a measurable  Riemannian metric $g$ is $\mu$ a.s.~invariant if for $\mu$ almost every $f$:
\[
\DS \vol(x: \forall v\in T_x M\,\,\, g_x(v)=g_{fx}(D_xf v))=1.
\]
A $\mu$ a.s.~invariant family of line bundles is defined similarly.
 
 We say that a measure $\mu$ on $\Diff(M)$ is \emph{coclean} if (i) and (ii) along with the condition $(iii)'$ below hold: 
 
\begin{enumerate}[leftmargin=*]
    \item[$(iii)'$] 
For the induced action of $\mu$ on $T^*M$, there do not exist any  measurable $\mu$~a.s. invariant family of subbundles of $T^*M$  or a 
 $\mu$~a.s. invariant Riemannian metric on  $T^*M$.
\end{enumerate}

 \end{definition}

 \begin{corollary}
 \label{rem:clean}
 If
 $\mu$ is clean and $\tilde \mu$ is another measure on $\Diff(M)$ such that $\mu$ is absolutely continuous with respect to $\tilde\mu$, then
 $\tilde\mu$ is expanding on average and ergodic. 
 In particular if $\bar\mu$ is an arbitrary measure then for each $\eps\in (0, 1]$
 the measure $\eps\mu+(1-\eps)\bar\mu$ is expanding on average and ergodic. The same holds for the conclusion that $\mu$ is coexpanding on average, if $\mu$ is assumed to be coclean.
\end{corollary}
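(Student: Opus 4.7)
The key observation is that since $\mu \ll \tilde\mu$, every $\tilde\mu$-null set is also $\mu$-null, so any property that holds $\tilde\mu$-almost surely automatically holds $\mu$-almost surely. In particular, every $\tilde\mu$-a.s.\ invariant measurable subset of $M$, family of measurable subbundles of $TM$, or measurable Riemannian metric is automatically $\mu$-a.s.\ invariant. This one-line transfer principle is the engine of the proof.

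First I would establish ergodicity of volume for $\tilde\mu$. If $A\subset M$ satisfies $\vol(fA\Delta A)=0$ for $\tilde\mu$-a.e.\ $f$, then the same holds for $\mu$-a.e.\ $f$, so by cleanness condition (ii) of Definition~\ref{DefClean}, $\vol(A)\in\{0,1\}$.

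Next I would apply Proposition~\ref{prop:EoA_characterization}(b) to deduce expansion on average. For this I must rule out any $\tilde\mu$-a.s.\ invariant measurable family of subbundles or invariant Riemannian metric over every $\tilde\mu$-stationary measure. Cleanness condition (iii) gives this over volume, so the task reduces to showing that volume is the unique $\tilde\mu$-stationary probability measure. For $x\in M$ let $\nu_x^\mu$ and $\nu_x^{\tilde\mu}$ denote the laws of $fx$ under $\mu$ and $\tilde\mu$ respectively. Since $\mu\ll\tilde\mu$ yields $\nu_x^\mu\ll\nu_x^{\tilde\mu}$, the absolutely continuous component of $\nu_x^\mu$ guaranteed by condition (i) is dominated by $\nu_x^{\tilde\mu}$, so $\nu_x^{\tilde\mu}$ also has a nontrivial absolutely continuous component with respect to volume. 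Then, for any $\tilde\mu$-stationary $\nu$, the Lebesgue decomposition $\nu=\nu_{ac}+\nu_{sing}$ combined with the stationarity relation $\nu=\int\nu_x^{\tilde\mu}\,d\nu(x)$ shows that the singular part injects absolutely continuous mass under one step, which is inconsistent with $\nu=P^*\nu$ unless $\nu_{sing}=0$. Hence $\nu\ll\vol$, and ergodicity of volume established above forces the stationary density to be constant, so $\nu=\vol$. Any $\tilde\mu$-a.s.\ invariant $\vol$-measurable family of subbundles or Riemannian metric is then $\mu$-a.s.\ invariant, contradicting cleanness (iii); so Proposition~\ref{prop:EoA_characterization}(b) applies and $\tilde\mu$ is expanding on average.

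The ``in particular'' clause is immediate: for $\eps\in(0,1]$ one has $\mu\ll\eps\mu+(1-\eps)\bar\mu$, and the first part applies. The coexpansion-on-average/coclean version follows by running the identical argument on the induced dual random dynamics on $T^*M$, using condition (iii)$'$ of cocleanness together with the cotangent analogue of Proposition~\ref{prop:EoA_characterization}. The main obstacle is the noise-propagation step establishing that every $\tilde\mu$-stationary measure is absolutely continuous with respect to volume. The inheritance of properties from $\tilde\mu$-a.s.\ to $\mu$-a.s.\ is essentially formal, but the Markov-chain smoothing argument that kills the singular part requires careful bookkeeping of how the absolutely continuous component introduced by condition (i) propagates through the stationarity relation; once this is in hand, ergodicity closes the argument cleanly.
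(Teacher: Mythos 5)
Your proposal is correct and follows essentially the same route as the paper's proof: the transfer principle from $\tilde\mu$-a.s.\ to $\mu$-a.s.\ properties via $\mu\ll\tilde\mu$, the smoothing argument showing every $\tilde\mu$-stationary measure is absolutely continuous and hence (by ergodicity of volume) equal to volume, and then condition (iii) combined with Proposition~\ref{prop:EoA_characterization}(b). The only difference is cosmetic ordering of the ergodicity and uniqueness steps.
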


\begin{proof}
We will check only the first claim about clean measures; the proof for coclean measures is identical.

First we show that that the volume is the unique $\tilde\mu$ stationary measure. Indeed let $\nu$ be an ergodic 
stationary measure. By (i) $\nu$ has an absolutely continuous component and since the class of 
absolutely continuous measures is invariant under convolution with $\mu$, $\nu$ must be absolutely continuous as all its mass must belong to its absolutely continuous component. 
By (ii) and Proposition \ref{PrRandomET},
 every $\mu$ almost surely invariant subset of $M$ has null or conull volume.
Applying Proposition \ref{PrRandomET} again we see that $\nu$ is volume.

However by (iii) there are no $\mu$ invariant and, hence, $\tilde\mu$ invariant, geometric structures and
so by Proposition \ref{prop:EoA_characterization}, $\tilde\mu$ is expanding on average.
\end{proof}

Given a measure $\mu$ on $\Diff^{\infty}_{\vol}(M)$ we have four different associated bundle maps. Write $Df\colon TM\to TM$ for the derivative. Write $Df^*\colon T^*M\to T^*M$ for the pullback, which maps fibers $T^*_{x}M\to T^*_{f^{-1}(x)}M$. 

Associated to the measure $\mu$ there are four basic associated random bundle automorphisms that one might study. In square brackets, we give them a name corresponding to their relationship with the original maps $f$. We list them as a pair $(f,F)$, where $f$ is a diffeomorphism and $F$ is a bundle map covering $f$.
\begin{definition}\label{defn:different_types_of_cocycles}
For a measure $\mu$ on $\Diff^{\infty}(M)$, we have four associated random bundle maps, and refer to the condition of each of them being expanding on average as follows:
\begin{enumerate}
\item
$(f,Df)$ on $TM$. [expanding on average]
\item
$(f,(Df^*)^{-1})$ on $T^*M$. [coexpanding on average]
\item
$(f^{-1}, Df^{-1})$ on $TM$. [expanding on average backwards]
\item
$(f^{-1}, Df^*)$ on $T^*M$.  [coexpanding on average backwards]
\end{enumerate}

\end{definition}

In fact, one can define the same four notions for any collection of bundle maps. We won't bother constructing diffeomorphisms that show each of these classes is distinct, but for bundle maps it is quite easy. In fact, we can do it with dynamics over a single point.

\begin{proposition}
Let $\mc{E}=\{*\}\times \R^3$, where $\{*\}$ is the singleton topological space. 
For any $1\le i \le 4$, one of the four different type of expanding on average (1)--(4) in the above list, there is a probability measure $\mu$ with bounded support on $\Aut(\mc{E})$ such that $\mu$ is not expanding on average of type ($i$), but is expanding on average of the other three types. For example, there is a measure that satisfies $(2),(3),(4)$, but not $(1)$.
\end{proposition}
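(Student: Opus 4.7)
The plan is to construct a single measure $\mu$ failing only condition $(1)$ and to obtain the remaining three cases as pushforwards of $\mu$ under the involutions $A \mapsto A^{-1}$, $A \mapsto A^T$, $A \mapsto A^{-T}$. A direct computation shows these involutions permute the four conditions: for instance, $\int \log\|B^T\xi\|\,d((A\mapsto A^T)_*\mu)(B) = \int \log\|A\xi\|\,d\mu(A)$, so condition $(4)$ for $(A\mapsto A^T)_*\mu$ is precisely condition $(1)$ for $\mu$, and the analogues hold for the other two operations. Thus it suffices to construct the single example failing only $(1)$.

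For that example I would take random block upper triangular matrices $A = \begin{pmatrix} a & v^T \\ 0 & B \end{pmatrix}$, where $B \in \SL_2(\R)$ is drawn from a strongly irreducible expanding-on-average ensemble with top Lyapunov exponent $\lambda > 0$, the scalar $a \in \R^*$ is bounded and random with $\alpha := \E{\log|a|}$ strictly less than $-\lambda$, and $v \in \R^2$ is a bounded random vector. Since $Ae_1 = ae_1$, the projective Dirac $\delta_{[e_1]}$ is stationary for the cocycle $A$ on $\PP(\R^3)$ and integrates $\log\|A\cdot\|$ to $\alpha < 0$, so condition $(1)$ fails.

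The main work is to verify that $A^{-1}$, $A^T$, and $A^{-T}$ are all expanding on average, which by Proposition~\ref{prop:EoA_int_characterization} reduces to listing every stationary measure on $\PP(\R^3)$ for each cocycle and checking that its integral is strictly positive. The invariant subspaces are explicit: $A^{-1}$, still block upper triangular, preserves $L := \mathrm{span}(e_1)$, whereas $A^T$ and $A^{-T}$, both block lower triangular, preserve $L^\perp = \mathrm{span}(e_2,e_3)$. For $A^T$, a generic line is contracted into $\PP(L^\perp)$ because the quotient direction grows only at rate $\alpha < \lambda$, yielding a unique stationary measure on $\PP(L^\perp)$ with integral equal to the top Lyapunov $\lambda > 0$ of $B^T$. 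For $A^{-1}$, the fiber dynamics over each point of the quotient $\PP(\R^3/L)$ is expanding at rate $-\alpha - \lambda > 0$, so every orbit is pushed to $[e_1]$, and $\delta_{[e_1]}$ is the unique stationary measure, with integral $-\alpha > 0$. For $A^{-T}$, the Furstenberg measure of $B^{-T}$ on $\PP(L^\perp)$ is one stationary measure (integral $\lambda > 0$), and a second arises from the contracting random affine dynamics $\mathbf{w} \mapsto aB^{-T}\mathbf{w} - B^{-T}v$ on the slice $w_1 = 1$ (linearly contracting since $\alpha + \lambda < 0$), with integral equal to the top Lyapunov $-\alpha > 0$ of $A^{-T}$.

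The main obstacle is ensuring this list is exhaustive for each cocycle, so that no hidden stationary measure carries a non-positive integral; this is exactly what the gap condition $\alpha < -\lambda$ secures, by forcing the sign of the relevant slice and fiber dynamics everywhere off the distinguished invariant subspaces. Once the ``fail only $(1)$'' measure $\mu$ is in hand, the measures failing only conditions $(2)$, $(3)$, $(4)$ are delivered directly as $(A\mapsto A^{-T})_*\mu$, $(A\mapsto A^{-1})_*\mu$, and $(A\mapsto A^T)_*\mu$, respectively.
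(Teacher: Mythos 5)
Your proposal is correct in outline and shares the paper's high-level reduction: build one measure failing only condition (1) and obtain the other three cases by pushing forward under $A\mapsto A^{-1}$, $A\mapsto A^{T}$, $A\mapsto A^{-T}$ (the paper does exactly this involution bookkeeping, and your assignment of which pushforward kills which condition is right). Where you diverge is in the model and, more substantially, in the verification. The paper takes $C=\begin{bmatrix}\det(B)^{-1} & q\\ 0 & B\end{bmatrix}$ with $B\in\GL(2,\R)$ and a \emph{large} coupling vector $q$ of norm $L$, and then checks the definition of expansion on average directly on unit vectors in a single step: vectors near the invariant $2$-plane expand because the $B$-block does, and vectors with first component bounded below expand because $\|Cv\|\ge L\epsilon_0$. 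This is elementary and self-contained, at the price of a quantitative juggling of $L$, $\epsilon_0$, and the one-step expansion constant $M$. You instead take $a\oplus B$ with $B\in\SL_2(\R)$ and impose the Lyapunov gap $\E\log|a|=\alpha<-\lambda$, then invoke Proposition \ref{prop:EoA_int_characterization} and classify all stationary measures on $\PP(\R^3)$ for each of the three transposed/inverted cocycles. This buys conceptual clarity (you see exactly which stationary measures exist and what their Furstenberg integrals are) and needs no largeness of the off-diagonal term, but it leans on nontrivial standard facts that you only assert: that an ergodic stationary measure for a block-triangular cocycle either lives on the invariant subspace or projects to the quotient with no mass left in the fiber when the fiber-affine recursion is expanding on average, and that a contracting-on-average affine recursion has a unique stationary measure. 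These are true (Furstenberg--Kifer / the classical theory of random affine recursions), and your gap condition is exactly what makes the signs come out right, but the exhaustiveness step you flag as ``the main obstacle'' is genuinely the load-bearing part of your route and would need either a citation or a short proof; the paper's one-step computation avoids the issue entirely.
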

\begin{proof}
This is straightforward using the characterization in Proposition~\ref{prop:EoA_characterization}.
We give an example of a measure $\mu$ that is not expanding on average for (1) but is for each of the others. The other examples we obtain the other cases, one can replace $\mu$ by the measures $\mu^{-T}, \mu^{-1}$, and $\mu^{T}$.
 Here, by $\mu^{-T}$ we mean the pushforward of $\mu$ by the map $A\mapsto A^{-T}$; the others are defined analogously.

Let $\mu_B$ be a measure supported on $\GL(2,\R)$. We will take $\mu_B$ to be a measure that is expanding on average and such that $\mu_{\det B}$ is also uniformly expanding. We may also choose $B$ so that the measures $\mu_{B^{-1}},\mu_{B^{-T}}$ and $\mu_{B^{-T}}$ are also expanding on average
because all those conditions are generic (see 
Example \ref{ExTorus} in \S \ref{ScHomogeneous} for a detailed discussion). In fact, by taking a power of these measures, we may arrange that all four of these measures are expanding on average with $N=1$ and that for all unit vectors $v$, if $\mu'$ is one of these four measures,
\begin{equation}\label{eqn:expansion_assump1}
\mathbb{E}_{\mu}{\ln \|F_{\omega} v\|}>100.
\end{equation}

Now consider the automorphisms of the trivial bundle $\{*\}\times \R^3$ distributed according to the measure $\mu_C$, where $C$ is distributed according to:
\[
C=\begin{bmatrix}
\det(B)^{-1} & q\\
0 & B
\end{bmatrix},
\]
where $q\in \R^2$ is a vector of length $L$, which is a constant that we will choose later.

Then four associated random walks arise distributed according to the measures $\mu_C,\mu_{C^{-T}},\mu_{C^{-1}},\mu_{C^T}$: 
\[
\begin{bmatrix}
\det(B)^{-1} & q\\
0 & B
\end{bmatrix}, 
\begin{bmatrix}
\det(B) & 0\\
* & B^{-T}
\end{bmatrix},
\begin{bmatrix}
\det(B) & *\\
0 & B^{-1}
\end{bmatrix},
\begin{bmatrix}
\det(B)^{-1} & 0\\
q^T & B^{T}
\end{bmatrix},
\]
where we have written a $*$ for whatever that entry must be. 

We now explain why $\mu_C$ is not expanding on average but the rest are.

(1)
For $\mu_C$, as the first coordinate is contracting, it is clear that $\mu_{C}$ is not expanding on average as it has an almost surely contracting subbundle.

(2) For $\mu_{C^{-T}}$, both of the blocks on the diagonal elements are expanding on average with $N=1$, hence any vector will expand in one step.

(3) For $\mu_{C^{-1}}$, similarly both of the diagonal blocks are expanding on average in one step, hence so is $\mu_{C^{-1}}$.

 (4)
For $\mu_{C^{T}}$, we need to argue slightly more as now the first block does not expand. First note that any unit vector that lies in the subspace $\{0\}\times \R^2$ will certainly expand due to \eqref{eqn:expansion_assump1}. In fact, by continuity, we see that the same holds for all unit vectors that make an angle of at most $\epsilon_0$ with $\{0\}\times \R^2$. But any vector $v$ that makes angle at least $\epsilon_0$ with $\R^2$ has first component at least $\epsilon_0/2$ in magnitude, hence for any matrix $C$ in the support of $\mu_{C^{T}}$, $\|Cv\|\ge L\epsilon_0$. Thus for $L$ sufficiently large, we see that this measure is expanding on average as well.
\end{proof}

We can also give constructions in the case of diffeomorphisms.
\begin{proposition}\label{prop:coexpanding_but_not_expanding}
There exists a measure $\mu$ with compact support on $\Diff^{\infty}_{\vol}(\mathbb{T}^4)$ that is coexpanding on average but is not expanding on average.
\end{proposition}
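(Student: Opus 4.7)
The plan is to exhibit an explicit pair of linear automorphisms of $\mathbb{T}^4$ with a block upper triangular structure. Concretely, I would set
\[
A_j = \begin{pmatrix} I_2 & W_j \\ 0 & B_j \end{pmatrix} \in \SL_4(\Z), \quad j = 1, 2,
\]
where $B_j \in \SL_2(\Z)$ are two Anosov matrices (e.g., a Cat map and a $\SL_2(\Z)$-perturbation of it) generating a Zariski dense subgroup of $\SL_2(\R)$, and where $W_j$ are integer $2\times 2$ matrices chosen generically in a sense made precise below. Define $\mu = \tfrac12(\delta_{A_1} + \delta_{A_2})$; the associated diffeomorphisms preserve volume on $\mathbb{T}^4$ since $\det A_j = \det B_j = 1$. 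Intuitively, the identity block creates an invariant tangent subbundle with trivial dynamics (killing EoA), while the coupling through $W_j$ together with expansion in the bottom block secures coexpansion on average.

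For the failure of EoA, the constant subspace $V = \R^2 \oplus \{0\} \subset T\mathbb{T}^4$ is $A_j$-invariant with action $I_2$, so every vector in $V$ has Lyapunov exponent $0$ almost surely. Applying Proposition~\ref{prop:EoA_characterization}(a) to volume (which is stationary) gives that $\mu$ is not expanding on average.

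To prove coEoA, I would use Proposition~\ref{prop:EoA_characterization}(a) for the cotangent cocycle, which in our coordinates is block lower triangular:
\[
A_j^{-T} = \begin{pmatrix} I_2 & 0 \\ -(W_j B_j^{-1})^T & B_j^{-T} \end{pmatrix}.
\]
The annihilator $V^\perp = \{0\}\oplus \R^2$ is the evident invariant subbundle, on which the action is $B_j^{-T}$; since $(B_1^{-T},B_2^{-T})$ generates the same Zariski-dense subgroup, classical Furstenberg theory yields that $(B_1^{-T},B_2^{-T})$ is EoA on $\R^2$, so every vector in $V^\perp$ has a positive Lyapunov exponent. One then classifies the remaining invariant subbundles $W \subset T^*M$: since $V^\perp$ is invariant, $W$ either contains $V^\perp$ or projects nontrivially to the quotient $V^* = \R^2$ on which $A_j^{-T}$ acts trivially. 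For each candidate dimension and projection, the invariance condition becomes an equation of the form $(I-B_j^{-T})\phi = -(W_j B_j^{-1})^T \cdot (\text{projector})$ that is uniquely solvable in $\phi$ for each $j$; consistency across $j=1,2$ is a nontrivial polynomial condition on $(W_1,W_2)$, violated for generic (and hence for suitably chosen integer) $W_j$. Thus, outside of the family of subbundles containing $V^\perp$ (on which a direct iteration of the coupling $-(W_j B_j^{-1})^T$ driven by the expanding $B_j^{-T}$ shows that every vector inherits a positive Lyapunov exponent), there are no invariant subbundles, and Proposition~\ref{prop:EoA_characterization}(a) applied to $(f,(Df^*)^{-1})$ gives coEoA.

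The main obstacle is the bookkeeping in the last step: verifying that the coupling term $-(W_j B_j^{-1})^T \xi_1$, once iterated through the expanding cocycle $B_j^{-T}$, forces any vector with nonzero projection to $V^*$ to acquire the top Lyapunov exponent of $B^{-T}$ — and simultaneously that the ``consistency'' equations obtained by equating the two $\phi$'s cut out a proper algebraic subvariety of the $W_j$-parameter space, so that integer $W_j$ avoiding it (and, after a possible perturbation, also compatible with any stationary measures that are atomic at common fixed points) actually exist. Both are essentially finite linear-algebra computations, which makes the construction explicit.
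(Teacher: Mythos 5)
Your construction is correct, but it takes a genuinely different route from the paper's. The paper uses two generators of the form $\mathrm{diag}(A,\Id_2)$ (with $A$ an expanding-on-average $\SL_2(\Z)$ product) together with a fixed shear $\bigl(\begin{smallmatrix}\Id&0\\ L\Id&\Id\end{smallmatrix}\bigr)$, and verifies coexpansion on average \emph{directly from the definition}: a two-case analysis on unit covectors ($\|x\|\le\epsilon$ versus $\|x\|\ge\epsilon$) with explicit constants $\epsilon=1/100$, $L=10$ and a largeness requirement on the expansion constant $M$. You instead couple the trivial and hyperbolic blocks inside each generator, $A_j=\bigl(\begin{smallmatrix}\Id_2&W_j\\0&B_j\end{smallmatrix}\bigr)$, and verify coexpansion \emph{softly} via Proposition~\ref{prop:EoA_characterization}(a), reducing everything to the nonexistence of a common invariant subspace of a.s.~non-expanding covectors. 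Both proofs establish non-expansion identically (a two-dimensional invariant tangent subbundle on which the cocycle acts by the identity). The paper's route buys an explicit expansion constant at a definite time $N$; yours avoids all the $\epsilon$--$L$--$M$ bookkeeping at the cost of invoking the structural characterization and Furstenberg's theorem.

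Two remarks that close the gaps you flag as ``the main obstacle.'' First, you do not need to show that covectors with nonzero projection to $V^*$ acquire the top exponent of $B^{-T}$. Since the cocycle is constant in $x$, the a.s.~non-expanding set $V_{non}$ is a single subspace of $\R^4$ invariant under both $A_1^{-T}$ and $A_2^{-T}$; Furstenberg (Zariski density of $\langle B_1^{-T},B_2^{-T}\rangle$) gives $V_{non}\cap V^\perp=0$, so if $V_{non}\neq 0$ it is the graph of a linear $\phi$ over some $U\subseteq\R^2$ satisfying $(\Id-B_j^{-T})\phi=C_j|_U$ for $j=1,2$, where $C_j=-(W_jB_j^{-1})^T$. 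Conversely any such invariant graph carries the identity action and hence consists of non-expanding vectors. So coexpansion on average is exactly equivalent to the failure of the consistency condition $(\Id-B_1^{-T})^{-1}C_1=(\Id-B_2^{-T})^{-1}C_2$ on every nonzero $U$ --- no growth estimate for the coupled component is required. Second, genericity of $(W_1,W_2)$ is unnecessary: taking $W_1=0$ and $W_2=\Id_2$ gives $C_1=0$ and $C_2=-B_2^{-T}$ invertible, so the consistency condition forces $U\subseteq\ker\bigl((\Id-B_2^{-T})^{-1}B_2^{-T}\bigr)=0$ (using that $B_2$ is hyperbolic so $\Id-B_2^{-T}$ is invertible). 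This also makes the stationary-measure issue moot: $V_{non}$ is independent of the base point, so the verification is the same over volume, over $\delta_0$, and over any other stationary measure.
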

\begin{proof}
To begin, let $\mu_A$ be a measure with finite support on $\SL(2,\Z)$ that is expanding on average at time $N=1$ and satisfies all four of the types of expanding on average in Definition \ref{defn:different_types_of_cocycles} with a uniform lowerbound $M>0$ on the expansion in each case. Note that by taking a convolution $\mu^n_A$ we can make $M$ as large as we like.

We now define two measures $\hat{\mu}_A$ and $\hat{\mu}_L$ that are both supported on $\SL(4,\Z)$. We define $\hat{\mu}_A$ to be a pushforward of $\mu_A$ by the map 
\[
A\mapsto \begin{bmatrix}
A & 0\\
0 & \Id_2
\end{bmatrix},
\]
and $\hat{\mu}_L$ will be supported on the constant shear matrix 
\[
\begin{bmatrix}
\Id_2 &  0\\
L \Id_2 & \Id_2.
\end{bmatrix}
\]
We then claim that for suitably chosen $M,L$ that $\hat{\mu}=(\hat{\mu}_A+\hat{\mu}_L)/2$ is coexpanding on average but not expanding on average. 
The corresponding cocycle on $T^*\mathbb{T}^4$ takes the form:
\[
\begin{bmatrix}
A^{-T} & 0\\
0 & \Id_2
\end{bmatrix}
\,\, , \,\,
\begin{bmatrix}
\Id & -L\Id\\
0 & -\Id
\end{bmatrix}
\]
We claim that for $L=10$ that if $M$, the expansion on average constant, is sufficiently large, then this random matrix product is coexpanding on average. 

We will check the definition of coexpanding on average directly. Suppose that $v=(x,y)\in \R^2\oplus \R^2$ is a unit vector. Then there are two cases depending on where $v$ lies. Fix $\epsilon=1/100$.

(1) ($\|x\|\le \epsilon$) In this case $\|y\|\ge 1-\epsilon^2$. Thus we can compute, that 
    \begin{equation}\label{eqn:average_of_measures_expectation}
    2\mathbb{E}_{\hat{\mu}}[\ln \|Bv\|]=\mathbb{E}_{\hat{\mu}_A}[\ln \|Bv\|]+\mathbb{E}_{\hat{\mu}_L} [\ln \|Bv\| ]=(i)+(ii).
    \end{equation}
    Due to the diagonal structure of the matrix, we obtain the trivial bound
    \[
    (i)=\mathbb{E}_{\hat{\mu}_A}\left[\ln \left\|\begin{bmatrix}
        A^{-T} & 0 \\
        0 & \Id 
        \end{bmatrix}
        \begin{bmatrix}
            x \\ y
            \end{bmatrix}
            \right\|\right]\ge \ln(1-\epsilon^2).
    \]
   Also
    \begin{align*}
    (ii)=\ln \left\| \begin{bmatrix}
\Id & -L\Id\\
0 & -\Id
\end{bmatrix}
\begin{bmatrix}
    x \\
    y
\end{bmatrix}
\right\|&\ge\ln \left( \left\|\begin{bmatrix}
\Id & -L\Id\\
0 & -\Id
\end{bmatrix}
\begin{bmatrix}
    0 \\
    y
\end{bmatrix}\right\|- \left\|\begin{bmatrix}
\Id & -L\Id\\
0 & -\Id
\end{bmatrix}
\begin{bmatrix}
    x \\
    0
\end{bmatrix}\right\|\right)\\
&\ge \ln (L(1-\epsilon^2)-\epsilon)>0
    \end{align*}
    Thus we see that the expansion on average condition is satisfied for vectors with $\|x\|\le \epsilon$, given $L$ and our choice of $\epsilon$.
    
    (2) ($\|x\|\ge \epsilon$) In this case we will take advantage of the expansion on average condition. As in the previous case, we have decomposition according to equation \eqref{eqn:average_of_measures_expectation} into two terms $(i)$ and $(ii)$. 
    \[
    (i)=\mathbb{E}_{\hat{\mu}_A}\left[\ln \left\|\begin{bmatrix}
        A^{-T} & 0 \\
        0 & \Id 
        \end{bmatrix}
        \begin{bmatrix}
            x \\ y
            \end{bmatrix}
            \right\|\right]\ge \mathbb{E}_{\hat{\mu}_A}\left[\ln \left\|\begin{bmatrix}
        A^{-T} & 0 \\
        0 & \Id 
        \end{bmatrix}
        \begin{bmatrix}
            x \\ 0
            \end{bmatrix}
            \right\|\right]\ge M-\ln \epsilon.
    \]
Also
\[
(ii)=\ln \left\| \begin{bmatrix}
\Id & -L\Id\\
0 & -\Id
\end{bmatrix}
\begin{bmatrix}
    x \\
    y
\end{bmatrix}
\right\|\ge \ln \sigma_4\ge \ln 2L, 
\]
where $\sigma_4$ is the smallest singular value of this matrix. 
Thus as long as 
\[
M-\ln \epsilon -\ln 2L>0,
\]
the random dynamics are expanding on average for these vectors as well.

Thus given our choice of $L=10$ and $\epsilon=1/100$, as long as 
\[
M\ge \ln(1/100)+\ln 20,
\]
the measure $\hat{\mu}$ is coexpanding on average.

 Noting that this system cannot be expanding on average because the last $2$ coordinates do not grow under the dynamics completes the proof.
\end{proof}

\begin{example}
 Note that in the above proof if we had reversed the roles of $\mu_A$ and $\mu_{A^{T}}$, then the dynamics on $\mathbb{T}^4$ would be generated by matrices of the form 
\[
\begin{bmatrix}
A & 0\\
0 &\Id
\end{bmatrix}, 
\begin{bmatrix}
\Id & L\Id\\
0 & \Id
\end{bmatrix}.
\]
The same argument as above shows that the random walk of these matrices will be expanding on average. However, note the random dynamics generated by these matrices is not ergodic because all maps factor over the identity map on $\mathbb{T}^2$. Thus $x_3$ and $x_4$ are continuous invariant functions for our system. 
This is especially striking in view of Corollary \ref{cor:coexpanding_components} below, which shows that under the coexpanding on average condition, there would only have been finitely many totally ergodic components of volume
(Note that, by e.g. Proposition \ref{PrRandomET} ergodic components of $\mu$ and $\mu^{-1}$ are the same).
\end{example}

Below we will need the following alternative characterization of the coexpanding on average condition. 

\begin{proposition}\label{prop:coa_d-1_planes}
Suppose that $\mc{E}$ is a $d$-dimensional vector bundle over a topological space $X$. 
Suppose that $\mu$ is an measure on $\Aut_{\vol}(\mc{E})$. Then $\mu$ is coexpanding on average if and only if it is expanding on average on $d-1$ planes.
\end{proposition}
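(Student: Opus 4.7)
The plan is to establish a pointwise linear-algebraic identity on each fiber that converts the coexpansion of a unit covector under $(Df^*)^{-1}$ into the Jacobian of $Df$ restricted to the hyperplane that covector annihilates, and then observe that the integrated conditions in \eqref{eqn:coexpanding_on_average} and on $(d-1)$-planes are then literally the same inequality.

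The key lemma I would prove first is the following. Let $A\colon V\to W$ be a volume preserving linear isomorphism of oriented $d$-dimensional inner product spaces. Identify $V^*$ with $V$ via the inner product. Then for every unit covector $\xi\in V^*$,
\[
\|(A^*)^{-1}\xi\|=\mathrm{Jac}(A\vert_{P}),\qquad P:=\ker\xi.
\]
The derivation is a single wedge-product computation. Let $v\in V$ be the unit vector representing $\xi$, pick an orthonormal basis $e_1,\dots,e_{d-1}$ of $P$, and let $n'\in W$ be a unit normal to $A(P)$. Volume preservation gives
\[
1=\|Ae_1\wedge\cdots\wedge Ae_{d-1}\wedge Av\|=|\langle Av,n'\rangle|\cdot\mathrm{Jac}(A\vert_{P}).
\]
On the other hand $(A^*)^{-1}\xi$ is a scalar multiple of $n'$, and the relation $\langle(A^*)^{-1}\xi,Av\rangle=\langle\xi,v\rangle=1$ forces that scalar to be $1/\langle Av,n'\rangle$. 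Combining gives the identity.

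Applying the lemma fiberwise with $A=D_xf$ (using that $f$ is volume preserving), taking logarithms, and integrating against $\mu^N$ turns
\[
\int N^{-1}\ln\|(D_xf^*)^{-1}\xi\|\,d\mu^N(f)\quad\text{into}\quad \int N^{-1}\ln\mathrm{Jac}(D_xf\vert_{P})\,d\mu^N(f),
\]
so the coexpansion condition for $\xi$ at $x$ is identical to the expansion-on-average inequality at the $(d-1)$-plane $P=\ker\xi$. Since the map $\xi\mapsto\ker\xi$ is a continuous surjection from the unit cotangent bundle onto the bundle of $(d-1)$-planes (with $\xi$ and $-\xi$ producing the same plane), a uniform positive lower bound over all unit covectors is equivalent to one over all $(d-1)$-planes, and the equivalence claimed in the proposition follows.

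The only real obstacle is bookkeeping: one has to be careful with the convention for $A^*$ vs. $(A^*)^{-1}$ and to genuinely use the volume-preserving hypothesis, since without $|\det A|=1$ the key identity would acquire an extra factor of $|\det A|$ and the two conditions would differ by an $\int\ln|\det Df|\,d\mu^N$ term rather than coincide.
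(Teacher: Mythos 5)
Your proposal is correct and follows essentially the same route as the paper: both rest on the pointwise identity that, for a volume-preserving linear map, the stretch of a unit conormal under $(A^*)^{-1}$ equals the Jacobian of $A$ restricted to the hyperplane it annihilates, after which the two integrated conditions coincide term by term. The paper derives this identity by writing the map in block-triangular form with respect to orthonormal frames adapted to $V$ and $L(V)$ and using $a\det(C)=1$, whereas you use a wedge-product computation; this is only a cosmetic difference in the linear algebra.
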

\begin{proof}
Fix a Riemannian metric on $\mc{E}$ so that the induced volume form of the metric agrees with the volume form already on $\mc{E}$. (As all volume forms are proportional, any metric will have this property after rescaling.)
First we begin with an observation. Suppose that $V$ is a $(d-1)$-plane in $\mc{E}$ and that $L\colon \mc{E}_x\to \mc{E}_y$. Then we can fix orthonormal frames $(n_V,v_1,\ldots,v_{d-1})$ and $(n_{L(V)},v_1',\ldots,v_{d-1}')$ such that $n_V$ and $n_{L(V)}$ are orthogonal to $V$ and $L(V)$, and the $v_i$ are an orthonormal basis of $V$ and the $v_i'$ are an orthonormal basis of $L(V)$. Then with respect to this ordered basis the $L$ is represented by a matrix:
\[
\begin{bmatrix}
a & 0\\
b & C
\end{bmatrix},
\]
where $a\in \R$, $b\in \R^{d-1}$, and $C\in GL(d-1,\R)$. This is the action $\mc{E}_x\to \mc{E}_y$. 
Then $L_*\colon \mc{E}_x^*\to \mc{E}_y^*$ is given by the matrix 
\[
\begin{bmatrix}
a^{-1} & b'\\
0 & C^{-T}
\end{bmatrix}.
\]
Let $n_V^*$ denote the dual vector to $n_V$. Then as $a\det(C)=1$ due to volume preservation, we see that 
\[
\|F\vert{n_V^*}\|=\|F\vert_{\vol(V)}\|,
\]
i.e.~the norm of the action on the conormal to $V$ is the same as the action on the volume element of $V$.

Thus the coexpansion on average is the same thing as being expanding on $(d-1)$-planes because
$\displaystyle
\int_{\omega} \ln \|F^n_{\omega} n_V^*\|\,d\mu^n_{\omega}=\int_{\omega} \ln \|F^n_{\omega}\vert_{\vol(V)} \|\,d\mu^n_{\omega}
$.
\end{proof}

 An immediate consequence of the above result is that for volume preserving systems in dimension $2$, expanding on average and coexpanding on average are the same thing. Both are equivalent to being expanding on average on lines ($1$-planes). 

\begin{corollary}
    \label{prop:low_dim_equivalence}
Suppose that $\mc{E}$ is a two dimensional vector bundle over a manifold $M$. If $\mu$ is a measure with compact support on $\Aut(\mc{E},M)$ such that the induced bundle automorphism on $\Lambda^2\mc{E}$ preserves a non-vanishing volume, then $\mu$ is expanding on average if and only if it is coexpanding on average, i.e. the corresponding measure $\mu^*$ on $\Aut(\mc{E}^*,M)$ over the same base dynamics is expanding on average.
\end{corollary}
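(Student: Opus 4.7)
The plan is to deduce this essentially as a direct consequence of Proposition \ref{prop:coa_d-1_planes}. Since $\mc{E}$ has rank $2$, setting $d=2$ in that proposition we get that $(d-1)$-planes are just lines in $\mc{E}$. Being expanding on average on $1$-planes is, unwinding the definitions, the same as being expanding on average in the usual sense, because for a $1$-plane $L$ with unit vector $v$ the restriction $F|_{\vol_L}$ is simply multiplication by $\|Fv\|$. Thus ``expanding on average on $(d-1)$-planes'' collapses to ``expanding on average'' when $d=2$.

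The only mismatch with Proposition \ref{prop:coa_d-1_planes} is that the latter is stated for measures supported on $\Aut_{\vol}(\mc{E})$, whereas here we only assume the induced dynamics on $\Lambda^2\mc{E}$ preserves a non-vanishing section. I would first reduce to the volume preserving case by trivializing $\Lambda^2\mc{E}$ using the given invariant section: pick a Riemannian metric on $\mc{E}$, compare its induced volume form with the invariant one, and rescale the metric fiberwise so that the two volume forms agree. With this choice each bundle map in the support of $\mu$ acts with determinant $\pm 1$ on every fiber, i.e.\ is in $\Aut_{\vol}(\mc{E})$ in the normalized metric. Since the expanding on average and coexpanding on average conditions are invariant under a bounded change of Riemannian metric (the logarithm of a bounded factor contributes only a bounded additive error which disappears when we pass to sufficiently high iterates $N$), this reduction is harmless.

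Once we are in the volume preserving case, applying Proposition \ref{prop:coa_d-1_planes} gives the chain of equivalences: $\mu$ is coexpanding on average iff $\mu$ is expanding on average on $(d-1)=1$-planes iff $\mu$ is expanding on average. Finally, the statement about $\mu^*$ on $\Aut(\mc{E}^*,M)$ being expanding on average is just the definition of coexpanding on average unpacked, so no extra argument is needed.

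I do not expect any serious obstacle: the whole content is already in Proposition \ref{prop:coa_d-1_planes}, and the only minor issue is the bookkeeping needed to translate from ``preserves a non-vanishing volume on $\Lambda^2\mc{E}$'' to ``lies in $\Aut_{\vol}(\mc{E})$'', which as indicated above is a cosmetic change of metric.
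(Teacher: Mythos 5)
Your proposal is correct and matches the paper's argument: the corollary is stated there as an immediate consequence of Proposition \ref{prop:coa_d-1_planes} with $d=2$, where $(d-1)$-planes are lines and expansion on average on lines is the usual expansion on average. The normalization of the metric so that its induced volume form agrees with the invariant one is exactly the first step of the paper's proof of that proposition, so your reduction is the same bookkeeping the paper already performs.
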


\section{Examples}\label{sec:examples}
 It was proven in 
 Potrie \cite{potrie2022remark} (for surfaces) and Elliot-Smith \cite{elliott2023uniformly} (in arbitrary dimension)
 that the set of conservative measures which are ergodic and expanding on average on $k$ planes is weakly dense, for every $k$.
 By Corollary \ref{prop:low_dim_equivalence} the set of ergodic measures which are coexpanding on average is also dense.
 As the coexpanding on average property is also manifestly $C^1$ open, this shows that ergodic coexpanding on average measures are  weak$^*$ generic. For many examples of generators $\mu$ in this section, we will focus on the coexpanding and expanding on average conditions, rather than the corresponding conditions on $\mu^{-1}$, which follows similarly in the cases below, but is slightly less natural to think about. 

 In this section we discuss several specific models of random dynamics studied in the literature and show that
 many are coexpanding on average.

\subsection{Random flows}
 \label{SSContExamples}

 Here we show how to verify coexpanding on average condition for measures of large support.
Our arguments are close to constructions of 
 \cite{BH12, BBMZ, BCZG, elliott2023uniformly, potrie2022remark} 
 but we provide details, since the model considered below is of independent interest.
 We note that for most of the examples of \S \ref{SSContExamples} it seems possible to verify the  stronger assumptions of
 \cite{BFPS} (in fact for Example \ref{ExPierrehumbert} this is done in \cite{BCZG, BFPS}). However,
as it was mentioned in the introduction, the advantage of our assumptions is that they are stable under 
weak* small perturbations, and so they remain valid if $\mu$ is approximated, for example, by atomic measures.

\begin{example}  Take $p>1$ and let $\cX=(X_1, \dots, X_p)$ be a tuple of  smooth 
divergence free vector fields on $M$, a closed manifold of dimension at least $2$.
Denote by $\Phi_j(t)$ the time $t$ map generated by the flow of $X_j$.
Let $d=\dim(M).$ 
Fix $T>0$, and let $(t, j)$ be uniformly distributed on $[-T, T]\times \{1, \dots, p\}$,
and let $\mu=\mu_\cX$ be the law of $\Phi_j(t)$.
\end{example}

\begin{theorem}
\label{ThGenVF-EoA}
For any fixed $T$ and $p\geq 2$, $\mu_\cX$ is coexpanding on average for an  open and dense  set of tuples $\cX$.
\end{theorem}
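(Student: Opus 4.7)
Openness of the coexpanding-on-average condition is immediate: inequality \eqref{eqn:coexpanding_on_average} is $C^1$-open in $\mu^N$ and the map $\cX\mapsto\mu_{\cX}$ is continuous in the relevant topology, so $C^{\infty}$-small perturbations of any coexpanding-on-average tuple remain coexpanding on average. I concentrate on density.

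By Proposition \ref{prop:coa_d-1_planes}, coexpansion on average of $\mu_{\cX}$ is equivalent to expansion on average on $(d-1)$-planes for the induced cocycle on $\Lambda^{d-1}TM$. By Proposition \ref{prop:EoA_characterization}(b), applied to this volume-preserving cocycle, failure of this property produces a $\mu_{\cX}$-stationary probability measure $\nu$ on $M$ together with either (a) a $\mu_{\cX}$-a.s.~invariant $\nu$-measurable family of $(d-1)$-subbundles of $TM$ or (b) a $\mu_{\cX}$-a.s.~invariant measurable Riemannian metric on $TM$. Denote any such object by $\mathcal{S}$; the goal is to show that generic tuples $\cX$ admit no such $\mathcal{S}$.

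Because $t$ is uniform on $[-T,T]$, the $\mu_{\cX}$-a.s.~invariance of $\mathcal{S}$ forces $(\Phi_j^t)_*\mathcal{S}=\mathcal{S}$ for Lebesgue-a.e.~$t\in[-T,T]$ and every $j\in\{1,\dots,p\}$. Invoking the Avila--Viana invariance principle \cite{AvilaViana10} as in the proof of Proposition \ref{prop:EoA_characterization}, a representative of $\mathcal{S}$ may be chosen continuous along orbits of each flow $\Phi_j^t$; then the set of $t$ satisfying $(\Phi_j^t)_*\mathcal{S}=\mathcal{S}$ is closed of full Lebesgue measure, hence equals $[-T,T]$. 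Differentiating at $t=0$ yields the pointwise infinitesimal identity $L_{X_j}\mathcal{S}(x)=0$ for every $j$ and $\nu$-a.e.~$x$. Consequently there is some $x\in\mathrm{supp}(\nu)$ at which every $X_j$ infinitesimally preserves $\mathcal{S}(x)$ --- either a finite collection of $(d-1)$-planes (cf.~Remark \ref{rem:two_subbundles}) or a positive-definite quadratic form on $T_xM$.

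The remaining task --- ruling out pointwise preserved structures generically --- is handled by Thom's jet transversality theorem, in the volume-preserving form \cite{visik1971some} recalled as Theorem \ref{ThThom}. Define $W\subset J^1(M,(TM)^p)$ as the closed locus of 1-jets of divergence-free $p$-tuples which, at some $x\in M$, jointly infinitesimally preserve some such object. Following the line of argument used in \cite{elliott2023uniformly, potrie2022remark}, $W$ is a finite union of real-algebraic strata, and a linear-algebra dimension count shows that each stratum has codimension strictly greater than $\dim M$ as soon as $p\geq 2$ and $d\geq 2$ (the constraint $L_{X_j}\mathcal{S}(x)=0$ imposes independent conditions across the $p$ indices, while the space of candidate $\mathcal{S}(x)$ is finite-dimensional). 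Thom's theorem then furnishes a residual set of tuples $\cX$ with $j^1\cX\pitchfork W$; since the codimension exceeds $\dim M$, transversality means $j^1\cX(M)\cap W=\emptyset$, and closedness of $W$ makes this latter property $C^1$-open, giving the desired open dense set of coexpanding-on-average tuples.

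The main obstacle I foresee is the upgrading step passing from measurable $\mu_{\cX}$-a.s.~invariance to the pointwise Lie-derivative identity $L_{X_j}\mathcal{S}=0$: the Avila--Viana invariance principle, as usually stated, concerns discrete-time cocycles over shift maps, so its deployment in the continuous-time flow setting here requires verification, and care is needed to handle the case of a multi-hyperplane invariant family rather than a single hyperplane field. The codimension count, while conceptually routine, must likewise be executed with attention to finite families of invariant planes; but once the first step is in place, it follows the standard transversality blueprint of \cite{elliott2023uniformly, potrie2022remark}.
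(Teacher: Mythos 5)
Your overall blueprint (reduce to ruling out invariant measurable geometric structures over stationary measures, upgrade their regularity, then kill the resulting infinitesimal condition by jet transversality) is the same as the paper's, but two steps as written do not go through. First, the regularity upgrade. The Avila--Viana invariance principle asserts that the disintegration of a fiberwise stationary measure is independent of the itinerary; it does not give spatial continuity of $\mathcal{S}$ along orbits, and mere continuity would in any case not let you differentiate in $t$. The paper's route is different: it first shows that accessibility is generic (Theorem \ref{Th-PuSh}(c)), deduces via a Doeblin argument that volume is the unique stationary measure (Lemma \ref{LmAccUE}) --- so one never has to handle exotic $\nu$ supported on small sets --- and then uses accessibility plus Fubini (Lemma \ref{LmSmoothStructure}) to replace the measurable invariant structure by a smooth one defined on all of $M$. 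Without this, your identity $L_{X_j}\mathcal{S}(x)=0$ is only meaningful along individual flow orbits and cannot be combined across different $j$'s or fed into a bracket argument.

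Second, and more seriously, the codimension count fails at the level of $1$-jets. The equation $L_{X_j}\mathcal{S}(x)=0$ involves not only the value $\mathcal{S}(x)$ but also the first derivative of the candidate structure at $x$, which is a free unknown. For a hyperplane field the unknowns $\bigl(V(x),D_xV\bigr)$ have dimension $(d-1)+d(d-1)=d^2-1$, while the $p$ equations $L_{X_j}V(x)=0$ impose only $p(d-1)$ conditions; for $p=2$ these are solvable at every point where some $X_j(x)\neq 0$, so your locus $W\subset J^1$ is essentially everything and has codimension $0$, not greater than $\dim M$. The correct infinitesimal obstruction must use iterated Lie brackets: one needs (Lemma \ref{LmSkewDensity}(c)) that the pairs $(Z(x),D_xZ)$, as $Z$ ranges over the Lie algebra generated by the $X_j$, span $\R^d\times\mathfrak{sl}_d$. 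This is a condition on jets of order comparable to $d^3$, and only by intersecting many independent codimension-$\geq 1$ strata living at different jet orders does the paper reach codimension $d+1$ and hence emptiness after transversality. Note also that invoking brackets presupposes the smoothness upgrade from the first point, since only for a smooth structure is the set of vector fields preserving it a Lie algebra.
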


We need some preparations.
We say that $\cX=(X_1, \dots ,X_p)$ has the {\em accessibility property} if for each $x$ and $y$ in $M$ there are $r\in \mathbb{N}$,
$\vec{j}=(j_1, \dots, j_r)$ and $\vec{t}=(t_1, \dots ,t_r)$ such that 
$\DS  \Phi_{\vec j}(\vec t)x:=\Phi_{j_r}(t_r) \cdots \Phi_{j_1}(t_1) x=y.$ Note that for fixed $\vec j$, 
we can view $\Phi$ as a smooth map $\Phi_{\vec{j}}\colon \R^r\to M$. We denote the set of 
tuples with the accessibility property by $\fA.$
The following results can be found 
in \cite[Section 3]{PSComplexity}.

\begin{theorem}
\label{Th-PuSh}
Suppose that $M$ is a smooth manifold. Then we have the following properties of accessible vector fields.:
\begin{enumerate}[leftmargin=*]
    \item[(a)] \label{ThControl} \cite[Thm.~3.2]{PSComplexity}
 If $\cX\in \fA$ then for each $x, y\in M$ there exist  $\vec{j}$ and $\vec{t_0}$ such that $\vec{t} \mapsto\Phi_{\vec{j}}(\vec{t})x$ is a submersion at $\vec{t_0}$ and $\Phi_{\vec{j}}(\vec{t_0})=y$. 
\item[(b)] \cite[Thm.~3.3]{PSComplexity} (Chow theorem) If for each $x\in M$ the vector fields $X_1, \dots, X_p$ together with their brackets generate $T_x M$ then
$\cX\in \fA.$
\item[(c)]\label{item:genericity_generate}
For each $p\geq 2,$ $\fA$ is open and dense  
in the space of $C^{\infty}$ vector fields.
\end{enumerate}
\end{theorem}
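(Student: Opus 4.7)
I will address the three parts in the order (b), (a), (c), since part (b) provides the tool used in (a) and (c).

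For part (b) (Chow's theorem), the strategy is the classical one: realize iterated Lie brackets of the $X_i$'s as tangent directions to the reachable set $R(x)=\{\Phi_{\vec j}(\vec t)x\}$. Using the Baker--Campbell--Hausdorff expansion, for any pair $X_i,X_j$ one has
\[
\Phi_j(-\sqrt{t})\,\Phi_i(-\sqrt{t})\,\Phi_j(\sqrt{t})\,\Phi_i(\sqrt{t})(x)=x+t[X_i,X_j](x)+O(t^{3/2}),
\]
and iterating the construction produces higher brackets. Under the bracket-generating hypothesis this yields $\dim(M)$ linearly independent tangent directions to $R(x)$ at $x$, so the implicit function theorem gives an open neighborhood of $x$ inside $R(x)$. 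Since the definition of $\fA$ allows negative times, the relation ``$y\in R(x)$'' is a symmetric open equivalence relation on the connected manifold $M$, and we conclude $R(x)=M$.

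For part (a), the key ingredient is the Sussmann--Stefan orbit theorem: the reachable set $R(x)$ is an immersed submanifold, and at any $y\in R(x)$ the tangent space $T_yR(x)$ is spanned by the images of all differentials $D_{\vec t}\Phi_{\vec j}(\vec t_0)$ with $\Phi_{\vec j}(\vec t_0)x=y$. Since $\cX\in\fA$, this span equals $T_yM$. I then select finitely many tuples $(\vec j^{(\ell)},\vec t^{(\ell)})_{\ell=1}^N$ reaching $y$ whose combined differential images span $T_yM$, and splice them into one composition: set $\vec j=\vec j^{(1)}\cdot (\vec j^{(1)})^{-1}\cdot \vec j^{(2)}\cdot (\vec j^{(2)})^{-1}\cdots \vec j^{(N)}$, where the inverse notation denotes the reverse string run with negated times, and take base parameters at which the composite evaluates to $y$. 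Because each ``out and back'' pair acts as the identity at the base parameters, the differentials with respect to the individual active blocks push forward through the intervening diffeomorphisms and together span $T_yM$, giving a submersion at $\vec t_0$.

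For part (c), openness of $\fA$ is immediate from (b): by compactness of $M$ there is a uniform bracket depth $k$ such that depth-$\le k$ brackets span everywhere, which is a $C^{k+1}$-open condition on $\cX$. For density I apply Thom's jet transversality theorem (Theorem~\ref{ThThom}): define $W\subset J^{k}(M,(TM)^p)$ to be the locus where iterated brackets up to depth $k$ of a $p$-tuple of vectors and their jets fail to span the fiber of $TM$. For $p\ge 2$ this is a semi-algebraic subvariety whose codimension exceeds $\dim M$ once $k$ is taken large enough (this is where the hypothesis $p\ge 2$ enters: with one vector field no brackets are produced). Thom's theorem then yields a residual set of tuples whose $k$-jet avoids $W$, hence bracket-generate everywhere, hence lie in $\fA$ by (b); together with openness this gives open and dense.

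The main obstacle is part (a), specifically verifying that the splicing construction really produces a submersion. One must check that after pushing the individual differentials forward through the ``out and back'' padding blocks they land independently in $T_yM$, which rests on invertibility of the flow maps (and hence on the definition of $\fA$ permitting negative times). The codimension count in (c) also requires care, but is standard once one writes out the dimensions of the bracket-generation failure stratification.
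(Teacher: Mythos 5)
The paper does not prove parts (a) and (b) at all (it cites \cite[Section 3]{PSComplexity} for them) and only sketches part (c), so the substantive comparison is with your argument for (c). There you follow the same broad strategy as the paper (reduce to bracket generation and apply Thom jet transversality), but the entire content of the paper's proof is the codimension count that you merely assert. Saying that the locus $W$ where ``iterated brackets up to depth $k$ fail to span'' has codimension exceeding $\dim M$ ``once $k$ is taken large enough'' is exactly the nontrivial claim: a single failure-of-spanning condition on a window of bracket orders is only a positive-codimension determinantal stratum (codimension as low as $1$), not codimension $>d$. The paper gets around this by (i) working on the open set where, say, $X_1\neq 0$ and passing to linearizing coordinates in which the brackets $\mc{L}^i_{X_1}X_2$ become the directional derivatives $\partial^i_{e_1}X_2$, hence \emph{free} coordinates on the jet fiber (this also disposes of the constraint coming from volume preservation, which you do not address at all --- note that jet transversality in $\Diff_{\vol}$ is a separate theorem, cited as \cite{visik1971some}); and (ii) intersecting $d+1$ such failure conditions over \emph{disjoint} ranges of derivative orders so that the codimensions add up to $d+1$. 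Without some version of (i) and (ii) your density argument is incomplete. Your openness argument also has a gap: openness of $\fA$ is not ``immediate from (b)'', because Chow's theorem gives a sufficient, not necessary, condition for accessibility of $C^\infty$ fields, so a tuple in $\fA$ need not be bracket-generating. Either settle for the (open) set of bracket-generating tuples, which is all the paper needs, or derive openness of $\fA$ from part (a) plus compactness.

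For part (a), your splicing as written is the delicate point you flag but do not resolve, and with the paper's convention $\Phi_{\vec j}(\vec t)=\Phi_{j_r}(t_r)\cdots\Phi_{j_1}(t_1)$ (first-listed letter applied first) it fails: perturbing the times in the active block $\vec j^{(\ell)}$, the variation is propagated to $y$ by $D g_N\circ Dg_\ell^{-1}$, where $g_\ell=\Phi_{\vec j^{(\ell)}}(\vec t^{(\ell)})$, so the images of the block differentials arrive at $T_yM$ conjugated by different linear maps and need not span even though the unconjugated images do. This is repairable by reordering the padding so that everything \emph{downstream} of each active block composes to the identity (e.g.\ the word $\vec j^{(1)}\cdot(\vec j^{(2)})^{-1}\cdot\vec j^{(2)}\cdots(\vec j^{(N)})^{-1}\cdot\vec j^{(N)}$), or by the standard maximal-rank argument, but as stated the spanning claim is unjustified. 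Part (b) is a standard sketch and is fine for a cited result.
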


 Part (c) is proven in \cite{Lobry70} for dissipative vector fields, however, the argument also works
in the divergence free case. We sketch the argument here since similar reasoning will be used to  
prove Theorem \ref{ThGenVF-EoA}. It sufficient to consider the case $p=2$.

 We encode the failure of accessibility by the union of large codimension submanifolds $W$ of a jet bundle 
  and then use Thom Jet Transversality Theorem \ref{ThThom}. 
 If the codimension is sufficiently large, transversality then implies that a residual subset of maps have their jet disjoint from $W$. $W$ will be unions of 
 submanifolds of the bundle of jets of sections $M\to TM\oplus TM$.

Roughly the proof will show the following: if we have a pair of vector fields, then generically at every point $z\in M$, either $X_1(z)$ or $X_2(z)$ does not vanish. Moreover, if one, say $X_1$ does not vanish at $z$, then we prove that linear relations among the Lie derivatives $\mc{L}^i_{X_1}X_2$ are a positive codimension in the space of jets and hence for sufficiently large $k$ the lie derivatives $\mc{L}_{X_1}X_2,\ldots, \mc{L}^k_{X_1}X_2$ must span $T_zM$.

First we define a subset $W_i^1$. This is the subset of $j^{i+d}(M,TM\oplus TM)$ of jets of vector fields $(X_1,X_2)$ such that $(z, j^{i+d} X_1, j^{i+d} X_2)\in W^i_1$
if 
$X_1(z)\neq 0$ and $\{\mc{L}_{X_1}^iX_2(z),\ldots,\mc{L}_{X_1}^{i+d}X_2(z)\}$ are not linearly independent. 
 We claim that $W_i^1$ is a finite union of submanifolds of positive codimension.

The claim is easiest to see in coordinates (the codimension is coordinate independent). 
For a given choice of $X_1\neq 0$ at $z$, we can pick linearizing coordinates so that $X_1=e_1$. Then the condition that $\{\mc{L}^i_{X_1}X_2(z),\ldots,\mc{L}^{i+d}_{X_1}X_2\}$ span $T_zM$ 
 is equivalent in coordinates to the condition that the columns of the matrix of vectors $[\partial^i_{e_1}X_2,\ldots,\partial ^{i+d}_{e_1} X_2]$ are  not linearly independent. The failure of linear independence is equivalent to the rank of this matrix  being $q$ for some $q<d$. 
 The condition that the matrix has rank exactly $q$ is the condition that all $q \times q$ minors containing a non-vanishing minor of order $(q-1)\times (q-1)$ have rank $0$ \cite[~2.2.1]{cieliebak2024introduction}. 
 Because each $q\!\times\!q$ minor contains a non-vanishing $(q-1)\!\times\!(q-1)$ subminor, each of these vanishing locuses gives us a submanifold $Q(X_1)$ of $j^{d+i}_z(M,TM)$ of codimension at least $1$ depending, in these coordinates, only on the partial derivatives of $X_2$ of order between $i$ and $i+d$. (Note moreover, that this argument applies even though we are restricting to the jets of conservative vector fields because jets we are considering only involve the derivative in one direction.)  
 Further, note that $Q(X_1)$ varies smoothly with $X_1(z)$. 
 Define $W_i^1(X_1)$ to equal the union of the submanifolds $Q$ corresponding to the various minors just described.

 Having established that
$W_i^1$ is the union of submanifolds of $j^{d+i}_z(M,TM\oplus TM)$ of codimension at least $1$,
consider  $W^1=W_{d}^1\cap W_{d+2d}^1\cap \cdots \cap W_{d+2d^2}^1$. As these conditions are independent of each other, the codimensions add. 
(In the coordinates described above, $W_1$ is literally a product of $W_{d+2kd}^1$ for $k=0,\dots, d$.) Thus,
$W^1$ is a finite union of submanifolds of codimension $d+1$ of $j^{d+2d^2}_z(M,TM\oplus TM)$. Similarly, we may define a subset $W_2$ with the reversed condition, that for certain ranges of $i$ the $\mc{L}^i_{X_2}X_1$ fail to form a basis of $T_zM$ when $X_2$ is non-vanishing.

Then by the Thom jet transversality theorem, it follows that for a generic pair of conservative vector fields $(X_1,X_2)$ over $M$
is transverse to $W^1\cup W^2$. As $W^1\cup W^2$ is codimension $d+1$ subset of the jet bundle, $X_1$ and $X_2$ are thus generically disjoint from $W^1\cup W^2$. This means that generically, if $z$ is a point where one of the vectors fields, say $X_1$, does not vanish, then $\{\mc{L}^i_{X_1}X_2\}_{1\le d(1+2d)}$ will span $T_zM$. From jet transversality, it is also the case that each of $X_1$ and $X_2$ has finitely many zeros and they occur at distinct points (a generic section of $TM\oplus TM$ avoids the zero section and is transverse to $\{0\}\oplus TM\cup TM\oplus \{0\}$). Thus for a generic pair of vector fields, at every point $z$ one of them does not vanish, and the brackets of that field with the other generate $T_zM$. 
Hence the accessibility is generic for pairs of volume preserving vector fields. 

\begin{lemma}
(cf.~\cite{BH12}, \cite[Thm.~4.4]{BBMZ})
\label{LmAccUE}
If $\cX\in \fA$ then the volume is the unique stationary measure for $\mu_{\cX}$.
\end{lemma}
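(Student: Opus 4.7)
Since each $\Phi_j(t)$ preserves volume, volume is a $\mu_\cX$-stationary measure, so one only needs to prove uniqueness. The plan is to promote accessibility into a uniform Doeblin-type minorization
\[
\mu^{r_0}*\delta_x \ge c\cdot \vol|_U \quad \text{for every } x\in M,
\]
for some $r_0\in\mathbb{N}$, $c>0$ and a fixed nonempty open $U\subseteq M$. From such a bound uniqueness is immediate: any stationary $\nu$ satisfies $\nu=\mu^{r_0}*\nu\ge c\cdot \vol|_U$, so two distinct ergodic stationary measures would be mutually singular while both dominating $c\cdot\vol|_U$, forcing $\vol(U)=0$, a contradiction.

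To produce the minorization, fix $y\in M$. By Theorem~\ref{Th-PuSh}(a), for each $x\in M$ there exist $r$, a multiindex $\vec{j}$, and $\vec{t}_0\in (-T,T)^r$ such that $\Phi_{\vec{j}}(\vec{t}_0)x=y$ and $\vec{t}\mapsto \Phi_{\vec{j}}(\vec{t})x$ is a submersion at $\vec{t}_0$. Under $\mu^r$ the multiindex $\vec{j}$ is chosen with probability $(2p)^{-r}$ and the times $\vec{t}$ carry the uniform density $(2T)^{-r}$ on $[-T,T]^r$, so pushing the Lebesgue measure on a small ball around $\vec{t}_0$ through a local section of this submersion exhibits an absolutely continuous component of $\mu^r*\delta_x$ with strictly positive density on a neighborhood of $y$. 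The implicit function theorem makes this estimate stable under small perturbations of $x$, so there are neighborhoods $V(x)\ni x$ and $W(x)\ni y$ and positive constants $r(x), c(x)$ with $\mu^{r(x)}*\delta_{x'}\ge c(x)\vol|_{W(x)}$ for all $x'\in V(x)$.

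It remains to make the exponent and target set uniform in the starting point. Using compactness of $M$, cover it by finitely many $V(x_i)$, and set $r_0=\max_i r(x_i)$. For $x\in V(x_i)$ the minorization holds after $r(x_i)$ steps; in the remaining $r_0-r(x_i)$ steps of $\mu$ there is a positive-probability event on which every time coordinate is so close to $0$ that the composition is $C^1$-close to the identity, and on this event $\vol|_{W(x_i)}$ is pushed onto a set still containing a fixed smaller open $W'(x_i)\ni y$. Volume preservation of each $\Phi_j(t)$ preserves the density lower bound, yielding $\mu^{r_0}*\delta_x\ge c'\vol|_{W'(x_i)}$ for $x\in V(x_i)$; intersecting the $W'(x_i)$ over $i$ furnishes a common $U$ and constant $c$, and the minorization is proved. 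The principal technical point is this final uniformization step, which is the content of the accessibility arguments in \cite{BH12} and \cite[Thm.~4.4]{BBMZ}.
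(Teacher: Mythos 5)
Your strategy is the same as the paper's: use Theorem~\ref{Th-PuSh}(a) to get a submersion $\vec t\mapsto \Phi_{\vec j}(\vec t)x$ at $\vec t_0$, push the uniform law of the times through it to exhibit an absolutely continuous component of the law of $f_\omega^r x$ with positive density near $y$, and conclude uniqueness via a Doeblin-type condition. Your uniformization by compactness and the padding of the remaining $r_0-r(x_i)$ steps with near-zero times are fine (and in fact more explicit than the paper, which simply invokes the Doeblin condition).

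There is, however, one concrete gap: you assert that the control data from Theorem~\ref{Th-PuSh}(a) satisfies $\vec t_0\in(-T,T)^r$. Nothing in the definition of $\fA$ or in Theorem~\ref{Th-PuSh}(a) bounds the components of $\vec t_0$; accessibility is stated for arbitrary real times, and the fixed parameter $T$ of the model can be small compared to the times needed to steer $x$ to $y$. As written, the single-step flows $\Phi_{j}(t)$ with $t\in[-T,T]$ cannot realize a component $t_{0,k}$ with $|t_{0,k}|>T$, so your minorization argument does not get off the ground for small $T$. The fix is the one the paper uses: choose $m$ with $|t_{0,k}|<mT$ for all $k$ and condition on the positive-probability event that the same vector field $X_{j_{k+1}}$ is selected on each of the $m$ consecutive steps $km+1,\dots,(k+1)m$, so that the composed flow realizes any time in $[-mT,mT]$; the submersion and density argument then go through for the $rm$-step chain. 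With that amendment your proof is complete and matches the paper's.
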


\begin{proof}
    Given $x$ and $y\in M$, let $\vec j, \vec t_0$ be as in Theorem \ref{ThControl}. Assume first that $T$ is large enough so that 
    for each $x$ and $y$ the absolute value of each component of $\vec t_0$ is less than $T$. Split $t=(t', t'')$ 
    so that $t''$ is $d$ dimensional and $\DS \det\left(\frac{\partial \Phi_{\vec j}}{\partial t''}\right)(\vec t_0)\neq 0.$
    Then for $\vec t$ close to $\vec t_0$ this determinant is also non zero. Integrating over $t'$ as above
    we see that for all $x$ and $y$ the distribution of $f_\omega^r x$ has density which is positive in a neighborhood
    of $y$. It follows that the Markov chain $x\mapsto f_\omega^r x$ satisfies the Doeblin condition \cite[p.~402]{meyn2009markov} and so its stationary
    measure is unique. This completes the proof in the case $T$ is sufficiently large. In the general case
    consider $m$ such that all components of $\vec t_0$ have absolute value less than $Tm$ and consider the event that
    for $0\leq k<r$ vector field $X_{j_{k+1}}$ is applied during the steps $kr+1, \dots ,(k+1) r$
    where $j_1, \dots,  j_r$ are components from Theorem \ref{ThControl}(a).
 \end{proof}

As a shorthand below, we will say \emph{geometric structures} to refer to measurable families of bundles or 
Riemannian metric as in the criterion in Proposition \ref{prop:EoA_characterization}(b). 

\begin{lemma}
\label{LmSmoothStructure}
Suppose $\cX\in \fA$ and $\mc{E}$ is a bundle over $M$. If there is a measurable geometric structure defined on $\mc{E}$ that is $\mu$ almost surely invariant,
then there is a smooth structure which is $\mu$ almost surely invariant.  
\end{lemma}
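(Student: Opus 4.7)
The plan is to show that any $\mu$-a.s.~invariant measurable section $s$ of the geometric structure bundle $F\to M$ must coincide almost everywhere with a smooth section, which by a.e.~equality remains $\mu$-a.s.~invariant. Since $\mu$ is supported on the maps $\Phi_j(t)$ for $(j,t)\in\{1,\dots,p\}\times[-T,T]$, the hypothesis that $f_*s=s$ for $\mu$-a.e.~$f$ is equivalent to: for each $j$ and Lebesgue-a.e.~$t\in[-T,T]$, $(\Phi_j(t))_*s=s$ almost everywhere on $M$. Iterating this over finitely many factors and applying Fubini, I obtain for each tuple $\vec{j}=(j_1,\dots,j_r)$ that $(\Phi_{\vec{j}}(\vec{t}))_*s=s$ a.e.~on $M$ for Lebesgue-a.e.~$\vec{t}\in[-T,T]^r$. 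A second application of Fubini to the full-measure set $E=\{(\vec{t},x):s(\Phi_{\vec{j}}(\vec{t})x)=(\Phi_{\vec{j}}(\vec{t}))_*s(x)\}\subset[-T,T]^r\times M$ shows that for almost every $x$, the relation $s(\Phi_{\vec{j}}(\vec{t})x)=(\Phi_{\vec{j}}(\vec{t}))_*s(x)$ holds for a.e.~$\vec{t}$.

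Next, fix $y_0\in M$. By Theorem~\ref{Th-PuSh}(a), there exist $\vec{j}$, $\vec{t}_0$ and some $x$ so that $\vec{t}\mapsto\Phi_{\vec{j}}(\vec{t})x$ is a submersion at $\vec{t}_0$ with image $y_0$; by enlarging the tuple as in the second part of the proof of Lemma~\ref{LmAccUE} we arrange that all components of $\vec{t}_0$ lie in $(-T,T)$. Split coordinates as $\vec{t}=(\vec{v},\vec{u})$ with $\vec{v}\in\R^d$ chosen so that the partial derivative in the $\vec{v}$-directions at $\vec{t}_0$ is an isomorphism onto $T_{y_0}M$. A further Fubini argument allows me to perturb and choose $x_0$ close to $x$ and $\vec{u}_0$ close to the $\vec{u}$-part of $\vec{t}_0$ so that the invariance relation holds at $x_0$ for almost every $\vec{v}$ in a neighborhood of the corresponding $\vec{v}_0$. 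Openness of the submersion condition ensures $\vec{v}\mapsto\Phi_{\vec{j}}(\vec{v},\vec{u}_0)x_0$ is still a local diffeomorphism near $\vec{v}_0$, onto an open neighborhood $V$ of $y_1:=\Phi_{\vec{j}}(\vec{v}_0,\vec{u}_0)x_0$, with smooth inverse $y\mapsto\vec{v}(y)$.

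For almost every $y\in V$ I then have
\[
s(y)=\bigl(\Phi_{\vec{j}}(\vec{v}(y),\vec{u}_0)\bigr)_*s(x_0),
\]
and the right-hand side defines a smooth section of $F$ on $V$ because $\vec{v}(y)$ is smooth, the action $(\vec{t},\xi)\mapsto(\Phi_{\vec{j}}(\vec{t}))_*\xi$ is smooth in both arguments, and $s(x_0)$ is a fixed element of the fiber $F_{x_0}$. Thus $s$ agrees a.e.~with a smooth section on $V$. Covering $M$ by countably many such neighborhoods and observing that two smooth sections that coincide a.e.~on an open set must coincide pointwise, the local smoothings glue to a global smooth section $\tilde s$ with $\tilde s=s$ almost everywhere; the $\mu$-a.s.~invariance transfers from $s$ to $\tilde s$. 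The main technical obstacle is the double Fubini step: one must choose the base point $x_0$ and the transverse parameter $\vec{u}_0$ \emph{before} restricting to the $d$-dimensional slice, since otherwise the full-measure set of ``good'' $\vec{t}$'s in $[-T,T]^r$ need not meet a prescribed $d$-dimensional slice in positive $d$-dimensional measure.
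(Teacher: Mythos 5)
Your proof is correct and follows essentially the same route as the paper's: use accessibility to get a submersion $\vec t\mapsto \Phi_{\vec j}(\vec t)x$, apply Fubini to locate a base point and transverse parameters where the invariance relation holds for a.e.\ value of the $d$ submersion coordinates, and observe that the pushforward of the fixed fiber value is then a smooth local version of the structure, which globalizes by covering. You are in fact somewhat more careful than the paper about the order of the Fubini quantifiers and about gluing the local smooth sections into a global one.
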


\begin{proof}
Given $x$ and $y$ let $\vec j, t', t''$ be as in the proof of Lemma \ref{LmAccUE}. Let $q_x$ be the invariant measurable structure given by
our assumption. By Fubini Theorem for almost every $x$ there is $\bar t'$ arbitrary close to $t'_0$ such that for
almost all $t''$, 
$\DS \Phi_{\vec j}(\bar t', t'') q_x=q_{\Phi_{\vec j} (\bar t' t'') x}$. Note that the left hand side is a smooth function of
$\tilde y=\Phi_{\vec j}(\bar t', t'') x.$ It follows that $q_{\tilde y}$ coincides almost surely with a smooth version in
a small neighborhood of $y$. By compactness it follows that there exists a continuous structure $\bar q_y$ which coincides with 
$q_y$ almost everywhere. By continuity $\bar q$ is $\mu$ invariant.
\end{proof}

\begin{lemma}
\label{LmSkewDensity} Suppose that $\cX\in\fA$, $\cE\to M$ is a vector bundle, and $F$ is a random map of $\cE$ covering $\mu$.
\begin{enumerate}[leftmargin=*]
    \item 
If there exists $x\in M$  such that for all non zero $v\in \cE_x$ the law of the image of $(x,v)$ has 
 an absolutely continuous component on $\mc{E}$, then there are no invariant 
 smooth geometric structures for $\cX.$

\item (cf.~\cite{elliott2023uniformly}) Suppose that for some $x\in M$, the law of the pair $(fx, F_x)$ has density on
$M\times \SL_d(\mathbb{R})$ then there are no invariant smooth
geometric structures.

\item Let $\fU$ denote the Lie algebra  generated by $X_1, \ldots , X_p.$
Suppose that there exists a chart, a point $x\in M$, and vector fields 
$Z_1, Z_2, \dots ,Z_q\in \fU$ with $q=d+d^2-1$ such that 
the vectors 
$$ \left\{\left(Z_j(x), D_xZ_j \right)\right\}_{j=1}^q$$ 
generate $\R^d\times \mathfrak{sl}_d$. Then there are no $\cX$ invariant geometric structures.
\end{enumerate}
\end{lemma}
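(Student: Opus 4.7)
Parts (1) and (2) follow the same template: any $\mu$-a.s.\ invariant smooth geometric structure on $\mathcal{E}$ (a proper subbundle family or a Riemannian metric) forces the image under $F$ to lie on a positive-codimension submanifold, contradicting the absolutely continuous component of the hypothesized laws. Part (3) is a bracket-generating argument lifting the random flow to the $\SL_d(\mathbb{R})$-frame bundle, reducing to (2).

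\textbf{Part (1).} Suppose a smooth $\mu$-a.s.\ invariant family of proper subbundles $\{q_y\}_{y \in M}$ of $\mathcal{E}$ exists. Its total space $Q = \{(y, w) \in \mathcal{E} : w \in q_y\}$ is a smooth submanifold of $\mathcal{E}$ of positive codimension, hence of zero Lebesgue measure. With $x$ as in the hypothesis and any $0 \neq v \in q_x$, invariance yields $F(x,v) \in Q$ almost surely, contradicting the assumed absolutely continuous component of the law of $F(x,v)$. The argument for an invariant Riemannian metric $g$ is identical, with $Q$ replaced by the codimension-one hypersurface $\{(y,w) \in \mathcal{E} : g_y(w,w) = g_x(v,v)\}$.

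\textbf{Part (2).} Fix $x$. If $\{q_y\}$ is a smooth invariant family of $k$-subbundles, then almost surely $(fx, F_x)$ lies in $\Sigma = \{(y, g) \in M \times \SL_d(\mathbb{R}) : g \cdot q_x = q_y\}$, whose fiber over each $y$ is a coset of the parabolic stabilizer of $q_x$ and has codimension $k(d-k) > 0$ in $\SL_d(\mathbb{R})$. So $\Sigma$ is Lebesgue-null in $M \times \SL_d(\mathbb{R})$, contradicting the density hypothesis. An invariant Riemannian metric gives the corresponding coset of the orthogonal group stabilizing $g_x$, with codimension $(d-1)(d+2)/2$, and the same contradiction.

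\textbf{Part (3).} Lift each $X_j$ canonically to a vector field $\hat X_j$ on the principal $\SL_d(\mathbb{R})$-bundle $P \to M$ of unit-volume frames. Since the canonical lift intertwines Lie brackets, the Lie algebra generated by $\{\hat X_j\}$ is the lift $\hat{\mathfrak{U}}$ of $\mathfrak{U}$. Under the natural identification $T_{\hat x} P \cong T_x M \oplus \mathfrak{sl}_d$, the value of $\hat Z$ at a frame $\hat x$ over $x$ is precisely $(Z(x), D_x Z)$, so the hypothesis says $\hat{\mathfrak{U}}$ spans $T_{\hat x} P$ at some (hence every nearby) point. By Chow (Theorem \ref{Th-PuSh}(b)), the collection $\{\hat\Phi_j(t)\}$ is accessible on a neighborhood in $P$. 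Running the submersion/Doeblin argument from the proof of Lemma \ref{LmAccUE} on this neighborhood in $P$ shows that for some $r$ the law of $(f_\omega^r x, D_x f_\omega^r)$ has an absolutely continuous component on an open set in $M \times \SL_d(\mathbb{R})$. Applying Part (2) to $\mu^r$ (invariance of a geometric structure under $\mu$ is equivalent to invariance under $\mu^r$) then rules out any $\mathcal{X}$-invariant smooth geometric structure. The main obstacle is the last step: verifying that Chow on $P$ together with the submersion lemma yields a genuine density component of the joint law of position and derivative, which amounts to checking that the frame-bundle version of the Chow--H\"ormander argument behind Lemma \ref{LmAccUE} produces a map with surjective differential into $M \times \SL_d(\mathbb{R})$, not merely into $M$.
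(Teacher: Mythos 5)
Parts (1) and (2) are correct and match the paper's argument in substance: an invariant smooth structure confines the image of $(x,v)$ (resp.\ of $(fx,F_x)$) to a positive-codimension, hence Lebesgue-null, subset, contradicting the absolutely continuous component. (The paper actually proves (2) by reducing it to (1) — a density of $A$ on $\SL_d(\R)$ gives a density of $Av$ for each $v\neq 0$ — rather than by your direct coset-codimension computation, but the two are interchangeable.)

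Part (3) is where you diverge from the paper, and where your argument has a gap. You try to show that the \emph{original} random system $\mu$, lifted to the frame bundle $P$, has a law with an absolutely continuous component on $M\times\SL_d(\R)$, by invoking Chow and then "the submersion/Doeblin argument from Lemma \ref{LmAccUE}". But Theorem \ref{Th-PuSh}(a)–(b) as available in the paper require the bracket-spanning condition at \emph{every} point (global accessibility $\cX\in\fA$), whereas the hypothesis of (3) only gives spanning of $T_{\hat{x}}P$ at one point $\hat{x}$ (hence on a neighborhood). To conclude you would need a local version of the orbit/submersion theorem on $P$ — precisely the step you flag as "the main obstacle" and leave unverified. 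The paper avoids this entirely with a different idea: the set of vector fields preserving a given smooth geometric structure $q$ is a Lie algebra, so if $q$ were invariant under $\mu$ then every $Z\in\fU$, in particular $Z_1,\dots,Z_q$, would preserve $q$; one then considers the \emph{auxiliary} random system generated by flowing along the $Z_j$. For that system no accessibility argument is needed: since the $q=d+d^2-1$ lifted vectors $(Z_j(x),D_xZ_j)$ already span $T_{\hat{x}}P$, the endpoint map $\vec{t}\mapsto \Phi_{Z_q}(t_q)\circ\cdots\circ\Phi_{Z_1}(t_1)$ is a submersion onto a neighborhood in $M\times\SL_d(\R)$ at $\vec{t}=0$, so the joint law of $(f(x),Df(x))$ has a density and part (2) applies to the auxiliary system, giving the contradiction. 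Your route is probably completable (via Sussmann–Krener), but as written it rests on an unproved local accessibility/submersion statement, whereas the paper's Lie-algebra observation makes the density of the derivative cocycle immediate.
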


\begin{remark}
This lemma can be applied to check that certain random systems are (co)expanding on average. As we saw above, if
$\cX\in \fA$ then the corresponding Markov process on $M$ satisfies the Doeblin condition which directly gives a
spectral gap in $L^2.$ However, the results of \S \ref{SSStability} show that the spectral gap also persists for small (in a weak topology) perturbations
of $\mu$ which is a new result, 
 cf.~Remark~\ref{RmNotOpen}.    
\end{remark}

\begin{proof}[Proof of Lemma \ref{LmSkewDensity}.]
(a) Suppose there is an invariant (finite) subbundle family $\cF\subset \cE.$ Then taking $v\in \cF_x$ we see that
$F_x(v)\in \cF_{fx}$ has finite support and so its law cannot have a component with a 
density. Likewise if $q_x$ is an invariant metric 
then $q_x(v, v)=q_{fx}(F_x v, F_x v)$, again precluding $(x, F_x v)$ from having an absolutely continuous law.

(b) follows from (a) since if $A$ has a density on $\SL_d(\reals)$ and $v$ is a non zero vector then 
the law of $Av$ is absolutely continuous as well.

(c) Let $q$ be an invariant geometric structure.
Note that the space of vectorfields preserving $q$ forms a Lie algebra.
Now given $\cZ=\{Z_1,\ldots Z_q\}$ as in the assumption of the lemma, and $T>0$,
then the random dynamical systems defined by random motion along the fields $\cZ$ also preserve $q.$ However, the distribution of $(f(x), Df(x))$ has an absolutely continuous component in $\mathbb{R}^d\times \SL_d(\mathbb{R})$
and so by already proven part (b), $\cZ$ cannot preserve $q$.
\end{proof}

We are now ready to prove Theorem \ref{ThGenVF-EoA}. 

\begin{proof}
    By Theorem \ref{Th-PuSh}(c), accessibility is generic, hence by Lemma \ref{LmAccUE} we know that generically volume is the unique invariant measure, so by Lemma \ref{LmSmoothStructure} it suffices to verify that generically the criterion
of Lemma \ref{LmSkewDensity}(c) is satisfied. As before, we can check this using jet transversality
 which in fact gives a stronger statement that the condition of the theorem generically holds for all $x\in M$. 
We will not give a detailed argument, but explain why the argument of Theorem \ref{Th-PuSh}(c) extends to this case. If we have two vector fields $X_1,X_2$, then if $X_1$ is non-vanishing, then in coordinates we may write $X_1=e_1$. What we want to show is then that the vector fields $Y_i\coloneqq ({\partial^i_{e_1} X_2,\partial^i_{e_1} DX_2})$ span  $\R^d\times \mf{sl}_d$. A slight complication now arises because the pairs $(Y_i,DY_i)$ cannot be chosen arbitrarily as the second term is the derivative of the first. However, note that we can instead restrict to even numbered indices $Y_2,Y_4,\ldots$ and still be able to choose the entries of these matrices freely. Analogously to before, we can define a subset of the jet bundle $W_i^1$ according to the condition that $\{(Y_{i+2k},DY_{i+2k})\}_{1\le k\le d+d^2-1}$ fail to span $\R^d\times \mf{sl}_d$. As before, this is a positive codimension condition that is given by a union of submanifolds of the jet bundle. 
Letting $D\!=\!4d\!+\!(d\!+\!d^2\!-\!1)$
we see that the subset $W^1\!=\!W^1_d\cap W^1_{d+D}\cap \cdots \cap   W^1_{d+(d+1)D}$  has codimension $d+1$ in the jet bundle $j^{d+(d+2)D}(M,TM\oplus TM)$. By jet transversality, we can now similarly conclude.
\end{proof}

 \begin{remark}
In Theorem \ref{ThGenVF-EoA} the amount of time that each vector field is applied for is uniformly bounded by $T$. There are several models considered in the literature where the times are unbounded. For example, in piecewise deterministic Markov chains
\cite{BH12, BBMZ, Davis84} the switching time has an exponential distribution. In the opposite direction 
one can make the switching rate go to zero obtaining stochastic PDEs studied in 
\cite{Bax86, Bax89, BS88, Car85}. In all those models expansion on average is also generic, however, 
we cannot immediately apply Theorem \ref{thm:essential_spectral_gap} since the corresponding measures
are not concentrated on a compact set. It is likely that this $C^k$ norms could be controlled 
using appropriate growth estimates for the solution of the linearized equation, but we do not pursue this topic
here in order to simplify the presentation.
 \end{remark}

 Theorem \ref{ThGenVF-EoA} allows us to construct coexpanding on average systems in a small neighborhood of the identity. Similar ideas could be used to construct coexpanding on average systems 
near an arbitrary diffeomorphism. Here we give one example.

\begin{example}
   Let $f$ be a volume preserving diffeomorphism of a compact manifold $M$ and $X$ be a divergence free
  vector field on $M$. Fix $T>0$ and let $\mu_{f, X, T}$ be the law of $\Phi_X(t) \circ f$ where $t$ is uniformly distributed on $[-T, T]$. 
\end{example}

\begin{theorem}
\label{ThXF}
Suppose that $f$ is a diffeomorphism such that for each $\ell$,  $f$ has only finitely many periodic points of period $\ell$. Then for an open and
dense set of vector fields $X$, $\mu_{f, X, T}$ is coexpanding on average and, moreover, this measure is coclean in the sense of Definition \ref{DefClean}.
\end{theorem}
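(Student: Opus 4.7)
The plan is to verify the coclean property of Definition~\ref{DefClean} on an open and dense set of vector fields $X$, whence Corollary~\ref{rem:clean} automatically yields the coexpanding-on-average conclusion. The strategy parallels the proof of Theorem~\ref{ThGenVF-EoA}: we establish an accessibility property for the pair $(f,X)$ and then use Thom jet transversality to rule out invariant geometric structures on $T^*M$.

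First, consider the control map
$$g_n(t_1,\dots,t_n;x) = \Phi_X(t_n)\circ f\circ \Phi_X(t_{n-1})\circ f\circ\cdots\circ \Phi_X(t_1)\circ f(x).$$
A direct chain-rule computation gives $\partial g_n/\partial t_i|_{t=0} = Df^{n-i}(X(f^{n-i+1}(x)))$, a tangent vector at $f^n(x)$. I would show that for a generic divergence-free $X$, for every $x\in M$ there exist $n$ and $t^*\in(-T,T)^n$ at which $g_n(\cdot;x)$ is a submersion. Non-spanning of the $\partial_{t_i} g_n$ is a positive-codimension condition on the $n$-jet of $X$ along the forward $f$-orbit of $x$. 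For non-periodic $x$ the orbit is infinite, so by Thom jet transversality (in the divergence-free setting, as in Theorem~\ref{Th-PuSh}(c)) one obtains spanning at some $n$ generically. For the (at most countable, by hypothesis) periodic points one uses localized perturbations of $X$ along each finite orbit to achieve the same. The resulting accessibility condition is manifestly open in $X$.

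Accessibility then yields properties (i) and (ii) of coclean exactly as in Lemma~\ref{LmAccUE}: fiber-integration over $t$ near $t^*$ produces an absolutely continuous component of the law of $f^n_\omega x$ (interpreted for the appropriate iterate $\mu^n$), and the Doeblin condition forces volume to be the unique $\mu$-stationary measure, hence ergodic. For property (iii)', assume a measurable $\mu$-a.s.~invariant family of subbundles of $T^*M$ or an invariant metric on $T^*M$ exists; accessibility upgrades it to a smooth invariant structure by the argument of Lemma~\ref{LmSmoothStructure}. I would then invoke the analog of Lemma~\ref{LmSkewDensity}(c) for the cotangent cocycle: it suffices to produce some $x$, some $n\geq d+d^2-1$, and some $t^*$ at which the joint map
$$t\mapsto \bigl(g_n(t;x),\,Dg_n(t;x)^*\bigr)$$
is a submersion onto $M\times \mathrm{SL}(d,\mathbb{R})$. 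Differentiating in $t_i$, the base component involves $Df^{n-i}(X(f^{\bullet}(x)))$ while the fiber component involves conjugates of $DX$ by intermediate cocycle blocks, so filling $\mathbb{R}^d\oplus\mathfrak{sl}_d$ is again a positive-codimension condition on a sufficiently high jet of $X$ along the $f$-orbit of $x$, and the transversality argument goes through.

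The hardest step will be Step~3: ensuring enough independent variation of the pair (position, derivative) to fill $\mathbb{R}^d\oplus\mathfrak{sl}_d$. Along any periodic $f$-orbit, the iterates $Df^p$ act on a \emph{fixed} tangent space and the algebraic possibilities for the fiber component are a priori constrained; one has to work at a non-periodic $x$ whose infinite forward orbit permits the iterated pushforwards of $X$ (and of $DX$) to generate the required Lie-algebraic variety. The hypothesis that $f$ has only finitely many periodic points of each period is precisely what guarantees that generic $X$ can be perturbed locally near each obstructing periodic orbit without destroying the transversality conditions produced elsewhere, making the final open-and-dense conclusion available.
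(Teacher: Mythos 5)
Your overall strategy --- verify the three conditions of Definition \ref{DefClean} (coclean) and invoke Corollary \ref{rem:clean}, using a control/jet-transversality argument along $f$-orbit segments --- is the same as the paper's. The submersion computation for the control map, the upgrade of measurable invariant structures to smooth ones via accessibility, and the reduction of condition (iii)$'$ to a submersion of $t\mapsto \bigl(g_n(t;x), Dg_n(t;x)^*\bigr)$ onto $M\times \SL_d(\R)$ all correspond to the paper's Steps 1--2 (the paper fixes $n=2d+1$ so that the non-spanning locus has codimension $d+1$ in a single jet bundle, which is what allows one to quantify over all base points at once and get an open and dense, rather than merely residual, set).

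The genuine gap is your treatment of periodic points. You propose to obtain the submersion at periodic $x$ by ``localized perturbations of $X$ along each finite orbit.'' This cannot work as stated. At a fixed point $x$ of $f$ (with $t^*=0$) the control directions are $\partial_{t_i}g_n|_0 = Df^{n-i}(X(x))$, i.e., the iterates of the single vector $X(x)$ under $D_xf$; these span $T_xM$ only if $X(x)$ is cyclic for $D_xf$, and if for instance $D_xf=\mathrm{Id}$ they span a line for \emph{every} choice of $X$, so no perturbation of $X$ helps. More generally, for a point of period $p\le 2d+1$ the orbit segment has repeated points, so the jet conditions at the orbit points are not independent and the codimension count underlying the transversality argument breaks down. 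Moreover, the set of all periodic orbits is countable and may be dense in $M$, so imposing a separate condition near each of them yields at best a residual set, not the claimed open and dense one. The paper's fix is different: it runs the transversality argument only for base points outside the finite set $P_{2d+2}$ of points of period at most $2d+2$ (where the relevant orbit points are distinct), and disposes of $P_{2d+2}$ by noting that generically $X$ does not vanish on this finite set, so after one random step the orbit almost surely leaves $P_{2d+2}$ and the absolutely continuous component is produced starting from $x_1$. In other words, the finiteness hypothesis is used to make the bad set finite and escapable by the noise, not to make each periodic orbit individually perturbable. (A small additional slip: $\partial g_n/\partial t_i|_{t=0}=Df^{n-i}(X(f^i(x)))$, not $Df^{n-i}(X(f^{n-i+1}(x)))$.)
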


\begin{proof} \noindent\textbf{Step 0.} First we introduce some notation. Let $P_k$ be the set of points in $M$ of period less or equal to $k$ for $f$. By assumption this is a finite set. Let $x_n$ be the distribution of the point $x_0$ after $n$ iterates of the random dynamics driven by $\mu_{f,X,T}$. The plan of the argument is to check the criteria in Definition \ref{DefClean} of cocleanness, as it then follows that the resulting dynamics is coexpanding on average by Corollary \ref{rem:clean}. 

\noindent\textbf{Step 1.} 
Write $X^t$ for the time $t$ flow of the vector field $X$. Note that we can rewrite the dynamics of $\prod_{i=1}^k X^{t_i}f$ by pushing the vector fields through $f$. Let a vector $\vec{t}=(t_1,\ldots,t_k)\in \R^k$ give the durations of the random flows. We then define
$\displaystyle
\Phi^k(\vec{t}\,)\coloneqq X^{t_k}\hat{X}^{t_{k-1}}_{k-1}\cdots \hat{X}^{t_{1}}_1f^k(z) 
$
where 
$\displaystyle
\hat{X}^t_i=((Df^{k-i})_* X)^t.
$

Let us first show
that the image of a point $f^{-k}(z)$ has absolutely continuous component containing $z$ in its support. To do this, it suffices to show that for $k$ sufficiently large it is generic that $D\Phi^k(\vec{0})$ has rank $d=\dim M$.

For $\Phi$ to have rank $d$, it suffices that $X,\hat{X}_{k-1},\ldots,\hat{X}_1$ span $T_zM$. Note that if $z$ is a point with $f^{-k}(z),\ldots,z$ distinct, then having $X,\hat{X}_{k-1},\ldots,\hat{X}_1$ span $T_zM$ is a constraint on the $1$-jet of $X$ at the points $f^{-(k-1)}(z),\ldots,z$. Note that if $k=d$, then the codimension of failing to span is codimension $1$ in the space of jets. Moreover, when $k=d+j$ the condition that $X,\hat{X}_{d+j-1},\ldots,\hat{X}_1$ fail to span $T_zM$ is codimension $j$ in the space of jets: this is the codimension of the condition that the all $d\times d$ minors of a $(d+j)\times d$ matrix have determinant zero. In particular, for $k=2d+1$, the condition is codimension $d$. Let $W_{2d+1}$ be the space of jets of vector fields $X$ such that $X,\hat{X}_{2d},\ldots,\hat{X}_1$ do not span $T_zM$ for all points $z\in M\setminus P_{2d+2}$. Moreover, similar to the proof of Theorem \ref{Th-PuSh}, $W_{2d+2}$ is the union of finitely many manifolds in the space of $1$-jets of codimension $2d+1$. Thus by the Jet transversality theorem (Theorem \ref{ThThom}), 
 a generic vector field $X$ is transverse to $W_{2d+1}$, and hence is disjoint from $W_{2d+1}$ because $W_{2d+1}$ is codimension $d+1$. In particular, it implies that for all $x_0\notin M\setminus P_{2d+2}$, the law of $x_{2d+1}$ has an absolutely continuous component containing $f^{2d+2}(x_0)$.

To see that generically for every $x_0\in M$,
the law of $x_{2d+2}$ has an absolutely continuous component note that generically $X$ does not vanish on $P_{2d+2}$, hence almost surely $x_1\notin M\setminus P_{2d+2}$, so we can apply the result of the previous paragraph. This gives the needed conclusion for the distribution of $x_{2d+2}$.

\noindent\textbf{Step 2.} Next, we check that there exists $k_d\in \N$ such that for all $(x,v)$ with $v\in T^1_xM$, the unit tangent bundle of $M$, the distribution of $(x_{k_d},v_{k_d})$ has an absolute continuous component as long as $d'$ is sufficiently large. We omit a detailed argument, as it is similar to the proof of Lemma \ref{LmSkewDensity}, and is an elaboration of the argument in the previous step.  

\noindent\textbf{Step 3.} Next we show that volume is ergodic. 

We claim that if $\Omega$ is an  invariant set for $\mu_{X,f,T}$-almost every map then it is also invariant by both $f$ and the flow $X^t$.
Indeed, for almost every $(t_1, t_2)\in [-T, T]^2$ we have $X^{t_1} f(\Omega)=X^{t_2} f(\Omega).$ It follows that for almost every 
$t\in [-2T, 2T]$ we have $X^t f(\Omega)=f(\Omega).$ Since the set of $t$s such that this equality holds is closed 
by Proposition \ref{PrNoInvSetsDiff},
$f(\Omega)$ is preserved by the flow of $X$. Hence for almost every $t$,\;
$\Omega=X^t f(\Omega)=f(\Omega)$ so $f$ preserves $\Omega$ as well.

We now show that the random dynamics generated by  $\Phi^{2d+1}(\vec t)$ from Step 1 is ergodic. From this ergodicity of $\mu_{f,X,T}$ follows easily:  if $\Omega$ is the invariant set as above then $\vol(\Omega)\in \{0, 1\}$ and the ergodicity
follows from Proposition \ref{PrRandomET}.

So let $\hmu$ be the measure on $\Diff_{\vol}^\infty(M)$ defined by $\Phi^{2d+1}(\vec t)$ and consider the Markov process 
$\{y_n\}$ on $M$ defined by  $y_n=g_n y_{n-1}$ where $\{g_n\}$ are IID diffeomorphisms distributed according to $\hmu$.
We will show that this process is exponentially mixing in the sense that for each $y', y''\in M$ the measures
$\hmu^n*\delta_{y'}$ and $\hmu^n*\delta_{y''}$ are exponentially close with respect to the variational distance.
To this end it suffices to show that there exists $n_0$ and a ball $B\subset M$ such that for each $n\geq n_0$ there is
a constant $\rho_n$ such that 
for each initial point $y_0$ the distribution of $y_n$ has an absolutely continuous component with density bounded
from below by $\rho_n$. We first show this when initial state is bounded away from $P_{2d+1}$. 
Note that the proof of Step 1 shows that generically for each $y_0\not \in P_{2d+1}$ the distribution of $y_1$
has a continuous component with density positive in a ball centered at $y_0$ with radius $r(y_0)$.  
Let $G_\eta$ be the set of points whose distance from $P_{2d+1}$ is at least $\eta$. By compactness there exists $\bar r$ such 
that $r(y_0)\geq \bar r$ for $y_0\in G_\eta$ and moreover the density on the corresponding components is
at least $\bar \rho.$
Decreasing $\bar r$ if necessary we can find a small ball $B(\bar y, \bar r)$
which is completely contained in $G_\eta$. 
Since $M$ is connected for small $\eta$ there exists $n_1$ such that for each $y_0\in G_\eta$ there exists
a sequence $y_0, y_1, \dots , y_{n_1}=\bar y$ such that the distance between the consecutive points is less than $\bar r/3$.
This proves the claim for $y_0\in G_\eta$ and $n\geq n_1$ with the lower bound on the density equal to 
$\left(\bar\rho \min_{y\in M} \vol(B(y, \bar r/3))\right)^{n_1}.$
Next, if $\eta$ is sufficiently small then there exists $q>0$
such that for each $y_0\not\in G_\eta$ the probability that $y_1\in G_\eta$ is at least $q$ proving the result for all
$y_0\in M$ with $n_0=n_1+1$. 
\vskip2mm

\noindent\textbf{Step 4.} We now conclude that $\mu_{f,X,T}$ is coclean. Indeed, we showed in Step 1 that the distribution of $x_{2d+d}$ has an absolutely continuous component. In Step 3 we showed that volume is ergodic, and in Step 2, because $(x,v)$ has an absolutely continuous component there cannot be a volume measurable invariant subbundle of $T^*M$. Hence the measure $\mu_{f,X,T}$ is coclean and thus coexpanding on average by Corollary \ref{rem:clean}.
\end{proof}

Using similar ideas, we can show that the Pierrehumbert model studied in \cite{BCZG} is coexpanding on average. The model originates in the paper \cite{pierrehumbert1994tracer}. The Pierrehumbert model is a random  
composition of vertical and horizontal sinusoidal shears, where the shears each have have independent, uniformly random phase shifts. Formally, this model is described as follows. 

\begin{example}
\label{ExPierrehumbert}
    Let $\mathbb{T}^2=[0,2\pi)^2$ be the torus
and $\tau$ be a positive parameter.
Then we define two measures $\mu_H$ and $\mu_V$ on $\Diff^{\infty}_{\vol}(\mathbb{T}^2)$. The measure $\mu_H$ is given by the pushforward of normalized Lebesgue measure on $[0,2\pi)$ by the map 
\[
t\mapsto (x,y)\mapsto (x+\tau\sin(y+t),y),
\]
and $\mu_V$ is the pushforward of the normalized Lebesgue measure on $[0,2\pi)$ by 
\[
t\mapsto (x,y)\mapsto (x,y+\tau\sin(x+t)). 
\]
Then the Pierrehumbert model is the random dynamics of the measure $\mu=\mu_V*\mu_H$. 
\end{example}

\begin{proposition}
The Pierrehumbert model is coexpanding on average.
\end{proposition}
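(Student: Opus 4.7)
Since $\mathbb{T}^2$ is two-dimensional and each map in the support of $\mu$ preserves volume, Corollary \ref{prop:low_dim_equivalence} (via Proposition \ref{prop:coa_d-1_planes}) reduces the claim to showing that $\mu$ is \emph{expanding on average}. The plan is to verify that $\mu$ is clean in the sense of Definition \ref{DefClean} and then invoke Corollary \ref{rem:clean}, which yields expansion on average and ergodicity of volume simultaneously.

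For condition (i) of cleanness, a direct computation of the Jacobian of the map $(t,s)\mapsto V_s H_t(x,y)$ gives
\[
J(t,s;x,y) \;=\; \tau^2\,\cos(y+t)\,\cos\bigl(x+\tau\sin(y+t)+s\bigr),
\]
which is nonzero on a full-measure subset of $[0,2\pi)^2$. Hence the pushforward of the uniform measure under $(t,s)\mapsto V_s H_t(x,y)$ is absolutely continuous on $\mathbb{T}^2$ for every base point $(x,y)$. For condition (ii) (ergodicity of volume), the absolute continuity just established, together with the connectedness of $\mathbb{T}^2$, gives a Doeblin-type minorization after a bounded number of iterations, which forces every $\mu$-a.s.~invariant set to have volume $0$ or $1$ by Proposition \ref{PrRandomET}; alternatively one can invoke the exponential mixing result of \cite{BCZG}.

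For condition (iii), the absence of any $\mu$-a.s.~invariant measurable family of line bundles or measurable Riemannian metric, I plan to argue along the lines of Lemma \ref{LmSkewDensity}(a) applied to $\mu^{*N}$ for some $N\geq 2$. Since an invariant geometric structure for $\mu$ is automatically invariant for $\mu^{*N}$, it suffices to exhibit $(x_0,y_0)\in \mathbb{T}^2$ such that, for every nonzero $v\in T_{(x_0,y_0)}\mathbb{T}^2$, the law of $(f_N(x_0,y_0),\, Df_N(x_0,y_0)\,v)$ has an absolutely continuous component on $T\mathbb{T}^2$; any invariant line field or sphere bundle of a measurable invariant metric is a measure-zero subset of $T\mathbb{T}^2$, contradicting such absolute continuity. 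Taking $N=2$ yields four parameters $(t_1,s_1,t_2,s_2)$ mapping into the four-dimensional space $T\mathbb{T}^2$, and the plan is to verify non-vanishing of the resulting $4\times 4$ Jacobian at a suitable point. The main idea is that variations in $(t_2,s_2)$ already shift $f_2(x_0,y_0)$ in a non-degenerate two-dimensional manner by the computation above, while variations in $(t_1,s_1)$ alter the intermediate shear derivatives in a non-isometric one-parameter way which, by the chain rule, moves the direction of $Df_2(x_0,y_0)\,v$ transversally to the position variation for generic $v$.

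The main obstacle is the explicit Jacobian rank computation for condition (iii): although the geometric picture above is clear, carrying out the calculation requires bookkeeping of the chain-rule contributions of four nested sinusoidal shears. By real-analyticity of the Pierrehumbert maps in all parameters, non-vanishing of this Jacobian at a single parameter point propagates to non-vanishing on a set of positive measure, which is what is needed for absolute continuity. Once (i)--(iii) are in hand, Corollary \ref{rem:clean} yields that $\mu$ is expanding on average and hence, by the reduction in the first paragraph, coexpanding on average.
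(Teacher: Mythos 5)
Your reduction to expansion on average via Corollary \ref{prop:low_dim_equivalence} matches the paper, and your verification of conditions (i) and (ii) of cleanness (the Jacobian $\tau^2\cos(y+t)\cos(x+\tau\sin(y+t)+s)$ is correct) is fine. But the entire content of this proposition lives in condition (iii) --- ruling out $\mu$-a.s.\ invariant measurable line-bundle families and metrics --- and there you have only a plan, not a proof. You explicitly defer the $4\times 4$ Jacobian rank computation for the two-step map $(t_1,s_1,t_2,s_2)\mapsto (f_2(x_0,y_0), Df_2(x_0,y_0)v)$, calling it ``the main obstacle.'' That computation is not routine: the position block degenerates wherever the cosine factors vanish, you must control the derivative block for the \emph{specific} directions $v$ that a putative invariant line field could select (real-analyticity only gives non-vanishing for a.e.\ $v$, which is not enough to exclude an exceptional invariant direction), and since a measurable structure is only defined and invariant at a.e.\ base point, you need the absolute-continuity statement on a positive-measure set of $x_0$, not at a single point as in Lemma \ref{lem:vectors_expand_lemma}'s neighbor Lemma \ref{LmSkewDensity}(a), which is stated for \emph{smooth} structures. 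As written, the argument does not close.

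For comparison, the paper avoids this computation entirely. It first uses accessibility and Lemma \ref{LmSmoothStructure} to promote any measurable invariant structure to a continuous one, so that it suffices to test invariance at the single point $q=(0,0)$. It then observes that the vertical shears send $q$ to each point $p(z)=(z,0)$, $|z|\le\tau$, via \emph{two} distinct maps $f_1,f_2$ whose differentials at $q$ are $\begin{bmatrix}1 & d(z)\\ 0&1\end{bmatrix}$ and $\begin{bmatrix}1 & -d(z)\\ 0&1\end{bmatrix}$ with $d(z)=\sqrt{1-z^2}\neq 0$. Any invariant family of at most two lines (the bound coming from Remark \ref{rem:two_subbundles}) or any invariant quadratic form must be preserved by both differentials simultaneously, and a two-line elementary calculation in the slope coordinate shows this forces a contradiction (at least three lines, or a degenerate metric). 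If you want to salvage your route, you must either carry out and verify the rank computation for all relevant $(x_0,v)$, or insert the accessibility-plus-promotion step and then argue at a single convenient point as the paper does.
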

\begin{proof}
Due to Corollary \ref{prop:low_dim_equivalence} it suffices to check that $\mu$ is expanding on average. 
We verify this by checking the criterion in Proposition \ref{prop:EoA_characterization} for conservative maps: 
we show that there is no measurable a.s.~invariant Riemannian metric or family of vector bundles. An easy computation shows that for the  Pierrehumbert system the distribution of $f(x)$ is absolutely
continuous for each $x$ and, moreover, the system is accessible. Therefore by the argument of Lemma \ref{LmSmoothStructure} a measurable structure can be promoted to a smooth one, so it suffices to show that
there are no smooth invariant geometric structures of either of the two types mentioned above.

Suppose that $V$ is a smooth family of  differentiable line bundles that is almost surely invariant under $\mu$. 
By Remark \ref{rem:two_subbundles} it follows that over any point $V$ may contain at most two lines. Since the action on Grasmannians is one-to-one 
the number of lines does not depend on the point.

Note that
the image of $q\coloneqq (0, 0)$ under $\mu_V$ is equal to 
$([-\tau,\tau]\,\, \mathrm{mod}\,\, 2\pi \mathbb{Z})\times \{0\}$ and many images have multiplicity at least two $2$. 
Moreover the differentials at these images are different. Namely, if $ p(z)\coloneqq (z,0)\in  [-\tau, \tau] \times \{0\}$, then there exist two shears $f_1$ and $f_2$ such that 
$f_1q=f_2q=p(z)$ and
\[
Df_1(q)=
\begin{bmatrix}
1 & d(z) \\ 
0 & 1
\end{bmatrix}
\quad 
Df_2(q)=
\begin{bmatrix}
1 & -d(z) \\
0 & 1
\end{bmatrix},
\]
where $d(z)=\sqrt{1-z^2}.$  
 Below we introduce the slope coordinate on the unit tangent bundle defined by $\zeta=x/y.$ In these coordinates,
 the matrix $\DS \begin{bmatrix}
1 & d \\ 0 & 1 \end{bmatrix} $ acts on the projective space by $\zeta\mapsto \zeta+d$.

 Suppose that the family $V(\cdot)$ consists of two lines. Call their slopes $L_1$ and $L_2$. 
 We claim that it then follows that $V$ contains more than two line contradicting Remark \ref{rem:two_subbundles}. Indeed consider $z$ 
 where $d(z)$ is defined and not equal to $0$. 
There are two cases:

\noindent
(i) if $\zeta_1\!:=\!L_1(q)\!\neq\! \infty$ and $L_2(q)\!\!=\!\!\infty$, then $Df_j(q) L_2=\infty$ while $Df_1(q) \zeta_1\!\!\neq\!\! Df_2(q) \zeta_1$;

\noindent (ii)
 If $\zeta_1=L_1(q)<\zeta_2=L_2(q)$ are both finite then $\zeta_1-d(z)<\zeta_1+d(z)<\zeta_2+d(z)$. 

So, in either case we get at least three lines.

If $V$ consists of a single line, then similarly to the case (i) above $V(q)$ should be vertical.
But the same reasoning applied to $\mu_H$ shows that $V(q)$ must be horizontal giving a contradiction.

The case of an invariant measurable Riemannian metric is similar. If we had such a metric $g$
we can represent $g_q$ by a quadratic form corresponding to a matrix
$\DS
\begin{bmatrix}
a & b\\
b & c
\end{bmatrix}.
$
Then the pushforwards of this metric by $f_1$ and $f_2$ to $(z,0)$ correspond to the the matrix 
\[
\begin{bmatrix}
1 & 0\\
\pm d(z) & 1
\end{bmatrix}
\begin{bmatrix}
a & b\\
b & c
\end{bmatrix}
\begin{bmatrix}
1 & \pm d(z) \\
0 & 1
\end{bmatrix}=
\begin{bmatrix}
a & \pm ad(z)+b\\
\pm ad(z)+b & ad^2(z)\pm 2bd(z)+c
\end{bmatrix}.
\]
As the images of $g_q$ by $Df_1(q)$ and $Df_2(q)$ should coincide we must have
$a=b=0.$ A similar argument using the horizontal shears gives $c=0.$
\end{proof}

 We close this subsection with an additional example, showing that a notoriously difficult to study system, the Chirikov-Taylor standard map, becomes expanding on average after perturbation. 

\begin{example}
The following random system on $\mathbb{T}^2$ is considered in \cite{blumenthal2017lyapunov}
\begin{equation}
\label{PertHenon}
f(x,y)=(L\psi(x)-y+\omega, x) 
\end{equation}
where $\psi:\mathbb{T}\to \mathbb{R} $ is a function such that all critical points of both $\psi$ and $\psi'$ are non-degenerate (and, hence there are finitely many such points),
and $\omega$ is uniformly distributed on $[-\eps, \eps].$ 
\end{example}
\begin{proposition}
\label{PrStandard}
\cite{blumenthal2018lyapunov}
    Given $\delta>0$,  and $\psi$ as above there exists $L_1$ such that if $L\geq L_1$ and $\eps>L^{\delta-1}$ then the random system \eqref{PertHenon} is coexpanding on average.
\end{proposition}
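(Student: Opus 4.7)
My plan is to reduce the assertion to expansion on average and then carry out a direct quantitative estimate of the integrated log-expansion using a good/bad decomposition of the base.

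Since $\dim \mathbb{T}^2 = 2$ and $f$ is volume preserving, Corollary \ref{prop:low_dim_equivalence} identifies the coexpanding and expanding on average conditions, so it suffices to show the latter. By Proposition \ref{prop:EoA_int_characterization}, I need to produce $N \in \mathbb{N}$ and $\lambda > 0$ such that for every $\mu^N$-stationary measure $\hat\nu$ on $\mathbb{P}(T\mathbb{T}^2)$,
\[
\iint \log \|Df^N_\omega v\|\, d\hat\nu(x,y,v)\, d\mu^N(\omega) \geq \lambda.
\]
The derivative $Df(x,y,\omega) = M(x) \coloneqq \bigl(\begin{smallmatrix} L\psi'(x) & -1 \\ 1 & 0 \end{smallmatrix}\bigr)$ lies in $\SL_2(\R)$ and depends only on $x$; in slope coordinates $s = v_y/v_x$ the projective action reads $s \mapsto 1/(L\psi'(x) - s)$ with one-step log-expansion $\tfrac{1}{2}\log\bigl(((L\psi'(x)-s)^2 + 1)/(1+s^2)\bigr)$.

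Set $\Lambda \coloneqq L^{\delta/3}$ and split $\mathbb{T} = G_\Lambda \sqcup B_\Lambda$ with $G_\Lambda = \{x : |L\psi'(x)| \geq \Lambda\}$. Non-degeneracy of the critical points of $\psi$ yields $|\psi'(x)| \gtrsim \mathrm{dist}(x,\mathrm{Crit}(\psi))$ near the finite set $\mathrm{Crit}(\psi)$, so $|B_\Lambda| \leq C\Lambda/L$. The crucial smoothing step: since $x_{n+1} = L\psi(x_n) - y_n + \omega$ with $\omega$ uniform on $[-\eps,\eps]$, the $x$-marginal of any $\mu$-stationary measure has density at most $(2\eps)^{-1}$, whence
\[
\hat\nu\bigl(\{x \in B_\Lambda\}\bigr) \;\leq\; C\Lambda/(L\eps) \;\leq\; CL^{-2\delta/3}
\]
by the hypothesis $\eps > L^{\delta - 1}$. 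A direct computation of $M(x_2)M(x_1)$, whose top-left entry equals $L^2\psi'(x_1)\psi'(x_2) - 1$, shows that when both $x_1, x_2 \in G_\Lambda$ the two-step log-expansion exceeds $2\log\Lambda - O(1)$ for every initial slope $s_0$ satisfying $|s_0| \leq L|\psi'(x_1)|/4$; the complement in $\mathbb{P}^1$ is a projective arc of length $O(1/\Lambda)$ centered on the stable direction of $M(x_1)$.

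The main obstacle is the \emph{projective concentration} issue: a priori the conditional distribution of $\hat\nu$ on slopes could be singular and could concentrate on these bad projective arcs. Following \cite{blumenthal2018lyapunov}, this is handled by passing to one more step and exploiting that the stable direction of $M(x_2)$ varies with $x_2$ at rate $\sim L\|\psi''\|_\infty$: after Fubini'ing over the freshly randomized $x_2$ (which has conditional density $\leq (2\eps)^{-1}$), the set of $x_2$ placing $s_1$ in the bad projective arc has Lebesgue measure $O(1/(L\Lambda))$, so has $\hat\nu$-mass $O(1/(L\eps\Lambda)) = O(L^{-\delta - \delta/3})$. Combining the three-step good-case lower bound $2\log\Lambda - O(1)$, the small $\hat\nu$-mass of the base-bad and projective-bad configurations, and the trivial worst-case single-step bound $\log\|M(x)\| \leq \log(3L\|\psi'\|_\infty) = O(\log L)$, gives
\[
\iint \log \|M_\omega^3 v\|\, d\hat\nu\, d\mu^3 \;\geq\; \tfrac{\delta}{2}\log L - CL^{-c\delta}\log L \;>\; 0
\]
for $L \geq L_1(\delta, \psi)$ sufficiently large, which yields the required expansion on average.
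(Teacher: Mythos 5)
Your reduction is exactly the paper's: in the conservative two--dimensional setting coexpansion on average coincides with expansion on average (the paper invokes Proposition \ref{prop:coa_d-1_planes}, you invoke its Corollary \ref{prop:low_dim_equivalence} --- same thing here), so it suffices to bound the integral \eqref{eqn:1Planes} below over all stationary measures on the projective bundle. Where you diverge is that the paper stops there and simply cites \cite[Prop.~9]{blumenthal2018lyapunov} for the lower bound of order $\ln L$, whereas you set out to reprove that estimate. Your sketch does follow the Blumenthal--Xue--Young strategy (smoothing of the $x$-marginal by the uniform noise, good/bad decomposition of the base according to $|L\psi'(x)|\ge\Lambda$, and a Fubini argument over the freshly randomized $x_2$ to defeat projective concentration on the contracting arcs), and the bookkeeping of exponents at the end is consistent. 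One soft spot: your bound of $O(1/(L\Lambda))$ for the Lebesgue measure of $\{x_2: s_1 \text{ lies in the bad arc of } M(x_2)\}$ implicitly treats $\psi'$ as having non-vanishing derivative; near the critical points of $\psi'$ the preimage of a short interval can only be controlled by a square root of its length, and this is precisely why the hypothesis demands non-degenerate critical points of $\psi'$ as well as of $\psi$ --- your sketch only visibly uses the latter. One must also check that the relevant interval of values of $\psi'$ (centered at $s_1/L$, which is small after a good first step) stays away from the nonzero critical values of $\psi'$, or else absorb the square-root loss into the exponent count. These are exactly the delicate points that \cite{blumenthal2018lyapunov} handles and that the paper deliberately outsources to the citation; as written, your quantitative claims in the Fubini step are not yet fully justified, though the architecture is right.
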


\begin{proof}
    By Proposition \ref{prop:coa_d-1_planes} it suffices to show that the above system is expanding on average. To this end we note that
    \cite[Prop.~9]{blumenthal2018lyapunov} shows that integral \eqref{eqn:1Planes} is bounded from below by a quantity of order $\ln L$
     for every stationary measure on the projective extension of \eqref{PertHenon}.
\end{proof}

We note that the results of \cite{blumenthal2017lyapunov, blumenthal2018lyapunov} are much stronger than Proposition~\ref{PrStandard}. In particular they get some information about the size of Lyapunov exponents
and they can handle the dissipative systems where the second component in \eqref{PertHenon} equals $bx$ for $b\neq 1$. 
The results of our paper show in particular that mixing obtained in \cite{blumenthal2018lyapunov}
persists for small weak* perturbation of \eqref{PertHenon}. 
 In particular, it persists for discrete approximations (of a sufficiently large cardinality). In this respect we would like to mention that
\cite{chung2020stationary} constructs explicit discrete perturbations of the standard map 
which are (co)expanding on average.

\subsection{Homogeneous systems and their perturbations}
\label{ScHomogeneous}

In this section, we explain that many algebraic systems as well as their perturbations are coexpanding on average.
The expanding on average property has been known for random matrix products for a long time. For example, if $\mu$ is a compactly supported measure on $\SL(d,\R)$ that is strongly irreducible and contracting, the random matrix products arising from $\mu$ are expanding on average \cite[Cor.~III.3.4]{BougerolLacroix}  (Recall that a linear action is called strongly 
irreducible if it does not preserve a family of linear subspaces, and it is called contracting if it does
not preserve a positive definite quadratic form). It was observed in \cite{goldsheid1989lyapunov} that 
the invariant structures described above are defined by polynomial equations and so the irreducibility and contraction
properties hold if the support of $\mu$ generates a Zariski dense subgroup of $\SL(d,\R)$.

\begin{example}
\label{ExTorus}
Consider the following diffeomorphisms of $\mathbb{T}^d$: $f_j(x)=A_j x+b_j$ where $A_j$ are elements of
$\SL(d, \Z)$ and $b_j$ are vectors in $\mathbb{T}^d.$
\end{example}

\begin{proposition}
\label{PrTorusAffine}
    If the group generated by $(A_1, \dots , A_m)$ is Zariski dense then the above tuple is coexpanding on average and mixing.
\end{proposition}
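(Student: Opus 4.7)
The plan is to verify coexpansion on average and mixing separately, each reducing to a classical statement about random walks by Zariski dense matrices.

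\textbf{Coexpansion on average.} Since each $f_j$ is affine, the pullback on covectors $D_x f_j^* = A_j^T$ is independent of the base point, so $(D_x f_j^*)^{-1} = A_j^{-T}$, and the defining inequality \eqref{eqn:coexpanding_on_average} reduces to expansion on average of the linear IID product on $(\mathbb{R}^d)^*$ driven by $\mu^* = \frac{1}{m}\sum_j \delta_{A_j^{-T}}$. Applying Proposition~\ref{prop:EoA_int_characterization} to this linear cocycle over a point, it suffices to show that $\iint \ln \|A^{-T}\xi\|\, d\nu(\xi)\, d\mu^*(A) > 0$ for every $\mu^*$-stationary measure $\nu$ on $\mathbb{RP}^{d-1}$. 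Because $A \mapsto A^{-T}$ is an algebraic automorphism of $\SL_d(\mathbb{R})$, the group $\langle A_1^{-T}, \dots, A_m^{-T}\rangle$ is also Zariski dense, so by the classical Furstenberg--Goldsheid--Margulis theory (see \cite[Cor.~III.3.4]{BougerolLacroix}) the top Lyapunov exponent of $\mu^*$ is strictly positive and equals the integral above on the unique stationary measure.

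\textbf{Mixing.} I would derive mixing by proving total ergodicity and then invoking Corollary~\ref{CrSpGap-MEM}. Expanding in Fourier modes, $\phi \circ f_j (x) = \sum_m \hat{\phi}(m)\, e^{2\pi i m \cdot b_j}\, e^{2\pi i (A_j^T m) \cdot x}$, which rewrites as $(\phi\circ f_j)^{\wedge}(n) = \hat\phi(A_j^{-T} n)\, e^{2\pi i (A_j^{-T} n)\cdot b_j}$. If $\phi \in L^2_0(\mathbb{T}^d)$ is invariant under $\mu^q$-almost-every map, then since $\mu^q$ is supported on the finitely many affine maps $f_\omega$ indexed by $\omega \in \{1,\dots,m\}^q$, one obtains $|\hat\phi(n)| = |\hat\phi(A_\omega^{-T} n)|$ for every such $\omega$ and every $n \in \mathbb{Z}^d \setminus \{0\}$. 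Thus $|\hat\phi|$ is constant along each orbit of the semigroup $\Gamma_q^{-T} := \{A_\omega^{-T} : \omega \in \{1,\dots,m\}^q\}$ acting on $\mathbb{Z}^d \setminus \{0\}$; the $\ell^2$-condition then forces $\hat\phi \equiv 0$ off the origin, provided every such orbit is infinite. If some $n \ne 0$ had a finite $\Gamma_q^{-T}$-orbit, its setwise stabilizer would be a proper algebraic subgroup of $\SL_d(\mathbb{R})$ containing $\Gamma_q^{-T}$, and hence its Zariski closure. A short group-theoretic argument shows that $\overline{\Gamma_q^{-T}}^{Z} = \SL_d(\mathbb{R})$: the subgroup $\langle \Gamma_q^{-T}\rangle$ has finite index in $\langle A_1^{-T}, \dots, A_m^{-T}\rangle$ (the quotient is a finitely generated abelian group of exponent dividing $q$, hence finite), and finite-index subgroups of Zariski dense subgroups of the connected group $\SL_d(\mathbb{R})$ remain Zariski dense. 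This contradicts $\langle \Gamma_q^{-T}\rangle$ being contained in a proper algebraic subgroup, proving total ergodicity; combined with coexpansion on average, Corollary~\ref{CrSpGap-MEM} yields multiple exponential mixing (in particular, ordinary mixing).

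\textbf{Main obstacle.} The coexpansion step is essentially a citation of Furstenberg's theorem, and the Fourier argument is direct. The technical crux is the algebraic assertion that the length-$q$ semigroup $\Gamma_q^{-T}$ is Zariski dense in $\SL_d(\mathbb{R})$ for every $q \ge 1$; this is a standard but delicate fact about finitely generated linear groups, and formulating it as a clean lemma is the main book-keeping burden of the proof.
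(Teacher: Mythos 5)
Your proof is correct, and the coexpansion half is essentially the paper's own argument: reduce to the constant linear cocycle $A_j^{-T}$ on covectors, note that transpose-inverse preserves Zariski density, and invoke the Furstenberg/Goldsheid--Margulis theory via Proposition~\ref{prop:EoA_int_characterization}. For mixing, however, you take a genuinely different and heavier route. The paper's proof is a two-line direct computation: the annealed correlation of two characters $e^{2\pi i\langle k_1,\cdot\rangle}$, $e^{2\pi i\langle k_2,\cdot\rangle}$ equals the probability that $S_n(\omega)k_1+k_2=0$ where $S_n$ is the (co)linear part of the word, and this probability tends to $0$ because coexpansion on average forces $\|S_n k_1\|\to\infty$ almost surely; no spectral machinery is needed. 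You instead prove total ergodicity by a Fourier-orbit argument ($|\hat\phi|$ constant on orbits of the length-$q$ semigroup, hence $\hat\phi\equiv 0$ off the origin once those orbits are infinite) and then invoke Corollary~\ref{CrSpGap-MEM}, i.e.\ the main essential-spectral-gap theorem, to upgrade to exponential mixing. This is not circular (the main theorem does not depend on this example), and it has the advantage of directly delivering total ergodicity, which is what the proof of Corollary~\ref{cor:main_cor} actually consumes; but it trades a self-contained elementary computation for the full weight of Theorem~\ref{thm:essential_spectral_gap} plus the algebraic lemma on Zariski density of the length-$q$ words. On that lemma: your assertion that $\langle\Gamma_q^{-T}\rangle$ has finite index with abelian quotient of exponent dividing $q$ implicitly uses that this subgroup is normal in $\langle A_1^{-T},\dots,A_m^{-T}\rangle$; this is true (conjugating a length-$q$ positive word by a generator can be rewritten as a product of length-$q$ positive words and their inverses, e.g.\ $A_i w A_i^{-1}=(A_i w')(w''A_i^{q-1})(A_i^{q})^{-1}$ with $w=w'w''$, $|w''|=1$), and the quotient is in fact cyclic of order dividing $q$, but this step deserves a sentence rather than being asserted.
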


\begin{proof}
The corresponding action on $T^* \mathbb{T^d}$ is given by $((A_1^T)^{-1}, \dots, (A_m^T)^{-1})$
which also generate a Zariski dense subgroup. So by the foregoing discussion this action $(f_1,\dots ,f_m)$
is coexpanding on average. 
To show that the action is mixing it suffices to show for each $k_1, k_2\in \Z^d$
$$ \mathbb{E} \left(\int \exp(2\pi i \langle k_1, f_\omega^n x\rangle ) \exp(2\pi i \langle k_2,  x\rangle ) dx \right)
\to 0 $$
   as $n\to\infty.$ However, the above expression equals to
   $$\EXP \left(\int \exp(2\pi i(\langle S_n(\omega) k_1+b_n(\omega)+k_2, x \rangle) dx \right) $$
   where $S_n({\omega})=A_{\omega_n}^*\dots A_{\omega_1}^*$ is the linear part and $b_n(\omega)$ is the corresponding translational part.
   Since the action of $(A_1^*, \dots , A_m^*)$ is expanding on average $\|S_n k_1+k_2\|$
   tends to infinity almost surely, and hence the probability that $S_n k_1+k_2=0$ goes to $0$ as $n\to\infty$.
\end{proof}

\begin{example}\label{ex:homogeneous}
Let $G$ be a real algebraic semisimple group without compact factors, and consider  the action of $G$ by left translation on
$M=G/\Gamma$ where $\Gamma$ is a cocompact lattice. Let $\mu$ be a measure
supported on a compact subset of $G$ and consider random translations on $M$
$x\mapsto gx$, where $g\in G$ is distributed according to $\mu.$
\end{example}

\begin{proposition}
\label{PrHomogeneous}
Let $H$ denote the Zariski closure of the group generated by $\supp(\mu).$
If $H$ is semisimple with no center and no compact factors, then $\mu$
is expanding and coexpanding on average and mixing.
\end{proposition}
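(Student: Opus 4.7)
The plan is to reduce the expansion/coexpansion assertions to positivity of top Lyapunov exponents for the random walk on $\mathfrak{g}$ (respectively $\mathfrak{g}^*$) coming from the (co)adjoint representation, and then to derive mixing from ergodicity of the $H$-action on $G/\Gamma$.

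First I would use the left trivialization of $TM$: at $x = h\Gamma$ identify $T_xM$ with $\mathfrak{g}$ via $V \mapsto \tfrac{d}{dt}|_{t=0} \exp(tV)x$. A direct computation gives $D_x L_g = \mathrm{Ad}(g)$ on $\mathfrak{g}$, independent of $x$, and dually $(D_x L_g^*)^{-1}$ becomes the coadjoint action $\mathrm{Ad}^*(g)$ on $\mathfrak{g}^*$. Together with Proposition \ref{prop:EoA_int_characterization}, this reduces expanding on average to showing that
\begin{equation*}
\iint \ln \|\mathrm{Ad}(g)v\|\, d\bar\nu([v])\, d\mu(g) > 0
\end{equation*}
for every $\mathrm{Ad}(\mu)$-stationary probability measure $\bar\nu$ on $\mathbb{P}(\mathfrak{g})$ (by homogeneity the base dependence disappears after disintegration). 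The analogous reduction for coexpansion, combined with Proposition \ref{prop:coa_d-1_planes} and the Killing-form identification $\mathfrak{g} \simeq \mathfrak{g}^*$ as $\mathrm{Ad}(H)$-modules (valid since $H$ is semisimple), produces the same statement with $\mathfrak{g}$ replaced by $\mathfrak{g}^*$.

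Next I would apply the Furstenberg/Guivarc'h--Raugi/Goldsheid--Margulis theorem. Since $H$ has no center and no compact factors, $\mathrm{Ad}(H) \subset GL(\mathfrak{g})$ is a Zariski closed semisimple subgroup without compact factors, and $\mathrm{Ad}(\supp\mu)$ generates a Zariski dense subgroup of $\mathrm{Ad}(H)$. Being semisimple, $\mathrm{Ad}(H)$ is reductive, so $\mathfrak{g}$ decomposes as a direct sum of $\mathrm{Ad}(H)$-irreducible subspaces $V_1,\ldots,V_k$. On each nontrivial $V_i$, Guivarc'h--Raugi gives a unique $\mathrm{Ad}(\mu)$-stationary measure $\nu_i$ on $\mathbb{P}(V_i)$ and strictly positive top Lyapunov exponent $\lambda_1(\mathrm{Ad}(\mu), V_i)>0$; Furstenberg's formula then identifies this with the integral above. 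Every $\mathrm{Ad}(\mu)$-stationary measure on $\mathbb{P}(\mathfrak{g})$ is a convex combination supported on the $\mathbb{P}(V_i)$'s, so the integral is strictly positive, yielding expansion on average. The argument for $\mathfrak{g}^*$ is identical and gives coexpansion. The main obstacle in this step is ensuring that the decomposition contains no trivial $\mathrm{Ad}(H)$-subrepresentation, equivalently that $\mathfrak{z}_\mathfrak{g}(\mathfrak{h}) = 0$; this is where the hypotheses that $H$ has no center and no compact factors (combined with the implicit nondegeneracy built into the setup) enter, since a nonzero fixed vector would produce a subbundle on which $\mathrm{Ad}(g)$ acts isometrically and destroy expansion.

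For mixing, the plan is to show that the Markov operator $\cL$ has no eigenvalue on the unit circle in $L^2_0(M,\vol)$. If $\cL\phi = e^{i\theta}\phi$ with $\|\phi\|_{L^2}=1$, then, as explained in \S \ref{SSWMSkew}, the inequality $\int \langle \phi,\phi\circ g\rangle\, d\mu(g) = e^{i\theta}$ forces $\phi\circ g = e^{i\theta}\phi$ for $\mu$-a.e.\ $g$, hence by continuity and Zariski density for every $g\in H$. Applying the relation twice yields $e^{2i\theta}=e^{i\theta}$, so $\theta=0$, and therefore $\phi$ is $H$-invariant. Howe--Moore applied to $H$ (semisimple, no compact factors) acting on $G/\Gamma$ shows the action is mixing, hence ergodic, so $\phi$ is constant, contradicting $\phi\in L^2_0$. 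This rules out eigenvalues on the unit circle and establishes mixing of the random walk, completing the proof.
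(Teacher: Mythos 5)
Your reduction of (co)expansion to positivity of Lyapunov exponents for $\mathrm{Ad}(\mu)$ on $\mathfrak{g}$ (and, via the Killing form, on $\mathfrak{g}^*$) is the right idea and is essentially what the paper delegates to the cited remark of Eskin--Lindenstrauss. Two points in that part are shaky, though. First, your claim that every $\mathrm{Ad}(\mu)$-stationary measure on $\PP(\mathfrak{g})$ is a convex combination of measures supported on the $\PP(V_i)$ is false when the decomposition has isotypic multiplicity: a diagonal copy of an irreducible factor is invariant and carries stationary measures disjoint from $\bigcup_i\PP(V_i)$. The conclusion survives, but the correct tool is the Furstenberg--Kifer/Hennion description, under which the integral against any ergodic stationary measure equals one of finitely many deterministic exponents, each of which is positive here because every irreducible summand is strongly irreducible with unbounded (non-compact Zariski closure) image. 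Second, the possible trivial summand $\mathfrak{z}_{\mathfrak{g}}(\mathfrak{h})\neq 0$ is a genuine obstruction (e.g.\ $\supp\mu$ Zariski dense in one factor of a product $G=G_1\times G_2$), and it is not excluded by ``$H$ semisimple, centerless, no compact factors'' alone; you flag it but do not resolve it, and appealing to ``implicit nondegeneracy'' is not a proof.

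The more serious gap is in the mixing argument, where you take a genuinely different route from the paper and it does not reach the stated conclusion. Ruling out unimodular eigenvalues of $\cG$ on $L^2_0$ establishes weak mixing, not mixing: a Markov contraction with no unimodular point spectrum can still fail to have $\langle\phi,\cG^n\psi\rangle\to 0$ (already for $\mu=\delta_g$ with $g$ weakly mixing but not mixing). Moreover, the step extending $\phi\circ g=e^{i\theta}\phi$ from $\mu$-a.e.\ $g$ to all of $H$ ``by continuity and Zariski density'' is unjustified: the set of $g$ satisfying such a relation is closed in the Hausdorff topology but not obviously Zariski closed, and the closed subgroup generated by $\supp\mu$ may be discrete even when its Zariski closure is $H$; consequently the identity $e^{2i\theta}=e^{i\theta}$ does not follow. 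The paper instead proves decay of correlations directly: expansion on average forces the projection of $S_n=g_n\cdots g_1$ to each simple factor of $H$ to leave compact sets almost surely, so Howe--Moore gives $\int\phi\,(\psi\circ S_n)\,dx\to 0$ a.s., and dominated convergence yields the annealed statement. If you want to salvage your eigenvalue computation, the same mechanism is what closes it: from $\phi\circ g=c(g)\phi$ for a.e.\ $g\in\supp\mu$ one gets $\lvert\langle\phi,\phi\circ S_n\rangle\rvert=1$ almost surely, contradicting Howe--Moore --- but at that point you are running the paper's argument, and you should either do so or invoke Theorem \ref{ThEMAnnealed} to upgrade weak mixing to exponential mixing.
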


\begin{proof}
The proof is similar to the proof of Proposition \ref{PrTorusAffine} but we use the adjoint representation
of $G$ instead of the natural action of $\SL_d(\R)$ on $\R^d.$

The expansion and coexpansion on average follow from \cite[Remark on p. 3]{eskin2018random}.
To see that the volume is mixing we need to show that for each pair of zero mean  $L^2$ functions $\phi$ and $\psi$ on $M$
$$ \EXP\left(\int\phi(x) \psi(S_n x)dx \right)\to 0$$
where $S_n=g_n\dots g_1$ and $\{g_n\}$ are IID distributed according to $\mu.$
From expansion on average it follows that projection of $S_n$ on each simple factor of $G$
tends to infinity, so by the Howe--Moore Theorem \cite[Thm.~2.2.20]{zimmer1984ergodic} the expression in parenthesis tends to $0$
almost surely proving mixing.
\end{proof}

\begin{remark}
    In fact much stronger results are known for Examples \ref{ExTorus} and \ref{ex:homogeneous}.
    In particular, \cite[Thm 1.1]{benoist2011mesures} tells that volume and periodic measures
    are only invariant measures for $\mu$, which is much stronger than mixing.

Also a minor modification of the proofs of Propositions \ref{PrTorusAffine} and \ref{PrHomogeneous} using the
large deviations bounds (see \cite[\S 12.5]{BQ-Book}) shows that the actions of those examples are, in fact, exponentially mixing.

Theorem \ref{thm:essential_spectral_gap} gives a different proof of exponential mixing, which also works for small non linear perturbation
of Examples \ref{ExTorus} and \ref{ex:homogeneous}.
\end{remark}

\begin{example}
Small perturbation of isometries were studied in \cite{dewitt2024simultaneous, dolgopyat2007simultaneous}.
 The following dichotomy is obtained.  

\begin{theorem}\label{thm:coa_isometry_perturbation}
 Suppose that $M$ is an isotropic manifold of dimension at least $2$ and let $(R_1,\ldots,R_m)$ be a tuple topologically generating the connected component of the identity of the isometry group of $M$. Let $(f_1,\ldots,f_m)$ be
 a $C^{\infty}$ small volume preserving perturbation of $(R_1,\ldots,R_m)$.
 Then either the perturbed maps are simultaneously conjugated back to isometries, or the perturbed random system is
 is both expanding on average and coexpanding on average.
 \end{theorem}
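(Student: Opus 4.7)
The plan is to derive the dichotomy by combining the characterization of expanding on average from Proposition \ref{prop:EoA_characterization}(b) with the invariance principle of Avila--Viana and the rigidity of measurable invariant geometric structures on isotropic manifolds.

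First I would handle the expanding on average conclusion. Suppose the perturbed system fails to be expanding on average. By Proposition \ref{prop:EoA_characterization}(b), over some stationary measure $\nu$ there is a $\mu$-a.s.\ invariant measurable family of subbundles $V\subset TM$ or a measurable invariant Riemannian metric $g$. Since the $R_j$ densely generate $\mathrm{Isom}_0(M)$, for a sufficiently small perturbation, the unique stationary measure remains volume (by spectral stability of the projection onto constants, as in Corollary \ref{rem:clean}, applied to the isometric case where volume is uniquely stationary). Because all Lyapunov exponents vanish for the unperturbed isometric system, any Lyapunov exponent of the perturbed system over the invariant structure is close to zero; one can then invoke the invariance principle (as in the proof of Proposition \ref{prop:EoA_characterization}) to upgrade stationarity to $\mu$-almost sure invariance of the disintegration of $\hat\nu$ along fibers of the relevant Grassmannian or frame bundle.

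Next I would smooth out this measurable invariant structure. The random dynamics generated by a tuple near a dense set of isometries is accessible on $M$ in the sense of \S\ref{SSContExamples} (the isometries already act transitively), so the argument of Lemma \ref{LmSmoothStructure} promotes the measurable a.s.\ invariant structure to a continuous and then smooth structure on $TM$ that is preserved by each $f_j$. On an isotropic manifold the isometry group acts transitively on the unit tangent bundle (and, more generally, on Grassmannians of $k$-planes for each $k$), so any smooth geometric structure preserved by a tuple $C^0$-close to a topologically generating set of isometries must be a small perturbation of a $\mathrm{Isom}(M)$-invariant one; on an isotropic manifold these are, up to scaling, the round metric itself (or its induced structures). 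A rigidity/straightening argument, as developed in \cite{dolgopyat2007simultaneous, dewitt2024simultaneous}, then produces a diffeomorphism $h$ conjugating each $f_j$ into $\mathrm{Isom}(M)$, yielding the first alternative of the dichotomy.

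For coexpanding on average I would apply the same argument to the dual cocycle $(f,(Df^*)^{-1})$ on $T^*M$. For an isometry $R_j$, the action on $T^*M$ is again an isometric action (pull back along $R_j$ is the transpose of an orthogonal map, hence orthogonal), so a small perturbation of $(R_1,\ldots,R_m)$ induces a small perturbation of isometries on $T^*M$, and the identical dichotomy argument gives either a smooth conjugation (which, by naturality of the adjoint, is the same as a conjugation on $TM$) or coexpanding on average. Alternatively, since the system is volume preserving, one may invoke Proposition \ref{prop:coa_d-1_planes} and upgrade the previous argument to expansion on average on $(d-1)$-planes via the same invariance-principle-plus-rigidity scheme applied to the Grassmannian $\Gr_{d-1}(TM)$, noting that isotropy again guarantees transitivity of $\mathrm{Isom}(M)$ on $\Gr_{d-1}(TM)$.

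The main obstacle is the rigidity step in the second paragraph: passing from a smooth invariant geometric structure preserved by the perturbed tuple to an actual conjugation back to isometries. This is where the hypothesis of being \emph{isotropic} is truly used, and where the fine work of \cite{dolgopyat2007simultaneous} on the Taylor expansion of Lyapunov exponents (identifying the obstruction to positive exponents with an invariant conformal structure) and of \cite{dewitt2024simultaneous} on straightening the perturbation is indispensable. In particular, care is needed because a measurable invariant \emph{family} of subbundles (rather than a single bundle) must also be ruled out, following the argument indicated in Remark \ref{rem:two_subbundles} and the discussion after Proposition \ref{prop:EoA_characterization}.
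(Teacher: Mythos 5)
Your proposal takes a genuinely different route from the paper, and unfortunately it has two gaps that I do not see how to close. The paper's argument is quantitative: it runs the KAM scheme of \cite{dolgopyat2007simultaneous, dewitt2024simultaneous}, and the dichotomy is that either the scheme converges (giving the conjugacy) or it halts, in which case \cite[Prop.~26]{dewitt2024simultaneous} gives an explicit \emph{positive lower bound} on the Furstenberg integral \eqref{eqn:integral_on_proj} for \emph{every} stationary measure on $\PP(TM)$; coexpansion then follows from \cite[Thm.~40]{dewitt2024simultaneous}, which shows the $k=1$ and $k=d-1$ integrals in \eqref{eqn:KPlanes} are of the same order. Your argument is qualitative (non-expansion $\Rightarrow$ invariant measurable structure $\Rightarrow$ smooth structure $\Rightarrow$ conjugacy), and this is where it breaks.

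First, the smoothing step fails. Lemma \ref{LmSmoothStructure} applies to the random-flow models of \S\ref{SSContExamples}, where the transition kernel acquires an absolutely continuous component because one integrates over a continuum of flow times (the proof is a Fubini argument in the time parameters). A finite tuple $(f_1,\ldots,f_m)$ sends each point to $m$ points; its transition probabilities are purely atomic, condition (i) of Definition \ref{DefClean} fails, and there is no mechanism to promote a volume-measurable a.s.\ invariant structure to a continuous one. (Indeed the unperturbed isometric tuple itself preserves a smooth metric and is not expanding on average, so the whole content of the theorem is deciding which side of the dichotomy a given perturbation falls on; the measurable-structure characterization of Proposition \ref{prop:EoA_characterization} cannot make that decision by itself.) Second, even granting a smooth invariant metric $g'$ for the perturbed tuple, the final rigidity step is unjustified: $g'$ being preserved by maps $C^0$-close to topological generators of $\mathrm{Isom}_0(M,g)$ does not make $g'$ invariant under $\mathrm{Isom}(M,g)$, and $(M,g')$ need not be isometric to the symmetric space $(M,g)$ (the group generated by the $f_j$ inside $\mathrm{Isom}(M,g')$ could be small, or $(M,g')$ could be a non-symmetric homogeneous metric). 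Producing the conjugacy is exactly the hard analytic content of the KAM iteration in \cite{dolgopyat2007simultaneous, dewitt2024simultaneous} and cannot be obtained from the existence of an invariant geometric structure alone. The same two objections apply to your treatment of the dual cocycle and of $\Gr_{d-1}(TM)$ for the coexpanding half of the statement.
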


This fact is not stated explicitly in these papers, so we will sketch the argument here,
even though it has been known to the experts for some time.

By Proposition \ref{prop:EoA_int_characterization}, in order to check the expansion on average condition, we need to verify that for 
all stationary measures $\nu$ on $\PP(TM)$, that on the projectivization of the tangent bundle of $M$ the following integral is  strictly positive: 
\begin{equation}\label{eqn:integral_on_proj}
\iint \sum_{i=1}^n \ln \|Df_iv\|\,d\nu(v)\,d\mu(f)>a
\end{equation}
 for some $a>0$.

The main argument in \cite{dolgopyat2007simultaneous,dewitt2024simultaneous} is a KAM scheme for producing a conjugacy that simultaneously linearizes the diffeomorphisms $(f_1,\ldots,f_m)$. 
Each step of the KAM scheme is able to proceed as long as there is an ergodic stationary measure $\nu$ for which the integral
\eqref{eqn:integral_on_proj} is 
close to zero in a precise quantitative sense.

This is due to \cite[Prop.~26]{dewitt2024simultaneous}, which gives an expression for the integral of an arbitrary stationary measure $\nu$ on $\PP(TM)$ that is independent of $\nu$ up to 
negligible terms The key feature of the argument in \cite{dewitt2024simultaneous} is that the KAM scheme can proceed as long as 
the first line in equation (18) of \cite{dewitt2024simultaneous} is small compared to the second line. If the KAM procedure can be run indefinitely then the 
$f_j$ are simultaneously conjugated to rotations. If that procedure stops then the main term in 
Prop.~26 comes from the first line of equation (18) and hence it is strictly positive.

Thus if the KAM procedure fails, then \eqref{eqn:integral_on_proj} holds, which shows that $\mu$ is expanding on average. The fact that $\mu$ is also coexpanding 
on average follows from Proposition \ref{prop:coa_d-1_planes} and \cite[Thm. 40]{dewitt2024simultaneous} which shows that the integrals \eqref{eqn:KPlanes} for $k=1$ and $k={d-1}$
are of the same order (note that the term $\Lambda_d$ in \cite[eqn. (93)]{dewitt2024simultaneous} is zero in the volume preserving case).

\begin{remark}
    Note that the same arguments work if we had instead started with the tuple $(R_1^{-1},\ldots,R_m^{-1})$ and its perturbation $(f_1^{-1},\ldots,f_m^{-1})$. Thus if $(R_1,\ldots,R_m)$ is a tuple of isometries of an isotropic manifold as above, and $(f_1,\ldots,f_m)$ is its $C^{\infty}$-small volume preserving perturbation then either $(f_1,\ldots,f_m)$ can be simultaneously conjugated to isometries, or the tuple is expanding on average, coexpanding on average, as well as expanding and coexpanding backwards, too.
\end{remark}

\end{example}

\subsection{Products}\label{subsec:products} In this subsection we show how to construct new examples of coexpanding on average systems from
the existing one. As an application 
we verify that if $\mu$ is expanding on average, then so is the associated $k$-point motion. 

We start by recording several properties of expanding and coexpanding on average systems.

\begin{lemma}
 For a measure $\mu$ on $\Aut(\mc{E})$, 
 the property of being expanding on average is independent of the metric on $\cE$.
\end{lemma}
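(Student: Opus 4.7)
The plan is to show that if $\mu$ satisfies the expanding on average condition for one metric $\|\cdot\|_1$, it satisfies it for any other smooth metric $\|\cdot\|_2$ on $\cE$. The underlying observation is that on a compact base (or the compact support of the relevant stationary measures) any two Riemannian metrics are uniformly comparable: there exist constants $0<c\le C$ such that $c\|v\|_1\le\|v\|_2\le C\|v\|_1$ for all $v\in\cE$.

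The main step is an iteration lemma: if expansion on average holds at time $N$ with rate $\lambda>0$ in $\|\cdot\|_1$, then for every integer $k\ge 1$ and every $\|\cdot\|_1$-unit vector $v$,
\[
\int \ln\|F^{kN}_\omega v\|_1\,d\mu^{kN}(\omega)\ge kN\lambda.
\]
This is proved by induction on $k$: using the IID structure, condition on the first $N$ steps, write $u(\omega_1)=F^N_{\omega_1}v$, and decompose $\ln\|F^{(k-1)N}_{\omega_2}u(\omega_1)\|_1=\ln\|u(\omega_1)\|_1+\ln\|F^{(k-1)N}_{\omega_2}\hat u(\omega_1)\|_1$ with $\hat u=u/\|u\|_1$ a unit vector; then apply the base hypothesis to the first factor and the inductive hypothesis to the second.

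With this in hand, take a $\|\cdot\|_2$-unit vector $v$. Then $C^{-1}\le\|v\|_1\le c^{-1}$, and
\[
\ln\|F^{kN}_\omega v\|_2\ge \ln c+\ln\|F^{kN}_\omega v\|_1=\ln c+\ln\|v\|_1+\ln\bigl\|F^{kN}_\omega(v/\|v\|_1)\bigr\|_1.
\]
Integrating, the right-hand side is at least $\ln(c/C)+kN\lambda$. Dividing by $kN$ and choosing $k$ large enough that $\ln(c/C)/(kN)\ge -\lambda/2$ gives the expanding on average condition at time $N'=kN$ with rate $\lambda/2$ in the metric $\|\cdot\|_2$. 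By symmetry, the converse implication holds as well, so the property is metric-independent.

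There is no real obstacle here; the only care needed is in the induction step, where the normalization $\hat u=u/\|u\|_1$ must be used to ensure that the inductive hypothesis—stated for unit vectors—can be applied. The compactness of the base (implicit in the setting where the expansion condition is nonvacuous, and explicit in the applications of this paper) is what makes the comparison constants $c,C$ uniform.
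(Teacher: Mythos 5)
Your proof is correct and follows essentially the same route as the paper's: iterate the expansion-on-average estimate to amplify the rate to $k\lambda$ at time $kN$, use the uniform comparability $c\|\cdot\|_1\le\|\cdot\|_2\le C\|\cdot\|_1$ coming from compactness, and take $k$ large enough to absorb the additive error $\ln(c/C)$. The only difference is that you spell out the induction step (normalizing $\hat u = u/\|u\|_1$) that the paper leaves implicit; this is fine.
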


\begin{proof}
Suppose that $F$ distributed according to a measure $\mu$ is expanding with respect to metric $\|\cdot\|$ and let $\|\cdot\|'$ be a different metric.
The expansion of $\cE$ is equivalent to saying that for each non-zero vector
$\DS \mathbb{E}[\ln \|F^N_\omega v\|]\geq \lambda \ln \|v\|.$ Iterating we see that for each $k\in\mathbb{N}$,
$\DS \mathbb{E}[\ln \|F^{Nk}_\omega v\|]\geq k \lambda \ln \|v\|.$ By compactness there is a constant $C$ such that
for each $v$, $C^{-1}\|v\|\leq \|v\|'\leq C \|v\|.$ It follows that 
$\DS \mathbb{E}[\ln \|F^{Nk}_\omega v\|']\geq k \lambda \ln \|v\|'-2\ln C.$
Taking $k$ large we conclude that $\mu$ is expanding on average with respect to $\|\cdot\|'.$
\end{proof}

\begin{lemma}\label{lem:eoa_product}
Suppose that $M_1,M_2$ are closed manifolds, $M=M_1\times M_2$, and that $\mu$ is probability measure with compact support on $\Diff^1(M)$ that is supported on diffeomorphisms of the form $f(x_1, x_2)=(f_1(x_1), f_2(x_2)).$
Then $\mu$ is (co)expanding on average iff its projections $\mu_j$ to $\Diff(M_j)$ are (co)expanding on average.
\end{lemma}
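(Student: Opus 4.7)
The plan is to exploit that the differential of a product diffeomorphism is block diagonal with respect to the splitting $TM = TM_1\oplus TM_2$, and likewise for the (inverse) pullback on $T^*M = T^*M_1\oplus T^*M_2$. I will treat the expanding on average case; the coexpanding case is identical after replacing $(Df, TM)$ with $((Df^*)^{-1}, T^*M)$.

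First, the easy direction. Suppose $\mu$ is expanding on average on $TM$ with constants $N,\lambda$. Fix $j\in\{1,2\}$ and a unit vector $v_j\in T_{x_j}M_j$. Embed it as $v=(v_1,0)$ or $(0,v_2)\in T_{(x_1,x_2)}M$ (pick any $x_{3-j}$). Block diagonality gives $\|Df^N v\|=\|Df_j^N v_j\|$, so integrating yields $\int\ln\|Df_j^N v_j\|\,d\mu_j^N\ge\lambda$, which is the expansion on average property for $\mu_j$.

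For the reverse direction, assume $\mu_j$ is expanding on average with constants $N_j,\lambda_j$, $j=1,2$. Fix a unit vector $v=(v_1,v_2)\in T(M_1\times M_2)$; by block diagonality $\|Df^n v\|^2=\|Df_1^n v_1\|^2+\|Df_2^n v_2\|^2$. Since $\|v_1\|^2+\|v_2\|^2=1$, at least one of $\|v_j\|\ge 1/\sqrt{2}$; say $j=1$ (the other case is symmetric). The key iteration step for $\mu_1$ is standard: writing $u_k=Df_1^{kN_1}v_1/\|Df_1^{kN_1}v_1\|$ and conditioning on the first $kN_1$ maps gives $\EXP[\ln\|Df_1^{(k+1)N_1}v_1\|\mid \mathcal F_{kN_1}]\ge \ln\|Df_1^{kN_1}v_1\|+\lambda_1$, so by induction
\[
\int \ln\|Df_1^{kN_1}v_1\|\,d\mu_1^{kN_1}\ge k\lambda_1+\ln\|v_1\|\ge k\lambda_1-\tfrac12\ln 2.
\]
Since $\|Df^{kN_1}v\|\ge\|Df_1^{kN_1}v_1\|$, the same lower bound holds for $\int\ln\|Df^{kN_1}v\|\,d\mu^{kN_1}$. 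Choosing $k$ so that $k\lambda_1-\tfrac12\ln 2>0$, and symmetrically handling the $j=2$ case by replacing $N_1$ with $N_2$ and $k$ with a suitable multiple, we get a single $N$ (any common multiple of $kN_1$ and the analogous $k'N_2$) and a uniform $\lambda>0$ such that $\int\ln\|Df^N v\|\,d\mu^N\ge\lambda$ for every unit $v$.

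The only subtle point is making the bound uniform in $v$ when neither $\|v_1\|$ nor $\|v_2\|$ clearly dominates, but this is handled by the $1/\sqrt 2$ dichotomy: every unit vector falls into (at least) one of the two cases, and in each case the chosen iterate gives the required expansion, so taking the common iterate $N$ works for all $v$. The coexpanding statement follows verbatim by applying the argument to the cocycle $(f^{-1},Df^*)$ acting on $T^*M=T^*M_1\oplus T^*M_2$, which is again block diagonal.
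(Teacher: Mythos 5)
Your proof is correct and follows essentially the same route as the paper: the easy direction via vectors of the form $(v_1,0)$, $(0,v_2)$, and the converse by noting which component of a unit vector dominates and using that the product norm dominates the factor norm. The only cosmetic difference is that the paper switches to the max metric $\|(v_1,v_2)\|'=\max(\|v_1\|,\|v_2\|)$ and invokes a separate metric-independence lemma to absorb the bounded loss, whereas you keep the Euclidean product metric and absorb the $\tfrac12\ln 2$ loss by iterating directly.
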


    Note that the $f_j$  need not be independent. For example, consider {\em $k$-point motion} where
    $M^{(k)}=M\times M\times \dots \times M$ ($k$ times) and $F(x_1, \dots x_k)=(f(x_1), \dots, f(x_k))$.
    Applying Lemma \ref{lem:eoa_product} to this example we obtain: 

\begin{corollary}
\label{CrEoAk}
  The $k$ point dynamics is expanding on average iff the original dynamics is expanding on average.     
\end{corollary}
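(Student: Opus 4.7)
The plan is to deduce the corollary directly from Lemma \ref{lem:eoa_product}, after promoting that lemma from two factors to $k$ factors and then recognizing the $k$-point motion as a special (diagonal) product system.

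First I would observe that Lemma \ref{lem:eoa_product} is stated for two factors $M_1\times M_2$ but generalizes to any finite product $M_1\times \cdots \times M_k$ by an immediate induction: writing $M_1\times \cdots \times M_k = M_1\times (M_2\times\cdots\times M_k)$, the hypothesis that the diffeomorphisms act as products is preserved, so (co)expansion on average for the full measure is equivalent to it for the projection to $M_1$ and for the projection to $M_2\times\cdots\times M_k$, and one iterates. (Equivalently, one could reprove the lemma directly for $k$ factors, since the derivative $DF$ at $(x_1,\dots,x_k)$ is block diagonal on $TM\oplus\cdots\oplus TM$, and for any unit $v=(v_1,\dots,v_k)$ some coordinate satisfies $\|v_{i_0}\|\geq 1/\sqrt{k}$, so $\|DF^N v\|\geq \|Df^N v_{i_0}\|$ and one absorbs the $-\tfrac12\ln k$ defect by iterating the dynamics until the expansion on average constant dominates it.)

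Second, I would identify the $k$-point motion as a special case of this extended lemma. The driving measure $\mu^{(k)}$ on $\Diff(M^{(k)})$ is the pushforward of $\mu$ under $f\mapsto (f,\dots,f)$, so in particular it is supported on product diffeomorphisms, and the projection of $\mu^{(k)}$ to each factor $\Diff(M)$ is just $\mu$ itself (the factors are not independent, but the lemma does not assume independence). Applying the extended Lemma \ref{lem:eoa_product} to $\mu^{(k)}$ therefore yields: $\mu^{(k)}$ is expanding on average on $M^{(k)}$ if and only if $\mu$ is expanding on average on $M$. The ``only if'' direction is in fact trivial by restricting to vectors supported in a single factor, so the content is in the ``if'' direction, which is exactly what Lemma \ref{lem:eoa_product} provides.

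There is no real obstacle: the statement is essentially a rephrasing of Lemma \ref{lem:eoa_product} in a degenerate case ($f_1=\cdots=f_k=f$). The only minor bookkeeping is the $k$-fold extension of the lemma, and the only quantitative subtlety is that the expanding-on-average constant for the $k$-point motion may be smaller than that of $\mu$ by an additive $O(\ln k)$, which is absorbed by iterating the random dynamics a bounded number of times. The coexpanding case is identical since Lemma \ref{lem:eoa_product} covers both, or alternatively by applying the expanding result to the induced action on $T^*M$ via Proposition \ref{prop:coa_d-1_planes}.
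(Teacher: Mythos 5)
Your proposal is correct and follows the paper's own route: the paper likewise derives Corollary \ref{CrEoAk} by applying Lemma \ref{lem:eoa_product} to the $k$-point motion, noting that the factors need not be independent and that each projection of the driving measure is $\mu$ itself. Your explicit induction to $k$ factors (and the remark that the max-metric or an iteration absorbs the $O(\ln k)$ defect) is exactly the bookkeeping the paper leaves implicit.
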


\begin{proof}[Proof of Lemma \ref{lem:eoa_product}.]
 If $\mu$ is expanding on average then so are $\mu_j$ as follows by considering vectors of the form $(v_1, 0)$ and $(0, v_2)$
 respectively. 
 
 Conversely, suppose that $\mu_j$ are expanding on average. 
  Let $N_j$ be the time realizing the expansion for $\mu_j$ and $\lambda_j$ be the expansion constant. Set $N=N_1 N_2$.
 Consider a metric
\[\|(v_1, v_2)\|'=\max(\|v_1\|, \|v_2\|).\]
 
 Take $v=(v_1, v_2)$ and suppose  that $\|v_1\|\geq\|v_2\|.$ 
 Then
 $$ \mathbb{E}[ \ln \| Df_\omega^N (v_1, v_2)\|']\geq \mathbb{E}[ \ln  \|Df_{1,\omega}^N (v_1)\|]\geq \lambda_1 \|v\|_1=
 \lambda_1 \|(v_1, v_2)\|'.
 $$
The case where  $\|v_1\|\leq\|v_2\|$ is similar. 
\end{proof}
\subsection{Automorphisms of complex surfaces}
 The work of Cantat and Dujardin provides additional examples of coexpanding on average dynamical systems. In \cite[Sec.~9]{cantat2024dynamics}, the authors give examples of random automorphisms of complex surfaces that are expanding on average. In fact, when they are volume preserving, this implies that those automorphisms are coexpanding on average as well. To see this, by Proposition \ref{prop:coa_d-1_planes} it suffices to check that they are expanding on average on $3$-planes as such surfaces have four real dimensions. 
If $\mu$ is an expanding on average measure on $\Aut(X)$ where $X$ is a complex surface, then consider the action of $f\in \Aut(X)$ on a $3$-plane $V$ in $TX$.
Note that we can always choose an orthonormal basis for $V$ of the form $\{v,iv,w\}$. As the map is complex analytic, $\|Dfv\|=\|Dfiv\|$ and $\|Dfw\|=\|Dfiw\|$. Note that $v\wedge iv\wedge w\wedge iw$ is a unit volume form. Hence due to volume preservation 
\[
1=\abs{Df_*(v\wedge iv\wedge w\wedge iw)}=\|Dfv\|\|Dfiv\|\|Dfw\|\|Dfiw\|\sin^2(\angle( Df_*V,Df_*W)). 
\]
Taking the logarithm and expectations over $f$, we obtain 
\begin{equation}
\E{\ln\|Dfv\|}+\E{\ln\|Dfw\|}+\E{\ln \abs{\sin(\angle V,W)}}=0.
\end{equation}
We can now apply this to the expected growth of the volume on a $3$-plane. Note that 
$$
\E{\ln \|Df_*(v\wedge iv\wedge w)\|}=
$$$$
2\E{\ln\|Dfv\|}+\E{\ln\|Dfw\|}+\E{\ln \abs{\sin(\angle V,W)}}=\E{\ln \|Df v\|}.
$$
As observed above, every $3$-plane has a unit volume form of the form $v\wedge iv\wedge w$. Thus if the random dynamics on $1$-vectors is expanding on average, then so is the random dynamics on $3$-vectors. As Cantat and Dujardin note in that paper, this gives a large collection of examples that are far from homogeneous.

\section{Comparing operators using symbols.}
\label{ScSymb-Op}
In this section, we will describe tools for comparing operators by comparing their symbols pointwise. 

We begin with Lemma \ref{lem:square_root} that allows us to essentially take a square root of a symbol. Then we prove a technical lemma that allow us to change the side of an inequality that a compact operator appears on. Finally, we obtain the main result of this section, which compares the norms of operators by comparing their symbols.

\begin{lemma}\label{lem:square_root}
Suppose that for $m\in \R$, that $A$ is an elliptic operator in $\Psi^m(M)$ whose principal symbol is positive for $\|\xi\|\geq 1$. 
Then there exists an elliptic $C\in \Psi^{m/2}(M)$ such that
$\displaystyle
A=C^*C+\mc{K},
$
where $\mc{K}: H^s\to H^{s-m}$ is compact. 
\end{lemma}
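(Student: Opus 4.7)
The plan is to construct $C$ directly as the quantization of a square root of the principal symbol of $A$, and then to read off the compactness of the remainder from the order of $A-C^*C$.

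First I would build the symbol. Since $\sigma_A \in S^m(T^*M)$ is positive for $\|\xi\|\ge 1$ and satisfies the lower bound $\sigma_A(x,\xi)\ge C_1\|\xi\|^m$ for large $\|\xi\|$ by ellipticity, I can fix a smooth cutoff $\chi(\xi)$ with $\chi\equiv 0$ on $\{\|\xi\|\le 1/2\}$ and $\chi\equiv 1$ on $\{\|\xi\|\ge 1\}$, and set
\[
c(x,\xi)=\chi(\xi)\sqrt{\sigma_A(x,\xi)}.
\]
A routine check using Fa\`a di Bruno on $\sqrt{\cdot}$ together with the estimate \eqref{eqn:S_m_definition} for $\sigma_A$ and the lower bound $\sigma_A\ge C_1\|\xi\|^m$ on $\mathrm{supp}(\chi)$ shows that $c\in S^{m/2}(M)$ and that $c$ is itself elliptic of order $m/2$, because $c^2=\sigma_A$ there. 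Then I set $C=\mathrm{Op}(c)\in\Psi^{m/2}(M)$, which is elliptic by construction.

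Next I would compute the principal symbol of $C^*C$. The symbolic calculus gives $\sigma_{C^*}=\overline{c}=c$ (since $c$ is real-valued) modulo $S^{m/2-1}$, and the principal symbol of a composition is the product of principal symbols, so
\[
\sigma_{C^*C}(x,\xi)=c(x,\xi)^2=\sigma_A(x,\xi)\quad\text{for }\|\xi\|\ge 1,
\]
modulo $S^{m-1}$. Consequently $A-C^*C$ lies in $\Psi^{m-1}(M)$: its principal part vanishes outside a bounded set in $\xi$, and that bounded-$\xi$ contribution is smoothing. Set $\mathcal{K}=A-C^*C$.

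Finally I would read off compactness. As an operator in $\Psi^{m-1}(M)$, $\mathcal{K}$ maps $H^s(M)\to H^{s-m+1}(M)$ boundedly (this is the standard mapping property of pseudodifferential operators on Sobolev spaces, e.g.\ via the $\Delta^{s}$ framework from the background section). Since $M$ is compact, the Rellich embedding $H^{s-m+1}(M)\hookrightarrow H^{s-m}(M)$ is compact, so $\mathcal{K}\colon H^s\to H^{s-m}$ is compact, as required. The only mild obstacle is the symbol-class check for the square root near $\|\xi\|=1$, which is purely a matter of bookkeeping with the chain rule given the pointwise lower bound on $\sigma_A$; no iteration or parametrix construction is needed, because we only require a compact remainder, not a smoothing one.
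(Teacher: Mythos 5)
Your proposal is correct and follows essentially the same route as the paper: quantize $\sqrt{\sigma_A}$, observe that $A-C^*C\in\Psi^{m-1}(M)$ by the symbolic calculus, and conclude compactness of the remainder $H^s\to H^{s-m}$ from the gain of one derivative together with Rellich. The only cosmetic difference is that you handle the low-frequency region with a cutoff in the symbol, whereas the paper first modifies $A$ by a smoothing operator so that the principal symbol is positive everywhere; these are interchangeable.
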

\begin{proof}
 Modifying if necessary $A$ 
 by a compact, smoothing operator, we can assume the principal symbol is positive.
 Take $C\!\!=\!\text{Op}(\sqrt{\sigma_A})$
where $\sigma_A$ is the principal symbol of $A$. Then $A\!-\!C C^*\!\in\! \Psi^{m-1}$ and so 
it maps $H^s$ to $H^{s-m+1}.$
\end{proof}

\begin{lemma}\label{lem:move_compact_terms}
Suppose $\mc{B}_1$ and $\mc{B}_2$ are  Hilbert spaces and that $A,B\colon \mc{B}_1\to \mc{B}_2$ are bounded linear operators such that $B$ is Fredholm and there is a compact operator $\mc{K}$ 
such that
\begin{equation}
\label{A-BK}
\|A\phi\|^2\le \|B\phi\|^2+\langle \mc{K}\phi,\phi\rangle. 
\end{equation}
Then for all $\epsilon>0$ there exists a compact operator $\mc{K}_{\epsilon}\colon \mc{B}_1\to \mc{B}_2$ such that 
\[
\|(A+ \mc{K}_\varepsilon)\phi\|\le (1+\epsilon)\|B\phi\|.
\]
\end{lemma}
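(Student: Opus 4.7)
The plan is to use the Fredholm structure of $B$ together with compactness of $\mc{K}$ to absorb the quadratic form term $\langle\mc{K}\phi,\phi\rangle$ into a compact perturbation of $A$. Since $\langle\mc{K}\phi,\phi\rangle$ depends only on the Hermitian part $\tfrac12(\mc{K}+\mc{K}^*)$, which is again compact, I may assume from the start that $\mc{K}$ is self-adjoint. Let $P_0\colon\mc{B}_1\to\mc{B}_1$ denote the orthogonal projection onto the finite-dimensional subspace $\ker B$, and let $B^+\colon\mc{B}_2\to\mc{B}_1$ be the Moore--Penrose pseudoinverse of $B$; since $B$ is Fredholm, $R(B)$ is closed and $B|_{(\ker B)^\perp}$ is bounded below, so $B^+$ is a bounded operator satisfying $B^+B = I - P_0$ and $BB^+ = \pi_{R(B)}$ (the orthogonal projection onto $R(B)$).

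Next I would introduce the range-side operator $S = AB^+\colon\mc{B}_2\to\mc{B}_2$. Applying hypothesis \eqref{A-BK} to $\phi = B^+y$ (which lies in $(\ker B)^\perp$ and satisfies $B\phi = y$) for each $y\in R(B)$ transforms it into
\[
\|Sy\|^2 \le \|y\|^2 + \langle \mc{K}''y, y\rangle, \qquad y\in R(B),
\]
where $\mc{K}'' = (B^+)^*\mc{K}B^+$ is compact and self-adjoint on $\mc{B}_2$ and vanishes on $R(B)^\perp$. Given $\epsilon > 0$, pick $\delta > 0$ with $1+\delta \le (1+\epsilon)^2$, and use the spectral decomposition to write $\mc{K}'' = \mc{K}''_f + \mc{K}''_s$ with $\mc{K}''_f$ finite-rank self-adjoint and $\|\mc{K}''_s\| \le \delta$. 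Let $V = \operatorname{range}(\mc{K}''_f)\subseteq R(B)$, a finite-dimensional subspace, and let $P_V$ be the orthogonal projection onto $V$; self-adjointness of $\mc{K}''_f$ with range $V$ forces $\ker\mc{K}''_f = V^\perp$.

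The candidate correction is
\[
\mc{K}_\epsilon = -AP_0 - AB^+ P_V B,
\]
which is finite rank, hence compact. Using $B^+B = I - P_0$ one checks that $A + \mc{K}_\epsilon = AB^+(I - P_V)B$, so for any $\phi\in\mc{B}_1$, setting $y = B\phi\in R(B)$, we obtain $(A+\mc{K}_\epsilon)\phi = S(P_V^\perp y)$. Since $V\subseteq R(B)$, $P_V^\perp y\in V^\perp\cap R(B)$, and applying the transformed hypothesis on this subspace yields
\[
\|S(P_V^\perp y)\|^2 \le \|P_V^\perp y\|^2 + \langle \mc{K}''_s P_V^\perp y, P_V^\perp y\rangle \le (1+\delta)\|y\|^2 \le (1+\epsilon)^2 \|B\phi\|^2,
\]
which is the desired bound. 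The main obstacle will be the careful bookkeeping with the pseudoinverse---in particular verifying that the algebraic identity $A + \mc{K}_\epsilon = AB^+ P_V^\perp B$ is valid on all of $\mc{B}_1$ and not merely on $(\ker B)^\perp$ where $B^+B = I$; the rest is the standard device of splitting a compact self-adjoint operator into a large finite-rank piece plus a small-norm piece and absorbing the latter into an $\epsilon$-multiple of the leading term.
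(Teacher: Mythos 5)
Your proof is correct. It rests on the same underlying idea as the paper's argument---remove a finite-rank obstruction coming from the large spectral part of the compact operator and absorb the small remainder into the factor $(1+\epsilon)$---but the execution is genuinely different. The paper works entirely on the domain side: it produces a finite-codimension subspace $\mc{V}\subset\mc{B}_1$ on which $\langle\mc{K}\phi,\phi\rangle\le((1+\epsilon)^2-1)\|B\phi\|^2$, and then projects onto $\mc{V}$ along a complement chosen to be orthogonal with respect to the degenerate semi-inner product $\langle B\phi,B\psi\rangle$, precisely so that the projection does not increase $\|B\phi\|$; the correction $A-A\Pi$ is then finite rank. You instead transfer the inequality to the range side via the Moore--Penrose pseudoinverse $B^+$, which turns the problem into a statement about $S=AB^+$ on $R(B)$ with an honest compact self-adjoint perturbation $\mc{K}''=(B^+)^*\mc{K}B^+$, and you then project $B\phi$ off the finite-dimensional space $V=\operatorname{range}(\mc{K}''_f)$ by a genuine orthogonal projection in $\mc{B}_2$. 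What your version buys is transparency at the one delicate point of the paper's proof: the norm-decreasing property of the projection is immediate for an orthogonal projection in $\mc{B}_2$, whereas the paper must choose the complement $\mc{U}$ adapted to the degenerate form $\langle B\cdot,B\cdot\rangle$. What the paper's version buys is brevity and the avoidance of the pseudoinverse bookkeeping ($B^+B=I-P_0$, $V\subseteq R(B)$, etc.), all of which you do verify correctly. One small presentational point: the hypothesis as stated only makes sense (or should be read with a real part) when $\langle\mc{K}\phi,\phi\rangle$ is real, and your opening reduction to the Hermitian part of $\mc{K}$ handles this cleanly.
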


\begin{proof} 
Since $\mc{K}$ is compact and $B$ is Fredholm  
 there is a finite codimension subspace $\mc{V}$ of $\mc{B}_1$ such that for $\phi\in \mc{V}$,
 \[
\|A\phi\|^2\le \|B\phi\|^2+\langle \mc{K}\phi,\phi\rangle \le (1+\epsilon)^2\|B\phi\|^2.
 \]
 Let $\mc{U}$ be an orthogonal complement to 
$\mc{V}$ with respect to the scalar product $\langle B\phi,B\phi\rangle$. Denoting by $\Pi$ the projection to $\mc{V}$ along $\mc{U}$ we get
$$ \|A\Pi \phi\|^2_0\leq (1+\eps)^2 \|B \Pi \pi \phi\|^2_0\leq (1+\eps)^2\|B\phi\|^2_0 $$
where the first inequality holds since $\Pi\phi\in \mc{V}$ and the second inequality holds by the definition of $\Pi$ using $\mc{U}$.
Since $A-A\Pi$ has finite rank, the result follows.
\end{proof}

\begin{lemma}\label{lem:comparison_of_symbols_and_operators}
Suppose $s\in \R$, $M$ is a closed Riemannian manifold, 
and $A$ and $B$ are  pseudodifferential operators in $\Psi^s(M)$ with associated principal symbols $a(x,\xi)$ and $b(x,\xi)$. Suppose that $B$ is elliptic and that there exist $\lambda$ and $r$ such that for all $x\in M$ and $\abs{\xi}>r$ in $T^*_xM$, $\abs{a(x,\xi)}\le \lambda b(x,\xi)$. Then for all $\epsilon>0$ there exists a compact/smoothing operator $\mc{K}_{\epsilon}\colon H^{-\infty}(M)\to C^{\infty}(M)$ such that for all $\phi\in H^s(M)$,
$$
\|A\phi\|_{0}^2\le (\lambda+\epsilon) \|B\phi\|_{0}^2+\langle \mc{K}_{\epsilon}\phi,\phi\rangle.
$$
\end{lemma}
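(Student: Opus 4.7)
The strategy is to build, via the pseudodifferential calculus, a nonnegative operator that witnesses the inequality at the symbol level and then translate this into the desired operator inequality modulo a smoothing remainder.

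First I would set up the right nonnegative symbol. Pick $\epsilon'>0$ small with $\lambda+\epsilon'<\lambda+\epsilon$ and consider
\[
p(x,\xi)=(\lambda+\epsilon) b(x,\xi)^2-\abs{a(x,\xi)}^2.
\]
(The hypothesis is used in the form $|a|^2\le\lambda\, b^2$ at high frequencies, replacing $\lambda$ by a slightly larger constant if necessary so that the strict inequality $|a|^2\le(\lambda+\epsilon')b^2$ holds on $|\xi|>r$; the constants can be absorbed into $\epsilon$.) By ellipticity of $B$ we have $b(x,\xi)^2\ge c|\xi|^{2s}$ for $|\xi|$ large, so $p\ge (\epsilon-\epsilon')b^2\ge c'|\xi|^{2s}$ for $|\xi|>r$. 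Modify $p$ by adding a large smooth function supported in $\{|\xi|\le r\}$; this is a symbol in $S^{-\infty}$, and the resulting $\tilde p$ is a positive elliptic symbol of order $2s$.

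Next I would extract a square root. Apply Lemma \ref{lem:square_root} to $\mathrm{Op}(\tilde p)\in\Psi^{2s}(M)$: there exists an elliptic $C\in\Psi^{s}(M)$ such that
\[
\mathrm{Op}(\tilde p)=C^*C+\mc{K}_1,
\]
where $\mc{K}_1:H^{-\infty}\to C^{\infty}$ is smoothing. Now compare $\mathrm{Op}(\tilde p)$ with $Q:=(\lambda+\epsilon)B^*B-A^*A\in\Psi^{2s}(M)$. By construction both operators have the same principal symbol $p$ (the modification added only a symbol of order $-\infty$), so the difference $R:=Q-\mathrm{Op}(\tilde p)$ lies in $\Psi^{2s-1}(M)$. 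As an operator $R:H^{s}(M)\to H^{-s}(M)$, it factors through $H^{s-(2s-1)}=H^{1-s}(M)$, which embeds compactly into $H^{-s}(M)$ by the Rellich theorem on the closed manifold $M$; so the associated quadratic form on $H^s$ is compact.

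Combining these observations,
\[
\langle Q\phi,\phi\rangle=\langle C^*C\phi,\phi\rangle+\langle (\mc{K}_1+R)\phi,\phi\rangle=\|C\phi\|_0^2+\langle \mc{K}_2\phi,\phi\rangle,
\]
with $\mc{K}_2=\mc{K}_1+R$ inducing a compact form on $H^s$ (and, in fact, it can be taken smoothing by folding $R$'s symbol into the construction of $\tilde p$ at each level of an asymptotic expansion, though one iteration already suffices for the stated conclusion). Rearranging and using $\|C\phi\|_0^2\ge 0$ gives
\[
\|A\phi\|_0^2=(\lambda+\epsilon)\|B\phi\|_0^2-\|C\phi\|_0^2-\langle \mc{K}_2\phi,\phi\rangle\le(\lambda+\epsilon)\|B\phi\|_0^2+\langle \mc{K}_\epsilon\phi,\phi\rangle
\]
with $\mc{K}_\epsilon=-\mc{K}_2$.

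The main delicate point I expect is the bookkeeping around ellipticity at low frequencies: making the modified symbol $\tilde p$ strictly positive everywhere (not just for $|\xi|>r$) so that Lemma \ref{lem:square_root} applies, while keeping track that this modification contributes only a smoothing error. A secondary subtlety is pushing the compact remainder in Step 3 down to a genuinely smoothing one, which requires iterating the symbolic construction so that the residue lies in $\Psi^{-N}$ for every $N$; the Rellich compactness argument is sufficient for the stated conclusion since only compactness of the quadratic form is needed.
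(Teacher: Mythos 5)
Your proposal is correct and follows essentially the same route as the paper: pass to the quadratic form of $(\lambda+\epsilon)B^*B-A^*A$, observe its principal symbol is positive and elliptic at high frequencies by the symbol hypothesis and the ellipticity of $B$, extract a square root via Lemma \ref{lem:square_root}, and absorb the lower-order and low-frequency discrepancies into a compact remainder. The extra bookkeeping you do (the explicit low-frequency cutoff and the Rellich argument for the $\Psi^{2s-1}$ residue) is exactly what the paper delegates to Lemma \ref{lem:square_root}, and your remark that the hypothesis $\abs{a}\le\lambda b$ yields $\abs{a}^2\le\lambda^2 b^2$ rather than $\lambda b^2$, so constants must be renamed, matches a harmless discrepancy already present between the paper's statement and its proof.
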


\begin{proof}
 By definition, we are interested in,
\[
(\lambda+\eps)^2\|B\phi\|^2_0-\|A\phi\|^2_0=\lambda^2\langle A\phi,A\phi\rangle - \langle B\phi,B\phi\rangle. 
\]
Now let $A^*$ and $B^*$ denote the formal adjoints of $A$ and $B$. While not by definition the actual adjoint, these operators are closed and the closure is adjoint to $A$ and $B$ with respect to the (regularized) $L^2$ pairing, see \cite[Sec.~I.8.2]{shubin2001pseudodifferential}, hence
\[
(\lambda+\eps)^2\|B\phi\|^2_0-\|A\phi\|^2_0= \langle ((\lambda+\eps)^2 B^*B-A^*A)\phi,\phi\rangle.
\]
Now by our assumption concerning the symbols, $ (\lambda+\eps)^2 B^*B-A^*A$ is an elliptic operator in $\Psi^{2s}$. Thus by Lemma \ref{lem:square_root}, there exist elliptic $C\in \Psi^{s}$ and compact $\mc{K}$ such that $(\lambda^2 B^*B-A^*A)=C^*C+\mc{K}$. This implies that 
\begin{equation}\label{eqn:est_on_C_K}
(\lambda+\eps)^2\|B\phi\|^2_0-\|A\phi\|^2_0=\|C\phi\|_0^2+\langle \mc{K}\phi,\phi\rangle, 
\end{equation}
which is the needed conclusion.
\end{proof}

\section{Main Estimates}
\label{ScMainProof}

In this section, we prove the essential spectral gap in a series of steps. 
We will concentrate on the spectral gap of $\mu^{-1}$ on $H^s$ for small negative $s$,
the results for $\mu$ follow by duality. 
First, we show how the expanding on average condition relates to a specific estimate on the action of the symbol of the operator $\Delta^{-s}$. Then we use the comparison inequality to compare with the symbol of $\Delta^{-s}$, proving the essential spectral gap. 

\begin{lemma}\label{lem:vectors_expand_lemma}
Suppose that $\mu^{-1}$ is a coexpanding on average measure on $\Diff^1(M)$ with compact support. Then there exists $s_0>0$ and $C$ such that for all $0<s<s_0$, there exists $0<\eta(s)<1$ such that for each $n\in \mathbb{N}$, each $x\in M$ and each $\xi\in T^{1*}_xM$, the unit cotangent bundle
\begin{equation}\label{eqn:upperbound_est}
\int \|{(D_x(f^{-1})^*)^{-1}}(\xi)\|^{-s}\,d\mu^n(\omega)\leq C\eta^n.
\end{equation} 
\end{lemma}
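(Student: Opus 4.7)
The strategy is to exponentiate the coexpansion hypothesis via a second-order Taylor expansion and then iterate using the cocycle structure. Fix $N$ and $\lambda$ as in the coexpansion on average assumption \eqref{eqn:coexpanding_on_average}. Since $\mu$ has compact support, there is a constant $K > 0$ such that for every $f$ in the support of $\mu^N$, every $x \in M$, and every unit $\xi \in T^*_xM$,
\[
\bigl|\ln \|(D_xf^*)^{-1}\xi\|\bigr| \le K.
\]
Applying the elementary inequality $e^{-su} \le 1 - su + \tfrac{1}{2}s^2 u^2 e^{s|u|}$ with $u = \ln \|(D_xf^*)^{-1}\xi\|$ and integrating over $\mu^N$, the coexpansion assumption gives
\[
\int \|(D_xf^*)^{-1}\xi\|^{-s}\, d\mu^N(f) \;\le\; 1 - sN\lambda + \tfrac{1}{2}s^2 K^2 e^{sK}.
\]
Choose $s_0 > 0$ so small that for $0 < s \le s_0$ the right-hand side is bounded above by $\beta(s) := 1 - \tfrac{1}{2}sN\lambda < 1$ uniformly in $x$ and in the unit cotangent vector $\xi$.

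The next step is to promote this one-block estimate to exponential decay in $n$ via the cocycle property. Denote by $A^{(n)}_\omega(x) = (D_x\Phi^n_\omega)^{*-1}$ the adjoint inverse cocycle over the $n$-fold composition $\Phi^n_\omega$, and set
\[
\psi_n(x,\xi) \;=\; \int \|A^{(n)}_\omega(x)\xi\|^{-s}\, d\mu^n(\omega), \qquad \|\xi\|=1.
\]
The chain rule gives $A^{(n+N)}_\omega(x) = A^{(N)}_{\sigma^n\omega}(\Phi^n_\omega x)\, A^{(n)}_\omega(x)$. Conditioning on the first $n$ symbols, writing $y = \Phi^n_\omega x$ and $\eta = A^{(n)}_\omega(x)\xi$, using independence and the homogeneity of $\|\cdot\|^{-s}$, the inner expectation equals $\|\eta\|^{-s}\, \psi_N(y, \eta/\|\eta\|) \le \beta(s)\, \|\eta\|^{-s}$. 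Taking the outer expectation yields the key recursive bound
\[
\psi_{n+N}(x,\xi) \;\le\; \beta(s)\, \psi_n(x,\xi)
\]
uniformly in $x$ and $\xi$.

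Iterating this bound $k$ times and handling the remainder gives the desired estimate. For $n = kN + r$ with $0 \le r < N$, we obtain $\psi_n(x,\xi) \le \beta(s)^k\, \psi_r(x,\xi)$. Since $\|A^{(r)}_\omega(x)\xi\| \ge e^{-rK}$ deterministically, $\psi_r(x,\xi) \le e^{rKs} \le e^{NKs}$, so
\[
\psi_n(x,\xi) \;\le\; e^{NKs}\, \beta(s)^{(n-r)/N} \;\le\; C(s)\, \eta(s)^n,
\]
with $\eta(s) = \beta(s)^{1/N} \in (0,1)$ and $C(s) = e^{NKs}\beta(s)^{-1}$. This is precisely \eqref{eqn:upperbound_est}.

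\textbf{Expected obstacle.} The only point requiring care is checking that the submultiplicative inequality for $\psi$ holds uniformly in $(x,\xi)$; this is automatic here because the second-order Taylor bound, and hence the one-step estimate $\psi_N \le \beta(s)$, is uniform thanks to the compact support of $\mu$. If $\mu$ were not compactly supported one would need moment bounds on $\ln \|(D_xf^*)^{-1}\|$ to run the same argument, but under the hypotheses of the lemma no such difficulty arises.
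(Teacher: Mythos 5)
Your proof is correct and follows essentially the same route as the paper's: differentiate (equivalently, Taylor-expand) $s\mapsto\int\|(D_xf^*)^{-1}\xi\|^{-s}\,d\mu^N$ at $s=0$, use the coexpansion hypothesis and compact support to get a uniform one-block contraction $\beta(s)<1$, and then iterate via the Markov/cocycle property. The paper states this more tersely (reducing to $N=1$ and saying the rest "follows by induction"), whereas you spell out the uniformity in $(x,\xi)$ and the handling of general $N$ and the remainder block; the only nitpick is that your constant $K$ is introduced as an $N$-step bound but later used as a per-step bound, a harmless slip since compact support gives both.
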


\begin{proof}
We give a proof in the case $N=1$ in the definition of the coexpanding on average property. For other $N$ the proof follows by adjusting the constant $C$.

Define the function $h(s,\xi)\colon (-1,1)\times T^{1*}M\to \R$ by 
\[
(s,\xi)\mapsto \int \|{(D_x(f^{ -1})^*)^{-1}\xi}\|^{-s}\,d\mu(\omega). 
\]
Note that $h(0,\xi)=1$, and that 
$$
\frac{\partial h}{\partial s}({0, \xi})=\frac{\partial }{\partial s} 
\int \|(D(f^{ -1})^*)^{-1} \xi\|^{-s}\,d\mu(\omega)
$$$$
=\int -\ln \|(D(f^{ -1})^*)^{-1} \xi\|\,d\mu(\omega)
<-\lambda<0.
$$
Thus there exists $s_0>0$ such \eqref{eqn:upperbound_est} follows for $s\in (0,s_0]$ for $n=1$ with $C=1$. For larger $n$, the needed conclusion follows by induction.
\end{proof}

\begin{remark}
For a diffeomorphism $f$ there is a natural action on $C^{\infty}_c(M)$ viewed as both functions and distributions. Unless $f$ is volume preserving, the map induced by pulling back a smooth function as a smooth function, and the map pulling back a smooth function as a distribution need not coincide. See e.g.~\cite[Eq.~I.3.13]{treves1980introduction}. This coincidence is used implicitly below.
\end{remark}

The following lemma allows us to combine operators with nonnegative principal symbol. The topology on $S^m(M)$ is the usual Fr\'echet topology on symbols. Below, one can just think of having uniform bounds  in equation \eqref{eqn:S_m_definition} over the entire family.

\begin{lemma}\label{lem:non_negative_sum}
Suppose that $M$ is a Riemannian manifold, $s\in \R$, and that $\{A_i\}_{i\in I}$ is a precompact family of elliptic pseudodifferential operators on $M$ in symbol class $S^{s}(M)$ with non-negative principal symbol, indexed by a probability space $(I,d\mu) $. Then for all $\epsilon>0$, there exists an operator $ B\in\Psi^{s}(M)$ with 
non-negative principal symbol and a (compact) smoothing operator $\mc{K}\in \Psi^{-\infty}$ such that 
for any $\phi\in H^{s}(M)$,
\[
\int \|A_i\phi\|_0^2\,d\mu\le \|B\phi\|_0^2+\langle \mc{K}\phi,\phi\rangle.
\]
and
\begin{equation}\label{eqn:definition_of_B_symbol}
\abs{\sigma_B}^2\le (1+\epsilon)\int \abs{\sigma_{A_i}(\xi)}^2\,{d\mu}.
\end{equation}
\end{lemma}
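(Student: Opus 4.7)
The natural candidate for $B$ is obtained by averaging the symbols pointwise. Concretely, my first step would be to define the operator
\[
T \;=\; \int_I A_i^* A_i \, d\mu(i),
\]
viewed as a Bochner integral. Using the precompactness of $\{A_i\}$ in $S^s(M)$, the symbol estimates \eqref{eqn:S_m_definition} are uniform in $i$, which lets me check that $T$ is a well-defined self-adjoint elliptic pseudodifferential operator in $\Psi^{2s}(M)$ with principal symbol
\[
\sigma_T(x,\xi) \;=\; \int_I |\sigma_{A_i}(x,\xi)|^2 \, d\mu(i).
\]
(The formal adjoint of $A_i$ has principal symbol $\overline{\sigma_{A_i}}$, and the principal symbol passes through the integral continuously.) By formal self-adjointness one has the identity $\int \|A_i \phi\|_0^2 \, d\mu(i) = \langle T\phi, \phi\rangle$ for $\phi \in C^\infty(M)$.

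Next I would quantize $B := \mathrm{Op}\bigl(\sqrt{(1+\epsilon/2)\,\sigma_T}\bigr)$. This produces an elliptic $B \in \Psi^s(M)$ with non-negative principal symbol satisfying
\[
|\sigma_B|^2 \;=\; (1+\epsilon/2)\,\sigma_T \;\le\; (1+\epsilon) \int |\sigma_{A_i}|^2 \, d\mu,
\]
giving \eqref{eqn:definition_of_B_symbol}. The operator $B^*B - T$ lies in $\Psi^{2s}(M)$, is formally self-adjoint, and has principal symbol $(\epsilon/2)\,\sigma_T$, which is strictly positive and elliptic by the uniform ellipticity of the $A_i$'s. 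My goal is then to realize $B^*B - T$ as $R^*R$ modulo a smoothing error.

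For this I would construct $R \in \Psi^s(M)$ such that $B^*B - T = R^*R + \mc{K}'$ with $\mc{K}' \in \Psi^{-\infty}(M)$. This is a refinement of Lemma \ref{lem:square_root}: rather than stopping at a compact remainder, one iteratively corrects the symbol of $R$ to eliminate all terms in its asymptotic expansion. Explicitly, set $R_0 = \mathrm{Op}(\sqrt{\sigma_{B^*B - T}})$, and inductively choose $R_k \in \Psi^{s-k}$ so that $\bigl(\sum_{j\le k} R_j\bigr)^*\bigl(\sum_{j\le k} R_j\bigr) - (B^*B - T) \in \Psi^{2s - k - 1}$; at each step the obstruction is a symbolic linear equation in $\sigma_{R_k}$ which is solvable because the principal symbol of $R_0$ is elliptic. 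Borel summation of the resulting asymptotic series produces the desired $R$ with smoothing remainder (this is the standard parametrix / square root construction for positive elliptic operators, cf.\ Seeley's theorem on complex powers).

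Combining these pieces yields
\[
\int \|A_i \phi\|_0^2 \, d\mu \;=\; \langle T\phi, \phi\rangle \;=\; \|B\phi\|_0^2 - \|R\phi\|_0^2 - \langle \mc{K}'\phi,\phi\rangle \;\le\; \|B\phi\|_0^2 + \langle \mc{K}\phi,\phi\rangle
\]
with $\mc{K} := -\mc{K}' \in \Psi^{-\infty}(M)$, as required. The main technical obstacle is the square-root-with-smoothing-remainder step: Lemma \ref{lem:square_root} only produces a compact (order $2s-1$) remainder, so one must either invoke the full iterative refinement outlined above, or alternatively perturb $B^*B - T$ by a finite-rank smoothing operator to achieve strict $L^2$-positivity and then apply the Seeley functional calculus to take an honest square root; either route is standard in pseudodifferential calculus but needs to be invoked carefully to guarantee the remainder is smoothing rather than merely compact.
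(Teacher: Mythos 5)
Your proposal is correct and follows essentially the same route as the paper: form $T=\int A_i^*A_i\,d\mu$, identify $\int\|A_i\phi\|_0^2\,d\mu=\langle T\phi,\phi\rangle$, and take $B$ to be (essentially) the quantization of the square root of $(1+\epsilon)$ times $\sigma_T$, absorbing the lower-order error via the positivity argument of Lemma \ref{lem:square_root}. The only divergence is that the paper is content with a remainder of one order lower (compact as a quadratic form, which is all that Lemma \ref{lem:move_compact_terms} and the application in Theorem \ref{thm:essential_spectral_gap} require), so the full Seeley-type iteration you outline to make $\mc{K}$ genuinely smoothing is more than is needed.
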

\begin{proof}
As before, for each $A$ in the support of $\mu$, there is its formal adjoint $A^*$. Then we may write
\[
\int \|A_i\phi\|_0^2\,d\mu= \int \langle A_i \phi,A_i \phi\rangle\,d\mu=\langle \left(\int A^*A\,d\mu\right) \phi,\phi\rangle. 
\]
Define $\hat{B}$ by taking 
\[
\hat{B}=(1+\epsilon)\text{Op}\left(\int \sigma_{A^*}\sigma_A\,d\mu\right).
\]
Then as in the proof of Lemma \ref{lem:comparison_of_symbols_and_operators} because $\sigma_B$ is greater than $(1+\epsilon)$ times the principal symbol of $\int A^*A\,d\mu$, there exists an operator $\mc{K}$ in $\Psi^{-\infty}(M)$, such that 
\[
\int \|A_i\phi\|_0^2\,d\mu\le \langle \hat{B}\phi,\phi\rangle +\langle \mc{K}\phi,\phi\rangle. 
\]
We can then apply Lemma \ref{lem:square_root} to $\hat{B}$ to find $B$ satisfying 
$\hat{B}=B^*B+\widetilde{\mc{K}}$ 
whose symbol satisfies \eqref{eqn:definition_of_B_symbol}. 
\end{proof}

\begin{lemma}\label{lem:action_on_symbol}
Suppose that $\mu^{-1}$ is a coexpanding on average measure on $\Diff^{\infty}_{\vol}(M)$ with compact support. Then for all $0<\lambda<1$,
there exists $n\in \mathbb{N}$ and $r$ such that if we write $\Delta$ for the usual Laplacian, and write  $\sigma_{\Delta^{-s}}(x,\xi)$ for the principal symbol of $\Delta^{-s}$, then for all $x\in M$ and $\abs{\xi}>r$ in $T^*_xM$,
\begin{equation}\label{eqn:comparison}
\int \abs{\sigma_{(\Delta^{-s})^f}(x,\xi)}^2\,d\mu^n(f)\le \lambda \abs{\sigma_{\Delta^{-s}}(x,\xi)}^2. 
\end{equation}
\end{lemma}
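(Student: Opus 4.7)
The plan is to identify both sides of \eqref{eqn:comparison} with explicit functions on $T^*M$ via the transformation law for principal symbols under pullback, and then reduce the resulting inequality to Lemma \ref{lem:vectors_expand_lemma} by homogeneity. Since $\sigma_{\Delta^{-s}}(x,\xi) = \|\xi\|^{-s}$, the pullback formula recalled in \S \ref{SSPullback} gives $\sigma_{(\Delta^{-s})^f}(x,\xi) = \|(D_xf^*)^{-1}\xi\|^{-s}$. Hence, up to the lower-order ambiguity discussed below, the target inequality is
\[
\int \|(D_xf^*)^{-1}\xi\|^{-2s}\,d\mu^n(f) \le \lambda\, \|\xi\|^{-2s}.
\]

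The integrand and the right-hand side are both homogeneous of degree $-2s$ in $\xi$, so it suffices to prove the inequality for unit covectors $\hat\xi \in T^{1*}_xM$. The problem therefore reduces to showing
\[
\int \|(D_xf^*)^{-1}\hat\xi\|^{-2s}\,d\mu^n(f) \le \lambda
\]
uniformly in $(x,\hat\xi)$. This is exactly the conclusion of Lemma \ref{lem:vectors_expand_lemma} applied with $2s$ in place of $s$, provided $2s$ lies in the range of exponents supplied by that lemma; one arranges this by shrinking the $s_0$ appearing in Theorem \ref{thm:essential_spectral_gap} so that $2s_0$ is still below the threshold of Lemma \ref{lem:vectors_expand_lemma}. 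The lemma then yields a bound of the form $C\eta^n$ with $\eta = \eta(2s) \in (0,1)$, and choosing $n$ large makes this at most $\lambda$.

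The one technical point, and the reason the inequality is only asserted for $|\xi| > r$, is that the principal symbol is defined only modulo $S^{-s-1}$: the identifications $\sigma_{\Delta^{-s}}(x,\xi) = \|\xi\|^{-s}$ and $\sigma_{(\Delta^{-s})^f}(x,\xi) = \|(D_xf^*)^{-1}\xi\|^{-s}$ hold up to errors of size $O(|\xi|^{-s-1})$, which become negligible relative to the leading terms of size $|\xi|^{-s}$ once $|\xi|$ is large. Because $\mu$ has compact support in $\Diff^{\infty}_{\vol}(M)$, the relevant symbol seminorms are uniformly bounded over the support of $\mu^n$, so the error is controlled uniformly in $f$. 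The clean way to execute this is to prove the homogeneous inequality with a strictly smaller constant $\lambda' \in (0,\lambda)$ in Steps 1--2, and then choose $r = r(n,\lambda,\lambda')$ large enough to absorb the perturbation back to $\lambda$; this last bookkeeping step is the only place where the compact-support hypothesis and the threshold $r$ actually enter the argument, and is the only mildly delicate ingredient.
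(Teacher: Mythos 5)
Your proposal is correct and follows essentially the same route as the paper's proof: identify the pulled-back principal symbol via the change-of-variables formula as $\|(D_xf^*)^{-1}\xi\|^{-s}$, reduce to unit covectors by homogeneity, and invoke Lemma \ref{lem:vectors_expand_lemma} with exponent $2s$ and $n$ large enough that $C\eta^n\le\lambda$. Your extra paragraph on the $S^{-s-1}$ ambiguity of the principal symbol is a slightly more careful accounting of where the threshold $r$ comes from than the paper bothers with, but it is a refinement of the same argument, not a different one.
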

\begin{proof}
 Recall from \S \ref{SSPullback} the
change of variables formula 
saying that if $A$ is a pseudodifferential operator 
with principal symbol $a(x,\xi)\colon T^*M\to \R$, and $f\in \Diff^\infty(M)$, 
then the pushforward $A^f$ has principal symbol $b(x,\xi)=a(f^{-1}(x), (D_x(f^{-1})^*)^{-1}\xi)$. 

Let $b_n$ denote the left hand quantity in equation \eqref{eqn:comparison}. 
Choose $2s$ and $n$ such that \eqref{eqn:upperbound_est} in Lemma \ref{lem:vectors_expand_lemma} holds for $C\eta^n<\lambda$. Then, for a unit covector $\xi\in T^*M$,  by the formula for the symbol of the pushforward, \eqref{eqn:pushforward_symbol},
\begin{align*}
b_n(x,\xi)&= \int\abs{\sigma_{\Delta^{-s}}(f^{-1}(x),(D_x(f^{-1})^*)^{-1}\xi)}^2\,d\mu^n (f)\\
&= \int \|(D_x(f^{-1})^*)^{-1} \xi\|^{-2s}\,d\mu^n(f) \\
& \le C\eta^n(s)(\sigma_{\Delta^{-s}}(x,\xi))^2\le \lambda (\sigma_{\Delta^{-s}}(x,\xi))^2. 
\end{align*}
By homogeneity of $b_n(x,\xi)$ and of estimate \eqref{eqn:upperbound_est}, the same estimate holds for all $\|\xi\|\ge 1$. Thus we are done. 
\end{proof}

We can now apply this estimate to study the essential spectral radius of the transfer operator.

\begin{proof}[Proof of Theorem \ref{thm:essential_spectral_gap}.] To begin, we assume that $\mu^{-1}$ is coexpanding on average. 
Recall that by definition $\mc{G}(\phi)=\int \phi\circ f_{\omega}\,d\mu(\omega)$. As in equation \eqref{eqn:averaged_operator_action}, we also have the action on operators, which we denote by $\mc{L}$. From before, we are interested in $\|\mc{G}^n\phi\|_{-s}$. We will take $n$ to be some potentially large number to be chosen later.
Then using a version of Jensen's inequality for Hilbert spaces (\cite[Thm.~1.1]{perlman1974jensens})
to pass to the second estimate, we find that:
\begin{align*}
\|\mc{G}^n\phi\|_{-s}^2=\|\Delta^{-s}\int \phi\circ f\,d\mu^n(f)\|_0^2
\le \int \|\Delta^{-s} (\phi\circ f)\|^2_0\,d\mu^n(f).
\end{align*}
But due to volume preservation,
\begin{align}
\label{L2Isom}
 \int \|\Delta^{-s} (\phi\circ f)\|^2_0\,d\mu^n(f)&=
  \int \|(\Delta^{-s} (\phi\circ f))\circ (f^{-1})\|^2_0\,d\mu^n(f)\\
  &=\int \|((\Delta^{-s})^f \phi\|^2_0\,d\mu^n(f). \nonumber
\end{align}
By Lemma \ref{lem:non_negative_sum},  there exists a pseudodifferential operator $B\in \Psi^{-s}$ and a compact operator $\mc{K}_1$ such that
$\displaystyle
\|\mc{G}^n\phi\|_{-s}^2\le \|B\phi\|_0^2+\langle \mc{K}_1\phi,\phi\rangle ,
$
and 
\begin{equation}\label{eqn:bound_on_B_symbol}
\abs{\sigma_B(x,\xi)}^2\le (1+\epsilon) \int \abs{\sigma_{(\Delta^{-s})^f}(x,\xi)}^2\,d\mu^n(f).
\end{equation}

We now compare the symbols of $B$ and $\Delta^{-s}$. For any $0<\lambda<1$, as long as $n$ is sufficiently large, by Lemma \ref{lem:action_on_symbol} applied to the right hand side of equation \eqref{eqn:bound_on_B_symbol}, it follows that $\abs{\sigma_{B}}\le\lambda \abs{\sigma_{\Delta^{-s}}}$ restricted to frequencies $\abs{\xi}>r$ for some $r$. 

We now conclude using the symbol comparison lemmas. As $\abs{\sigma_{B}}\le\lambda \abs{\sigma_{\Delta^{-s}}}$, it follows from Lemma \ref{lem:comparison_of_symbols_and_operators} applied to $B$ that for all $\epsilon>0$ there exists a compact, smoothing operator $\mc{K}_{\epsilon}$ such that 
\[
\|\mc{G}^n\phi\|_{-s}^2\le \|B\phi\|_{0}^2+\langle \mc{K}_1\phi,\phi\rangle\le (\lambda+\epsilon)\|\Delta^{-s}\phi\|_0^2+\langle (\mc{K}_\epsilon+\mc{K}_1)\phi,\phi\rangle
\]
Recalling that $\|\Delta^{-s}\phi\|_0^2=\|\phi\|_{-s}^2$, we then find by Lemma \ref{lem:move_compact_terms} that there exists a compact operator $\mc{K}_2$ such that 
\[
\|(\mc{G}^n+\mc{K}_2)\phi\|^2_{-s}\le (\lambda+2\epsilon)\|\phi\|_{-s}^2,
\]
which establishes essential spectral gap since $\lambda+2\epsilon<1$ if $\lambda$ and  $\epsilon$ are sufficiently small.

 In the remaining case, if $\mu$ is coexpanding on average, then the dynamics given by $\phi\mapsto \int \phi\circ f^{-1}\,d\mu$ has essential spectral gap on $H^{-s}$ by the above. Hence as the adjoint action is given by $\phi\mapsto \int \phi\circ f\,d\mu$, the random dynamics of $\mu$ have an essential spectral gap on $H^s$ for small $s>0$. This is because an operator and its adjoint have the same essential spectral radius.
\end{proof}

We note that the proof given above in fact shows the following {\em Lasota-Yorke} inequality under the assumption that 
$\mu^{-1}$ is coexpanding on average: 
\begin{equation}
\label{LY} 
\|\mc{G}^n\phi\|_{-s}\leq \eta^n \|\phi\|_{-s}+C_n \|\phi\|_{-\bar s} 
\end{equation}
where $\bar s=s+\frac{1}{2}>s$ and  the constants $\eta$ and $C$ are uniform in some neighborhood of $\mu.$ This
estimate will be useful in the next section.

\section{Applications of essential spectral gap}
\label{ScApplications}

\subsection{Essential spectral gap on \texorpdfstring{$L^2$}{square integrable functions}}
  The proof of the following theorem uses the interpolation results recalled in Subsection \ref{subsec:interpolation}.

\begin{theorem}\label{thm:essential_spectral_gapL2}
Suppose that $\mu$ and $\mu^{-1}$ are both coexpanding on average measures on $\Diff^{\infty}_{\vol}(M)$ with compact support. Then there exists $s_0>0$ such that the induced action on $H^s(M)$ has essential spectral gap for all $s\in [-s_0,s_0]$.
\end{theorem}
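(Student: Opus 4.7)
The plan is to combine Theorem \ref{thm:essential_spectral_gap} applied to both $\mu$ and $\mu^{-1}$, use $L^2$-duality between $H^{s_0}$ and $H^{-s_0}$ to transfer the gap from $\mu^{-1}$'s generator to an $H^{s_0}$-gap for $\mc{G}$ itself, and then interpolate between $H^{-s_0}$ and $H^{s_0}$.

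First, since $\mu$ is coexpanding on average, Theorem \ref{thm:essential_spectral_gap} yields some $s_0>0$ such that $\mc{G}\colon H^{-s_0}(M)\to H^{-s_0}(M)$ has essential spectral radius $<1$. Next, I compute the $L^2$-adjoint of $\mc{G}$. Using volume preservation to change variables,
\[
\langle \mc{G}\phi,\psi\rangle_{L^2} =\int\!\!\int \phi(fx)\psi(x)\,dx\,d\mu(f)=\int\!\!\int \phi(y)\psi(f^{-1}y)\,dy\,d\mu(f)=\langle\phi,\mc{G}^*\psi\rangle_{L^2},
\]
so the formal adjoint is $(\mc{G}^*\psi)(y)=\int\psi(f^{-1}y)\,d\mu(f)$, which is exactly the generator associated to the pushforward measure $\mu^{-1}$ on $\Diff^\infty_{\vol}(M)$ via $f\mapsto f^{-1}$. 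By hypothesis $\mu^{-1}$ is coexpanding on average, so a second application of Theorem \ref{thm:essential_spectral_gap} produces (after possibly shrinking $s_0$) essential spectral radius $<1$ for $\mc{G}^*$ on $H^{-s_0}(M)$.

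Now I use duality. The space $H^{s_0}(M)$ is dual to $H^{-s_0}(M)$ via the $L^2$ pairing, and the operator $\mc{G}\colon H^{s_0}\to H^{s_0}$ is the Banach-space adjoint of $\mc{G}^*\colon H^{-s_0}\to H^{-s_0}$. Since an operator and its Banach-space adjoint have the same spectrum and the same essential spectrum (this can be seen e.g.\ from the fact that Fredholm properties and compactness are preserved by taking adjoints), we conclude
\[
r_e\bigl(\mc{G}\colon H^{s_0}\to H^{s_0}\bigr)=r_e\bigl(\mc{G}^*\colon H^{-s_0}\to H^{-s_0}\bigr)<1.
\]

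Finally, I interpolate. By \eqref{eqn:complex_interpolation}, complex interpolation between $H^{-s_0}$ and $H^{s_0}$ gives
\[
[H^{-s_0},H^{s_0}]_{[\theta]}=H^{(2\theta-1)s_0},\qquad \theta\in(0,1),
\]
which sweeps out all of $H^s$ for $s\in(-s_0,s_0)$ as $\theta$ varies. Applying the interpolation inequality for the essential spectral radius in Lemma \ref{lem:interpolation} to the pair $(H^{-s_0},H^{s_0})$ yields
\[
r_e\bigl(\mc{G}\colon H^s\to H^s\bigr)\le r_e\bigl(\mc{G}\colon H^{-s_0}\to H^{-s_0}\bigr)^{1-\theta}\, r_e\bigl(\mc{G}\colon H^{s_0}\to H^{s_0}\bigr)^{\theta}<1
\]
for every $s\in(-s_0,s_0)$. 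Shrinking $s_0$ slightly to include the endpoints (both of which were already handled) completes the proof. The only non-routine step is the duality identification of $\mc{G}^*$ with the generator of $\mu^{-1}$, which is clean thanks to volume preservation; once that is in hand the rest is essentially formal from Theorem \ref{thm:essential_spectral_gap} and the interpolation machinery.
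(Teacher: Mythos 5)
Your proposal is correct and follows essentially the same route as the paper: identify $\mc{G}^*$ with the generator of $\mu^{-1}$ using volume preservation, apply Theorem \ref{thm:essential_spectral_gap} to both $\mu$ and $\mu^{-1}$, transfer the gap to $H^{s_0}$ by duality (adjoints share the essential spectrum), and conclude by complex interpolation via Lemma \ref{lem:interpolation} and \eqref{eqn:complex_interpolation}. The only point the paper makes explicit that you elide is that the interpolated operator really is composition with the dynamics, which follows from density of $C^\infty$ in all the relevant spaces.
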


\begin{proof}[Proof of Theorem \ref{thm:essential_spectral_gapL2}.]
To begin, note that if $\mc{G}_\mu$ denotes the generator of $\mu$ then  $\mc{G}_\mu^*=\mc{G}_{\mu^{-1}}$. Now by Theorem \ref{thm:essential_spectral_gap} for some small $s_0>0$, two things follow:  because $\mu$ is coexpanding on average $\mc{G}_{\mu}$ has essential spectral gap on $H^{s_0}$, and because $\mu^{-1}$ is coexpanding on average it follows that $\mc{G}_{\mu}$ has essential spectral gap on $H^{-s_0}$. Now we can apply Lemma \ref{lem:interpolation}, and interpolate between $H^{-s_0}$ and $H^{s_0}$ to get the Sobolev space $H^s$ for any $s\in (-s_0,s_0)$ by equation \eqref{eqn:complex_interpolation}. By the lemma, the interpolated operator $\mc{G}_{\mu}$ has essential spectral gap on $H^s$ as long as it has it on $H^{-s_0}$ and $H^{s_0}$. All that one needs to check is that the interpolated operator is indeed the operator given by the composition with the dynamics, but this is clear because $C^{\infty}$ functions are dense in $H^{s_0}$, $H^{-s_0}$, and $H^s$.
\end{proof}

\begin{remark}
\label{rem:besov}
Note that due to \eqref{eqn:besov} under the hypotheses of Theorem~\ref{thm:essential_spectral_gap} we also obtain spectral gap on the Besov spaces $B^{s}_{2 q}$ for $q\ge 1$ and $s\in [-s_0,s_0]$.
\end{remark}

\subsection{Pair correlation}
Recall that a measure preserving map $F$ is {\em totally ergodic} if $F^q$ is ergodic for all $q\in \mathbb{N}$.

\begin{theorem}\label{ThEMAnnealed} Let $\mu^{-1}$ be a coexpanding on average measure on $\Diff^\infty_{\vol}(M)$ with compact support.

(a)
 Suppose that the measure $\mu$ is weak mixing in the sense explained in \S\ref{SSWMSkew}. Then the random walk defined by $\mu$ on $M$ is exponentially mixing. Specifically, there exists $s>0$, $C>0$, and $0<\lambda<1$ such that for $\phi\in H_0^s$ and $\psi\in H^{-s}_0$, the Sobolev spaces of zero mean,
\[
\abs{\langle \phi,\mc{\mc{G}}^n \psi\rangle }\le C\lambda^n\|\phi\|_{H^s}\|\psi\|_{H^{-s}}.
\]
In particular, this implies that for any fixed $\alpha>0$, and zero mean $\phi,\psi\in C^{\alpha}(M)$,
\[
\abs{\langle \phi,\mc{G}^n \psi\rangle}\le C\lambda^n \|\phi\|_{C^{\alpha}}\|\psi\|_{C^{\alpha}}. 
\]

(b) The same conclusion holds if we only assume that the skew product defined by \eqref{DefSkew} is totally ergodic.
\end{theorem}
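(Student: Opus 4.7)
By Theorem \ref{thm:essential_spectral_gap} the generator $\mc{G}$ has essential spectral radius $\eta<1$ on $H^{-s}$ for some small $s>0$, so its spectrum outside the disk $\{|\lambda|\le\eta\}$ consists of finitely many isolated eigenvalues of finite multiplicity. The plan is to pin down this peripheral spectrum in three stages: first, show every peripheral eigenvalue lies in $\overline{\mathbb{D}}$; second, show that eigenfunctions with $|\lambda|=1$ are actually in $L^2(M)$; third, show that under either hypothesis (a) or (b) the only unit-modulus eigenvalue is $\lambda=1$, with the one-dimensional eigenspace of constants. The spectral decomposition $H^{-s}=\mathbb{C}\cdot\mathbf{1}\oplus H^{-s}_0$ will then give exponential decay of $\mc{G}^n$ on the mean-zero subspace $H^{-s}_0$, and pairing $\phi\in H^s_0$ with $\mc{G}^n\psi$ yields the correlation estimate; the $C^\alpha$ version then follows from the Sobolev embedding $C^\alpha\hookrightarrow H^s$ for small $s<\alpha$.

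The bound $|\lambda|\le 1$ will come from duality. The Banach adjoint of $\mc{G}$ (for the $L^2$-pairing) is the operator $\mc{G}^*=\mc{G}_{\mu^{-1}}$ acting on $H^s=(H^{-s})^*$; it has the same isolated peripheral eigenvalues as $\mc{G}$ on $H^{-s}$, and it is a contraction on $L^2$ (any Markov averaging operator is). Since $H^s\hookrightarrow L^2$, every peripheral $\mc{G}^*$-eigenfunction lies in $L^2$, forcing $|\lambda|\le 1$. For $|\lambda|=1$ I plan to identify the Riesz projector $P_\lambda$ onto the generalised eigenspace $V_\lambda$ with the ergodic average
\[
P_\lambda=\lim_{n\to\infty}\frac{1}{n}\sum_{k=0}^{n-1}\lambda^{-k}\mc{G}^k.
\]
The limit exists strongly in $H^{-s}$ by the spectral decomposition together with the absence of Jordan blocks at $|\lambda|=1$ (any such block would contradict $\|\mc{G}\|_{L^2\to L^2}\le 1$). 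For smooth $\psi$ the same averages converge in $L^2$ by von Neumann's mean ergodic theorem applied to the $L^2$-contraction $\lambda^{-1}\mc{G}$; since $L^2\hookrightarrow H^{-s}$, uniqueness of the $H^{-s}$-limit forces $P_\lambda(C^\infty)\subset L^2$, and since $V_\lambda$ is finite dimensional and $C^\infty$ is dense in $H^{-s}$, this gives $V_\lambda\subset L^2$.

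With each unit-modulus eigenfunction $\phi$ now in $L^2(M)$, volume preservation and the equality case of Jensen's inequality yield $\phi\circ f=\lambda\phi$ for $\mu$-a.e.~$f$ as well as $\mc{G}|\phi|^2=|\phi|^2$; ergodicity of volume (which follows from both (a) and (b), the second via Proposition \ref{PrRandomET}) then forces $|\phi|$ to be constant, so one may normalise $|\phi|\equiv 1$. Consequently, for any two such eigendata $(\lambda_1,\phi_1)$ and $(\lambda_2,\phi_2)$ the functions $\phi_1\phi_2$ and $\bar\phi_1$ are again $L^\infty$-eigenfunctions with eigenvalues $\lambda_1\lambda_2$ and $\lambda_1^{-1}$, so the set of unit-modulus eigenvalues forms a subgroup of $S^1$; being finite by quasi-compactness, it is a cyclic group of $N$-th roots of unity. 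Under (a) weak mixing rules out $N>1$ by its very definition. Under (b), any $\lambda$ with $\lambda^N=1$ and $N>1$ would yield $\mc{G}^N\phi=\phi$, hence by Jensen $\phi\circ g=\phi$ for $\mu^N$-a.e.~$g$, so $\Psi(\omega,x):=\phi(x)$ would be a non-constant $F^N$-invariant function, contradicting total ergodicity of the skew product $F$. Thus $N=1$ in both cases, $\mc{G}$ has spectral radius some $\lambda_*<1$ on $H^{-s}_0$, and the inequality $|\langle\phi,\mc{G}^n\psi\rangle|\le\|\phi\|_{H^s}\|\mc{G}^n\psi\|_{H^{-s}}\le C\lambda_*^n\|\phi\|_{H^s}\|\psi\|_{H^{-s}}$ completes the argument. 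The single most delicate step will be the regularity bootstrap of the previous paragraph---matching the abstract Riesz projector on $H^{-s}$ with the concrete $L^2$-ergodic average on smooth test functions---since Theorem \ref{thm:essential_spectral_gap} only provides essential spectral gap on the distributional side.
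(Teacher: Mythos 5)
Your proposal is correct and reaches the conclusion by the same overall strategy as the paper (essential spectral gap from Theorem \ref{thm:essential_spectral_gap}, then identification of the peripheral spectrum via the $L^2$ structure, then weak mixing/total ergodicity to kill nontrivial unit-modulus eigenfunctions), but the key technical component is organized differently. The paper never shows that the peripheral eigenfunctions of $\mc{G}$ on $H^{-s}$ lie in $L^2$: it instead decomposes the \emph{test function} $\phi\in H^s_0$ according to the dual spectral splitting for the adjoint $\mc{G}^*=\mc{G}_{\mu^{-1}}$ acting on $H^{s}$. Since $H^{s}\hookrightarrow L^2$ for $s>0$, the peripheral eigenfunctions of the adjoint are automatically in $L^2$, the scalar-product argument of \S\ref{SSWMSkew} applies to them directly, weak mixing forces them to vanish, and the pairing $\langle\phi_1,\mc{G}^n\psi_1\rangle$ then dies without any information about $\psi_1\in H^{-s}$. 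Your route instead bootstraps the regularity of the peripheral generalized eigenspaces of $\mc{G}$ itself into $L^2$, by matching the Riesz projector with the Ces\`aro average and invoking von Neumann's mean ergodic theorem on smooth test functions. This is valid and yields slightly more (the peripheral eigendistributions are genuine $L^2$ functions of constant modulus), at the cost of the extra ergodic-average argument, which the duality trick renders unnecessary. Part (b) is handled essentially identically in both arguments: the unit-modulus eigenvalues form a finite group, hence consist of $q$-th roots of unity, so a nontrivial one produces a non-constant $F^q$-invariant function.

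One small circularity to repair. You rule out Jordan blocks at $|\lambda|=1$ by appealing to $\|\mc{G}\|_{L^2\to L^2}\le 1$, but at that stage you do not yet know that the generalized eigenvectors of $\mc{G}$ on $H^{-s}$ lie in $L^2$ --- that is exactly what the Jordan-block-free Ces\`aro formula is supposed to deliver. Run the Jordan-block exclusion on the adjoint side instead: a nontrivial block for $\mc{G}^*$ at a unit-modulus eigenvalue would produce $\phi_1,\phi_2\in H^{s}\subset L^2$ with $(\mc{G}^*)^n\phi_2=\bar\lambda^n\phi_2+n\bar\lambda^{n-1}\phi_1$, so that $\|(\mc{G}^*)^n\phi_2\|_{L^2}\ge n\|\phi_1\|_{L^2}-\|\phi_2\|_{L^2}\to\infty$, contradicting the fact that $\mc{G}_{\mu^{-1}}$ is an $L^2$ contraction (volume preservation plus the triangle inequality). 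Since the Jordan structures of $\mc{G}$ and $\mc{G}^*$ on the dual pair of finite-dimensional peripheral subspaces coincide, this legitimizes your Ces\`aro formula and the rest of the bootstrap goes through.
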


\begin{proof}
(a) By Theorem \ref{thm:essential_spectral_gap} there exists $s>0$ such $\mc{G}$ acting on $H^{-s}$ has essential spectral gap. From the spectral decomposition theorem, e.g. \cite[Sec.~148]{riesz1990functional}, we can decompose $H^{-s}$ into two $\mc{G}$ invariant pieces $H_1$ and $H_2$ so that $H_1$ contains the part of the spectrum of modulus at least $1$ and the action on $H_2$ has 
 has spectral radius smaller than some $\eta<1$. 
There is a corresponding invariant decomposition in the dual space $H^{s}$ for the action of the adjoint $\mc{G}^*$, which we denote $H_1^*$ and $H_2^*$. Note that $H_1$ and $H_1^*$ are finite dimensional from the assumption of essential spectral gap. 
Given $\phi\in H^{s}_0$ and $\psi\in H^{-s}_0$, decompose $\phi=\phi_1+\phi_2$, $\psi=\psi_1+\psi_2$
where $\phi_1\in H_1^*,$ $\phi_2\in H_2^*$, $\psi_1\in H_1$ and $\psi_2\in H_2$.
\begin{equation}
 \label{OrtSpec}   
\abs{\langle \phi,\mc{G}^n\psi\rangle}\le \abs{\langle \phi_1,\mc{G}^n \psi_1\rangle}+\abs{\langle \phi_2,\mc{G}^n \psi_2\rangle}
\end{equation}
Any element of $H_1^*$ is an element of $L^2$ that satisfies 
$\mc{G}^{n*}{\phi_1}= r e^{i\theta}{\phi_1}$ for some real  $r\geq 1$ and
$\theta$. As the $f$ preserve volume, this adjoint is given by $\phi\mapsto \int \phi\circ f^{-1}\,d\mu(f)$. 
Arguing as in \S \ref{SSWMSkew} we get that $r=1$ and ${\phi_1\circ f^{-1}}=e^{i\theta}{\phi_1}$
for $\mu$ almost every $f$.
Now our assumption about weak mixing implies that
$\phi_1$ must be constant, and hence $0$ by assumption of zero integral. 

For the second term in \eqref{OrtSpec} 
we have exponential decay because the norm of $\mc{G}^n$ on $H_2$ is at most $\eta^n $ for large $n$.
Thus 
$\displaystyle
\abs{\langle \phi,\mc{G}^n\psi\rangle}\le \eta^n\|\phi\|_{H^s}\|\psi\|_{H^{-s}_0}, 
$  as desired.

(b) Suppose now that $F\colon \Sigma\times M\to \Sigma\times M$ is ergodic but $\mathcal{G}$ does not have a spectral gap. It is easy to see that the set eigenvalues corresponding to eigenfunctions
depending only on the $M$ coordinate form an abelian group 
(cf.~\cite[Theorem 12.1.1(1)]{CFS}).
Since $\mathcal{G}$ has an essential spectral gap on $H^{-s}$ the space of eigenfunctions in $H^{-s}$ and hence in $L^2$ is finite dimensional.
Therefore, the aforementioned group is finite, and so there exists $q$ such that all eigenfunctions are $q$-th roots of unity. It follows that all 
eigenfunctions are invariant by $F^q$ and so $F^q$ is not ergodic.
\end{proof}

\subsection{Multiple mixing.}
We can now check multiple mixing. 

\begin{corollary}\label{cor:multiple_mixing}
Under the assumptions of Theorem \ref{ThEMAnnealed}, there exists 
a constant $0\!\!<\!\!\theta\!\!<\!\!1$ such that 
for all  $d\in\mathbb{N}$, there is a constant $C$ such that 
for all zero mean $\phi_0, \phi_1\dots \phi_{d-1}\in C^1(M)$, all zero mean $\phi_{d} \in H^{-s}$,
and for all $0\!=\!n_0\!<\!n_1\!<\!\dots \!<\!n_d$
we have:
\[ \left|\mathbb{E}_\mu \left[\int \phi_0 \mathcal{G}^{m_1} \left( \phi_1 \left(\mathcal{G}^{m_2} \phi_2\dots \phi_{d-1}\left(
\mathcal{G}^{m_d} \phi_d \right)\right)\right) dx \right]  \right|\leq C \theta^{L} 
\left[\prod_{j=0}^{d-1} \|\phi_j\|_{C^1}\right] \|\phi_d\|_{H^{-s}}, \]
where $m_j=n_j-n_{j-1}$ and $\displaystyle L=\min_j m_j.$
\end{corollary}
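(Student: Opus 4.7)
The plan is to proceed by backward induction over the levels $j = d, d-1, \ldots, 1$, exploiting the essential spectral gap of $\mathcal{G}$ on $H^{-s}$. By the argument in the proof of Theorem~\ref{ThEMAnnealed}(b), the standing hypotheses force the peripheral spectrum of $\mathcal{G}$ on $H^{-s}$ to consist of the single simple eigenvalue $1$ with constant eigenfunction; this yields a spectral decomposition $\mathcal{G}^n = P + Q_n$ where $Pg = \int g\,dx$ is the projection onto constants, $PQ_n = Q_nP = 0$, and $\|Q_n\|_{H^{-s}\to H^{-s}} \leq C\theta^n$ for some $\theta\in(0,1)$ independent of $n$.

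I will set $F_d = \mathcal{G}^{m_d}\phi_d$ and inductively $F_j = \mathcal{G}^{m_j}(\phi_j F_{j+1})$ for $j = d-1, \ldots, 1$, so that the quantity to bound is $\langle \phi_0, F_1\rangle$. The inductive claim will be that each $F_j$ decomposes as $F_j = c_j + R_j$ with $c_j\in\mathbb{C}$, $R_j\in H^{-s}$ of zero mean, and
\[
|c_j| + \|R_j\|_{H^{-s}} \leq C_d\, \theta^{L} \Bigl(\prod_{k=j}^{d-1}\|\phi_k\|_{C^1}\Bigr)\|\phi_d\|_{H^{-s}}.
\]
For the induction step, substituting $F_{j+1} = c_{j+1} + R_{j+1}$ and using the zero-mean hypothesis on $\phi_j$ (which kills $P\phi_j$) I will obtain
\[
F_j = c_{j+1}\,Q_{m_j}\phi_j + \langle \phi_j, R_{j+1}\rangle + Q_{m_j}(\phi_j R_{j+1}),
\]
so that $c_j = \langle \phi_j, R_{j+1}\rangle$ and $R_j = c_{j+1}Q_{m_j}\phi_j + Q_{m_j}(\phi_j R_{j+1})$, with the two-line recursion
\[
|c_j| \leq C\|\phi_j\|_{C^1}\|R_{j+1}\|_{H^{-s}}, \qquad \|R_j\|_{H^{-s}} \leq C\theta^{m_j}\|\phi_j\|_{C^1}\bigl(|c_{j+1}| + \|R_{j+1}\|_{H^{-s}}\bigr).
\]
Starting from the base case $c_d = 0$ and $\|R_d\|_{H^{-s}} \leq C\theta^{m_d}\|\phi_d\|_{H^{-s}}$ (immediate from the spectral decomposition since $\phi_d$ is zero-mean), routine iteration delivers the claim, because every factor $\theta^{m_k}$ that is accumulated along the way is at least $\theta^{L}$.

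Two standard Sobolev facts, valid once $s_0 < 1$, will make this run: the embedding $C^1 \hookrightarrow H^{s_0}$ with $\|\phi\|_{H^{s_0}} \lesssim \|\phi\|_{C^1}$ (so $|\langle \phi, R\rangle| \lesssim \|\phi\|_{C^1}\|R\|_{H^{-s_0}}$), and boundedness of pointwise multiplication by a $C^1$ function on $H^{-s_0}$ with operator norm $\lesssim \|\phi\|_{C^1}$ (by duality with its boundedness on $H^{s_0}$). Given these, the final estimate follows from the observation that $\langle \phi_0, F_1\rangle = \langle \phi_0, R_1\rangle$, since the constant contribution $c_1$ is killed by the zero-mean hypothesis on $\phi_0$; bounding this by $\|\phi_0\|_{C^1}\|R_1\|_{H^{-s_0}}$ produces the desired inequality. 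The main delicate point will be ensuring that the multiplication bound on $H^{-s}$ is quantitative with operator norm proportional to $\|\phi\|_{C^1}$, so that the product $\prod_{k=0}^{d-1}\|\phi_k\|_{C^1}$ emerges with the correct form; everything else is bookkeeping on the displayed recursion.
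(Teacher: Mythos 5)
Your proposal is correct and follows essentially the same route as the paper: both arguments rest on the spectral decomposition $\mathcal{G}^n = P + Q_n$ with $\|Q_n\|_{H^{-s}\to H^{-s}}\le C\theta^n$ on the zero-mean part (established in the proof of Theorem \ref{ThEMAnnealed}), the boundedness of multiplication by a $C^1$ function on $H^{-s}$, and the splitting of each intermediate function into its mean and a zero-mean remainder. Your backward recursion on the pairs $(c_j,R_j)$ is just an unrolled version of the paper's induction on $d$, which peels off the innermost factor $\psi=\phi_{d-1}\mathcal{G}^{m_d}\phi_d$ and splits it as $\langle\psi,1\rangle+\psi_2$.
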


\begin{proof}
We proceed by induction. For $d=1$ the result holds due to  Theorem \ref{ThEMAnnealed}.   

For $d>1$, let $\psi=\phi_{d-1} \mathcal{G}^{m_d} \phi_d.$ Note that in the proof of Theorem \ref{ThEMAnnealed} we established that
$H_1$ is trivial, since the only eigenfunction of modulus $1$ is $1$, which is orthogonal to zero mean functions.
So $\|\mathcal{G}^{m_d} \phi_d\|_{H^{-s}}{\leq}
C_1\theta^{m_d}\|\phi_d\|_{H^{-s}}$. Since multiplication by a $C^1$ 
function is a bounded operator on $H^{-s}$, with the norm bounded by the $C^1$ norm of the function,  $\|\psi\|_{H^{-s}}{\leq} C_2\theta^{m_d}\|\phi_{d-1}\|_{C^1}\|\phi_d\|_{H^{-s}}$. 
$\psi$ need not have zero mean but we can split it as $\psi_1+\psi_2$ where 
$\psi_1=\langle \psi, 1 \rangle 1$ and
$\psi_2$ has zero mean. Hence applying the inductive assumption for $d-2$ and $d-1$ respectively, and noticing that
$$\langle \psi, 1\rangle=\int \phi_{d-1} \mathcal{G}^{m_d} {\phi_{d}} \; dx=O\left(\|\phi_{d-1}\|_{C^1} \|\phi_d\|_{H^{-s}} \theta^{m_d} \right) $$
we obtain the result.
\end{proof}

\subsection{Non-mixing systems.}

Without assuming ergodicity we have the following consequence of coexpansion on average.

\begin{corollary}\label{cor:coexpanding_components}
 Consider a measure $\mu$ on $\Diff^\infty_{\vol}(M)$ with compact support, and suppose that $\mu^{-1}$  or $\mu$ is a coexpanding on average. Then there exists $q>0$
 and a finite collection of disjoint positive measure subsets $M_1, \dots ,M_l$ of $M$ of total measure $1$ such that for each $j$,
 $F^q$ preserves $\Sigma\times M_j$ and the restriction of $F^q$ to this set is totally ergodic.
\end{corollary}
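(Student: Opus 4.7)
My plan is to use the finite‐dimensional peripheral spectral subspace provided by Theorem~\ref{thm:essential_spectral_gap} as a pigeonhole bound for the number of invariant sets modulo each power of the random system, and then identify the desired decomposition with the atoms of a suitable $\sigma$-algebra of almost surely invariant sets. Fix $s>0$ as in Theorem~\ref{thm:essential_spectral_gap}, so that $\mathcal{G}\colon H^{-s}\to H^{-s}$ has essential spectral radius strictly smaller than $1$. Its peripheral spectrum (the part on the unit circle) then consists of finitely many eigenvalues of finite multiplicity, and the corresponding spectral projection carves out a finite-dimensional $\mathcal{G}$-invariant subspace $E\subset H^{-s}$ of some dimension $D<\infty$.

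For each $q\in\mathbb{N}$, let $\mathcal{I}_q$ be the $\sigma$-algebra (mod null) of measurable sets $A\subseteq M$ satisfying $f^{-1}A=A$ for $\mu^q$-almost every $f$. If $A\in\mathcal{I}_q$ then $\mathcal{G}^q\mathbf{1}_A=\int\mathbf{1}_{f^{-1}A}\,d\mu^q(f)=\mathbf{1}_A$ in $L^2$, so via the continuous inclusion $L^2\hookrightarrow H^{-s}$ the indicator $\mathbf{1}_A$ lies in the $1$-eigenspace of $\mathcal{G}^q$ on $H^{-s}$. This $1$-eigenspace is contained in the sum of generalized eigenspaces of $\mathcal{G}$ at $q$-th roots of unity, all of which are peripheral, hence it is contained in $E$. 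Indicators of distinct atoms of $\mathcal{I}_q$ are linearly independent, so the number of atoms $l_q$ satisfies $l_q\le D$; moreover, because these indicators span the space of $\mathcal{I}_q$-measurable $L^2$ functions, the atoms actually partition $M$ (mod null).

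The chain rule $(f_m\cdots f_1)^{-1}A=f_1^{-1}\cdots f_m^{-1}A$ shows that if $A$ is $\mu^q$-a.s. invariant then $A$ is $\mu^{qm}$-a.s. invariant, giving $\mathcal{I}_q\subseteq\mathcal{I}_{qm}$ and hence $l_q\le l_{qm}$. Since $n\mapsto l_n$ is bounded by $D$, its supremum is attained at some $q$, and for every $m\ge 1$ we obtain $l_q\le l_{qm}\le\sup_n l_n=l_q$, so $l_{qm}=l_q$. A nested inclusion of finite $\sigma$-algebras with the same finite atom count is an equality, so $\mathcal{I}_{qm}=\mathcal{I}_q$ (mod null) for every $m\ge 1$.

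Let $M_1,\dots,M_l$ be the atoms of $\mathcal{I}_q$. Each $M_j$ has positive volume, is $\mu^q$-a.s. invariant, and $F^q$ preserves $\Sigma\times M_j$. For every $m\ge 1$, the identity $\mathcal{I}_{qm}=\mathcal{I}_q$ implies $M_j$ remains an atom of $\mathcal{I}_{qm}$, i.e., it has no $\mu^{qm}$-a.s. invariant subset of intermediate volume; Proposition~\ref{PrRandomET} then gives that $F^{qm}$ is ergodic on $\Sigma\times M_j$. Thus $F^q\vert_{\Sigma\times M_j}$ is totally ergodic, as required. The main technical point in this argument is the reduction from arbitrary peripheral eigenfunctions in $H^{-s}$ down to indicator functions of invariant sets; once this is executed via $L^2\hookrightarrow H^{-s}$ and finite-dimensionality of $E$, the rest is a combinatorial pigeonhole on nested $\sigma$-algebras of bounded atom count.
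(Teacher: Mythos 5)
Your proof is correct and follows essentially the same strategy as the paper's: both arguments bound the number of $\mu^q$-a.s.\ invariant sets by the dimension of the finite-dimensional peripheral eigenspace of $\mathcal{G}$ on $H^{-s}$ supplied by Theorem~\ref{thm:essential_spectral_gap}, and both invoke Proposition~\ref{PrRandomET} to convert atomicity of the invariant $\sigma$-algebra into (total) ergodicity of $F^q$ on each piece. The only difference is cosmetic and lies in the termination step: the paper iteratively refines the partition and argues the refinement must stop because the atom count is bounded, whereas you choose $q$ directly as a maximizer of the atom count $l_q$ over the nested algebras $\mathcal{I}_q\subseteq\mathcal{I}_{qm}$ --- a slightly cleaner packaging of the same pigeonhole.
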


\begin{proof}
As $\mu$ and $\mu^{-1}$ have the same ergodic components, we will just consider the case where $\mu^{-1}$ is coexpanding on average.
    If $F$ is totally ergodic, there is nothing to prove, so we suppose that $F$ is not totally ergodic. Then there exists $q$ such that $F^q$ is not ergodic. 
     As is standard for random systems, we say that a set is invariant if it is invariant modulo $\vol$-null sets. 
    By Proposition~\ref{PrRandomET}, there is a set $\tilde M\subset M$ which is invariant by almost all $f_\omega^q$.
    Note that the space of functions depending only on $x$ which are invariant mod zero  by almost all $f_\omega^q$ is finite dimensional
    (its dimension does not exceed the dimension of $H^1_*$ from the proof of Theorem \ref{ThEMAnnealed}).
 Hence the $\sigma$-algebra of invariant sets is finitely generated, so there are finitely many sets 
    $\tilde M_1, \tilde M_2,\dots \tilde M_{\tilde l}$ which are invariant and such that the restriction of $F^q$ to 
    $\Sigma\!\times\!\tilde M_j$ is ergodic. If $F^q$ is totally ergodic restricted to these sets, we are done.
    Otherwise there is $\hat q>1$ such that, applying Proposition~\ref{PrRandomET} again,
    we could split 
    $\tilde M_j\!=\!\hat M_{j1}\bigcup \dots \bigcup\hat M_{jk_j}$ so that
    $\hat M_{ji}$ are invariant under $F^{\hat q}$ and $F^{\hat q}$ is ergodic on 
    $\Sigma\!\times\!\hat M_{ji}$,
     and the splitting is non trivial in the sense that at least one $\tilde M_j$
    is split into more than one  piece.
    
    Continuing this procedure we obtain finer and finer subpartitions of $M.$ Since the number of elements in every 
    partition is at most dimension of $H_1^*$, this process stops after finitely many steps.
\end{proof}

\subsection{Stability of mixing.}
\label{SSStability}
Let $\cK$ be a compact set of measures $\mu$ such that $\mu^{-1}$ is coexpanding on average measures and   \eqref{LY} holds for $\mu\in \cK$.
Consider the following Wasserstein type function on the space of measures
$$ \dd(\mu, \tilde \mu)=\inf \int_\pi  \sqrt{[d_{C^2}(f, \tilde f)+d_{C^2}(f^{-1}, \tilde f^{-1})]} d\pi(f, \tilde f).
$$
where the infimum is over all measures $\pi$ with marginals $\mu$ and $\wt{\mu}.$

\begin{theorem}\label{thm:exponential_mixing}
\label{ThSpStab}
Suppose $\mu\in \cK$ is a measure on $\Diff^\infty_{\vol}(M)$ with compact support and such that
the associated operator $\mc{G}$ has no eigenvalues on the unit circle  in $H^{-s}_0$, the space of zero mean distributions. Then the same holds for any measure 
$\tilde \mu$ which is sufficiently close to $\mu$ with respect to $\dd$ metric.
\end{theorem}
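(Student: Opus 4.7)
The plan is to apply Keller--Liverani perturbation theory (Proposition \ref{PrSpecCont}) to the one-parameter family $\cG_\eps = \cG_{\tilde\mu}$ acting on the zero-mean space $H^{-s}_0$, taking $\|\cdot\|_{-s}$ as the strong norm and $\|\cdot\|_{-\bar s}$ as the weak norm, where $\bar s = s + \tfrac12$ is as in the Lasota--Yorke estimate \eqref{LY}. Conditions \eqref{UniGrowth}, \eqref{KL-LY} and \eqref{FinMult} all follow from \eqref{LY} and the essential spectral gap of Theorem \ref{thm:essential_spectral_gap}, both of which hold uniformly on $\cK$ since the relevant constants depend continuously on the coexpansion constant and on the $C^k$-diameter of $\supp\mu$.

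The key step is the tame continuity \eqref{TameCont}. Given an almost optimal coupling $\pi$ realizing $\dd(\mu,\tilde\mu)$, write
\[
\cG_{\tilde\mu}\phi - \cG_\mu\phi = \int (\phi\circ\tilde f - \phi\circ f)\,d\pi(f,\tilde f),
\]
so it suffices to establish the pointwise bound
\[
\|\phi\circ\tilde f - \phi\circ f\|_{-\bar s} \le C\sqrt{d_{C^2}(f,\tilde f)}\,\|\phi\|_{-s}
\]
uniformly on the compact set where $\pi$ is supported. Decomposing $\tilde f = g\circ f$ with $g = \tilde f\circ f^{-1}$ and using that composition with $f$ is uniformly bounded on $H^{-\bar s}$ (by volume preservation and uniform $C^2$ bounds), this reduces to
\[
\|\phi\circ g - \phi\|_{-\bar s} \le C\, d_{C^1}(g,\id)^{1/2}\,\|\phi\|_{-s},
\]
which we prove by duality. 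Because $g$ is volume preserving, $\langle \phi\circ g - \phi, \varphi\rangle = \langle \phi, \varphi\circ g^{-1} - \varphi\rangle$, so it suffices to show
\[
\|\varphi\circ g^{-1} - \varphi\|_{s} \le C\, d_{C^1}(g^{-1},\id)^{1/2}\,\|\varphi\|_{\bar s},\qquad \varphi\in H^{\bar s}.
\]
This follows by complex interpolation \eqref{eqn:complex_interpolation} at $\theta = \tfrac12$ between the trivial bound $\|\varphi\circ g^{-1} - \varphi\|_{\bar s} \lesssim \|\varphi\|_{\bar s}$ and the one-derivative estimate $\|\varphi\circ g^{-1} - \varphi\|_{\bar s - 1} \lesssim d_{C^1}(g^{-1},\id)\,\|\varphi\|_{\bar s}$, the latter obtained from the fundamental theorem of calculus $\varphi\circ g^{-1} - \varphi = \int_0^1 D\varphi|_{x_t}\cdot(g^{-1} - \id)\,dt$ together with uniform bounds on multiplication by $C^1$ functions and composition with near-identity diffeomorphisms. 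Integrating against $\pi$ and applying the definition of $\dd$ yields $\tau(\dd(\mu,\tilde\mu)) \le C\sqrt{\dd(\mu,\tilde\mu)}$.

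With \eqref{UniGrowth}--\eqref{TameCont} in hand, the theorem follows from Proposition \ref{PrSpecCont}(i). The essential spectral gap combined with the hypothesis that $\cG_\mu$ has no eigenvalues on the unit circle shows that $\mathrm{Spec}(\cG_\mu)\cap \{|\lambda| \ge \eta'\}$ is a finite collection of eigenvalues at positive distance $\delta_0$ from $\{|\lambda| = 1\}$. Choosing $r < 1$ sufficiently close to $1$ and $\delta < \delta_0$, the entire unit circle lies inside $V_{r,\delta}$, hence $\cG_{\tilde\mu}$ has no eigenvalues on the unit circle once $\dd(\mu,\tilde\mu)$ is small. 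The main obstacle is the pointwise estimate in the second paragraph: the half-derivative loss is dictated by the fact that composition with a near-identity diffeomorphism shifts frequencies, and the $\sqrt{\cdot}$ inside the definition of $\dd$ is precisely calibrated so that the resulting $1/2$-power interpolation produces a continuous modulus $\tau(\eps)\to 0$.
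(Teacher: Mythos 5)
Your proposal is correct and follows essentially the same route as the paper: verify the Keller--Liverani hypotheses \eqref{UniGrowth}--\eqref{TameCont} with strong norm $H^{-s}$ and weak norm $H^{-\bar s}$, reduce the tame-continuity estimate to positive Sobolev indices by duality and volume preservation, and conclude via Proposition \ref{PrSpecCont}. The only difference is that you prove the half-derivative continuity bound $\|\varphi\circ g^{-1}-\varphi\|_{s}\lesssim d_{C^1}(g^{-1},\id)^{1/2}\|\varphi\|_{\bar s}$ by interpolation, whereas the paper imports it from \cite[Lemma~2.39]{BaladiBook}; your interpolation argument is the standard proof of that lemma, so this is a self-contained filling-in of a cited step rather than a different method.
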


We start with some notation. Let $\tilde \mu$ be a measure such that $\delta:=\dd(\mu, \tilde \mu)$ is small. 
Denote $G:=\mc{G}_\mu,$ $\tilde G:=\widetilde{\mathcal{G}}_{\tilde \mu}.$
We need an auxiliary estimate. 

\begin{lemma} 
\label{LmWH}
(a) There is a constant $K$ such that for $s, \brs$ from \eqref{LY}
$$ \|G\phi-\tilde G\phi\|_{-\bar s}\leq K \delta \|\phi\|_s. $$

\noindent
(b) For each $n$ there is a constant $K_n$ such that 
$\displaystyle \|G^n\phi-\tilde G^n\phi\|_{-\bar s}\leq K_n \delta \|\phi\|_s. $
\end{lemma}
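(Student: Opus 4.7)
The plan for (a) is a coupling-duality argument combined with Sobolev interpolation. Fix a coupling $\pi$ of $\mu$ and $\tilde\mu$ with $\int\sqrt{d_{C^2}(f,\tilde f)+d_{C^2}(f^{-1},\tilde f^{-1})}\,d\pi\le 2\delta$, so that
\[
(G-\tilde G)\phi = \int (\phi\circ f - \phi\circ \tilde f)\, d\pi(f,\tilde f).
\]
By duality it suffices to bound $|\langle (G-\tilde G)\phi,\psi\rangle_{L^2}|$ uniformly in $\psi\in H^{\bar s}$ of norm $1$. Volume preservation lets me pair the difference on the other side: $\langle \phi\circ f-\phi\circ\tilde f,\psi\rangle=\langle\phi,\psi\circ f^{-1}-\psi\circ\tilde f^{-1}\rangle$, which Cauchy--Schwarz in the $H^s/H^{-s}$ duality bounds by $\|\phi\|_s\,\|\psi\circ f^{-1}-\psi\circ\tilde f^{-1}\|_{-s}$. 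The task reduces to showing
\[
\|\psi\circ g_1-\psi\circ g_2\|_{-s}\le C\,\sqrt{d_{C^2}(g_1,g_2)}\,\|\psi\|_{\bar s}
\]
uniformly for $g_1, g_2$ in a fixed compact set of $C^\infty$ volume-preserving diffeomorphisms.

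The analytic crux is the standard estimate $\|\psi\circ g_1-\psi\circ g_2\|_0\le C\, d_{C^0}(g_1,g_2)\,\|\psi\|_1$, which I would prove (when $g_1,g_2$ are close) by writing $\psi(g_1 x)-\psi(g_2 x)=\int_0^1 \nabla\psi(g_t x)\cdot \dot g_t(x)\,dt$ along the short geodesic joining $g_1x$ to $g_2 x$, applying Cauchy--Schwarz in $t$, and using the uniform $C^1$-bound on the interpolants $g_t$ together with a change of variables (Jacobians are uniformly bounded since the support is compact); for $g_1, g_2$ not close the trivial bound $2\|\psi\|_0$ is already sufficient. Complex interpolation of the family $T_{g_1,g_2}\colon\psi\mapsto\psi\circ g_1-\psi\circ g_2$ viewed as a map on the pair $(L^2,H^1)\to L^2$ at parameter $\theta=\bar s\in(0,1)$ yields $\|T_{g_1,g_2}\psi\|_0\le C\, d_{C^0}(g_1,g_2)^{\bar s}\,\|\psi\|_{\bar s}$; since $d_{C^0}\le d_{C^2}$ is uniformly bounded on the compact family, $d_{C^0}^{\bar s}=d_{C^0}^{s}\cdot d_{C^0}^{1/2}\le C'\sqrt{d_{C^2}}$. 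Combining with $L^2\hookrightarrow H^{-s}$ and integrating over $\pi$ gives (a) via the definition of $\dd$.

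For (b) I would use the telescopic identity
\[
G^n-\tilde G^n=\sum_{k=0}^{n-1} G^{n-1-k}(G-\tilde G)\tilde G^k,
\]
apply (a) to each summand, and use that $\tilde G$ acts boundedly on $H^s$ and $G$ acts boundedly on $H^{-\bar s}$ (both uniformly over $\cK$ by compactness of the supports). The main obstacle is producing the $\sqrt{d_{C^2}}$ factor matching the $\tfrac12$-derivative spacing between $s$ and $\bar s$: this is precisely why $\dd$ incorporates a square root, since $\bar s=s+\tfrac12>\tfrac12$ leaves a strictly positive cushion $d_{C^0}^{\bar s -1/2}=d_{C^0}^{s}$ that can be absorbed into a uniform constant.
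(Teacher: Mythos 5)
Your argument is correct and follows essentially the same route as the paper: part (a) by duality against $\psi\in H^{\bar s}$, reducing to a positive‑index estimate on $\psi\circ f^{-1}-\psi\circ\tilde f^{-1}$, and part (b) by telescoping $G^n-\tilde G^n$ and using uniform boundedness of $G$ on $H^{-\bar s}$ and of $\tilde G$ on the domain space. The one substantive difference is that where the paper simply cites Baladi's Lemma 2.39 for the composition estimate, you derive it from scratch by interpolating $T_{g_1,g_2}\colon\psi\mapsto\psi\circ g_1-\psi\circ g_2$ between $L^2\to L^2$ (norm $\le 2$) and $H^1\to L^2$ (norm $\lesssim d_{C^0}$), which is a clean, self-contained substitute; the geodesic-interpolant computation and the absorption of $d_{C^0}^{s}$ into a constant are both fine on a compact family. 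One caveat worth recording: your duality pairs $\|\phi\|_{s}$ with $\|\psi\circ f^{-1}-\psi\circ\tilde f^{-1}\|_{-s}$, so you prove the inequality exactly as printed, with $\|\phi\|_{s}$ ($s>0$) on the right. The version actually needed for the Keller--Liverani condition \eqref{TameCont} in Theorem \ref{ThSpStab} has $\|\phi\|_{-s}$ (the strong norm of \eqref{LY}) on the right --- the printed $\|\phi\|_{s}$ is evidently a sign typo --- and the paper's chain accordingly pairs $\|\phi\|_{-s}$ with $\|(G^{*}-\tilde G^{*})\psi\|_{s}$, using the estimate $\|\psi\circ g_1-\psi\circ g_2\|_{s}\le C\sqrt{d_{C^2}(g_1,g_2)}\,\|\psi\|_{\bar s}$. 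Your interpolation argument upgrades to this stronger form with no new ideas: interpolate the pair $(H^{s},H^{s+1})\to H^{s}$ at $\theta=1/2$ instead of $(L^2,H^1)\to L^2$ at $\theta=\bar s$.
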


\begin{proof}
(a) For Sobolev spaces of positive indices this is proven in \cite[Lemma~2.39]{BaladiBook}. The result for negative 
indices follows by duality. Namely,
given $\psi\in H^{\bar s}$ we have

$$ \left|\langle G\phi-\tilde G \phi, \psi \rangle\right|=
\left|\langle \phi, G^*\psi -\tilde G^*\psi\rangle\right|\leq \|\phi\|_s \|(G^*-\tilde G^*)\psi\|_s
$$
The second factor can be rewritten as
$$ \|(G^*-\tG^*)\psi\|_{-s}=\left\Vert \int \left[\psi\circ f^{-1}-\psi\circ \tilde f^{-1} \right] d\pi \right\Vert_{-s}
\leq  \int \left\Vert\psi\circ f^{-1}-\psi\circ \tilde f^{-1} \right\Vert_{-s} d\pi
$$$$
\leq \int K \sqrt{d_{C^2} (f^{-1}, \tilde f^{-1})} d\pi \|\psi\|_{-\bar s} \leq K \delta \|\psi\|_{-\bar s} $$
 proving part (a).

\noindent
(b) follows from (a) by writing $\displaystyle G^n-\tG^n=\sum_{j=0}^{n-1} \left[G^{n-j} \tG^j-G^{n-j-1} \tG^{j+1}\right]. $
\end{proof}

\begin{proof}[Proof of Theorem \ref{ThSpStab}]
This follows from Proposition \ref{PrSpecCont}. Indeed \eqref{UniGrowth} follows from the 
inequality
$\displaystyle \|\cG^n\|_{-s}\leq C\max_f \|Df\|^{-s} 
$
which can be obtained by interpolation. \eqref{KL-LY} follows from \eqref{LY},
\eqref{FinMult} holds due to Theorem \ref{thm:essential_spectral_gap}, and \eqref{TameCont}
holds by Lemma \ref{LmWH}.
\end{proof}

\subsection{Genericity of exponential mixing.}
\label{ScEMGeneric}
Using the decomposition from Corollary~\ref{cor:coexpanding_components}, we can show that if the coexpanding on average condition is generic among tuples, then so is ergodicity. Recall that we associate with a tuple the random dynamical system that assigns equal weight to each element of the tuple.

\begin{proposition}\label{prop:stable_exponential_mixing}
Suppose that the coexpanding on average condition is dense in $\Diff^{\infty}_{\vol}(M)^m$, the space of $m$-tuples, then stable exponential mixing is dense in the space of $(m+1)$-tuples. 
\end{proposition}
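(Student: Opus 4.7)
My plan is to produce, for any $(m+1)$-tuple $(f_0, f_1, \ldots, f_m) \in \Diff^\infty_{\vol}(M)^{m+1}$, an arbitrarily small perturbation whose driving measure is coexpanding on average and whose skew product is totally ergodic. Once this is achieved, Corollary \ref{CrSpGap-MEM} and Theorem \ref{ThSpStab} together yield stable exponential mixing. The overall strategy mirrors the three ingredients of Theorem \ref{ThExpMixGen}: engineer coexpansion on average via the density hypothesis, then use a further small perturbation to secure total ergodicity, and finally invoke stability.

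I would start by applying the density hypothesis to the sub-tuple $(f_1, \ldots, f_m)$, obtaining a nearby tuple $(\tilde f_1, \ldots, \tilde f_m)$ which is coexpanding on average as an $m$-tuple, with driving measure $\mu_m$. To promote coexpansion to the enlarged $(m+1)$-tuple with driving measure $\mu_{m+1} = \frac{1}{m+1}(\delta_{f_0} + \sum_{i=1}^m \delta_{\tilde f_i})$, the key observation is that any measurable family of subbundles or Riemannian metric on $T^*M$ that is $\mu_{m+1}$-a.s.~invariant must also be $\mu_m$-a.s.~invariant, since a.s.-invariance under a convex combination of Dirac masses forces invariance under each atom in the support. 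I would then invoke Proposition \ref{prop:EoA_characterization}(b), applied via Proposition \ref{prop:coa_d-1_planes} to the cotangent cocycle, to conclude coexpansion of $\mu_{m+1}$, possibly after an additional small perturbation of $f_0$ designed to ensure $f_0$ does not preserve any residual invariant structures inherited from the $m$-tuple.

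Once $\mu_{m+1}$ is coexpanding on average, Theorem \ref{thm:essential_spectral_gap} provides essential spectral gap on $H^{-s_0}$ for some $s_0 > 0$, and Corollary \ref{cor:coexpanding_components} decomposes $M$ into finitely many totally ergodic components $M_1, \ldots, M_l$ under some iterate $F^q$. If $l>1$, I would perturb $f_0$ further (still arbitrarily small) using Proposition \ref{PrNoInvSetsDiff}: for each intermediate-volume invariant set $M_j$, the set of diffeomorphisms $g$ with $\vol(g M_j \cap M_j^c)>0$ is $C^r$-open and dense. A generic small perturbation of $f_0$ thus breaks the invariance of each proper component; merging them in turn (finitely many steps, preserving coexpansion throughout since coexpansion is $C^1$-open) collapses the decomposition to a single totally ergodic component. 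Theorem \ref{ThEMAnnealed}(b) then furnishes exponential mixing, and Theorem \ref{ThSpStab} confirms stability, completing the argument.

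The main obstacle I anticipate is the passage from coexpansion of $\mu_m$ to coexpansion of $\mu_{m+1}$. Proposition \ref{prop:EoA_characterization}(b) provides absence of invariant geometric structures as a sufficient condition for coexpansion, whereas coexpansion of $\mu_m$ alone, via the contrapositive of part (a), only rules out $\mu_m$-a.s.~invariant subbundles consisting of almost-surely non-coexpanding covectors---invariant subbundles of coexpanding covectors or invariant Riemannian metrics might still persist. The most delicate portion of the proof will therefore be the arrangement, via a further small perturbation of $f_0$ generically disrupting any fixed finite-dimensional invariant geometric datum, of the absence of all such structures for $\mu_{m+1}$, so that Proposition \ref{prop:EoA_characterization}(b) can be applied cleanly.
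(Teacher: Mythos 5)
Your plan hinges on upgrading coexpansion of the $m$-sub-tuple $(\tilde f_1,\dots,\tilde f_m)$ to coexpansion of the full $(m+1)$-tuple, and this step has a genuine gap that your proposed fix does not close. Coexpansion of $\mu_m$ does not imply coexpansion of $\mu_{m+1}=\frac{1}{m+1}(\delta_{f_0}+\sum_i\delta_{\tilde f_i})$: the integrals in \eqref{eqn:coexpanding_on_average} for $\mu_{m+1}^N$ are dominated by words with a positive density of $f_0$'s, which can contract. Your route to repairing this is Proposition \ref{prop:EoA_characterization}(b), but that criterion is only \emph{sufficient} (the paper stresses this with the product example in \S\ref{subsec:products}), so coexpansion of $\mu_m$ gives you no control over invariant geometric structures; moreover the structures relevant to (b) live over $\mu_{m+1}$-stationary measures $\nu$, which need not be $\mu_m$-stationary, so your observation that $\mu_{m+1}$-a.s.\ invariance forces $\mu_m$-a.s.\ invariance does not connect to anything you know about $\mu_m$. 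Finally, there is no finite (or even parametrized) family of "residual invariant structures" to disrupt by a single generic perturbation of $f_0$; the transversality-type arguments in the paper (Theorems \ref{ThGenVF-EoA}, \ref{ThXF}) that rule out such structures require measures with absolutely continuous components and substantially more work. You correctly flag this as the delicate point, but it is not a technicality — it is the missing proof.

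The paper's proof avoids this issue entirely: it never establishes coexpansion of the $(m+1)$-tuple. Instead it perturbs the sub-tuple to be coexpanding both forwards and backwards, so that by Theorem \ref{thm:essential_spectral_gapL2} its generator is quasicompact on $L^2$; the generator of the full tuple is a convex combination of this operator with an $L^2$-contraction, and inherits the essential spectral gap. Ergodicity is then obtained exactly along the lines of the second half of your argument, except that Corollary \ref{cor:coexpanding_components} is applied to the \emph{sub-tuple}: any a.s.-invariant set for a power of the full system is invariant for the sub-tuple, hence lies in the finite algebra generated by the sub-tuple's totally ergodic components, and Proposition \ref{PrNoInvSetsDiff} shows a generic choice of the extra map destroys each of the finitely many nontrivial elements of that algebra. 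That part of your proposal is sound; the coexpansion-upgrade step is the part that fails.
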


\begin{proof}
 By assumption, the coexpanding on average condition is dense among $m$-tuples. From this it follows that 
 the property that $(f_1,\ldots,f_m)$ is coexpanding on average both forwards and backwards is dense. Thus 
 by Theorem~\ref{thm:essential_spectral_gapL2} the operator $\mc{G}$ associated to the full tuple has essential spectral gap on $L^2$, as it is the average of two operators of norm $1$, one having this property.

From Corollary \ref{cor:coexpanding_components} applied to $(f_1,\ldots,f_m)$ we see that there exists $q$ and a finite partition $\{M_i^f,\ldots,M_{l}^f\}$ such that the restriction of the $q$-th power of the dynamics of $(f_1,\ldots,f_m)$ to this set is totally ergodic. 
 Similarly to the proof of Theorem \ref{ThEMAnnealed}, once we have an essential spectral gap,
in order to obtain exponential mixing, it suffices to show that in fact every power of the dynamics generated by $(f_1,\ldots,f_m,g)$ is ergodic. Hence we must show that for each power of the dynamics, the $\sigma$-algebra of a.s.~invariant sets is trivial. This  $\sigma$-algebra is a coarsening of the algebra $\cI$ generated by the partition 
$\{M_i^f\}_{1\le i\le l}$.
Since $\cI$ is finite, we need to show that for each nontrivial $A\in \cI$, a generic map does not preserve $A$,
but this follows from Proposition \ref{PrNoInvSetsDiff}.
\end{proof}

The argument presented above can be applied to show the genericity of exponential mixing in other settings as well.

\begin{proof}[Proof of Theorem \ref{ThGenericNearIsom}]
Suppose that $(R_1,\ldots,R_{m-1})$ generates $ \mathrm{Isom}(M)$. By Theorem \ref{thm:coa_isometry_perturbation},
its perturbation is either isometric or (generically) coexpanding on average. 
Suppose we extended the tuple with an extra map $(R_1,\ldots,R_{m-1} ,R_{m})$, and then perturbed to obtain a tuple $(f_1,\ldots,f_{m-1},f_{m})$. If $(f_1,\ldots,f_{m-1})$ is not simultaneously conjugated back to isometries, then this tuple is coexpanding on average forwards and backwards. Hence  
 Proposition \ref{PrNoInvSetsDiff} shows that possibly after 
 a further $C^{\infty}$ small perturbation $\wt{f}_{m}$ of $f_{m}$ the resulting dynamics of $(f_1,\ldots,f_m,\wt{f}_{m+1})$ is stably exponentially mixing. As generating tuples $(R_1,\ldots,R_{m-1})$ are dense in $\mathrm{Isom}(M)$ by \cite[Thm.~1.1]{field1999generating}, Theorem \ref{ThGenericNearIsom} follows.
\end{proof}

\subsection{Dissipative perturbations.}
\label{SSDissipative}
 Due to the spectral gap, small dissipative perturbations of a measure $\mu$ with $\mu^{-1}$ coexpanding on
 average and conservative must have an absolutely continuous invariant measure with a density in $H^s,s>0$. 
 We note that in 
 \cite[Thm.~A]{brown2024absolute} the authors exhibit an open set of (co)expanding on average random systems
 on $\mathbb{T}^2$ such that 
\begin{enumerate}[leftmargin=*]
\item[(i)]
There is an absolutely continuous stationary measure;
\item[(ii)]
Any stationary measure is either absolutely continuous or finitely supported.
\end{enumerate}

 They conjecture that the same conclusion holds for arbitrary mildly dissipative expanding on average
 systems on surfaces.

Our result below extends (i) to arbitrary dimension (for coexpanding systems). However, our methods do not give (ii) even in dimension two since the {\it a priori} regularity of non-atomic stationary measures 
obtained in 
\cite{BrownRodriguezHertz} is insufficient to conclude that the measure is in $H^{-s}$ for small $s.$ 
 
We also note that for higher dimensional systems there could be a  stationary measure supported on proper submanifolds. A simple example is provided by a $k$ point motion 
discussed in \S \ref{subsec:products} which preserves generalized
diagonals. It is an important open question if fractal stationary measures are also possible in either
the conservative or mildly dissipative setting (cf. \cite[Conjecture 1.1.12]{brown2025measure}).
 
\begin{theorem}
Let $\mu$ be a measure on $\Diff^{\infty}_{\vol}(M)$ such that  $\mu^{-1}$ is coexpanding on average.
Then there exists $\delta>0$ such that if $\tilde\mu$ is a  $C^1$ small perturbation of $\mu$ 
supported on diffeomorphisms in $\Diff^{\infty}(M)$ satisfying that for each $x\in M$
\begin{equation}
 \left|\det(D_x f)-1\right|\leq \delta,
\label{NearCon}
\end{equation}
then: 
\begin{enumerate}[leftmargin=*]
\item[(a)] The generator $\widetilde{\cG}$ of $\tilde\mu$ process
 has an essential spectral spectral radius smaller than 1 
 in $H^{-s}$ 
for some small $s>0$.
\item[(b)] The random system generated by $\wt{\mu}$ has an absolutely continuous invariant measure in $H^s$ 
for some small $s>0$.
\end{enumerate}
\end{theorem}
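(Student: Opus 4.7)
The plan is to revisit the proof of Theorem \ref{thm:essential_spectral_gap}, tracking where volume preservation enters, and then combine the resulting quasi-compactness with positivity of the transfer operator to produce an invariant density in $H^s$.

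For part (a), I would first observe that the coexpanding on average condition is $C^1$-open, so the perturbed measure $\tilde\mu$ remains coexpanding on average (with the same $N$ and a slightly smaller constant) whenever it is sufficiently $C^1$-close to $\mu$. The sole place volume preservation entered the proof of Theorem \ref{thm:essential_spectral_gap} is the change-of-variables identity \eqref{L2Isom}. Under the hypothesis \eqref{NearCon} this identity becomes the inequality
$$
\int \|\Delta^{-s}(\phi\circ f)\|_0^2\, d\tilde\mu^n(f) \leq (1-\delta)^{-n} \int \|(\Delta^{-s})^f\phi\|_0^2\, d\tilde\mu^n(f),
$$
since each Jacobian factor $\det Df_i$ in an $n$-fold composition is at least $1-\delta$. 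Running the remainder of the proof unchanged would then produce a Lasota--Yorke type bound
$$
\|\widetilde{\mc{G}}^n\phi\|_{-s}^2 \leq (1-\delta)^{-n}(\lambda + 2\epsilon)\|\phi\|_{-s}^2 + \langle \mc{K}_\epsilon\phi,\phi\rangle.
$$
I would then first fix $n$ (depending only on $\mu$) so that $\lambda+2\epsilon<1$, and only afterwards choose $\delta$ small enough that $(1-\delta)^{-n}(\lambda+2\epsilon)<1$. This yields the essential spectral gap.

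For part (b), the plan is to pass to the dual operator $\widetilde{\mc{G}}^*\colon H^s\to H^s$, which has the same spectrum as $\widetilde{\mc{G}}$ on $H^{-s}$ and hence also essential spectral radius strictly less than~$1$. Since $\widetilde{\mc{G}}\mathbf{1}=\mathbf{1}$, the value~$1$ lies in this spectrum, and by the essential spectral gap it is an isolated eigenvalue of finite multiplicity. I would then consider the Ces\`aro averages
$$
\rho_n = \frac{1}{n}\sum_{k=0}^{n-1} \widetilde{\mc{G}}^{*k}\mathbf{1},
$$
each of which is a probability density since $\widetilde{\mc{G}}^*$ is positive and $L^1$-norm preserving on non-negative functions. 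Decomposing $H^s = E\oplus F$ with $E$ the finite-dimensional sum of generalized eigenspaces of $\widetilde{\mc{G}}^*$ for unit-modulus eigenvalues, the $F$-part of $\rho_n$ tends to $0$ in $H^s$; on $E$, equivalence of norms together with the $L^1$-contractivity of $\widetilde{\mc{G}}^*$ shows that $\widetilde{\mc{G}}^{*k}|_E$ is uniformly bounded, ruling out Jordan blocks at unit-modulus eigenvalues and forcing Ces\`aro convergence. The $H^s$-limit $\rho$ is $\widetilde{\mc{G}}^*$-invariant, non-negative as a weak $L^2$-limit of non-negative functions, and satisfies $\int\rho = 1$, giving the desired absolutely continuous invariant measure.

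The main obstacle will be the bookkeeping in part (a): the dissipative correction $(1-\delta)^{-n}$ grows with $n$, so one must fix $n$ purely in terms of $\mu$ before shrinking $\delta$. Part (b) is then a fairly standard consequence of quasi-compactness combined with positivity of the transfer operator.
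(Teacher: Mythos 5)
Your part (a) is essentially the paper's argument: the authors likewise observe that volume preservation enters the proof of Theorem \ref{thm:essential_spectral_gap} only through the $L^2$ change of variables \eqref{L2Isom}, and that under \eqref{NearCon} the composition operator's $L^2$-norm grows by at most $(1+O(\delta))^n$, which is absorbed since the contraction constant is strictly less than $1$; your bookkeeping of fixing $n$ before shrinking $\delta$ is a correct way to organize this. Part (b), however, takes a genuinely different route. The paper starts from the fact that $1$ is an eigenvalue of finite multiplicity of $\widetilde{\cG}$ (since $\widetilde{\cG}\mathbf{1}=\mathbf{1}$), hence of the adjoint $\widetilde{\cL}$ on $H^s$, takes a real eigenfunction $\phi$, and shows by a positivity-plus-mass-conservation argument that $\widetilde{\cL}\phi^+\geq\phi^+$ together with $\langle\widetilde{\cL}\phi^+,\mathbf{1}\rangle=\langle\phi^+,\mathbf{1}\rangle$ forces $\widetilde{\cL}\phi^+=\phi^+$; this neatly sidesteps any discussion of the peripheral spectrum. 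You instead run Ces\`aro averages of $\mathbf{1}$ under $\widetilde{\cG}^*$ and control the peripheral part via $L^1$-contractivity on the finite-dimensional spectral subspace. Your argument is correct and more constructive (it exhibits the density as a limit), but note one point you leave implicit: your decomposition $H^s=E\oplus F$ with $E$ the \emph{unit-modulus} generalized eigenspaces presupposes that $\widetilde{\cG}^*$ has no eigenvalues of modulus greater than $1$ on $H^s$; this does hold, by the same $L^1$ argument you use for Jordan blocks (an eigenfunction lies in $H^s\subset L^1$ and $\|\widetilde{\cG}^*\psi\|_{L^1}\leq\|\psi\|_{L^1}$ forces $|\lambda|\leq 1$), but it should be said, since otherwise the $F$-part of the Ces\`aro averages need not tend to zero.
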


\begin{proof}
To prove (a) we note that the only place where the volume preservation was used in the proof of Theorem \ref{thm:essential_spectral_gap}
is \eqref{L2Isom} where we used that the composition with $f^n_\omega$ preserves $L^2$-norm. Under the volume preservation
assumption \eqref{NearCon}, the norm of the composition on $L^2$ is increased by at most a factor of $(1+\delta)^n$ which is sufficient for the argument as long as
$(1+\delta)\eta<1$.

To prove (b) note that  $\widetilde{\cG} 1=1$ and by part (a), 1 is an eigenvalue of finite multiplicity.
It follows that it is also eigenvalue of finite multiplicity of the adjoint operator $\widetilde{\cL}$,
 which acts on $H^s$, $s>0$. 
In particular, 
there exists an $\widetilde\cL$ invariant function  $\phi\in H^s.$ 
Multiplying $\phi$ by $i$  if necessary we may assume that $\Re(\phi)\neq 0$.
Since $\Re(\phi)$ is preserved by $\widetilde\cL$ we may assume from the beginning that $\phi$ is real.
By the same argument we may assume that $\phi^+:=\max(\phi, 0)$ is not identically zero. 
We claim that $\phi^+\leq \widetilde\cL\phi^+.$ Indeed if
$\phi(x)>0$ then 
$\DS (\widetilde\cL\phi^+)(x)\geq (\widetilde\cL\phi)(x)=\phi(x)=\phi^+(x).$
On the other hand if $\phi(x)\leq 0$ then $(\widetilde\cL\phi^+)(x)\geq 0=\phi^+(x)$
proving the claim.
The claim implies that
$$ \langle \phi^+, 1 \rangle\leq  \langle \widetilde\cL\phi^+, 1 \rangle=\langle\phi^+, \widetilde\cG 1\rangle
=\langle\phi^+, 1\rangle. 
$$
But this is only possible if the inequality is in fact equality, that is, $\widetilde\cL \phi^+=\phi^+.$
Thus the measure with density $\phi^+$ is a stationary measure of our Markov chain.
\end{proof}

\begin{remark}
 Of course, if our random system is totally ergodic, then by Keller--Liverani stability result, all 
$\widetilde\cL$ invariant functions 
 (real or complex) are proportional, so in that case the stationary measure
    is unique.
\end{remark}

\subsection{Central limit theorem.} In this subsection, we deduce the central limit theorem from the spectral gap.

\begin{theorem}\label{thm:central_limit_theorem}
Suppose that $M$ is a closed manifold and that $\mu$ is a compactly supported measure on $\Diff^{\infty}_{\vol}(M)$ such that $\mu^{-1}$ is coexpanding on average and weak mixing. Then 
\begin{enumerate}[leftmargin=*]
\item[(a)] (CLT) 
The associated random dynamical system satisfies the 
central limit theorem. Namely let $\phi\colon M\to \R$ be a zero mean H\"older function.  Then for $z\in \R$, 

$$ \lim_{N\to\infty} (\mu^N\otimes \vol)\left((\omega,x): \sum_{n=0}^{N-1} \phi(f_\omega^n x)\leq \sqrt{N} z\right)=\int_0^z g_\sigma(s) ds
$$
where 
\begin{equation}
\label{GreenKubo}
 \sigma^2=\sigma^2(\phi)=\|\phi\|_{L^2}^2 +2\sum_{n=1}^\infty \langle \phi, \cG^n\phi \rangle_{L^2},
 \end{equation}
\hskip-8mm and $g_\sigma$ is the density of the normal random variable with zero mean and variance~$\sigma^2.$
\vskip1mm

\item[(b)] (Berry--Esseen bound) Moreover there is a constant $K$ such that if $\phi$ is a zero mean $C^1$ function 
with $\sigma^2(\phi)\neq 0$, then for all $z\in \R$,
\[ \left|(\mu^N\otimes \vol)\left((\omega,x): \sum_{n=0}^{N-1} \phi(f_\omega^n x)\leq \sqrt{N} z\right)- \int_0^z g_\sigma(s) ds
\right|\leq \frac{K}{\sqrt N}.  \]

\item[(c)] If, in addition, $\mu^{-1}$ is also coexpanding on average, then both CLT and Berry--Essen bound hold for $L^\infty$ observables.

\end{enumerate}
\end{theorem}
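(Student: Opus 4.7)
The proof uses the Nagaev--Guivarc'h perturbation method. Fix $s \in (0, s_0)$ smaller than the H\"older exponent $\alpha$ of $\phi$, so that $\cG$ has essential spectral gap on $H^{-s}_0$ by Theorem~\ref{thm:essential_spectral_gap} and multiplication by $e^{it\phi}$ is a bounded operator on $H^{-s}$, analytic in $t$. Introduce the twisted transfer operator
\begin{equation*}
\cG_t \psi(x) = \int e^{it\phi(fx)}\,\psi(fx)\,d\mu(f),
\end{equation*}
so that $\mathbb{E}_{\mu^N\otimes \vol}[e^{itS_N}] = \int \cG_t^N 1\,d\vol$ with $S_N = \sum_{k=1}^N \phi(f_\omega^k x)$. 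By weak mixing combined with essential spectral gap (as in the proof of Theorem~\ref{ThEMAnnealed}(a)), $1$ is a simple isolated eigenvalue of $\cG_0 = \cG$ with no other peripheral spectrum. I would next verify that $\{\cG_t\}$ satisfies the Keller--Liverani hypotheses relative to the norm pair $(\|\cdot\|_{-s},\|\cdot\|_{-\bar s})$ from the Lasota--Yorke inequality~\eqref{LY}: the Lasota--Yorke bounds transfer from $\cG$ to $\cG_t$ because multiplication by $e^{it\phi}$ is uniformly bounded in $t$, and $\|\cG_t-\cG\|_{H^{-s}\to H^{-\bar s}} \le C|t|$ follows from $|e^{it\phi}-1|\le |t\phi|$.

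Proposition~\ref{PrSpecCont}, upgraded to analytic dependence by standard Kato theory (applicable since $t\mapsto \cG_t$ is analytic in the operator norm on $H^{-s}$), then yields a simple eigenvalue $\lambda(t)$ analytic near $0$ and a spectral decomposition $\cG_t^N = \lambda(t)^N \Pi(t) + R_t^N$ with $\|R_t^N\|_{-s} \le C\theta^N$ uniformly for small $t$. Differentiating $\cG_t v(t) = \lambda(t)v(t)$ once and pairing with the left eigenvector $d\vol$ gives $\lambda'(0) = i\int\phi\,d\vol = 0$; a second differentiation together with $(I-\cG)v'(0) = i\cG\phi$ yields $\lambda''(0) = -\sigma^2$ with $\sigma^2$ as in~\eqref{GreenKubo}, where convergence of the Green--Kubo series is supplied by Theorem~\ref{ThEMAnnealed}(a). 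Substituting $t\to t/\sqrt{N}$ gives $\mathbb{E}[e^{itS_N/\sqrt{N}}] \to e^{-\sigma^2 t^2/2}$, which proves~(a) via L\'evy's continuity theorem. For part~(c), the same argument runs on $L^2$: Theorem~\ref{thm:essential_spectral_gapL2} provides the essential spectral gap, and multiplication by $e^{it\phi}$ is an isometry on $L^2$ for $\phi \in L^\infty$, analytic in $t$ via the power series for $e^{it\phi}$.

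The main obstacle is the Berry--Esseen bound in part~(b). By Esseen's smoothing inequality it reduces to a uniform estimate $|\mathbb{E}[e^{itS_N/\sqrt{N}}] - e^{-\sigma^2 t^2/2}| \le C/\sqrt{N}$ on $|t| \le c\sqrt{N}$. On a fixed compact neighborhood of $0$ the analytic expansion of $\lambda(t)$ supplies this with error $O(|t|^3/\sqrt{N})$. For $t$ in a compact set disjoint from $0$, one must show that $\cG_t$ has no eigenvalue on the unit circle: if $\cG_t v = e^{i\theta} v$ with $v \in H^{-s}$, a standard cohomological argument using that $1$ is the only peripheral eigenvalue of $\cG$ (with constant eigenfunction) forces $\phi$ to be cohomologous to a constant along $\mu$-typical orbits, hence $\sigma^2(\phi) = 0$, contradicting the hypothesis. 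Continuity of the peripheral spectrum via Proposition~\ref{PrSpecCont} then provides uniform geometric decay $\|\cG_t^N\|_{-s} \le C\eta_K^N$ for $t$ in any such compact set. Combining these with a crude operator-norm bound for the remaining range of $t$ then yields the Berry--Esseen inequality.
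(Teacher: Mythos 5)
Your argument is correct and rests on the same engine as the paper's proof, namely the Nagaev--Guivarc'h spectral perturbation of the twisted operator $\cG_t\psi=\cG(e^{it\phi}\psi)$ acting on a space where Theorem~\ref{thm:essential_spectral_gap} gives quasicompactness and weak mixing (via the argument of Theorem~\ref{ThEMAnnealed}) makes $1$ the only peripheral, simple eigenvalue. The differences are in the packaging. First, the paper does not re-derive the perturbative CLT/Berry--Esseen machinery: it simply verifies the hypotheses of \cite[Theorem 3.7]{Gouezel} (spectral gap, simplicity of $1$, and $C^3$ dependence of $t\mapsto\cG_t$ obtained from the exponential series for $e^{it\phi}$), whereas you sketch the full Kato expansion of $\lambda(t)$ and the Esseen smoothing step yourself. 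Second, for part (a) the paper proves the Berry--Esseen bound only for $C^1$ observables on $H^{-s}$ and then upgrades to H\"older observables by the abstract density argument of Proposition~\ref{PrDenseCLT}; you instead work directly on $H^{-s}$ with $s$ below the H\"older exponent $\alpha$, using that multiplication by a $C^\alpha$ function is bounded on $H^{-s}$ for $s<\alpha$. That shortcut is legitimate and in fact yields the Berry--Esseen rate for H\"older observables too, which the paper's route does not; the paper's density argument buys generality (it needs only summable correlations on a dense subclass) at the cost of losing the rate for merely H\"older $\phi$. One remark on your part (b): with the cutoff $T=c\sqrt N$ in Esseen's inequality, only $u=t/\sqrt N$ in a fixed small neighborhood of $0$ ever enters, so your discussion of aperiodicity of $\cG_t$ for $t$ bounded away from $0$ is not needed; this is fortunate, since the ``standard cohomological argument'' you invoke there is the one genuinely unsubstantiated step in the proposal and would require real work in the random setting. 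Finally, your sums start at $k=1$ rather than $n=0$ as in the statement, a bounded discrepancy that is harmless for both the CLT and the $O(N^{-1/2})$ bound but should be flagged.
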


\begin{proof} 

Part (b) follows by \cite[Theorem 3.7]{Gouezel} which says that the Berry--Esseen bound holds provided that
$\mc{G}$ has spectral gap on some Banach space $\mathbb{B}$, $1$ is a simple eigenvalue of $\cG$, and, denoting 
$\DS \cG_t (\psi)=\cG\left(e^{it \phi} \psi\right)$, we have that the map $t\mapsto \mc{G}_t$ is $C^3$ in the strong operator norm in $\Aut(\mathbb{B}).$
Take $\mathbb{B}=H^{-s}$. Then the essential spectral gap holds 
by Theorem \ref{thm:essential_spectral_gap}, the second condition holds  due to Theorem \ref{ThEMAnnealed}
since $\mu$ is weak mixing, and the last condition holds
because $e^{it \phi}$ is the sum of its Taylor series and multiplication by $\phi$, and hence $\phi^k$, define bounded operators in $H^{-s}$ with at most exponentially growing norms.

Next, under the assumption of part (c), the generator has a spectral gap on $L^2$ by Theorem \ref{thm:essential_spectral_gapL2}.
Now part (c) follows from \cite[Theorem 3.7]{Gouezel}, this time with 
$\mathbb{B}=L^2$, and the fact that multiplication by an $L^\infty$ function is a bounded
operator on $L^2.$

Part (a) follows from part (b) and Proposition \ref{PrDenseCLT} below.
\end{proof}
\begin{proposition}
\label{PrDenseCLT}
Suppose that $x_n$ is a Markov process with state space $M$, and $\cB$ is a space of zero mean functions on $M$ where the generator $\mc{G}$ has summable
correlations in the sense that
$
\DS
\left|\langle \cG^n \phi, \psi\rangle\right|\leq a(n)\|\phi\|{ \|\psi\|}$
 with  $\sum_n a(n)<\infty$. 
If there is a dense set $\cD\subset \cB$ such that for all $\phi\in\cD$, 
$N^{-1/2}\sum_{n=0}^{N-1} \phi(x_n)$ converges in law as $N\to\infty$
to a normal random variable with zero mean and variance $\sigma^2(\phi)$ given by \eqref{GreenKubo}, 
then the same holds for all $\phi\in\cB$.
\end{proposition}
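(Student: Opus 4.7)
The plan is a standard density argument combining Chebyshev's inequality with the summable correlations hypothesis. Write $S_N(\phi) = \sum_{n=0}^{N-1} \phi(x_n)$ and let $\mathbb{E}$ denote expectation against the stationary measure governing the Markov chain. Since for stationary $x_0$ one has $\mathbb{E}[\phi(x_n)\psi(x_m)] = \langle \phi, \mc{G}^{|n-m|}\psi\rangle_{L^2}$, expanding the square of $S_N(\phi)$ and applying the summable correlations hypothesis produces a uniform variance bound of the form
\[ \mathbb{E}[S_N(\phi)^2] \leq C N \|\phi\|^2, \]
where $C$ depends only on $a(\cdot)$ and on the constant comparing $\|\cdot\|_{L^2}$ to $\|\cdot\|_{\cB}$. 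A parallel estimate, using the identity
\[ \langle \phi, \mc{G}^n \phi\rangle - \langle \psi, \mc{G}^n \psi\rangle = \langle \phi - \psi, \mc{G}^n \phi\rangle + \langle \psi, \mc{G}^n(\phi - \psi)\rangle, \]
shows that the Green--Kubo functional $\sigma^2(\phi)$ defined by \eqref{GreenKubo} is continuous on $\cB$, with $|\sigma^2(\phi) - \sigma^2(\psi)| \leq C'\|\phi - \psi\|\,(\|\phi\| + \|\psi\|)$.

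Now fix $\phi \in \cB$ and pick $\phi_k \in \cD$ with $\|\phi - \phi_k\| \to 0$. By the Portmanteau theorem, the CLT for $\phi$ amounts to verifying $\mathbb{E}[F(S_N(\phi)/\sqrt{N})] \to \int F(s)\, g_{\sigma(\phi)}(s)\, ds$ for every bounded Lipschitz $F \colon \R \to \R$. For any $\delta > 0$, linearity of $S_N$ together with Chebyshev's inequality and the variance bound applied to $\phi - \phi_k$ yield
\[ \bigl|\mathbb{E}[F(S_N(\phi)/\sqrt{N})] - \mathbb{E}[F(S_N(\phi_k)/\sqrt{N})]\bigr| \leq \|F\|_{\mathrm{Lip}}\delta + \frac{2\|F\|_\infty C \|\phi - \phi_k\|^2}{\delta^2}. \]
The CLT for $\phi_k$ gives $\mathbb{E}[F(S_N(\phi_k)/\sqrt{N})] \to \int F\, g_{\sigma(\phi_k)}$ as $N \to \infty$; the continuity of $\sigma^2$, combined with the continuous dependence of the Gaussian density $g_\sigma$ on $\sigma$, gives $\int F\, g_{\sigma(\phi_k)} \to \int F\, g_{\sigma(\phi)}$ as $k \to \infty$. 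Sending $N \to \infty$, choosing $\delta = \|\phi - \phi_k\|^{2/3}$, and then letting $k \to \infty$ concludes.

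There is no substantive obstacle; the argument is purely soft and standard in the ergodic-theoretic CLT literature. The only point worth verifying is that all expectations are genuinely taken against a stationary measure, so that the autocorrelations reduce cleanly to $\langle \phi, \mc{G}^n \phi\rangle$ and the variance bound is linear in $N$ with the correct constant; this is built into the hypotheses of the proposition. The degenerate case $\sigma(\phi) = 0$ is handled by interpreting $g_0$ as $\delta_0$, with weak convergence to a point mass at the origin.
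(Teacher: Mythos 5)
Your proposal is correct and follows essentially the same route as the paper's proof: both rest on the uniform variance bound $\mathbb{E}[S_N(\phi)^2]\le CN\|\phi\|^2$ coming from summable correlations, Chebyshev applied to $S_N(\phi-\psi)$, and the continuity of the Green--Kubo variance, the only cosmetic difference being that you test weak convergence against bounded Lipschitz functions while the paper sandwiches the cumulative distribution function directly.
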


\begin{proof}
In the course of the proof we will denote $\displaystyle S_N(\phi)=\sum_{n=0}^{N-1} \phi(x_n)$ and
let $\Phi_\sigma(z)$ be the cumulative distribution function of the normal random
variable with zero mean and variance $\sigma^2$. In the Big-O terms below, the implied constants depend only on the $a(n)$ unless otherwise noted. 

Take $\phi\in \cB.$ Recall that $\DS \Vert S_n(\phi)\Vert_{L^2}^2=N\sigma^2(\phi)+O(\|\phi\|_{L^2}^2).$
If $\sigma^2(\phi)=0$, then $S_N(\phi)/\sqrt{N}$ converges to 0 due to the Chebyshev inequality.

Next, suppose that $\sigma^2(\phi)\neq 0.$ Take $z\in \mathbb{R}$ and $\eps>0$ and choose $\psi\in\cD$
such that $\|\eta\|\leq \eps^4$ where $\eta=\phi-\psi$. 

Then 
\begin{align*}
 \Prob\left(\frac{S_N(\psi)}{\sqrt{N}}\leq z-\eps\right)-\Prob\left(\left|\frac{S_N(\eta)}{\sqrt{N}} \right|\geq \eps\right)
 &\leq 
\Prob\left(\frac{S_N(\phi)}{\sqrt{N}}\leq z\right)  
\\
&\leq 
\Prob\left(\frac{S_N(\psi)}{\sqrt{N}}\leq z+\eps\right)+\Prob\left(\left|\frac{S_N(\eta)}{\sqrt{N}} \right|\geq \eps\right).
\end{align*}
Since a straightforward computation using the summability of the correlations gives $\sigma(\psi)=\sigma(\phi)+O_{\phi}(\eps^4)$, we see that for large $N$,
$$ \Prob\left(\frac{S_N(\psi)}{\sqrt{N}}\leq z+\eps\right)\leq \Phi_{\sigma(\psi)}(z+\eps)+\eps\leq \Phi_{\sigma(\phi)}(z)+C\eps.$$
Similarly,
\[
\DS \Prob\left(\frac{S_N(\psi)}{\sqrt{N}}\leq z-\eps\right)\geq  \Phi_{\sigma(\phi)}(z)-C\eps.
\]
Also since $\sigma^2(\eta)=O(\eps^4)$, the Chebyshev inequality tells us that 
$\DS \Prob(|\eta|>\eps)=O(\eps^2)$.\vskip1mm

Combining the above estimates gives 
$\DS \Prob\left(\frac{S_N(\phi)}{\sqrt{N}}\leq z\right)=\Phi_{\sigma(\phi)}(z)+O_{\phi}(\eps). 
$
Since $\eps$ is arbitrary the result follows.
\end{proof}

\subsection{Quenched properties.}
The results described so far pertain to the averaged (annealed) dynamics. However, if ergodic properties of the two point motion are well understood, one can derive quenched results,
which we discuss briefly in this subsection. We say that the random dynamics has {\em quenched exponential mixing} on a Banach space $\mathbb{B}$ of functions on $M$ 
if there exists a constant $\theta<1$ and  random variable $C(\omega)$ such that for almost all $\omega$ and all zero mean functions $\phi, \psi\in \mathbb{B}$ we have
\[
\left| \int \phi(x) \psi(f_\omega^n x) dx\right|\leq C(\omega) \theta^n \|\phi\|_{\mathbb{B}} \|\psi\|_{\mathbb{B}}.
\]
We say that the random dynamics satisfies the {\em quenched Central Limit Theorem} on $\mathbb{B}$ if there exists a quadratic form
$\cD$ on $\mathbb{B}$ which is not identically zero such that for almost every $\omega$ and all zero mean functions $\phi\in \mathbb{B}$, then 
if $x$ is uniformly distributed on $M$, the distribution of $S_N^{\omega}\phi (x)/\sqrt{N}$ converges in law to a normal random variable with zero mean and variance $\cD(\phi)$. 

\begin{theorem}
Suppose that $\mu$ is a measure on $\Diff^{\infty}_{\vol}(M)$ such that $\mu^{-1}$ is coexpanding on average and the two point system is totally ergodic. Then 
the random dynamics defined by $\mu$ enjoys quenched exponential mixing on $H^s$ for $s>0$ and the quenched Central Limit Theorem on $C^1$.
\end{theorem}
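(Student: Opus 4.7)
The plan is to reduce both conclusions to the annealed theory already established (Theorem~\ref{ThEMAnnealed} and Corollary~\ref{cor:multiple_mixing}) applied to the two-point motion $F(x,y)=(f(x),f(y))$ on $M\times M$. First I observe that since each realization $(f,f)$ is a product diffeomorphism with identical factors, Lemma~\ref{lem:eoa_product} implies that the induced driving measure on $\Diff^\infty_{\vol}(M\times M)$ is coexpanding on average whenever $\mu$ is. Combined with the hypothesis that the two-point system is totally ergodic, Theorem~\ref{ThEMAnnealed}(b) yields annealed exponential mixing of the two-point motion on $H^{-s_2}(M\times M)$ for some $s_2>0$, with some rate $\theta_2\in(0,1)$.

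For quenched exponential mixing, take zero-mean $\phi,\psi\in H^s(M)$ with $s\geq s_2$ and compute the second moment
\[
\mathbb{E}_\omega\bigl|\langle\phi,\psi\circ f_\omega^n\rangle_{L^2}\bigr|^2
=\bigl\langle\bar\phi\otimes\phi,\;\mathcal{G}_2^{\,n}(\bar\psi\otimes\psi)\bigr\rangle_{L^2(M\times M)},
\]
where $\mathcal{G}_2$ is the generator of the two-point motion. The function $\bar\psi\otimes\psi$ has zero integral because $\int\psi=0$, and the tensor products are in $H^s(M\times M)\subset H^{-s_2}(M\times M)$, so Step~1 gives $\mathbb{E}_\omega|\langle\phi,\psi\circ f_\omega^n\rangle|^2\leq C\theta_2^{\,n}\|\phi\|_{H^s}^2\|\psi\|_{H^s}^2$. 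Fixing any $\theta\in(\sqrt{\theta_2},1)$, Chebyshev makes the tail probabilities summable in $n$; a Borel--Cantelli argument applied to a countable dense set in $H^s\times H^s$ (e.g., a Schauder basis) then provides an $\omega$-dependent constant $C(\omega)<\infty$ so that the desired pointwise bound holds simultaneously for all basis elements, and continuity of the bilinear pairing extends the estimate to all of $H^s\times H^s$.

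For the quenched CLT on $C^1$, I would apply the method of moments. Writing $S_N^\omega(\phi)(x)=\sum_{n=0}^{N-1}\phi(f_\omega^n x)$, the $k$-th $x$-moment can be unfolded (using volume preservation and the change of variable $y=f_\omega^{n_1}x$ for the smallest index) into a nested product of the form handled by Corollary~\ref{cor:multiple_mixing}. Taking expectation in $\omega$ and using multiple exponential mixing of the one-point system gives annealed moments converging, by the standard combinatorial identification of the leading pairings, to the Gaussian moments with variance $\sigma^2(\phi)$ from \eqref{GreenKubo}. To upgrade to an almost sure statement, I bound $\mathrm{Var}_\omega\bigl(\mathbb{E}_x[(S_N^\omega(\phi)/\sqrt{N})^k]\bigr)$ by re-expressing it as an expectation of a product of two $k$-fold correlations sharing the same $\omega$, and then viewing this product as an observable on $M\times M$ to which the annealed two-point mixing from Step~1 applies. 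This yields an $O(N^{-\gamma})$ bound for some $\gamma>0$, which is summable along a subsequence $N_j=j^{2/\gamma}$; a second Borel--Cantelli together with monotone interpolation in $N$ gives almost sure convergence of the moments, hence the quenched CLT by the method of moments theorem.

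The main obstacle I anticipate is the bookkeeping in Step~3: confirming that the expansion of $\mathrm{Var}_\omega(\cdot)$ in terms of two-point correlations genuinely produces an integrable function on $M\times M$ in the correct Sobolev class and that the combinatorial contributions from non-paired indices are absorbed by the decay rate from Corollary~\ref{cor:multiple_mixing}. The delicate point is that multiple mixing is stated with a single $H^{-s}$ factor and the rest in $C^1$, so one must arrange the nested structure of the moments so that the lowest-regularity factors appear in the innermost position; this is combinatorially routine but requires care to ensure the uniform constants depend only on $\|\phi\|_{C^1}$.
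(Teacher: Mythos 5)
Your reduction to the two-point motion is exactly the paper's first step: Corollary \ref{CrEoAk} (equivalently Lemma \ref{lem:eoa_product}) gives that the two-point system is coexpanding on average, and total ergodicity plus Theorem \ref{ThEMAnnealed}(b) gives an annealed spectral gap for its generator. At that point the paper simply cites an external reference for the implication ``spectral gap of the two-point generator $\Rightarrow$ quenched exponential mixing and quenched CLT,'' whereas you attempt to prove that implication from scratch. Your second-moment identity $\EXP_\omega|\langle\phi,\psi\circ f_\omega^n\rangle|^2=\langle\bar\phi\otimes\phi,\cG_2^n(\bar\psi\otimes\psi)\rangle$ and the Chebyshev computation with $\theta>\sqrt{\theta_2}$ are correct, and this is indeed the standard route.

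The gap is in the passage from the countable dense set to all of $H^s\times H^s$. Borel--Cantelli applied to a fixed pair $(\phi_i,\psi_j)$ produces a random constant $C_{i,j}(\omega)$ that is a.s.\ finite (indeed with uniformly bounded second moment), but the single constant $C(\omega)$ you need is $\sup_{i,j}C_{i,j}(\omega)$, and a countable supremum of a.s.\ finite random variables with uniformly bounded $L^2$ norms need not be finite a.s. ``Continuity of the bilinear pairing'' does not rescue this: the forms $\theta^{-n}\langle\phi,\psi\circ f_\omega^n\rangle$ are not uniformly bounded in $n$ at the scale you need, so smallness on a dense set only transfers once you already have a single uniform $C(\omega)$. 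The standard repair is to sum $C_{i,j}(\omega)^2$ against summable weights (so that the constants degrade polynomially in $(i,j)$) and then, for a general pair, use an approximant whose index grows with $n$, balancing the degraded constant against the approximation error; this works but is not the one-line step you describe. Two smaller points: (i) your argument as written only covers $s\ge s_2$, and extending to all $s>0$ needs an extra interpolation against the trivial $L^2$ bound; (ii) the method-of-moments quenched CLT is a plausible but essentially unexecuted program --- the variance-of-moments bound requires the $2k$-fold multiple mixing of the \emph{two-point} motion (available via Corollary \ref{cor:multiple_mixing} applied to the product system), and the combinatorial decorrelation estimates there are the bulk of the work, not a routine afterthought.
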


\begin{proof}
By Corollary \ref{CrEoAk} the two point motion is also coexpanding on average backwards. Hence by 
Theorem~\ref{thm:essential_spectral_gap} the two point motion has essential spectral gap on $H^{-t}$ for small positive $t$. 
From the total ergodicity assumption together with Theorem~\ref{ThEMAnnealed}(b) the generator has a spectral gap on $H^{-t}$ for small positive $t$. 
    Now the result follows from \cite{DWDQAn} which says that a spectral gap on $H^t$ implies 
    the quenched exponential mixing on $H^s$ for $s\!>\!0$ and quenched Central Limit Theorem on $C^r$ for $r\!>\!|t|.$
\end{proof}

\section{Back to the introduction}
\label{ScBack}
Here we explain how the results stated in the introduction follow from the main results of our paper. Theorem \ref{thm:essential_spectral_gap} was proven in Section \ref{ScMainProof}, while Theorem \ref{ThGenericNearIsom} was proven
in \S\ref{ScEMGeneric}. We now show the remaining results.

\begin{proof}[Proof of Corollary~\ref{CrSpGap-MEM}.]
Suppose that $\mu^{-1}$ is a coexpanding on average measure that is totally ergodic. Then any perturbation $\wt{\mu}$ of $\mu$ has the same properties by Theorem~\ref{ThSpStab}. Thus $\wt{\mu}$ is multiple exponential mixing by Corollary \ref{cor:multiple_mixing}, and satisfies the central limit theorem by 
Theorem  \ref{thm:central_limit_theorem}. 
\end{proof} 

\begin{proof}[Proof of Theorem \ref{ThExpMixGen}.] 
Let $\mathfrak{G}$ be the set of measures that are totally ergodic and such that $\mu^{-1}$ is coexpanding on average.
These measures are strongly chaotic as was explained above. 
Also $\mathfrak{G}$ is open by Theorem \ref{ThSpStab}.
To see that it is dense note that
it is proven in \cite{elliott2023uniformly} that
for each open set $\cU$ in the space of $\Diff_{\vol}^\infty(M)$ there exists a clean measure $\mu_0$
supported on $\cU$ 
(see also Theorem \ref{ThXF} of the present paper).
Thus for each measure $\mu$ on $\cU$ and each $\eps>0$ the measure 
$\mu_\eps=\eps \mu_0+(1-\eps) \mu$ belongs to $\mathfrak{G}$ by Corollary~\ref{rem:clean}.
Thus $\mathfrak{G}$ is dense.
\end{proof}

\begin{proof}[Proof of Corollary~\ref{cor:main_cor}.]
The fact that the examples of measures $\mu$ described in the corollary have $\mu^{-1}$  coexpanding on average and are totally ergodic 
follows from Propositions \ref{PrTorusAffine}, \ref{PrHomogeneous} and 
Theorem \ref{thm:coa_isometry_perturbation} respectively. The strong chaoticity follows from Corollary \ref{cor:multiple_mixing} and Theorem  \ref{thm:central_limit_theorem}. 
The same properties hold for $\mu^{-1}$ since $\mu^{-1}$ belongs to the same class as $\mu.$
Now the spectral gap on $L^2$ follows
from Theorem \ref{thm:essential_spectral_gapL2}.
\end{proof} 

\printbibliography
\end{document}